\let\origsection=\section \def\section{\@ifstar{\origsection*}{\mysection}} 
\def\mysection{\@startsection{section}{1}\z@{.7\linespacing\@plus\linespacing}{.5\linespacing}{\normalfont\scshape\centering\S}}
\numberwithin{equation}{section}
\numberwithin{figure}{section}
\newenvironment{sidewaysfigurepage}{
    \clearpage
    \@rotfloat{figure}
    \ifthenelse{\isodd{\rot@LR}}{
       }{
       }
}{
    \end@rotfloat
    \clearpage
}
\setlist[enumerate]{label=(\arabic*), ref=(\arabic*)}
\newtheorem{theorem}{Theorem}[section]
\newtheorem{lemma}[theorem]{Lemma}
\newtheorem{corollary}[theorem]{Corollary}
\newtheorem{conjecture}[theorem]{Conjecture}
\newtheorem*{ubqconjecture}{The Ubiquity Conjecture}
\newtheorem{lemmadef}[theorem]{Lemma and Definition}
\theoremstyle{definition}
\newtheorem{definition}[theorem]{Definition}
\newtheorem{remark}[theorem]{Remark}
\newtheorem{question}[theorem]{Question}
\newtheorem{remarkdef}[theorem]{Remark and Definition}
\newcommand{\V}{\mathcal{V}}
\newcommand{\Nbb}{\mathbb{N}}
\newcommand{\Pcal}{{\mathcal P}}
\newcommand{\Rcal}{{\mathcal R}}
\newcommand{\Scal}{{\mathcal S}}
\newcommand{\Tcal}{{\mathcal T}}
\newcommand{\Fcal}{{\mathcal F}}
\newcommand{\Ecal}{{\mathcal E}}
\newcommand{\Hcal}{\mathcal{H}}
\newcommand\subsetsim{\mathrel{%
  \ooalign{\raise0.2ex\hbox{$\subset$}\cr\hidewidth\raise-0.8ex\hbox{\scalebox{0.9}{$\sim$}}\hidewidth\cr}}}
\renewcommand{\triangleleft}{\vartriangleleft}
\renewcommand{\leq}{\leqslant}
\renewcommand{\geq}{\geqslant}
\renewcommand{\preceq}{\preccurlyeq}
\newcommand{\Gtribe}{{$G$-\text{tribe}}}
\newcommand{\Gsubtribe}{{$G$-\text{subtribe}}}
\DeclareMathOperator{\init}{init}
\DeclareMathOperator{\core}{core}
\DeclareMathOperator{\dist}{dist}
\DeclareMathOperator{\RG}{RG}
\title[Ubiquity of graphs with extensive tree-decompositions]{Ubiquity in graphs III: Ubiquity of locally finite graphs with extensive tree-decompositions}
\author[Bowler, Elbracht, Erde, Gollin, Heuer, Pitz, Teegen]{Nathan Bowler \and Christian Elbracht \and Joshua Erde \and J.~Pascal Gollin \and Karl Heuer \and Max Pitz \and Maximilian Teegen}
\address[Bowler, Elbracht, Pitz, Teegen]{Universit\"{a}t Hamburg, Department of Mathematics, Bundesstra{\ss}e 55 (Geomatikum), 20146 Hamburg, Germany}
\address[Erde]{Graz University of Technology, Institute of Discrete Mathematics, Steyrergasse 30, 8010 Graz, Austria}
\address[Gollin]{Institute for Basic Science (IBS), Discrete Mathematics Group, 55, Expo-ro, Yuseong-gu, Daejeon, Republic of Korea, 
34126}
\address[Heuer]{Technische Universit\"{a}t Berlin,
Institut f\"{u}r Softwaretechnik und Theoretische Informatik,
Ernst-Reuter-Platz 7, 10587 Berlin, Germany}
\email{nathan.bowler@uni-hamburg.de}
\email{christian.elbracht@uni-hamburg.de}
\email{erde@math.tugraz.at}
\email{pascalgollin@ibs.re.kr}
\email{karl.heuer@tu-berlin.de}
\email{max.pitz@uni-hamburg.de}
\email{maximilian.teegen@uni-hamburg.de}
\thanks{The fourth author was supported by the Institute for Basic Science (IBS-R029-C1).}
\thanks{The fifth author was supported by the European Research Council (ERC) under the European Union's Horizon 2020 research and innovation programme (ERC consolidator grant DISTRUCT, agreement No.\ 648527).}
\begin{document}

\begin{abstract}
    A graph~$G$ is said to be \emph{ubiquitous}, 
    if every graph~$\Gamma$ that contains arbitrarily many disjoint $G$-minors automatically contains infinitely many disjoint $G$-minors.
    The well-known \emph{Ubiquity conjecture} of Andreae says that every locally finite graph is ubiquitous.
    
    In this paper we show that locally finite graphs admitting a certain type of tree-decomposition, which we call an \emph{extensive tree-decomposition}, are ubiquitous. 
    In particular this includes all locally finite graphs of finite tree-width, and also all locally finite graphs with finitely many ends, all of which have finite degree. It remains an open  question whether every locally finite graph admits  an extensive tree-decomposition.
\end{abstract}

\maketitle
\date{}

\section{Introduction}
Given a graph~$G$ and some relation~$\triangleleft$ between graphs, we say that~$G$ is \emph{$\triangleleft$-ubiquitous} 
if whenever~$\Gamma$ is a graph such that ${nG \triangleleft \Gamma}$ for all~${n \in \Nbb}$, then ${\aleph_0 G \triangleleft \Gamma}$, 
where ${\alpha G}$ is the disjoint union of~$\alpha$ many copies of~$G$. 
A classic result of Halin~\cite[Satz~1]{H65} says that the ray, i.e.~a one-way infinite path, is $\subseteq$-ubiquitous, where~$\subseteq$ is the subgraph relation. 
That is, any graph which contains arbitrarily large collections of vertex-disjoint rays must contain an infinite collection of vertex-disjoint rays. 
Later, Halin showed that the double ray, i.e.~a two-way infinite path, is also $\subseteq$-ubiquitous~\cite{H70}. 
However, not all graphs are $\subseteq$-ubiquitous, and in fact even trees can fail to be $\subseteq$-ubiquitous (see for example~\cite{W76}). 

The question of ubiquity for classes of graphs has also been considered for other graph relations. 
In particular, whilst there are still reasonably simple examples of graphs which are not $\leq$-ubiquitous (see~\cite{L76,A77}), where~$\leq$ is the topological minor relation, it was shown by Andreae that all rayless countable graphs~\cite{A80} and all locally finite trees~\cite{A79} are $\leq$-ubiquitous. 
The latter result was recently extended to the class of all trees by the present authors~\cite{BEEGHPTI}.

In~\cite{A02} Andreae initiated the study of ubiquity of graphs with respect to the minor relation~$\preceq$. 
He constructed a graph which is not $\preceq$-ubiquitous, however the construction relies on the existence of a counterexample to the well-quasi-ordering of infinite graphs under the minor relation, for which only examples of uncountable size are known~\cite{komjath1995note,Pitz2020,T88}. 
In particular, the question of whether there exists a countable graph which is not $\preceq$-ubiquitous remains open.

Andreae conjectured that at least all \emph{locally finite} graphs, those with all degrees finite, should be $\preceq$-ubiquitous.

\begin{ubqconjecture}
    Every locally finite connected graph is $\preceq$-ubiquitous.
\end{ubqconjecture}

In~\cite{A13} Andreae established the following pair of results, demonstrating that his conjecture holds for wide classes of locally finite graphs. Recall that a \emph{block} of a graph is a maximal $2$-connected subgraph, and that a graph has \emph{finite tree-width} if there is an integer~$k$ such that the graph has a tree-decomposition of width~$k$.

\begin{theorem}[Andreae, {\cite[Corollary 1]{A13}}]
    \label{t:And1}
    Let~$G$ be a locally finite, connected graph with finitely many ends such that every block of~$G$ is finite. 
    Then~$G$ is $\preceq$-ubiquitous.
\end{theorem}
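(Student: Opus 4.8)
The plan is to use the block-cut tree of $G$ to reduce to a finite ``core'' plus finitely many one-ended pieces, and then to treat the one-ended pieces by a Halin-style ubiquity argument organised through $G$-tribes. First I would pin down the structure of $G$. As $G$ is locally finite and all of its blocks are finite, the block-cut tree $T$ of $G$ is locally finite, and since $G$ has finitely many ends so does $T$ --- for a graph all of whose blocks are finite, the ends correspond bijectively to the ends of its block-cut tree. A locally finite tree with finitely many ends has a finite subtree $T_0$ such that every component of $T-T_0$ is finite or one-ended; absorbing the finite components into $T_0$ exhibits $G$ as $G_0 \cup D_1 \cup \cdots \cup D_m$, where $G_0$ is a finite connected subgraph, each $D_i$ is connected, locally finite and one-ended with all blocks finite, and $D_i$ meets $G_0 \cup \bigcup_{j \neq i} D_j$ in a single cut vertex $v_i \in G_0$. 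Writing out the block-cut tree of each $D_i$ along a spine ray and absorbing the (necessarily finite) off-spine branches into the blocks they hang from, $D_i$ takes the shape of a ``chain of finite graphs'': finite connected graphs $H^i_0, H^i_1, \dots$ with $v_i \in H^i_0$, consecutive ones sharing a single vertex, and non-consecutive ones disjoint. In particular $D_i$ contains a ray meeting every $H^i_k$, so that $D_i$ is only ``finitely thicker'' than this ray.

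Assume now that $nG \preceq \Gamma$ for all $n \in \Nbb$, and phrase the argument through $G$-tribes: for each $n$, a family of $n$ pairwise disjoint minor models of $G$ in $\Gamma$. By repeated pigeonhole one refines to a sub-tribe all of whose models are ``uniform'' on arbitrarily long finite initial pieces $G_0 \cup \bigcup_i(H^i_0 \cup \cdots \cup H^i_N)$ of $G$. Since finite graphs are $\preceq$-ubiquitous (a compactness argument), such a tribe already yields infinitely many pairwise disjoint models of the core $G_0$ in $\Gamma$, each of which still extends arbitrarily far into every one of the $m$ ends; the entire problem reduces to extending infinitely many of these models all the way into all $m$ ends at once.

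The hard part, and the heart of the proof, is the one-ended case: showing that a chain-of-blocks graph $D_i$ is ubiquitous relative to its attaching vertex, in a form compatible with the core. Here I would prove a strengthened, ``decorated'' version of Halin's theorem --- from arbitrarily large families of pairwise disjoint rays (which the $D_i$-models supply, being close to rays) one extracts not merely $\aleph_0$ disjoint rays but $\aleph_0$ disjoint rays each carrying, in the right order, infinitely many disjoint models of the finite pieces $H^i_k$. Concretely the infinitely many $D_i$-models are built stage by stage: at stage $k$ one uses the $G$-tribe to locate, disjointly from everything already committed, a correctly attached $H^i_k$-model onto each current partial model, while keeping enough of $\Gamma$ in reserve to continue --- the latter resting on the standard rerouting techniques for large families of disjoint rays (encoded via their ray graph) that underlie Halin's proof. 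The main obstacle is precisely this bookkeeping: keeping the construction pairwise disjoint across infinitely many stages and infinitely many copies simultaneously, while respecting the prescribed attachments, which is where the machinery of $G$-tribes and their refinements does the real work.

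Finally I would assemble the $m$ ends by induction on $m$, the base case $m=0$ being the ubiquity of the finite graph $G_0$. Given a tribe of pairwise disjoint models of $G_0 \cup D_1 \cup \cdots \cup D_j$, extend it into the next end by applying the one-ended argument to $D_{j+1}$, routing the new decorated rays disjointly from everything already fixed and attaching them at the copies of $v_{j+1}$; at $j=m$ this produces $\aleph_0$ pairwise disjoint models of $G$, i.e.\ $\aleph_0 G \preceq \Gamma$. The one remaining subtlety is to ensure that extending into one end does not exhaust the room needed for the others, which is handled by always working inside a sub-tribe that still admits models reaching arbitrarily far into every not-yet-treated end.
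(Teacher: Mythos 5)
Your structural reduction (block--cut tree, finite core plus finitely many one-ended chains of finite graphs glued at cut vertices) matches the decomposition the paper uses, but the heart of your argument has a genuine gap: the step ``at stage $k$ one uses the $G$-tribe to locate, disjointly from everything already committed, a correctly attached $H^i_k$-model onto each current partial model'' cannot be carried out as stated. The linking lemmas for rays only let you connect the boundary of a partial model to a \emph{tail} of a fresh tribe member's ray, landing beyond an uncontrollable finite set; so what is available for attachment is not the fresh minor's copy of $H^i_k$ but the bough $H^i_{k'}\cup H^i_{k'+1}\cup\cdots$ rooted at some position $k'$ possibly much larger than $k$. To continue you need that this far-away bough contains, as a minor rooted correctly at the cut vertex on the ray, a copy of the bough $H^i_k\cup H^i_{k+1}\cup\cdots$ that you actually have to realise. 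This rooted self-similarity is not automatic for an arbitrary chain of finite graphs; it is exactly what the paper extracts from the well-quasi-ordering of pointed finite graphs under the rooted minor relation (Robertson--Seymour), packaged as the self-similar boughs of an extensive tree-decomposition. Your proposal never invokes well-quasi-ordering or any substitute for it, so the ``decorated Halin theorem'' you describe — infinitely many disjoint rays each carrying the pieces $H^i_k$ \emph{in the right order without gaps} — is precisely the unproved crux.

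A second problem is the final assembly. If the outer induction on ends keeps only a thick tribe of models of $G_0\cup D_1\cup\dots\cup D_j$ at each step, then at $j=m$ you have a thick tribe of $G$-models, i.e.\ $nG\preceq\Gamma$ for all $n$ — the hypothesis, not $\aleph_0 G\preceq\Gamma$. If instead you pass to $\aleph_0$ disjoint models after each end, then extending them into the next end requires finding fresh tribe members disjoint from infinitely many already-infinite subgraphs; moreover two ends of $G$ may be sent to the \emph{same} end $\epsilon$ of $\Gamma$ by the minors, in which case the committed models cannot be separated from $\epsilon$ by a finite set and the rerouting arguments break. The paper avoids both traps by running a single nested construction that interleaves all ends: at every finite stage only the $\epsilon$-separated part of each partial model is completed (controlled by a finite bounder), while the parts heading towards $\epsilon$ are represented by finitely many extender rays that get relinked at each step.
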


\begin{theorem}[Andreae, {\cite[Corollary 2]{A13}}]
    \label{t:And2}
    Let~$G$ be a locally finite, connected graph of finite tree-width such that every block of~$G$ is finite. 
    Then~$G$ is $\preceq$-ubiquitous.
\end{theorem}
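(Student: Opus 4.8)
The plan is to set up the usual \emph{$G$-tribe} machinery for ubiquity and run it along a tree-decomposition of~$G$ of finite adhesion. Recall that a \emph{model} (inflated copy) of~$G$ in~$\Gamma$ is a family $(B_v)_{v\in V(G)}$ of pairwise disjoint connected vertex sets of~$\Gamma$ with an edge of~$\Gamma$ between~$B_u$ and~$B_v$ whenever $uv\in E(G)$, so $nG\preceq\Gamma$ means~$\Gamma$ admits~$n$ pairwise disjoint models of~$G$, and the goal is to produce~$\aleph_0$ of them. Since~$G$ is locally finite of finite tree-width, it has a tree-decomposition $(T,(V_t)_{t\in T})$ in which~$T$ is locally finite, every part~$V_t$ is finite of bounded size, and every adhesion set $V_s\cap V_t$ (for $st\in E(T)$) has size at most some fixed~$k$: one may start from the block--cut tree --- which is locally finite since every block is finite and every vertex lies in only finitely many blocks --- and refine each block using a bounded-width tree-decomposition of it. Fix such a decomposition rooted at some $r\in T$, and for $t\in T$ let~$G_t$ be the subgraph induced by the parts at and below~$t$, attached to the rest of~$G$ through the finite adhesion set~$S_t$ between~$t$ and its parent.

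From $nG\preceq\Gamma$ for all $n\in\Nbb$ one obtains a \emph{thick $G$-tribe} in~$\Gamma$, that is, a collection of arbitrarily large finite families of pairwise disjoint $G$-models. The outer recursion peels models off one at a time, so it suffices to prove: every thick $G$-tribe in~$\Gamma$ yields a single $G$-model~$H$ together with a thick $G$-tribe all of whose members are disjoint from~$H$; since each step discards just one~$H$ and only thickness must survive, iterating $\omega$ times leaves $\aleph_0$ pairwise disjoint models. To prove this I would use a dichotomy: either such an~$H$ with a disjoint thick subtribe can be found directly; or the tribe is \emph{concentrated}, i.e.\ there is a finite $S\subseteq V(\Gamma)$ admitting no thick subtribe all of whose models avoid~$S$, and --- localising along~$T$ --- one may take~$S$ to be governed by the finite adhesion sets~$S_t$, so that each model exhibits one of only finitely many attachment patterns at~$S$.

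The concentrated case is the heart of the matter, and I expect it to be the main obstacle. Here I would argue by induction on the structure of~$T$: since at most~$|S|$ pairwise disjoint models can meet~$S$, in each large family all but boundedly many models avoid~$S$, so their restrictions to the parts~$G_t$ below the relevant adhesion sets form thick $G_t$-tribes sitting in a fixed finite region near~$S$; a Menger/pigeonhole rerouting inside that finite region, exploiting the bounded adhesion, lets one attach a fresh copy of the top part of~$G$ to infinitely many of these, and the extensions down the (possibly infinitely many) children of each node of~$T$ are assembled by a compactness argument (K\"onig's lemma), which is where local finiteness of~$T$ and finiteness of the parts are used. Finite tree-width enters precisely by bounding the adhesion, which keeps this interface analysis finitary; the bookkeeping needed to carry, at every node~$t$, a thick tribe that is simultaneously extendable along all branches below~$t$ and compatible above~$t$ at~$S_t$, while preserving thickness through both the peeling and the branching, is the technical core of the argument. (Andreae's extra assumption that every block of~$G$ is finite only simplifies the choice of $(T,(V_t)_{t\in T})$ and, as the present framework shows, can be dispensed with.)
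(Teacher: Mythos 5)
Your sketch identifies the right battlefield (the concentrated case) but is missing the two ideas that carry the paper's argument there, and the one mechanism you offer in their place does not work. The first missing idea is well-quasi-ordering. To extend a partial $G$-model across a boundary adhesion set you must re-embed the finite chunk of $G$ you have already realised further down a branch of~$T$, inside a freshly chosen model from the tribe; a Menger/pigeonhole rerouting gives you a linkage of paths, not a minor re-embedding of that chunk. The paper obtains exactly this re-embedding from Thomas's theorem that graphs of bounded tree-width are well-quasi-ordered under minors (Lemma~\ref{lem:labelled-wqo-btw}), packaged as the ``self-similarity'' clause in the definition of an extensive tree-decomposition (Definition~\ref{d:selfsimilarbough}); without it the inductive step has no way to restart the construction in the new region. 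The second missing idea is the structural analysis of the end~$\epsilon$ at which the tribe is concentrated (pebbly / grid-like / half-grid-like, Theorem~\ref{t:trichotomy}, and the order~$\leq_\epsilon$ on core rays). This is what guarantees that a transitional linkage sends the ray bundle emanating from a partial model $Q^n_i$ onto the correct branch-set rays of a single new model $H_i$, so that the amalgamation $Q^n_i \oplus_{\mathcal O} K_i$ is again an honest inflated copy; an uncontrolled Menger rerouting typically scrambles the bundle.

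On the framework itself, ``peel off one full $G$-model and pass to a disjoint thick subtribe'' is exactly what one \emph{cannot} do in the concentrated case: a whole $G$-model is infinite and has rays tending to~$\epsilon$, and concentration only gives you control past finite separators, so there is no guarantee of a thick subtribe avoiding a fixed infinite~$H$. The paper therefore never constructs a complete model before starting the next; it keeps an increasing family of \emph{partial} models $(Q^n_i)$, each an $IG(T^{\neg\epsilon}_\rho)$, all growing simultaneously, together with bounders, extenders and extension schemes that certify the remainder of~$\Gamma$ is still usable. Your compactness (K\"onig's lemma) step faces the same obstacle in a different guise: the objects to be chosen at each level must be vertex-disjoint from all choices made at other levels and in other branches, so there is no static finitely branching tree of configurations to compactify over -- the construction consumes~$\Gamma$ as it goes, which is precisely why the invariants above are maintained explicitly rather than appealed to by compactness. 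Your remark that the block hypothesis can be dropped is correct (the paper's Theorem~\ref{c:finend} does so), but the mechanism that makes this possible is exactly the WQO-plus-end-structure machinery that your outline omits.
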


Note, in particular, that if~$G$ is such a graph, then the degree of every end in~$G$ must be one.\footnote{A precise definitions of the ends of a graph and their degree can be found in Section~\ref{s:prelim}.} 
The main result of this paper is a far-reaching extension of Andreae's results, removing the assumption of finite blocks. 

\begin{theorem}
    \label{c:locfin}
    Let~$G$ be a locally finite, connected graph with finitely many ends such that every end of~$G$ has finite degree. 
    Then~$G$ is $\preceq$-ubiquitous.
\end{theorem}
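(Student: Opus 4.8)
The plan is to derive Theorem~\ref{c:locfin} from the main theorem of the paper, which guarantees $\preceq$-ubiquity for every locally finite graph that admits an extensive tree-decomposition. Thus everything reduces to exhibiting such a tree-decomposition for a locally finite connected graph~$G$ whose ends $\omega_1,\dots,\omega_k$ are finite in number and each of finite degree. The decomposition I would build is \emph{spider-shaped}: a single central part capturing the bounded region of~$G$ in which the ends interact, together with one ray-indexed branch for each end capturing the ``thin'' behaviour of~$G$ towards that end.

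For the central part, choose for each pair $i\neq j$ a finite set separating~$\omega_i$ from~$\omega_j$; the union~$S_0$ of these sets is finite, may be enlarged to a finite connected vertex set, and then has the property that every component of~$G-S_0$ contains at most one end. Since~$G$ is locally finite, $G-S_0$ has only finitely many components, and its infinite ones are precisely $C_1,\dots,C_k$, where each~$C_i$ is connected, locally finite and one-ended, with its unique end~$\omega_i$ of finite degree~$d_i$. Let the \emph{core} be~$S_0$ together with all finite components of~$G-S_0$; this is a finite subgraph, only finitely many edges of~$G$ join the core to any~$C_i$, and~$G$ is covered by the core and the~$C_i$.

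For each branch I would exploit the finiteness of~$d_i$. Since~$\omega_i$ is a thin end, there is a sequence of finite vertex sets $X^i_1,X^i_2,\dots$ in~$C_i$, each separating the previous one from~$\omega_i$ and together exhausting~$C_i$, all of bounded size (in fact of size~$d_i$ for~$n$ large), together with~$d_i$ pairwise disjoint rays $R^i_1,\dots,R^i_{d_i}$ in~$\omega_i$ such that each~$R^i_j$ meets every~$X^i_n$ (eventually) in exactly one vertex. This is the step where finite end-degree is genuinely used, via a Menger-type fan argument along the end. The finite ``shells'' of~$C_i$ between consecutive sets~$X^i_n$ form a tree-decomposition of~$C_i$ over a ray, with finite parts and finite adhesion sets; gluing these~$k$ ray-decompositions to a central node for the core (after the standard adjustment of the parts near each join to keep adhesion finite) yields a tree-decomposition of~$G$ over a spider, with all parts finite and all adhesion sets finite.

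It then remains to check that this tree-decomposition is \emph{extensive} in the precise technical sense required by the main theorem, so that the $G$-tribe machinery applies; this, together with the thin-end step above, is what I expect to be the main obstacle. The conditions ``local'' to a branch should be routine, since every part is finite and any recursive demands on parts are vacuous, so the content lies in the conditions controlling the behaviour along the~$k$ rays of the spider: it is exactly here that the bounded, $d_i$-thin adhesion sets~$X^i_n$ and the backbone rays~$R^i_1,\dots,R^i_{d_i}$ do their work, providing the room to reflect a partial embedding of~$G$ into a host graph~$\Gamma$ and push it further towards each end. A last, purely finite bookkeeping point is to merge the branch-decompositions with the core at the central node without violating any defining property of extensiveness.
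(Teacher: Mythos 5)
Your overall skeleton matches the paper's: Theorem~\ref{c:locfin} is indeed deduced from Theorem~\ref{t:nice} by exhibiting an extensive tree-decomposition, and the decomposition used is exactly your spider — a finite set separating the ends, one one-ended locally finite piece $G_i=G[C_i\cup X]$ per end, each decomposed along a ray via Halin's structure theorem (Lemma~\ref{lem:oneended-td}) with separators of size $d_i$ met by a maximal family of $d_i$ disjoint rays, and the ray-decompositions glued at a common root (Lemmas~\ref{lem:one-ended-nice} and~\ref{lem:finendsisext}).

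However, there is a genuine gap: you never establish the self-similarity of the boughs, which is the one non-routine clause in Definition~\ref{d:extensive} and the entire content of the argument. Having finite parts, bounded adhesion sets $X^i_n$ and backbone rays $R^i_1,\dots,R^i_{d_i}$ does \emph{not} by itself give, for each edge $e$ of the decomposition ray, a witness $W\subseteq G[B(e')]$ that is an inflated copy of the whole infinite bough $\overline{G[B(e)]}$ with $V(R_{e,s})\cap S(e')\subseteq W(s)$: the finite shells between consecutive separators can be pairwise non-isomorphic and of unbounded size, so there is no a priori reason any bough should re-embed deeper along the ray. The paper obtains this from the well-quasi-ordering of $\ell$-pointed finite graphs under the pointed minor relation $\preceq_p$ (Lemma~\ref{lem:labelled-wqo}, a consequence of Robertson--Seymour): by Remark~\ref{r:increasingsubsequence} one finds $n_0$ so that each pointed shell $(G_n,\pi_n)$ with $n\geq n_0$ is a pointed minor of infinitely many later shells, and the witness $W$ is then stitched together from an infinite increasing chain of such embeddings, connected along the backbone rays. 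Your proposal explicitly defers exactly this step (``what I expect to be the main obstacle''), so as written it reduces the theorem to its hardest part without resolving it; to close the gap you must invoke the pointed WQO of finite graphs (or an equivalent) and carry out the concatenation of pointed minor embeddings as in the proof of Lemma~\ref{lem:one-ended-nice}.
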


\begin{theorem}
    \label{c:finend}
    Every locally finite, connected graph of finite tree-width is $\preceq$-ubiquitous.
\end{theorem}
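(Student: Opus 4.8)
The plan is to derive Theorem~\ref{c:finend} from the two pillars of the paper: on the one hand the theorem that every locally finite graph admitting an \emph{extensive} tree-decomposition is $\preceq$-ubiquitous, and on the other hand the statement that every locally finite connected graph of finite tree-width admits such a tree-decomposition. The first of these is the deep input and accounts for the bulk of the paper; granting it, Theorem~\ref{c:finend} reduces entirely to the second, and it is the second that I would carry out here.

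So fix a locally finite connected graph $G$ together with a tree-decomposition $(T,\V)$ of $G$ of finite width $k$. Note that every part then has at most $k+1$ vertices and every adhesion set at most $k$ vertices, so in particular all parts are finite; since $G$ is locally finite, hence countable, we may also take $T$ countable, and what will really matter is only that the adhesion of the decomposition stays finite. The first step is to tidy $(T,\V)$ by the usual surgery on tree-decompositions: root $T$, contract every edge of $T$ whose adhesion set equals one of its two incident parts, pass to a decomposition of minimal adhesion, and impose whatever standard normalizations the definition of ``extensive'' calls for (for instance that no adhesion set along a branch of the rooted tree is contained in one further from the root). None of these operations increases the width, so finite parts and finite adhesion are preserved.

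The heart of the matter is then to check that a sufficiently tidy rooted tree-decomposition of bounded width is extensive: concretely, that below every node $t$ one can locate the self-similar descendant structure that extensiveness demands, and that these choices can be made coherently for all nodes at once. Here the two finiteness features are what make it go through: boundedness of the adhesion sets means that only finitely many isomorphism types of adhesion set occur, and finiteness of the parts means that only boundedly many local configurations appear, so a compactness argument (K{\H o}nig's lemma) along each ray of $T$ yields a node whose rooted cone recurs --- up to isomorphism respecting the adhesion set to its parent --- infinitely far below it, and such a recurrence is exactly the self-similarity required. Nodes below which $T$ has only a finite cone contribute just a finite subgraph of $G$ and are dealt with in the spirit of the finite-block arguments underlying Theorems~\ref{t:And1} and~\ref{t:And2}.

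I expect the main obstacle to be not any single one of these steps in isolation but their \emph{simultaneous} realization: one has to reassemble $(T,\V)$ into a genuinely extensive tree-decomposition, choosing the witnessing self-similar descendants for all nodes consistently and with the nestedness and disjointness relations they are required to satisfy, and then verify every clause of the definition. This bookkeeping, rather than any conceptual leap, is where the real effort lies --- and, crucially, at no point is $G$ required to have finite blocks, which is precisely the improvement of Theorem~\ref{c:finend} over Theorem~\ref{t:And2}.
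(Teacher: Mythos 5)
Your top-level reduction is the paper's own: Theorem~\ref{c:finend} follows from Theorem~\ref{t:nice} together with the fact that every locally finite connected graph of finite tree-width admits an extensive tree-decomposition (Lemma~\ref{lem:boundwidthisext}). The gap lies in your proposed proof of that second ingredient. Self-similarity of a bough $B(e)$ (Definition~\ref{d:selfsimilarbough}) demands that the \emph{entire} bough $\overline{G[B(e)]}$ --- in general an infinite graph --- occur as a minor inside $G[B(e')]$ for edges $e'$ arbitrarily far below $e$, with a prescribed family of disjoint rays meeting the correct branch sets. Your K\H{o}nig's-lemma argument only exploits that there are finitely many types of parts and adhesion sets; that bounds the local configurations but in no way forces an infinite cone to recur below itself, since even at width $2$ there are infinitely many pairwise distinct (and a priori minor-incomparable) infinite boughs. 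Pigeonhole on finitely many local types cannot yield recurrence of an infinite structure; what is genuinely needed, and what the paper uses, is a well-quasi-ordering theorem for the pieces themselves --- Thomas's result that graphs of tree-width at most $k$ and arbitrary cardinality are well-quasi-ordered under the minor relation, in the pointed form of Lemma~\ref{lem:labelled-wqo-btw}. Applying this (via Remark~\ref{r:increasingsubsequence}) to the sequence of pointed boughs along each ray of $T$ produces the increasing subsequence that furnishes the witnesses of self-similarity. This is exactly the point stressed in Section~\ref{s:WQO}: compactness is not a substitute for the WQO input.

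A second omission is the ray families $\mathcal{R}_e$ that the definition of self-similarity requires: disjoint rays from $S(e)$ converging to an end $\omega_e$ of $G$ and hitting the witness $W$ in the right branch sets. The paper obtains these from \emph{leanness}: by K{\v{r}}{\'\i}{\v{z}}--Thomas (Remark~\ref{rem:lean-tw}) the decomposition may be taken lean, and after the cleaning step of Lemma~\ref{lem:bough-connected} (connected boughs, locally finite decomposition tree, every vertex in finitely many parts) leanness provides $d_\epsilon$ disjoint paths between consecutive minimum-size separators along each ray of $T$, which concatenate into the required disjoint rays and also supply the point functions for the WQO argument. Your ``usual surgery'' normalizations do not deliver either of these, so the construction as sketched does not get off the ground.
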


The reader may have noticed that these results are of a similar flavour: they all make an assertion that locally finite graphs which are built by pasting  finite graphs in a tree like fashion are ubiquitous -- with differing requirements on the size of the finite graphs, how far they are allowed to overlap, and the structure of the underlying decomposition trees. And indeed, behind all the above results there are unifying but more technical theorems,  the strongest of which is the true main result of this paper:

\begin{theorem}[Extensive tree-decompositions and  ubiquity]
\label{t:mainintro}
    Every locally finite connected graph admitting an extensive tree-decomposition is $\preceq$-ubiquitous.
\end{theorem}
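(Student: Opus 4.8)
The plan is to follow the general pattern of this series, organised around the notion of a \emph{$G$-tribe}. A $G$-tribe in~$\Gamma$ is a set~$\mathcal{F}$ of finite families, each family being a finite set of pairwise disjoint models of~$G$ in~$\Gamma$ (subgraphs witnessing $G \preceq \Gamma$); the tribe is \emph{thick} if it contains families of every finite cardinality, and a \emph{subtribe} arises by passing to subfamilies of its families. The hypothesis that $nG \preceq \Gamma$ for every $n \in \Nbb$ is exactly the statement that $\Gamma$ carries a thick $G$-tribe, and it suffices to extract from it a single family of $\aleph_0$ pairwise disjoint $G$-models. Fix an extensive tree-decomposition $(T,\mathcal{V})$ of~$G$. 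Since $G$ is locally finite and connected it is the increasing union $\bigcup_{n \in \Nbb} G_n$ of finite connected subgraphs, which we may take compatible with $(T,\mathcal{V})$ in that each $G_n$ is the union of the parts indexed by a finite subtree of~$T$; we use repeatedly that the adhesion sets between $G_n$ and the remainder of~$G$ are finite.

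The first step is a dichotomy for thick tribes. After a cleaning step which, as usual, exploits the finiteness of the adhesion sets of~$G$, one shows that a thick $G$-tribe has either a \emph{dispersed} thick subtribe --- meaning that for every finite $S \subseteq V(\Gamma)$ all but finitely many members of each family avoid~$S$ --- or else a thick subtribe that \emph{concentrates} at a single end~$\omega$ of~$\Gamma$ --- meaning that for every finite $S \subseteq V(\Gamma)$ all but finitely many members of each family lie in the component of $\Gamma - S$ in which~$\omega$ lies. Reducing ubiquity to these two regimes is the abstract backbone shared with the earlier parts of the series; the new point here is that the cleaning continues to function at the level of minors. In the dispersed regime one finishes by a plain diagonalisation: choosing models $H_1, H_2, \dots$ one at a time, at stage~$i$ the finite set $V(H_1) \cup \cdots \cup V(H_{i-1})$ is avoided by some member of some family, which we take as $H_i$, so that $\{H_i : i \in \Nbb\}$ is the family sought.

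So suppose the thick tribe concentrates at an end~$\omega$, so that every $G$-model in play must eventually run into~$\omega$. The idea is to construct the required $\aleph_0$ pairwise disjoint $G$-models simultaneously, nesting them ever deeper into~$\omega$. Fix a descending sequence of regions $C_0 \supseteq C_1 \supseteq \cdots$ at~$\omega$ --- the components of $\Gamma$ minus finite separators in which~$\omega$ lies --- so that every ray belonging to~$\omega$ eventually lies inside every~$C_i$. Processing the parts of~$G$ along $(T,\mathcal{V})$, we place the finite ``core'' of the $j$-th copy of~$G$ inside~$C_j$ and route its ``arms'' --- the tails of~$G$ that approach its ends --- spiralling inwards through the regions~$C_i$ with $i > j$, all the while keeping the cores and arms of all copies pairwise disjoint. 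That this bookkeeping can be sustained indefinitely is, in essence, the meshing of two self-similar structures: the \emph{extensive} structure of $(T,\mathcal{V})$, which --- unlike the mere finiteness of adhesion --- supplies along every end-direction of~$G$ a family of mutually isomorphic separations, so that an arm of~$G$ can be re-routed through a fixed finite-width slice of~$\omega$ again and again; and the concentration of the tribe, which guarantees that~$\omega$ has, arbitrarily deep, the room for one further core together with all the arms passing through it.

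The step I expect to be the genuine obstacle is precisely this matching of the two repetitive patterns --- and, within it, the disjoint routing of the (possibly several) ends of the various copies of~$G$ through the single end~$\omega$ of~$\Gamma$; for the isolated arm-routing subproblems one can invoke the ubiquity of rays \cite{H65} and of locally finite trees \cite{BEEGHPTI} as black boxes. This is also exactly the point at which the hypothesis of an \emph{extensive} tree-decomposition, rather than merely one of finite adhesion, is indispensable. By contrast, the final stage --- checking that locally finite graphs of finite tree-width, and locally finite graphs with finitely many ends each of finite degree, admit extensive tree-decompositions, whence Theorems~\ref{c:locfin} and~\ref{c:finend} (and a fortiori Theorems~\ref{t:And1} and~\ref{t:And2}) --- is routine pigeonholing on the adhesion sets and presents no comparable difficulty.
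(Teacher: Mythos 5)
Your opening reductions match the paper's: the hypothesis $nG \preceq \Gamma$ for all $n$ is packaged as a thick $G$-tribe, and one either finds $\aleph_0 G \preceq \Gamma$ outright or passes to a thick subtribe concentrated at a single end $\epsilon$ of $\Gamma$ (this is Lemma~\ref{l:concentrated}, imported from the earlier papers). The overall shape of the concentrated case --- an infinite nested construction extending finitely many partial copies of $G$ at each stage, driven by the self-similarity supplied by the extensive tree-decomposition --- is also the paper's. But the step you yourself flag as ``the genuine obstacle'' is not a detail to be deferred: it is the actual content of the theorem, occupying Sections~\ref{s:nonpebbly}--\ref{sec:countable-subtrees}, and the black boxes you propose for it do not suffice. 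Halin's theorem and the linking lemmas give you \emph{some} linkage from the current bundles of rays to the rays of a fresh layer of the tribe, but no control over \emph{which} ray is linked to \emph{which}: to extend the $m$-th partial copy you need its entire bundle $(R^n_{m,i})_i$ to land on the bundle $H(A_i)$ of a single new minor $H$, in the correct order, while leaving one further minor untouched to start the next copy. A generic linkage scrambles the bundles, and rerouting afterwards can destroy the disjointness of the reserved minor.

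What makes this work in the paper is the structure theory of thick ends from \cite{BEEGHPTII}: $\epsilon$ is pebbly (in which case $K_{\aleph_0} \preceq \Gamma$ and you are done), or grid-like, or half-grid-like; in the latter two cases the core rays of $\epsilon$ carry a global partial (cyclic) order $\leq_\epsilon$ preserved by every transition function, and one keeps the extender rays arranged as consecutive intervals of this order, uses \emph{transition boxes} to pre-commit to all possible reroutings before choosing the linkage, and uses the strong linking lemma to spare one minor. On top of this there is a second issue your sketch does not address at all: $G$ may have many ends, and for a given minor $H$ only some of them are sent to $\epsilon$; the paper handles this dynamically via weak/strong agreement of the tribe about $\partial(T_n)$, well-separation, and push-outs along the tree-decomposition to shunt the non-$\epsilon$ parts of each copy behind a finite bounder. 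Without these ingredients the inductive step cannot be carried out, so as it stands the proposal is an accurate road map with the destination missing rather than a proof.
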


The precise definition of an extensive tree-decomposition is somewhat involved and  will be given in detail in Section~\ref{s:extensive} up to Theorem~\ref{t:nice}. Roughly, however, it implies that we can find many self-minors of the graph at spots whose precise positions are governed by the decomposition tree. We hope that the proof sketch in Section~\ref{s:sketch} is a good source for additional intuition before the reader delves into the technical details.

To summarise, we are facing two main tasks in this paper. One is to prove our main ubiquity result, Theorem~\ref{t:mainintro}. This will occupy the second part of this paper, Sections~\ref{s:nonpebbly} to~\ref{sec:countable-subtrees}.
And as our other task, we also need to prove that the graphs in Theorems~\ref{c:locfin} and~\ref{c:finend} do indeed possess such extensive tree-decompositions.

This analysis occupies Section~\ref{s:extensive} and~\ref{s:getnice}. The proof uses in an essential way certain results about the well-quasi-ordering of graphs under the minor relation, including Thomas's result~\cite{T89} that for all $k \in \mathbb{N}$, the classes of graphs of tree-width at most $k$ are well-quasi-ordered under the minor relation. 
In fact, the class of locally finite graphs having an extensive tree-decomposition is certainly larger than the results stated in Theorems~\ref{c:locfin} and~\ref{c:finend}; for example, it is easy to see that the infinite grid  $\mathbb{N} \times  \mathbb{N}$  has  such an extensive tree-decomposition. It remains an open question whether \emph{every} locally finite graph has an extensive tree-decomposition. A more precise discussion of how this problem relates to the theory of well-quasi- and better-quasi-orderings  of finite graphs will be given in Section~\ref{s:WQO}.

But first, in Section~\ref{s:sketch} we will give a sketch of the key ideas in the proof, at the end of which we will provide a more detailed overview of the structure and the different sections of this paper.

\section{Proof sketch}
\label{s:sketch}

To give a flavour of the main ideas in this paper, let us begin by considering the case of a locally finite connected graph~$G$ with a single end~$\omega$, where~$\omega$ has finite degree~${d \in \Nbb}$ (this means that there is a family ${(A_i \colon 1 \leq i \leq d)}$ of~$d$ disjoint rays in~$\omega$, but no family of more than~$d$ such rays). 
Our construction will exploit the fact that graphs of this kind have a very particular structure. 
More precisely, there is a tree-decomposition ${(S, (V_s)_{s \in V(S)})}$ of~$G$, where ${S = s_0s_1s_2\ldots}$ is a ray and such that, if we denote~$V_{s_n}$ by~$V_n$ and~${G[\bigcup_{l \geq n} V_l]}$ by~$G_n$ for each~$n$, the following holds:
\begin{enumerate}
    \item \label{item:sketch-1} each~$V_n$ is finite;
    \item \label{item:sketch-2} 
        $|V_i \cap V_{j}|=d$ if  $|i-j|=1$, and $|V_i \cap V_{j}|=0$ otherwise;  
    \item \label{item:sketch-3} all the~$A_i$ begin in~$V_0$; 
    \item \label{item:sketch-4} for each~${m \geq 1}$ there are infinitely many~${n > m}$ such that~$G_m$ is a minor of~$G_n$, in such a way that for any edge~$e$ of~$G_m$ and any~${i \leq d}$, the edge~$e$ is contained in~$A_i$ if and only if the edge representing it in this minor is. 
\end{enumerate}

Property~\ref{item:sketch-4} seems rather strong -- it is a first glimpse of the strength of extensive tree-decompositions alluded to in Theorem~\ref{t:mainintro}. The reason it can always be achieved has to do with the well-quasi-ordering of finite graphs. For details of how this works, see Section~\ref{s:getnice}. 
The sceptical reader who does not yet see how to achieve this may consider the argument in this section as showing ubiquity simply for graphs~$G$ with a decomposition of the above kind. 

Now we suppose that we are given some graph~$\Gamma$ such that~${nG \preccurlyeq \Gamma}$ for each~$n$, and we wish to show that ${\aleph_0G \preccurlyeq \Gamma}$. 
Consider a $G$-minor~$H$ in~$\Gamma$. 
Any ray~$R$ of~$G$ can be expanded to a ray~${H(R)}$ in the copy~$H$ of~$G$ in~$\Gamma$, 
and since~$G$ only has one end, all rays~${H(R)}$ go to the same end~$\epsilon_H$ of~$\Gamma$; 
we shall say that~$H$ \emph{goes to} the end~$\epsilon_H$. 

Techniques from an earlier paper~\cite{BEEGHPTI} show that we may assume that there is some end $\epsilon$ of $\Gamma$ such that all $G$-minors in $\Gamma$ go to $\epsilon$, otherwise it can be shown that $\aleph_0 G \preceq \Gamma$.

From any $G$-minor~$H$ we obtain rays~${H(A_i)}$ corresponding to our marked rays~$A_i$ in~$G$, which by the above all go to $\epsilon$. 
We will call this family of rays the \emph{bundle} of rays given by~$H$. 

Our aim now is to build up an ${\aleph_0G}$-minor of~$\Gamma$ recursively. 
At stage~$n$ we hope to construct~$n$ disjoint~${G[\bigcup_{m \leq n}V_m]}$-minors~${H^n_1, H^n_2, \ldots, H^n_n}$, 
such that for each such~$H^n_m$ there is a family ${(R^n_{m,i} \colon i \leq k)}$ of disjoint rays in~$\epsilon$, where the path in~$H^n_m$ corresponding to the initial segment of the ray~$A_i$ in~${\bigcup_{m \leq n}G_m}$ is an initial segment of~$R^n_{m,i}$, 
but these rays are otherwise disjoint from the various~$H^n_l$ and from each other, see Figure~\ref{fig:bundles}. 
We aim to do this in such a way that each~$H^n_m$ extends all previous~$H^l_m$ for~${l \leq n}$, so that at the end of our construction we can obtain infinitely many disjoint $G$-minors as~${(\bigcup_{n \geq m}H^n_m \colon m \in \Nbb)}$. 
The rays chosen at later stages need not bear any relation to those chosen at earlier stages; 
we just need them to exist so that there is some hope of continuing the construction. 

We will again refer to the families~${(R^n_{m,i} \colon i \leq k)}$ of rays starting at the various~$H^n_m$ as the \emph{bundles} of rays from those~$H^n_m$. 

\begin{figure}[h!]
    \centering
\begin{tikzpicture}[magenta]
\draw[black] (9,-.6)++(-.1, 0) -- ++(.1,0) -- (9,-2) node[anchor=west,pos=.5] {\scriptsize\textit{bundle}} -- ++(-.1, 0);
\draw[black] (9, -1.2) ;
\foreach \i in {1,...,4}{
\begin{scope}[xshift=-3cm,yshift=-\i*1.5cm]
    \draw (-2.5, 0) node {$ H_\i^n $};
    \draw (-1.4, 0) circle [radius=.5];
    \draw (-.7, .05) circle [radius=.5];
    \draw (0, 0) circle [radius=.5];
    \draw (60:.5) arc (-30:60:-.5);
    \foreach \y in {1,...,3} {
        \begin{scope}[yshift=-\y*.25cm+.5cm]
            \draw[->] (-1.4, 0) to[out=0,in=180] (-.7, .05) to[out=0,in=180] (0,0) to[out=0,in=180] (1, .2) -- +(10, 0);
        \end{scope}
        \draw (11.5, 1 - .4 * \y) node {\scriptsize $ R_{\i,\y}^n $};
        }
\end{scope}
}
\draw[black] (-3, -4.5*1.5) node {$ \vdots $};
\end{tikzpicture}
    \caption{Stage~$n$ of the construction with disjoint ${G[\bigcup_{m \leq n}V_m]}$-minors~$H^n_i$ with their bundles of disjoint rays.}
    \label{fig:bundles}    
\end{figure}
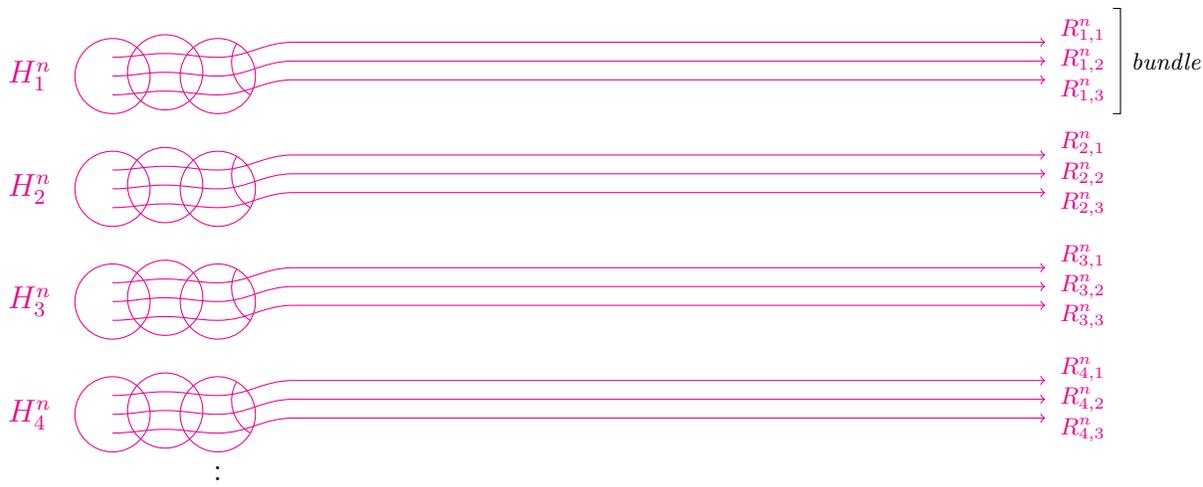

The rough idea for getting from the~$n$\textsuperscript{th} to the~${n+1}$\textsuperscript{st} stage of this construction is now as follows: 
we choose a very large family~$\mathcal{H}$ of disjoint $G$-minors in~$\Gamma$. 
We discard all those which meet any previous~$H^n_m$ and we consider the family of rays corresponding to the~$A_i$ in the remaining minors. 
Then it is possible to find a collection of paths transitioning from the~$R^n_{m,i}$ from stage~$n$ onto these new rays. 
Precisely what we need is captured in the following definition, which also introduces some helpful terminology for dealing with such transitions: 

\begin{definition}[Linkage of families of rays]
    \label{d:linkage}
    Let ${\mathcal{R} = (R_i \colon i\in I)}$ and ${\mathcal{S} = (S_j \colon j \in J)}$ be families of disjoint rays, where the initial vertex of each~$R_i$ is denoted~$x_i$. 
    A family of paths ${\mathcal{P} = (P_i \colon i \in I)}$, is a \emph{linkage} from~$\mathcal{R}$ to~$\mathcal{S}$ if there is an injective function~${\sigma \colon I \rightarrow J}$ such that
    \begin{itemize}
        \item each~$P_i$ goes from a vertex~${x'_i \in R_i}$ to a vertex~${y_{\sigma(i)} \in S_{\sigma(i)}}$; 
        \item the family ${\mathcal{T} = (x_iR_ix'_iP_iy_{\sigma(i)}S_{\sigma(i)} \colon i\in I)}$ is a collection of disjoint rays.\footnote{Where we use the notation as in~\cite{D16}, see also Definition~\ref{def_concat}.}
            We write ${\mathcal{R}\circ_\mathcal{P}\mathcal{S}}$ for the family~$\mathcal{T}$ as well~${R_i \circ_{\mathcal{P}} \mathcal{S}}$ for the ray in~$\mathcal{T}$ with initial vertex~$x_i$. 
    \end{itemize}
    We say that~$\Tcal$ is obtained by \emph{transitioning} from~$\Rcal$ to~$\Scal$ along the linkage.
    We say the linkage~$\mathcal{P}$ \emph{induces} the mapping~$\sigma$. 
    We further say that~$\mathcal{P}$ \emph{links}~$\mathcal{R}$ to~$\mathcal{S}$.
    Given a set~$X$ we say that the linkage is \emph{after}~$X$ if ${X \cap V(R_i) \subseteq V(x_iR_ix'_i)}$ for all~${i \in I}$ and no other vertex in~$X$ is used by the members of~$\mathcal{T}$. 
\end{definition}

Thus, our aim is to find a linkage from the~${R^n_{m,i}}$ to the new rays after all the~$H^n_m$. 
That this is possible is guaranteed by the following lemma from~\cite{BEEGHPTI}: 

\begin{lemma}[Weak linking lemma {\cite[Lemma~4.3]{BEEGHPTI}}]
    \label{l:weaklink}
    Let~$\Gamma$ be a graph and~${\omega \in \Omega(\Gamma)}$. 
    Then, for any families~${\Rcal = (R_1, \ldots, R_n)}$ and~${\Scal = (S_1,\ldots, S_n)}$ of vertex disjoint rays in~$\omega$ and any finite set~$X$ of vertices, there is a linkage from~$\mathcal{R}$ to~$\mathcal{S}$ after~$X$.
\end{lemma}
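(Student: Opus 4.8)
The plan is to reduce to a cleaner setting and then induct on $n$, the inductive step exploiting the basic fact that the end $\omega$ lives inside a single component of $\Gamma$ minus any finite set of vertices.

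First I would reduce to the case $X=\emptyset$ with $\Gamma$ connected. Since $X$ is finite and every $R_i$, $S_j$ belongs to $\omega$, there is a unique component $C$ of $\Gamma-X$ in which $\omega$ lives, and each ray has a tail in $C$. For every $i$ pick $x'_i\in R_i$ so far out that $x_iR_ix'_i\supseteq X\cap V(R_i)$ and $x'_iR_i\sub C$, and replace each $S_j$ by a tail $\widetilde S_j\sub C$ disjoint from $X$ (and, when $R_i$ meets $S_j$ in infinitely many vertices, chosen beyond the finitely many such vertices lying on the committed segment $x_iR_ix'_i$). A linkage from $(x'_iR_i)_i$ to $(\widetilde S_j)_j$ inside $C$, with the segments $x_iR_ix'_i$ prepended, is then a linkage from $\Rcal$ to $\Scal$ after $X$: it avoids $X$, while the retained segments already swallow $X\cap V(R_i)$. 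The verification that the concatenations really form a family of disjoint rays is routine but fiddly; where two of the given rays overlap infinitely one does not route a path between them at all but simply follows one of them to a suitably chosen common vertex.

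For the core I would prove a slightly stronger statement by induction on $n$: allow, in addition, a finite set $W\sub V(\Gamma)$ to be avoided outright by the new pieces of the construction, and for each $j$ a finite set $Z_j\sub V(S_j)$, and require that the produced ray ending on $S_{\sigma(i)}$ turns onto it only past $Z_{\sigma(i)}$. The base case $n=1$ is just the statement that two rays in one end can be joined by a path between arbitrarily late stretches of their tails — immediate from $\omega$ living in one component of $\Gamma$ minus any finite set (one may instead quote Menger here). For the inductive step I would peel off the pair $(R_n,S_n)$: choose a large finite separator $F$ whose $\omega$-side $C_F$ is disjoint from $W$, from $Z_n$, and from all the finite data committed so far, link $R_n$ to $S_n$ by a ray $T_n$ whose new part $P_n$ lies entirely in $C_F$ and which turns onto $S_n$ deep inside $C_F$, and then apply the induction hypothesis to $R_1,\dots,R_{n-1}$ and $S_1,\dots,S_{n-1}$ with $W$ enlarged by the finite portion of $T_n$ and each $Z_j$ enlarged by the finitely many vertices in which $P_n$ meets $S_j$. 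The point is that $T_n$, though infinite, is outside a finite part just a tail of $S_n$, and $S_n$ is disjoint from the targets $S_1,\dots,S_{n-1}$ of the smaller instance, so the obstruction $T_n$ presents to the remaining rays is finite.

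The main obstacle, I expect, is exactly this last claim when the new path $P_n$ is forced to cross the other source rays $R_i$ — which can genuinely happen, e.g. for rays nested one inside another in a grid. One must ensure these crossings happen far enough out that the possibly lengthened initial segments of the $R_i$ produced by the recursion still lie strictly before them, so that no produced ray $T_i$ ever re-uses a vertex of $T_n$. Routing $P_n$, and likewise every linking path at each deeper level of the recursion, inside successively smaller $\omega$-components $C_F\supsetneq C_{F'}\supsetneq\cdots$ is what keeps these finitely many crossings under control; checking that $n$ rounds of this suffice while the accumulating sets $W$ and $Z_j$ stay finite is the heart of the argument.
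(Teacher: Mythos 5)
The recursive peeling of $(R_n,S_n)$ is where this breaks down. Once you commit to $T_n = x_nR_nx_n'P_ny_nS_n$, the rays $T_1,\dots,T_{n-1}$ produced by the recursive call must avoid \emph{all} of $T_n$, including the infinite tail $y_nS_n$. Your strengthened hypothesis only lets you forbid a finite set $W$ and prescribe finite sets $Z_j$ on the surviving targets $S_1,\dots,S_{n-1}$; neither mechanism can force the new linking paths $P_i$, nor the retained initial segments $x_iR_ix_i''$, to avoid $y_nS_n$. The claim that ``the obstruction $T_n$ presents to the remaining rays is finite'' is therefore false: $y_nS_n$ has a tail in every $\omega$-component, so routing the later $P_i$ inside successively smaller components $C_{F'}$ does not make them avoid it; and since nothing forbids some $R_i$ ($i<n$) from meeting $S_n$ in infinitely many vertices, the initial segment $x_iR_ix_i''$ --- which your nesting scheme forces to reach into $C_{F'}\subsetneq C_F$, i.e.\ \emph{past} the point $y_n$ chosen ``deep inside $C_F$'' --- may be forced to contain vertices of $y_nS_n$. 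Choosing $y_n$ later does not help, because $y_n$ must be fixed before the recursion determines how long the segments $x_iR_ix_i''$ become: this is a genuine circularity, not fiddly bookkeeping.

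The same point shows the sequential strategy itself is the wrong approach, not merely under-specified: once $T_n$ is fixed, $\omega$ may split into several ends of $\Gamma-V(T_n)$, and a linking path from $R_i$ to the partner you are committed to need no longer exist avoiding $T_n$ (this is exactly the phenomenon the ray-graph/transition-function machinery of this paper is built to handle). The proof in~\cite{BEEGHPTI} avoids all of this by finding the $n$ paths \emph{simultaneously}: fix a finite $X^+\supseteq X$ containing all initial segments $x_iR_ix_i'$ up front, contract a tail $T(S_j,X^+\cup\bigcup_i V(x_iR_ix_i'))$ of each target to a single vertex $s_j$, and apply Menger's theorem between $\{x_1',\dots,x_n'\}$ and $\{s_1,\dots,s_n\}$ inside $C(X^+,\omega)$; no separator of size less than $n$ exists, since some $R_i$-tail avoids it and still reaches tails of all the $S_j$ in the connected component of $\omega$ beyond the separator. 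Simultaneity is precisely what removes the circular dependence your recursion runs into, so you should replace the induction on $n$ by this one-shot Menger argument.
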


The aim is now to use property~\ref{item:sketch-4} of our tree-decomposition of~$G$ to find minor-copies of~${G[V_{n+1}]}$ sufficiently far along the new rays that we can stick them onto our~$H^n_m$ to obtain suitable~${H^{n+1}_m}$. 
There are two difficulties at this point in this argument. 
The first is that, as well as extending the existing~$H^n_m$ to~$H^{n+1}_m$, we also need to introduce an~$H^{n+1}_{n+1}$. 
To achieve this, we ensure that one of the $G$-minors in~$\mathcal{H}$ is disjoint from all the paths in the linkage, so that we may take an initial segment of it as~$H^{n+1}_{n+1}$. 
This is possible because of a slight strengthening of the linking lemma above; 
see~\cite[Lemma~4.4]{BEEGHPTI} or Lemma~\ref{l:link} for a precise statement.

A more serious difficulty is that in order to stick the new copy of~$V_{n+1}$ onto~$H^n_m$ we need the following property:

\begin{equation}\tag{$*$}
    \label{property}
    \parbox{12cm}{For each of the bundles corresponding to an~$H^n_m$, the rays in the bundle are linked to the rays in the bundle coming from some~${H \in \mathcal{H}}$. 
    This happens in such a way that each~$R^n_{m,i}$ is linked to~$H(A_i)$.}\vspace{0.3cm}
\end{equation} 

Thus we need a great deal of control over which rays get linked to which. 
We can keep track of which rays are linked to which as follows: 

\begin{definition}[Transition function]
    \label{d:trans-func}
    Let~${\mathcal{R} = (R_i \colon i\in I)}$ and~${\mathcal{S} = (S_j \colon j \in J)}$ be families of disjoint rays. We say that a function~${\sigma \colon I \rightarrow J}$ is a \emph{transition function} from~$\Rcal$ to~$\Scal$ if for any finite set~$X$ of vertices there is a linkage from~$\Rcal$ to~$\Scal$ after~$X$ that induces~$\sigma$.
\end{definition}

So our aim is to find a transition function assigning new rays to the~$R^n_m$ so as to achieve~\eqref{property}. 
One reason for expecting this to be possible is that the new rays all go to the same end, and so they are joined up by many paths. 
We might hope to be able to use these paths to move between the rays, allowing us some control over which rays are linked to which. 
The structure of possible jumps is captured by a graph whose vertex set is the set of rays:

\begin{definition}[Ray graph]
    \label{def_raygraph}
    Given a finite family of disjoint rays~${\mathcal{R} = (R_i \colon i \in I)}$ in a graph~$\Gamma$ the \emph{ray graph}, ${\RG_{\Gamma}(\mathcal{R}) = \RG_{\Gamma}(R_i \colon i \in I)}$ is the graph with vertex set~$I$ and with an edge between~$i$ and~$j$ if there is an infinite collection of vertex disjoint paths from~$R_i$ to~$R_j$ which meet no other~$R_k$. 
    When the host graph~$\Gamma$ is clear from the context we will simply write~${\RG(\mathcal{R})}$ for~${\RG_{\Gamma}(\mathcal{R})}$. 
\end{definition}

Unfortunately, the collection of possible transition functions can be rather limited. 
Consider, for example, the case of families of disjoint rays in the grid. 
Any such family has a natural cyclic order, and any transition function must preserve this cyclic order. 
This paucity of transition functions is reflected in the sparsity of the ray graphs, which are all just cycles. 

However, in a previous paper~\cite{BEEGHPTII} we analysed the possibilities for how the ray graphs and transition functions associated to a given thick\footnote{An end is \emph{thick} if it contains infinitely many disjoint rays.} end may look. We found that there are just three possibilities. 

The easiest case is that in which the rays to the end are very joined up, in the sense that any injective function between two families of rays is a transition function. 
This case was already dealt with in~\cite{BEEGHPTII}, where is was shown that in any graph with such an end we can find a~$K_{\aleph_0}$ minor. 
The second possibility is that which we saw above for the grid: 
all ray graphs are cycles, and all transition functions between them preserve the cyclic order. 
The third possibility is that all ray graphs consist of a path together with a bounded number of further `junk' vertices, where these junk vertices are hanging at the ends of the paths (formally: all interior vertices on this \emph{central path} in the ray graph have degree~$2$). 
In this case, the transition functions must preserve the linear order along the paths. 

The second and third cases can be dealt with using similar ideas, so we will focus on the third one here. 

Since we are assuming that all the $G$-minors in $\Gamma$ go to $\epsilon$, given a large enough collection of $G$-minors $\mathcal{H}$, almost all of the rays from the bundles of the $H \in \mathcal{H}$ lie on the central path of the ray graph of this family of rays, and so in particular by a Ramsey type argument there must be a large collection of $H \in \mathcal{H}$ such that for each $H$, the rays~${H(A_i)}$ appear in the same order along the central path.

Since there are only finitely many possible orders, there is some consistent way to order the~$A_i$ such that for every~$n$ we can find~$n$ disjoint $G$-minors~$H$ such that there is some ray graph in which, for each~$H$, the rays~${H(A_i)}$ appear in this order along the central path, which we can assume, without loss of generality, is from $H(A_1)$ to $H(A_d)$.

This will allow us to recursively maintain a similar property for the rays from the bundles of the $H_m^n$. More precisely, we can guarantee that there is a slightly larger family~$\mathcal{R}$ of disjoint rays, consisting of the~$R^n_{m,i}$ and some extra `junk' rays, such that all of the~$R^n_{m,i}$ lie on the central path of $\RG (\Rcal)$, and for each~$n$ and~$m$ the~$R^n_{m,i}$ appear on this path consecutively in order from~$R^n_{m,1}$ to~$R^n_{m,k}$. 

Then, our extra assumption on the structure of the end $\epsilon$ ensures that given a linkage from $\Rcal$ to the bundles from $H \in \mathcal{H}$ which induces a transition function, we can reroute our linkage, using the edges of $\RG(\Rcal)$, so that~\eqref{property} holds.

There is one last subtle difficulty which we have to address, once more relating to the fact that we want to introduce a new~$H^{n+1}_{n+1}$ together with its private bundle of rays corresponding to its copies of the~$A_i$, disjoint from all the other~$H^{n+1}_m$ and their bundles. Our strengthening of the weak linking lemma allows us to find a linkage which avoids one of the $G$-minors in~$\mathcal{H}$, but this linkage may not have property~\eqref{property}. 

We can, as before, modify it to one satisfying~\eqref{property} by rerouting the linkage, but this new linkage may then have to intersect some of the rays in the bundle of~$H^{n+1}_{n+1}$, if these rays from~$H^{n+1}_{n+1}$ lie between rays linked to a bundle of some~$H^n_m$, see Figure~\ref{fig:only_rounting}. 

\begin{figure}[h]
    \centering
\begin{tikzpicture}
\colorlet{extendercolor}{green!80!black}
\newcommand\Hn[1]{
    \begin{scope}[blue]
    \draw (-2.5, 0) node {#1};
    \draw (-1.4, 0) circle [radius=.5];
    \draw (-.7, .05) circle [radius=.5];
    \draw (0, 0) circle [radius=.5];
    \draw (60:.5) arc (-30:60:-.5);
    \end{scope}
    \foreach \y in {1,...,3} {
        \begin{scope}[yshift=-\y*.25cm+.5cm]
            \draw[->,extendercolor] (-1.4, 0) to[out=0,in=180] (-.7, .05) to[out=0,in=180] (0,0) to[out=0,in=180] (1, .2) -- +(10, 0);
        \end{scope}
}}

\begin{scope}[yshift=1.5cm,opacity=.3]
    \Hn{}
\end{scope}
\Hn{$ H_m^n $}
\begin{scope}[yshift=-1.5cm,opacity=.3]
    \Hn{}
\end{scope}

\newcommand\newH[2]{
    \begin{scope}[magenta,yshift=-2.5cm]
    \draw (0, 0) circle [radius=.7];
    \draw (50:.7) arc (130:50:-.7);
    \foreach \x in {-1,...,1}{
        \begin{scope}[xshift=\x*.3cm]
            \draw (0, .0) -- (0,.7);
            \draw[->,preaction={#2}] (0,.7)  -- (0, 5);
        \end{scope}
    }
    \draw (0,0) node[anchor=north,yshift=3pt] {#1};
    \end{scope}
}

\begin{scope}[xshift=6.5 cm]
    \newH{}{}
\end{scope}
\tikzstyle{transition}=[draw=red!50!white,thick,postaction={draw=black,thin}]
\begin{scope}
\draw[transition] (2.8, .2) -| (6.5-.3, .6);
\draw[transition] (2.8, -.05) -- (6, -.05);
\end{scope}

\begin{scope}[xshift=3.5 cm]
        \newH{}{draw=white,line width = 4pt,shorten >= 2.2cm,-}
\end{scope}
\begin{scope}[xshift=5cm]
    \newH{$ H_{n+1}^{n+1} $}{draw=white,line width = 4pt}
\end{scope}

\begin{scope}
\draw[transition] (2.8, .45) -| (3.5-.3, 2);
\draw[transition] (6.5-.3, .6) -| (5+.3, .7) -| (5,.8) -| (5-.3,.8) -| (3.5+.3, 1) -| (3.5, 2);
\draw[transition] (6, -.05) -| (6.5, .8) -| (6.5-.3, .9) -| (5+.3, 1.1) -| (5,1.2) -| (5-.3,1.3) -| (3.5+.3, 2);
\end{scope}

\foreach \x in {8, 9.5,2} {
    \begin{scope}[xshift=\x cm, opacity=.2]
        \newH{}{}
    \end{scope}
}

\end{tikzpicture}
    \caption{Extending the $H^n_m$ by routing onto a set of disjoint $G$-minors might cause problems with introducing a new $H^{n+1}_{n+1}$ disjoint to the rest.}
    \label{fig:only_rounting} 
\end{figure}

However, we can get around this by instead rerouting the rays in~$\mathcal{R}$ \emph{before} the linkage, so as to rearrange which bundles make use of (the tails of) which rays. Of course, we cannot know before we choose our linkage how we will need to reroute the rays in~$\mathcal{R}$, but we do know that the structure of $\epsilon$ restricts the possible reroutings we might need to do. 

Hence, we can avoid this issue by first taking a large, but finite, set of paths between the rays in $\mathcal{R}$ which is rich enough to allow us to reroute the rays in $\mathcal{R}$ in every way which is possible in $\Gamma$. Since the rays in $\mathcal{R}$ also go to $\epsilon$, the structure of $\epsilon$ will guarantee that this includes all of the possible reroutings we might need to do. We call such a collection a \emph{transition box}. 

Only after building our transition box do we choose the linkage from~$\Rcal$ to the rays from~$\mathcal{H}$, and we make sure that this linkage is after the transition box. 
Then, when we later see how the rays in $\Rcal$ should be arranged in order that the rays from the bundle of~$H^{n+1}_{n+1}$ do not appear between rays linked to a bundle of some~$H^n_m$, we can go back and perform a suitable rerouting within the transition box, see Figure~\ref{l_fig_intro}.

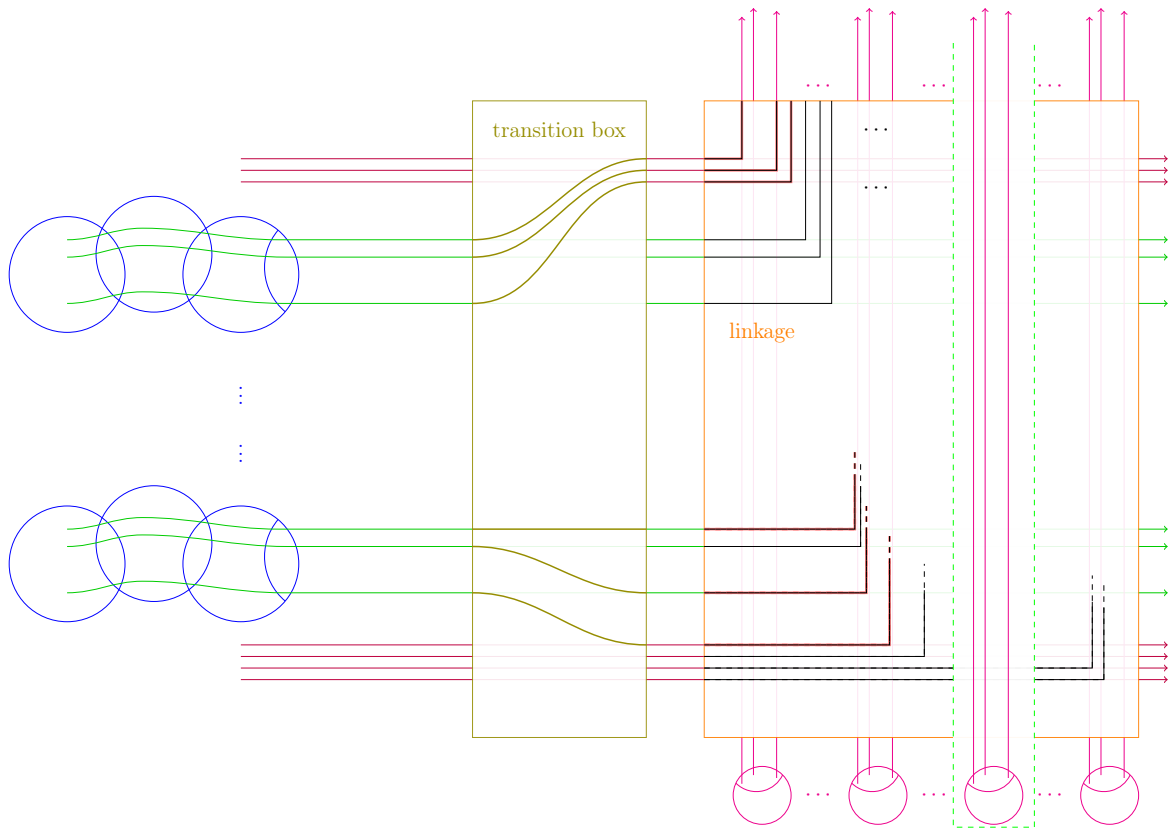
\begin{figure}[h!]
    \centering
    \scalebox{.7}{
\begin{tikzpicture}[scale=1.1]
    \colorlet{junkcolor}{purple}
    \tikzstyle{JunkRay}=[junkcolor,->]
    \colorlet{extendercolor}{green!80!black}
    \tikzstyle{ExtenderRay}=[extendercolor,->]
    \begin{scope}[yshift=2cm]
        \foreach \y in {0,...,-2}
            \draw[JunkRay] (0, 0.2*\y) -- +(16,0);
    \end{scope}

    \newcommand\Qin[1]{
        \draw[blue] (0,0) circle [radius=1];
        \draw[blue] (-1.5,.35) circle [radius=1];
        \draw[blue] (-3,0) circle [radius=1];
        \draw[blue] (-40:1) arc (50:-40:-1);
        \draw[ExtenderRay] (-3,.6) to[out=0,in=180] ++(1.3, .2) to[out=0,in=180] (.65,.6) -- (16, .6);
        \draw[ExtenderRay] (-3,.3) to[out=0,in=180] ++(1.3, .2) to[out=0,in=180] (.8,.3) -- (16, .3);
        \draw[ExtenderRay] (-3,-.5) to[out=0,in=180] ++(1.3, .2) to[out=0,in=180]  (.75,-.5) -- (16,-.5);
    }
    \begin{scope}[yshift=0cm]
        \Qin{1}
    \end{scope}

    \draw[blue] (0, -2) node {$ \vdots $};
    \draw[blue] (0, -3) node {$ \vdots $};

    \begin{scope}[yshift=-5cm]
        \Qin{n}
    \end{scope}

    \begin{scope}[yshift=-7cm]
        \foreach \y in {0,...,3}
            \draw[JunkRay] (0, 0.2*\y) -- +(16,0);
    \end{scope}


    \colorlet{transitioncolor}{olive}
    \filldraw[draw=transitioncolor,fill=white,opacity=.9] (4, 3) rectangle (7, -8);
    \draw[transitioncolor] (5.5, 2.5) node {transition box};

    \draw[transitioncolor, thick] (4,.6) to[out=0,in=180] (7,2);
    \draw[transitioncolor, thick] (4,.3) to[out=0,in=180] (7,2-.2);
    \draw[transitioncolor, thick] (4,-.5) to[out=0,in=180] (7,2-.4);

    \draw[transitioncolor, thick] (4,.6-5) to[out=0,in=180] (7,.6-5);
    \draw[transitioncolor, thick] (4,.3-5) to[out=0,in=180] (7,-.5-5);
    \draw[transitioncolor, thick] (4,-.5-5) to[out=0,in=180] (7,-7+.6);


    \newcommand\newH{
        \begin{scope}[yshift=-9cm]
            \draw[magenta] (0, 0) circle [radius=.5];
            \draw[magenta] (155:.5) arc (45:155:-.5);
                \draw[magenta,->] (.25,.3) -- ++(0, 13.25);
                \draw[magenta,->] (-.15,.35) -- ++(0, 13.25);
                \draw[magenta,->] (-.35,.2) -- ++(0, 13.25);
        \end{scope}
    }

    \begin{scope}[xshift=9cm]
        \newH
    \end{scope}
    \draw[magenta] (10, -9) node {$ \cdots $};
    \draw[magenta] (10, 3.25) node {$ \cdots $};
    \begin{scope}[xshift=11cm]
        \newH
    \end{scope}
    \draw[magenta] (12, -9) node {$ \cdots $};
    \draw[magenta] (12, 3.25) node {$ \cdots $};

    \draw[magenta] (14,-9) node {$ \cdots $};
    \draw[magenta] (14,3.25) node {$ \cdots $};
    \begin{scope}[xshift=15cm]
        \newH
    \end{scope}

    \filldraw[draw=orange,fill=white,opacity=.9] (8, 3) rectangle (15.5, -8);

    \colorlet{pprimecolor}{red!50!white}
    \tikzstyle{Pprime}=[draw=pprimecolor, double=black]
    \tikzstyle{Pprimedashedend}=[Pprime,dashed,preaction={draw,solid,Pprime,shorten >=.5cm}]
    \tikzstyle{dashedend}=[dashed,postaction={draw,solid,shorten >=.5cm}]

    \draw[orange] (9, -1) node {linkage};

    \draw[Pprime] (8,2) -| (9-.35,3);
    \draw[Pprime] (8,2-.2) -| (9.25,3);
    \draw[Pprime] (8,2-.4) -| (9.5,3);
    \draw (8,.6) -| (9.75,3);
    \draw (8,.3) -| (10,3);
    \draw (8,-.5) -| (10.2,3);

    \draw[Pprimedashedend] (8,.6-5) -| (10.6,-3);
    \draw[dashedend] (8,.3-5) -| (10.7,-3.2);
    \draw[Pprimedashedend] (8,-.5-5) -| (10.8,-4);

    \draw[Pprimedashedend] (8,-7+.6) -| (11.2,-4.5);
    \draw[dashedend] (8,-7+.4) -| (11.8,-5);
    \draw[dashedend] (8,-7+.2) -| (14.7,-5.2);
    \draw[dashedend] (8,-7)    -| (14.9,-5.3);

    \draw (11, 2.5) node {$ \cdots $}
          (11, 1.5) node {$ \cdots $};

    \begin{scope}[xshift=13cm]
        \fill[fill=white, opacity=.95] (-.7, 4) rectangle (.7, -10.55);
        \draw[green, dashed] (-.7,4) |- (.7, -9.55) -- (.7, 4);
        \newH
    \end{scope}

\end{tikzpicture}
    }
    \caption{The transitioning strategy between the old and new bundles.}
    \label{l_fig_intro}
\end{figure}

This completes the sketch of the proof that locally finite graphs with a single end of finite degree are ubiquitous. 
Our results in this paper are for a more general class of graphs, but one which is chosen to ensure that arguments of the kind outlined above will work for them. 
Hence we still need a tree-decomposition with properties similar to \ref{item:sketch-1}--\ref{item:sketch-4} from our ray-decomposition above. 
Tree-decompositions with these properties are called \emph{extensive}, and the details can be found in Section~\ref{s:extensive}. 

However, certain aspects of the sketch above must be modified to allow for the fact that we are now dealing with graphs~$G$ with multiple, indeed possibly infinitely many, ends. 
For any end~$\delta$ of~$G$ and any $G$-minor~$H$ of~$\Gamma$, all rays~${H(R)}$ with~$R$ in~$\delta$ belong to the same end~$H(\delta)$ of~$\Gamma$. 
If~$\delta$ and~$\delta'$ are different ends in~$G$, then~$H(\delta)$ and~$H(\delta')$ may well be different ends in~$\Gamma$ as well. 

So there is no hope of finding a single end~$\epsilon$ of~$\Gamma$ to which all rays in all $G$-minors converge. 
Nevertheless, we can still find an end~$\epsilon$ of~$\Gamma$ towards which the $G$-minors are \emph{concentrated}, in the sense that for any finite vertex set~$X$ there are arbitrarily large families of $G$-minors in the same component of~${G-X}$ as all rays of~$\epsilon$ have tails in. 
See Section~\ref{s:tribes} for details. 
In that section we introduce the term \emph{tribe} for a collection of arbitrarily large families of disjoint $G$-minors. 

The recursive construction will work pretty much as before, in that at each step~$n$ we will again have embedded $G^n$-minors for some large finite part~$G^n$ of~$G$, together with a number of rays in~$\epsilon$ corresponding to some designated rays going to certain ends~$\delta$ of~$G$. 

In order for this to work, we need some consistency about which ends~$H(\delta)$ of~$\Gamma$ are equal to~$\epsilon$ and which are not. 
It is clear that for any finite set~$\Delta$ of ends of~$G$ there is some subset~$\Delta'$ such that there is a tribe of $G$-minors~$H$ converging to~$\epsilon$ with the property that the set of~$\delta$ in~$\Delta$ with~${H(\delta) = \epsilon}$ is~$\Delta'$. 
This is because there are only finitely many options for this set. 
But if~$G$ has infinitely many ends, there is no reason why we should be able to do this for all ends of~$G$ at once.

Our solution is to keep track of only finitely many ends of~$G$ at any stage in the construction, and to maintain at each stage a tribe concentrated towards~$\epsilon$ which is consistent for all these finitely many ends. 
Thus in our construction consistency of questions such as which ends~$\delta$ of~$G$ converge to~$\epsilon$ or of the proper linear order in the ray graph of the families of canonical rays to those ends is achieved dynamically during the construction, rather than being fixed in advance. 
The ideas behind this dynamic process have already been used successfully in our earlier paper~\cite{BEEGHPTI}, where they appear in slightly simpler circumstances. 

The paper is then structured as follows. 
In Section~\ref{s:prelim} we give precise definitions of some of the basic concepts we will be using, and prove some of their fundamental properties. 
In Section~\ref{s:extensive} we introduce extensive tree-decompositions and in Section~\ref{s:getnice} we illustrate that many locally finite graphs admit such decompositions. In Section~\ref{s:nonpebbly} we analyse the possible collections of ray graphs and transition functions between them which can occur in a thick end. In Section~\ref{s:tribes} we introduce the notion of tribes and of their concentration towards an end and begin building some tools for the main recursive construction, which is given in Section~\ref{sec:countable-subtrees}. We conclude with a discussion of the future outlook in Section~\ref{s:WQO}.

\section{Preliminaries}
\label{s:prelim}

In this paper we will denote by~$\Nbb$ the set of positive integers and by~$\Nbb_0$ the set of non-negative integers. In our graph theoretic notation we generally follow the textbook of Diestel~\cite{D16}. 
For a graph~${G = (V,E)}$ and~${W \subseteq V}$ we write~${G[W]}$ for the induced subgraph of~$G$ on~$W$. For two vertices~${v,w}$ of a connected graph~$G$, we write~$\dist(v,w)$ for the edge-length of a shortest $v$--$w$ path. A path ${P = v_0v_1\ldots v_n}$ in a graph~$G$ is called a \emph{bare path} if~${d_G(v_i) = 2}$ for all inner vertices~$v_i$ for~${0 < i < n}$.

\subsection{Rays and ends}

\begin{definition}[Rays, double rays and initial vertices of rays]
    A one-way infinite path is called a \emph{ray} and a two-way infinite path is called a \emph{double ray}. 
    For a ray~$R$, let~$\init(R)$ denote the \emph{initial vertex} of~$R$, that is the unique vertex of degree~$1$ in~$R$. 
    For a family~$\mathcal{R}$ of rays, let~${\init(\mathcal{R})}$ denote the set of initial vertices of the rays in~$\mathcal{R}$. 
\end{definition}
    
\begin{definition}[Tail of a ray]
    Given a ray~$R$ in a graph~$G$ and a finite set~${X \subseteq V(G)}$, the \emph{tail of~$R$ after~$X$}, written~${T(R,X)}$, is the unique infinite component of~$R$ in~${G - X}$. 
\end{definition}
    
\begin{definition}[Concatenation of paths and rays]  
    \label{def_concat}
    For a path or ray~$P$ and vertices ${v,w \in V(P)}$, let~${vPw}$ denote the subpath of~$P$ with endvertices~$v$ and~$w$, and~${\mathring{v}P\mathring{w}}$ the subpath strictly between~$v$ and~$w$. 
    If~$P$ is a ray, let~$Pv$ denote the finite subpath of~$P$ between the initial vertex of~$P$ and~$v$, and let~$vP$ denote the subray (or \emph{tail}) of~$P$ with initial vertex~$v$. 
    Similarly, we write~${P\mathring{v}}$ and~${\mathring{v}P}$ for the corresponding path/ray without the vertex~$v$. 
    For a ray ${R = r_0 r_1 \ldots}$, let~$R^{-}$ denote the tail~${r_1 R}$ of~$R$ starting at~$r_1$.
    Given a family~$\mathcal{R}$ of rays, let~$\mathcal{R}^{-}$ denote the family~${(R^{-} \colon R \in \mathcal{R})}$.
    
    Given two paths or rays~$P$ and~$Q$, which intersect in a single vertex only, which is an endvertex in both~$P$ and~$Q$, we write~$PQ$ for the \emph{concatenation of~$P$ and~$Q$}, that is the path, ray or double ray~${P \cup Q}$. 
    Moreover, if we concatenate paths of the form~$vPw$ and~$wQx$, then we omit writing~$w$ twice and denote the concatenation by~${vPwQx}$. 
\end{definition}

\begin{definition}[{Ends of a graph, cf.~\cite[Chapter~8]{D16}}]
    An \emph{end} of an infinite graph~$\Gamma$ is an equivalence class of rays, where two rays~$R$ and~$S$ of~$\Gamma$ are \emph{equivalent} if and only if there are infinitely many vertex disjoint paths between~$R$ and~$S$ in~$\Gamma$. 
    We denote by~${\Omega(\Gamma)}$ the set of ends of~$\Gamma$. 
    
    We say that a ray~${R \subseteq \Gamma}$ \emph{converges} (or \emph{tends}) to an end~$\epsilon$ of~$\Gamma$ if~$R$ is contained in~$\epsilon$. 
    In this case, we call~$R$ an \emph{$\epsilon$-ray}. 
    Given an end~${\epsilon \in \Omega(\Gamma)}$ and a finite set~${X \subseteq V(\Gamma)}$ there is a unique component of~${\Gamma - X}$ which contains a tail of every ray in~$\epsilon$, which we denote by~${C(X,\epsilon)}$. Given two ends $\epsilon, \epsilon' \in \Omega(\Gamma)$, we say a finite set $X \subseteq V(\Gamma)$ \emph{separates} $\epsilon$ and $\epsilon'$ if $C(X,\epsilon) \neq C(X,\epsilon')$.
    
    For an end~${\epsilon \in \Omega(\Gamma)}$, we define the \emph{degree} of~$\epsilon$ in~$\Gamma$, denoted by~${\deg(\epsilon)}$, as the supremum in ${\Nbb \cup \{ \infty \}}$ of the set ${ \{ |\mathcal{R}| \; \colon \; \mathcal{R} \textnormal{ is a set of disjoint } \epsilon \textnormal{-rays} \} }$. 
    Note that this supremum is in fact an attained maximum, i.e.~for each end~$\epsilon$ of~$\Gamma$ there is a set~$\mathcal{R}$ of vertex-disjoint $\epsilon$-rays with~${|\mathcal{R}| = \deg(\omega)}$, as proved by Halin~\cite[Satz~1]{H65}. 
    An end with finite degree is called \emph{thin}, otherwise the end is called \emph{thick}.
\end{definition}

\subsection{Inflated copies of graphs}

\begin{definition}[Inflated graph, branch set]
    Given a graph~$G$, we say that a pair~${(H,\varphi)}$ is an \emph{inflated copy of~$G$}, or an~$IG$, if~$H$ is a graph and~${\varphi \colon V(H) \rightarrow V(G)}$ is a map such that: 
    \begin{itemize}
        \item For every~${v \in V(G)}$ the \emph{branch set}~${\varphi^{-1}(v)}$ induces a non-empty, connected subgraph of~$H$; 
        \item There is an edge in~$H$ between~${\varphi^{-1}(v)}$ and~${\varphi^{-1}(w)}$ if and only if~${vw \in E(G)}$ and this edge, if it exists, is unique. 
    \end{itemize}
\end{definition}

When there is no danger of confusion, we will simply say that~$H$ is an~$IG$ instead of saying that~${(H,\varphi)}$ is an~$IG$, and denote by~${H(v) = \varphi^{-1}(v)}$ the branch set of~$v$. 

\begin{definition}[Minor]
    A graph~$G$ is a \emph{minor} of another graph~$\Gamma$, written~${G \preceq \Gamma}$, if there is some subgraph~${H \subseteq \Gamma}$ such that~$H$ is an inflated copy of~$G$. In this case, we also say that $H$ is a \emph{$G$-minor} in $\Gamma$. 
\end{definition}

\begin{definition}[Extension of inflated copies]
    Suppose~${G \subseteq G'}$ as subgraphs, and that~$H$ is an~$IG$ and~$H'$ is an~$IG'$. 
    We say that~$H'$ \emph{extends}~$H$ (or that~$H'$ is an \emph{extension} of~$H$) if~${H \subseteq H'}$ as subgraphs and~${H(v) \subseteq H'(v)}$ for all~${v \in V(G)}$. 
    Note that, since~${H \subseteq H'}$, for every edge~${vw \in E(G)}$ the unique edge between the branch sets~$H'(v)$ and~$H'(w)$ is also the unique edge between~$H(v)$ and~$H(w)$.
    
    If~$H'$ is an extension of~$H$ and~${X \subseteq V(G)}$ is such that~${H'(x) = H(x)}$ for every~${x \in X}$, then we say~$H'$ is an extension of~$H$ \emph{fixing~$X$}. 
\end{definition}

\begin{definition}[Tidiness]
    Let~${(H,\varphi)}$ be an~$IG$. 
    We call~${(H,\varphi)}$ \emph{tidy} if 
    \begin{itemize}
        \item $H[\varphi^{-1}(v)]$ is a tree for all~${v \in V(G)}$;
        \item $H[\varphi^{-1}(v)]$ is finite if~$d_G(v)$ is finite.
    \end{itemize}
\end{definition}

Note that every~$H$ which is an~$IG$ contains a subgraph~$H'$ such that~${(H',\varphi \restriction V(H'))}$ is a tidy~$IG$, although this choice may not be unique. 
In this paper we will always assume, without loss of generality, that each~$IG$ is tidy. 

\begin{definition}[Restriction]
    Let~$G$ be a graph, ${M \subseteq G}$ a subgraph of~$G$, and let~${(H,\varphi)}$ be an~$IG$. 
    The \emph{restriction of~$H$ to~$M$}, denoted by~$H(M)$, is the~$IM$ given by~${(H(M),\varphi')}$, where~${\varphi'^{-1}(v) = \varphi^{-1}(v)}$ for all~${v \in V(M)}$, and~$H(M)$ consists of the union of the subgraphs of~$H$ induced on each branch set~${\varphi^{-1}(v)}$ for each~${v \in V(M)}$, together with the edge between~${\varphi^{-1}(u)}$ and~${\varphi^{-1}(v)}$ in~$H$ for each~${uv \in E(M)}$.
\end{definition}

Suppose~$R$ is a ray in some graph~$G$. 
If~$H$ is a tidy~$IG$ in a graph~$\Gamma$, then in the restriction~$H(R)$ all rays which do not have a tail contained in some branch set will share a tail. 
Later in the paper, we will want to make this correspondence between rays in~$G$ and~$\Gamma$ more explicit, with use of the following definition: 

\begin{definition}[Pullback]
    Let~$G$ be a graph, ${R \subseteq G}$ a ray, and let~${(H, \varphi)}$ be a tidy~$IG$. 
    The \emph{pullback of~$R$ to~$H$} is the subgraph~${H^{\downarrow}(R) \subseteq H(R)}$, where~${H^{\downarrow}(R)}$ is subgraph minimal such that~${(H^{\downarrow}(R), \varphi \restriction V(H^{\downarrow}(R)))}$ is an~$IR$. 
\end{definition}

Note that, since~$H$ is tidy, ${H^{\downarrow}(R)}$ is well defined. It can be shown that, in fact, $H^{\downarrow}(R)$ is also ray.

\begin{lemma}[{\cite[Lemma~2.11]{BEEGHPTII}}]
    \label{l:pullbackray}
    Let~$G$ be a graph and let~$H$ be a tidy~$IG$. 
    If~${R \subseteq G}$ is a ray, then the pullback~${H^{\downarrow}(R)}$ is also a ray.
\end{lemma}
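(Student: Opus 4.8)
\noindent
The plan is to identify $H^{\downarrow}(R)$ explicitly and then simply read off that it is a ray. Write $R = r_0 r_1 r_2 \cdots$, and for each $i \in \Nbb_0$ let $e_i$ be the unique edge of $H$ between the branch sets $H(r_i)$ and $H(r_{i+1})$ (unique since $H$ is an $IG$), with endvertices $a_i \in H(r_i)$ and $b_{i+1} \in H(r_{i+1})$. Since $H$ is tidy, $H[H(r_i)]$ is a tree, so for every $i \geq 1$ there is a \emph{unique} path $P_i$ in $H[H(r_i)]$ from $b_i$ to $a_i$; let $P_0$ be the trivial path on the single vertex $a_0$. I claim that
\[
    H^{\downarrow}(R) \;=\; P_0\, e_0\, P_1\, e_1\, P_2\, e_2\, \cdots,
\]
where each $e_i$ is read as the edge joining the endvertex $a_i$ of $P_i$ to the endvertex $b_{i+1}$ of $P_{i+1}$.

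\noindent
First I would check that this concatenation, call it $C$, is an $IR$: its branch sets $H(r_i) \cap V(C) = V(P_i)$ are non-empty, connected, and pairwise disjoint (as $\varphi$ is a function), the edge $e_i$ joins $V(P_i)$ to $V(P_{i+1})$, and $C$ contains no other edge between distinct branch sets; since $r_i r_j \in E(R)$ exactly when $|i-j| = 1$, this is precisely what is required of an $IR$. Conversely, any subgraph $M \subseteq H(R)$ for which $(M, \varphi \restriction V(M))$ is an $IR$ must contain every $e_i$ — an $IR$ needs an edge between the branch sets of $r_i$ and $r_{i+1}$, and $e_i$ is the only candidate available in $H$ — hence contains $a_i$ and $b_i$ for each $i$; the branch set of $r_i$ in $M$ then induces a connected subgraph of the tree $H[H(r_i)]$ containing both $a_i$ and $b_i$, so it contains the unique $b_i$--$a_i$ path $P_i$. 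Thus $C \subseteq M$. As $C$ is itself such an $M$, it is the unique subgraph-minimal one, so $H^{\downarrow}(R) = C$ (in particular $H^{\downarrow}(R)$ is well defined, as asserted after the definition).

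\noindent
Then I would observe that $C$ is a ray. Each $P_i$ is a path between two vertices of a tree, hence finite; the $V(P_i)$ are pairwise disjoint; the edges $e_i$ are pairwise distinct, and $e_i$ meets $C$ only in $a_i \in V(P_i)$ and $b_{i+1} \in V(P_{i+1})$. Listing the vertices segment by segment — $a_0$, then $P_1$ traversed from $b_1$ to $a_1$, then $P_2$ traversed from $b_2$ to $a_2$, and so on, with $e_i$ joining consecutive segments — therefore displays $C$ as a one-way infinite path with initial vertex $a_0$, i.e.\ a ray.

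\noindent
The only genuinely load-bearing hypothesis here is tidiness: it is what makes the internal path $P_i$ \emph{unique}, hence $H^{\downarrow}(R)$ well defined and a path rather than, say, one that picks up a cycle inside some branch set. Everything else is just unwinding the definitions of restriction, pullback and $IR$, so I do not expect a real obstacle in carrying this out.
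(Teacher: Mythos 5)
Your proof is correct, and it is essentially the standard argument: the paper does not prove this lemma here but imports it from \cite{BEEGHPTII}, where the proof likewise identifies the pullback as the concatenation of the unique inter-branch-set edges with the unique connecting paths inside each branch-set tree. Your identification of tidiness as the load-bearing hypothesis (guaranteeing uniqueness and hence both well-definedness and path-ness) is exactly right.
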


\begin{definition}
    \label{d:rayfamily}
    Let~$G$ be a graph, $\Rcal$ be a family of disjoint rays in~$G$, and let~$H$ be a tidy~$IG$. 
    We will write~$H^{\downarrow}(\Rcal)$ for the family ${(H^{\downarrow}(R) \colon R \in \Rcal)}$.
\end{definition}

It is easy to check that if two rays~$R$ and~$S$ in~$G$ are equivalent, then also~$H^{\downarrow}(R)$ and~$H^{\downarrow}(S)$ are rays (Lemma~\ref{l:pullbackray}) which are equivalent in~$H$, and hence also equivalent in~$\Gamma$.

\begin{definition}
    \label{def:H(omega)}
    For an end~$\omega$ of~$G$ and~${H \subseteq \Gamma}$ a tidy~$IG$, we denote by~$H(\omega)$ the unique end of~$\Gamma$ containing all rays~${H^{\downarrow}(R)}$ for~${R \in \omega}$. 
\end{definition}

\subsection{Transitional linkages and the strong linking lemma}

The next definition is based on definitions already stated in Section~\ref{s:sketch} (cf.~Definition~\ref{d:linkage}, Definition~\ref{d:trans-func} and Definition~\ref{def_raygraph}).

\begin{definition}
    We say a linkage between two families of rays is \emph{transitional} if the function which it induces between the corresponding ray graphs is a transition function.
\end{definition}

\begin{lemma}
    \label{l:trans-link}
    Let~$\Gamma$ be a graph and let~${\epsilon \in \Omega(\Gamma)}$. 
    Then, for any finite families ${\Rcal = (R_i \colon i \in I)}$ and ${\Scal = (S_j \colon j \in J)}$ of disjoint $\epsilon$-rays in~$\Gamma$, there is a finite set~$X$ such that every linkage from~$\Rcal$ to~$\Scal$ after~$X$ is transitional.
\end{lemma}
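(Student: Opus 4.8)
The statement to prove is Lemma~\ref{l:trans-link}: given an end $\epsilon$ of $\Gamma$ and finite families $\Rcal = (R_i \colon i \in I)$, $\Scal = (S_j \colon j \in J)$ of disjoint $\epsilon$-rays, there is a finite set $X$ such that every linkage from $\Rcal$ to $\Scal$ after $X$ is transitional, i.e.\ induces a transition function between the ray graphs $\RG(\Rcal)$ and $\RG(\Scal)$. The plan is to reduce this to the structure theory of thick ends developed in the companion paper (the trichotomy of ray graphs and transition functions mentioned in Section~\ref{s:sketch}): if $\epsilon$ is thin the statement is essentially vacuous since there are only finitely many rays to the end and the ray graphs stabilise, so the interesting case is $\epsilon$ thick, where $\RG(\Rcal)$ and $\RG(\Scal)$ are (up to bounded junk) either all complete, all cycles, or all central paths, and there is a canonical combinatorial description of which injections arise as transition functions.

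First I would recall the key fact (from Section~\ref{s:nonpebbly}/\cite{BEEGHPTII}) that for a fixed end $\epsilon$ the ray graph $\RG_\Gamma(\mathcal{R})$ of any finite family of $\epsilon$-rays, after deleting a bounded-size set of junk vertices, has one of the three canonical shapes, and moreover that the \emph{combinatorial type} of this shape, together with the induced linear/cyclic order on the non-junk rays, is an invariant: it does not change when we pass to tails of the rays. Next, fix $\Rcal$ and $\Scal$. For each $i \in I$, choose finitely many vertex-disjoint $\Rcal$-avoiding paths witnessing each edge of $\RG(\Rcal)$ incident to $i$ (finitely many edges, each needing only finitely many witnessing paths to pin down the edge) — and similarly for the non-edges, choose a finite separator certifying that the relevant infinite bundle of disjoint paths does \emph{not} exist; do the same for $\Scal$. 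Let $X$ be the (finite) union of all vertices used by these witnessing paths and separators together with $\init(\Rcal) \cup \init(\Scal)$. The point of this $X$ is: any linkage after $X$ is forced to stay ``downstream'' of all these certificates, so the ray graphs of the rerouted rays $\Rcal \circ_\Pcal \Scal$ coincide with those of $\Rcal$ and of $\Scal$ restricted appropriately, and in particular the order-structure that a transition function must respect is already visible in $X$.

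The heart of the argument is then: let $\Pcal$ be any linkage from $\Rcal$ to $\Scal$ after $X$, inducing $\sigma \colon I \to J$; I must show $\sigma$ is a transition function, i.e.\ that for every finite $Y$ there is a linkage after $Y$ inducing the \emph{same} $\sigma$. By the structure theory, $\sigma$ being ``induced by a linkage after $X$'' forces $\sigma$ to be one of the order-respecting injections allowed by the common combinatorial type of $\RG(\Rcal)$ and $\RG(\Scal)$ (this is where $X$ does its work — absorbing $X$ forbids any ``illegal'' crossing, e.g.\ in the cycle case it forbids changing the cyclic order, and in the central-path case it forbids changing the linear order on non-junk rays). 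Conversely, the structure theory also guarantees that \emph{every} such order-respecting injection is realised by a linkage after an arbitrary prescribed finite set $Y$: one builds the required paths using the abundance of disjoint paths between consecutive rays in the ray graph (the defining property of its edges), pushed past $Y$. Combining these two directions gives that $\sigma$ is a transition function, hence $\Pcal$ is transitional, as required.

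\textbf{Main obstacle.} The delicate point is making precise, and invoking correctly, the claim that a linkage after a sufficiently large finite $X$ cannot induce an ``illegal'' map between the ray graphs — i.e.\ that crossing patterns forbidden globally are already forbidden locally once $X$ absorbs all the relevant separators and witnessing paths. This requires carefully citing the trichotomy result of \cite{BEEGHPTII} in the precise form ``the set of transition functions between two families of $\epsilon$-rays depends only on the combinatorial type of their ray graphs and the relevant orders,'' and checking that the finitely many certificates collected into $X$ genuinely pin down that combinatorial type and those orders for \emph{all} rerouted families $\Rcal \circ_\Pcal \Scal$ simultaneously; the bounded-junk vertices are the fiddly part, since a linkage could in principle reroute a non-junk ray through a junk ray, and one has to check $X$ rules this out (or that it does not affect $\sigma$ on the non-junk part, which is all that matters for a transition function). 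The thin-end case and the ``complete ray graph'' case are comparatively routine and can be disposed of quickly at the start.
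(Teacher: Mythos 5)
There is a genuine gap, and it sits exactly where you flagged your ``main obstacle.'' Your argument hinges on two unproven claims: (a) that a linkage after your certificate set $X$ can only induce an ``order-respecting'' injection, and (b) that \emph{every} order-respecting injection is a transition function (so that being order-respecting suffices for being transitional). Claim (b) in particular does not follow from the cited trichotomy results: Lemmas~\ref{l:gridstructure} and~\ref{l:halfgridstructure} say that transition functions preserve the correct orientation, not conversely that every orientation-preserving injection is realised by linkages after arbitrary finite sets. Without (b), knowing that $\sigma$ respects the order tells you nothing about whether $\sigma$ is a transition function. Claim (a) is likewise only sketched; your certificates pin down the ray graphs of $\Rcal$ and $\Scal$, but it is not shown that they constrain which $\sigma$ a linkage after $X$ can induce. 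So the heart of the proof is missing.

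The deeper issue is that the entire structural detour is unnecessary: the lemma is a two-line consequence of Definition~\ref{d:trans-func} and the finiteness of $I$ and $J$. A function $\sigma \colon I \to J$ fails to be a transition function precisely when there exists a finite set $X_\sigma$ such that no linkage after $X_\sigma$ induces $\sigma$ --- this obstruction set is handed to you by the negation of the definition, with no need to understand what transition functions look like. Since there are only finitely many functions $I \to J$, set $X := \bigcup_\sigma X_\sigma$ over the non-transition-functions $\sigma$. Any linkage after $X$ is after each $X_\sigma$ (being ``after'' is inherited by subsets of $X$), hence cannot induce any non-transition-function, hence is transitional. This is the paper's proof; note that it works for arbitrary ends with no case distinction on thin, pebbly, grid-like or half-grid-like, whereas your approach would require a separate (and in the pebbly and thin cases differently flavoured) analysis in each branch of the trichotomy.
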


\begin{proof}
    By definition, for every function~${\sigma \colon I \rightarrow J}$ which is not a transition function from~$\Rcal$ to~$\Scal$ there is a finite set~${X_\sigma \subseteq V(\Gamma)}$ such that there is no linkage from~$\Rcal$ to~$\Scal$ after~$X_\sigma$ which induces~$\sigma$. 
    If we let~$\Phi$ be the set of all such~$\sigma$ which are not transition functions, then the set~${X := \bigcup_{\sigma \in \Phi} X_\sigma}$ satisfies the conclusion of the lemma. 
\end{proof}

In addition to Lemma~\ref{l:weaklink}, we will also need the following stronger linking lemma, which is a slight modification of~\cite[Lemma~4.4]{BEEGHPTI}: 

\begin{lemma}[Strong linking lemma]
    \label{l:link}
    Let~$\Gamma$ be a graph and let~${\epsilon \in \Omega(\Gamma)}$. 
    Let~$X$ be a finite set of vertices and let~${\Rcal = (R_i \colon i \in I)}$ a finite family of vertex disjoint $\epsilon$-rays. 
    Let~${x_i = \init(R_i)}$ and let~${x'_i = \init(T(R_i,X))}$. 
    Then there is a finite number~${N = N(\Rcal,X)}$ with the following property: 
    For every collection ${(H_j \colon j\in[N])}$ of vertex disjoint subgraphs of~$\Gamma$, all disjoint from~$X$ and each including a specified ray~$S_j$ in~$\epsilon$, there is an~${\ell \in [N]}$ and a transitional linkage ${\Pcal = (P_i \colon i \in I)}$ from~$\Rcal$ to ${(S_j \colon j \in [N])}$, with transition function~$\sigma$, which is after~$X$ and such that the family 
    \[
        \mathcal{T} = \left(x_iR_ix'_iP_iy_{\sigma(i)}S_{\sigma(i)} \colon i\in I\right)
    \]
    avoids~$H_{\ell}$. 
\end{lemma}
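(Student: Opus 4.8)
The plan is to bootstrap from the weak linking lemma (Lemma~\ref{l:weaklink}) combined with the transitionality guarantee of Lemma~\ref{l:trans-link}, and then use a counting/pigeonhole argument to beat the finitely many subgraphs $H_j$. First I would enlarge $X$ to a finite set $X^{+} \supseteq X$ that simultaneously witnesses Lemma~\ref{l:trans-link} for $\Rcal$ against *every* family of $|I|$ disjoint $\epsilon$-rays — more precisely, I would take $X^{+}$ to absorb the finite exceptional sets $X_\sigma$ for all the finitely many non-transition functions $\sigma\colon I \to I$ out of $\Rcal$ (there are only finitely many since $|I|<\infty$), so that any linkage from $\Rcal$ to any $|I|$-family of $\epsilon$-rays that is after $X^{+}$ is automatically transitional. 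Note $x'_i = \init(T(R_i,X))$ is unaffected if we are slightly careful, or else I would simply restate with $X^{+}$ and observe the tails only get shorter; this is a routine bookkeeping point.

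Next, I would set $N := |X^{+}|\cdot|I| + 1$ (or any bound strictly exceeding the number of vertices of $X^{+}$ times the number of rays). Given the $N$ vertex-disjoint subgraphs $H_1,\dots,H_N$, each containing a ray $S_j \in \epsilon$, apply Lemma~\ref{l:weaklink} with the finite set $X^{+}$ to link $\Rcal$ to the family $(S_j \colon j \in [N])$: this yields a linkage $\Pcal = (P_i \colon i \in I)$ after $X^{+}$, inducing an injection $\sigma\colon I \to [N]$, with $\mathcal{T} = (x_iR_ix'_iP_iy_{\sigma(i)}S_{\sigma(i)} \colon i \in I)$ a family of disjoint rays. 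Since the linkage is after $X^{+}$, by our choice of $X^{+}$ it is transitional. Now the point is that the paths $P_i$ together with the initial segments $x_iR_ix'_i$ and the chosen tails $y_{\sigma(i)}S_{\sigma(i)}$ use only finitely many vertices outside $X^{+}$ *per index*? — this is not quite true since the paths are finite but the tails are infinite. The correct observation is instead: the linkage paths $P_i$ are finite, they start on $R_i$ and end on $S_{\sigma(i)}$, and apart from $X^{+}$ and the rays $R_i$ themselves and the chosen tails of $S_{\sigma(i)}$, they are disjoint from everything by the ``after $X^{+}$'' clause. In particular $\bigcup_i P_i$ meets only the $|I|$ subgraphs $H_{\sigma(i)}$ among $H_1,\dots,H_N$ — wait, that already gives all but $|I|$ of them avoided, so $N := |I|+1$ would suffice for avoiding *some* $H_\ell$, but we also need $\mathcal{T}$, not just $\Pcal$, to avoid $H_\ell$.

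So the real counting is on $\mathcal{T}$: the rays in $\mathcal{T}$ consist of the finite pieces $x_iR_ix'_iP_i$ plus the tails $y_{\sigma(i)}S_{\sigma(i)}$. The tail $y_{\sigma(i)}S_{\sigma(i)}$ lies inside $H_{\sigma(i)}$, and since the $H_j$ are pairwise disjoint it meets no other $H_j$; the finite piece $x_iR_ix'_iP_i$ is, however, where the trouble lies, as a priori $R_i$ could wander through many of the $H_j$. Here I would instead choose the $N$ in two stages: first shorten each $R_i$ past $X^{+}$ to its tail, note each such tail is an $\epsilon$-ray, and observe that after linking, each ray of $\mathcal{T}$ uses only finitely many vertices before it ``settles'' into the tail of some $S_{\sigma(i)}$; collecting these finitely many vertices over all $i \in I$ gives a finite set $F$, and then taking $N$ larger than $|F| + |I|$ ensures by pigeonhole that some $H_\ell$ with $\ell \notin \sigma(I)$ is disjoint from $F$ and hence from $\mathcal{T}$. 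The honest way to make ``$\mathcal{T}$ uses only finitely many vertices outside the settled tails'' precise is: run the weak linking lemma, get $\mathcal{T}$; each ray $T_i \in \mathcal{T}$ eventually coincides with a tail of $S_{\sigma(i)}$, hence lies in $H_{\sigma(i)}$ from some point on; the initial (finite!) segment of $T_i$ up to that point is finite, so $\bigcup_i$ of these initial segments is a finite set $F$; then $F \cup X^{+}$ together meets at most $|F| + |X^{+}|$ of the disjoint sets $V(H_1),\dots,V(H_N)$ outside of $\sigma(I)$; choosing $N := |I| + |X^{+}| + |F|$ — except $F$ depends on the linkage, which depends on $N$ — so this is circular and must be broken.

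The clean fix, and what I would actually write, is to bound $|F|$ a priori: by Lemma~\ref{l:weaklink} applied with the finite set $X^{+}$, one may additionally demand (inspecting its proof, or just its statement with a suitably chosen target) that the linkage paths are short, but more robustly, I would restructure as follows. Set $N$ large, apply weak linking to get the transitional linkage $\Pcal$ after $X^{+}$ with $\mathcal{T}$ disjoint rays; let $F$ be the finite set of vertices of $\bigcup_i (x_iR_ix'_iP_i y_{\sigma(i)}S_{\sigma(i)})$ lying outside $\bigcup_j V(H_j)$ together with $\bigcup_{i} (V(\mathcal{T}) \cap \bigcup_j V(H_j))$ restricted to the finitely many indices in $\sigma(I)$ — since $\mathcal T$ settles into $H_{\sigma(i)}$, the set $V(\mathcal T) \cap \bigcup_{j \notin \sigma(I)} V(H_j)$ is finite; call it $F'$. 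Then $F'$ meets at most $|F'|$ of the disjoint subgraphs $H_j$ with $j \notin \sigma(I)$, so if $N \geq |I| + |F'| + 1$ there is $\ell \notin \sigma(I)$ with $V(H_\ell) \cap V(\mathcal T) = \emptyset$, i.e.\ $\mathcal T$ avoids $H_\ell$, as required. The only remaining gap is that $|F'|$ could depend on $N$; but $F' \subseteq \bigcup_i (\text{finite prefix of } T_i)$ and the length of that prefix of $T_i$ is bounded by $|x_iR_ix'_i| + |P_i|$ plus the distance along $S_{\sigma(i)}$ from $y_{\sigma(i)}$ to where $T_i$ enters $H_{\sigma(i)}$ (which is $0$, since $S_{\sigma(i)} \subseteq H_{\sigma(i)}$). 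So $F'$ is covered by $\bigcup_i V(x_iR_ix'_iP_i)$, whose size is bounded once we have bounded $|P_i|$. Thus the clean statement to extract from Lemma~\ref{l:weaklink} (or its proof) is: \emph{the number of vertices used by the linkage paths depends only on $\Rcal$ and $X^{+}$, not on $\Scal$} — which holds because in the construction one only ever uses $X^{+}$-avoiding paths between the finitely many rays $R_i$ and then jumps onto the $S_j$, and the ``jump on'' step can be taken to use a single new vertex per ray. With that uniform bound $M = M(\Rcal, X^{+})$ on $\big|\bigcup_i V(P_i)\big|$ in hand, set $N := N(\Rcal, X) := |I| + M + 1$, and the pigeonhole goes through.

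The main obstacle, then, is exactly this circularity: we need an a priori bound on how many vertices the linkage and the resulting rays $\mathcal{T}$ consume outside the $H_j$'s, \emph{before} we know $N$. I expect to resolve it by peeling open the weak linking lemma just enough to extract the uniform bound $M(\Rcal, X^{+})$ on the size of the linkage paths — this is precisely the content of ``slight modification of~\cite[Lemma~4.4]{BEEGHPTI}'' mentioned in the text — and then the transitionality is handed to us for free by Lemma~\ref{l:trans-link} via the enlargement $X^{+}$, and the avoidance of some $H_\ell$ is a one-line pigeonhole. I would therefore structure the final write-up as: (i) enlarge $X$ to $X^{+}$ using Lemma~\ref{l:trans-link} so every after-$X^{+}$ linkage is transitional; (ii) invoke the quantitative form of Lemma~\ref{l:weaklink} to get a uniform bound $M$ on the vertices used by any after-$X^{+}$ linkage from $\Rcal$; (iii) set $N := |I| + M + 1$; (iv) given $H_1,\dots,H_N$, apply Lemma~\ref{l:weaklink} to link $\Rcal$ to $(S_j)_{j\in[N]}$ after $X^{+}$, obtaining a transitional linkage $\Pcal$ and the disjoint family $\mathcal T$; (v) since $\mathcal T$ off its settled tails uses at most $M$ vertices outside $\bigcup_j V(H_j)$, and those settled tails lie in distinct $H_j$, the family $\mathcal T$ meets at most $|I| + M$ of the $H_j$, so some $H_\ell$ is disjoint from $\mathcal T$; output that $\ell$ and $\Pcal$.
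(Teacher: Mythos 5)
There is a genuine gap, and it sits exactly where you flagged your own unease. The paper's proof is two lines: take $Y$ as in Lemma~\ref{l:trans-link} so that linkages after $X\cup Y$ are automatically transitional, and then apply \cite[Lemma~4.4]{BEEGHPTI} --- which is already the \emph{strong} linking lemma, with the ``some $H_\ell$ is avoided'' conclusion built in --- to the set $X\cup Y$. The ``slight modification'' referred to in the text is therefore only the addition of transitionality; it is not, as you suppose, a quantitative refinement of the \emph{weak} linking lemma (Lemma~\ref{l:weaklink} is \cite[Lemma~4.3]{BEEGHPTI}; Lemma~4.4 is a different, stronger statement). Your step (i) matches the paper. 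Your steps (ii)--(v) attempt to rebuild the avoidance property from the weak linking lemma by pigeonhole, and this is where the argument fails.

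Concretely, the pigeonhole needs to bound, independently of $N$ and of the $H_j$, how many of the pairwise disjoint $H_j$ with $j\notin\sigma(I)$ are met by $\bigcup_i V(x_iR_ix'_iP_i)$. The segments $x_iR_ix'_i$ are fine, but the paths $P_i$ are produced inside the weak linking lemma by a Menger-type argument in a region of $\Gamma$ that depends on where the tails of the $S_j$ lie; nothing in the statement (or proof) of Lemma~\ref{l:weaklink} bounds $\bigl|\bigcup_i V(P_i)\bigr|$, or even the number of distinct $H_j$ these paths traverse, in terms of $\Rcal$ and $X^{+}$ alone. Your claim that ``the jump-on step can be taken to use a single new vertex per ray'' is not available: the connecting paths can be forced to be long and to wander through arbitrarily many of the $H_j$, and one cannot simply add an $H_\ell$ to the avoided set since $H_\ell$ is infinite. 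Obtaining a linkage whose associated family $\mathcal{T}$ misses one of $N(\Rcal,X)$ prescribed disjoint ray-containing subgraphs is precisely the nontrivial content of \cite[Lemma~4.4]{BEEGHPTI}, proved there by a genuine rerouting/counting argument; it cannot be recovered as a one-line pigeonhole on top of Lemma~\ref{l:weaklink}. So your proof is circular at its core unless you either cite that lemma (as the paper does) or reprove it.
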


\begin{proof}
    Let~${Y \subseteq V(\Gamma)}$ be a finite set as in Lemma~\ref{l:trans-link}. 
    We apply the strong linking lemma established in~\cite[Lemma~4.4]{BEEGHPTI} to the set~${X \cup Y}$ to obtain this version of the strong linking lemma.
\end{proof}

\begin{lemmadef}
    \label{l:transition-box}
    Let~$\Gamma$ be a graph, ${\epsilon \in \Omega(\Gamma)}$, ${X \subseteq V(\Gamma)}$ be finite, and let ${\mathcal{R} = ( R_i \colon i \in I)}$, ${\mathcal{S} = ( S_j \colon j \in J)}$ be two finite families of disjoint $\epsilon$-rays with~${|I| \leq |J|}$.
    Then there is a finite subgraph~${Y}$ such that, for any transition function~$\sigma$ from~$\mathcal{R}$ to~$\mathcal{S}$, there is a linkage~$\mathcal{P}_\sigma$ from~$\mathcal{R}$ to~$\mathcal{S}$ inducing~$\sigma$, with~${\bigcup \mathcal{P}_\sigma \subseteq Y}$, which is after $X$.
    
    We call such a graph~$Y$ a \emph{transition box between~$\mathcal{R}$ and~$\mathcal{S}$ (after~$X$)}. 
\end{lemmadef}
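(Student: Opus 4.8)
The plan is to observe that the entire difficulty has already been absorbed into the definition of a transition function, so that the statement reduces to a finiteness observation. Since~$I$ and~$J$ are finite, there are only finitely many functions~${I \to J}$, and hence only finitely many transition functions from~$\mathcal{R}$ to~$\mathcal{S}$; call this finite set~$\Phi$. Moreover every transition function is injective, being induced by a linkage, so~$\Phi$ is empty unless~${|I| \leq |J|}$; in the degenerate case~${\Phi = \varnothing}$ the conclusion holds vacuously with~$Y$ any finite subgraph, which is why the hypothesis~${|I| \leq |J|}$ costs nothing.

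For each~${\sigma \in \Phi}$ I would apply Definition~\ref{d:trans-func} directly to the given finite set~$X$: by definition of a transition function there is a linkage~${\mathcal{P}_\sigma = (P_i^\sigma \colon i \in I)}$ from~$\mathcal{R}$ to~$\mathcal{S}$, after~$X$, that induces~$\sigma$. The key point is that such a linkage is a finite object: it is an~$I$-indexed family of paths, and paths (in the sense of~\cite{D16}) are finite, so~${Y_\sigma := \bigcup \mathcal{P}_\sigma = \bigcup_{i \in I} P_i^\sigma}$ is a finite subgraph of~$\Gamma$.

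Finally I would set~${Y := \bigcup_{\sigma \in \Phi} Y_\sigma}$, a finite union of finite subgraphs and hence a finite subgraph of~$\Gamma$. Then~$Y$ is as required: for every transition function~$\sigma$ from~$\mathcal{R}$ to~$\mathcal{S}$ we have~${\sigma \in \Phi}$, and the linkage~$\mathcal{P}_\sigma$ fixed above induces~$\sigma$, satisfies~${\bigcup \mathcal{P}_\sigma = Y_\sigma \subseteq Y}$, and is after~$X$ by construction. This also makes the name ``transition box between~$\mathcal{R}$ and~$\mathcal{S}$ (after~$X$)'' well-posed, since the single subgraph~$Y$ works simultaneously for all admissible~$\sigma$.

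There is essentially no obstacle to overcome here beyond bookkeeping: the two facts used are that there are finitely many candidate functions (immediate from~${|I|, |J| < \infty}$) and that a linkage is contained in a finite subgraph (immediate from the definition, since a path is finite and the index set~$I$ is finite). In particular no structural information about the end~$\epsilon$ is needed at this point; all of that work has been done in establishing which functions are transition functions (for instance via Lemma~\ref{l:trans-link} and Lemma~\ref{l:link}), and here we merely repackage finitely many linkages into a single finite subgraph.
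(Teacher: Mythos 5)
Your proof is correct and follows essentially the same route as the paper's: there are only finitely many functions $I \to J$, so one picks a linkage after $X$ witnessing each transition function and takes $Y$ to be the (finite) union of these finitely many finite linkages. The extra remarks about injectivity and the role of the hypothesis $|I| \leq |J|$ are accurate but not needed for the argument.
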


\begin{proof}
    Let~${\sigma \colon I \rightarrow J}$ be a transition function from~$\mathcal{R}$ to~$\mathcal{S}$. 
    By definition, there is a linkage~$\mathcal{P}_\sigma$ from~$\mathcal{R}$ to~$\mathcal{S}$ after~$X$ which induces~$\sigma$. Let~$\Phi$ be the set of all transition functions from~$\mathcal{R}$ to~$\mathcal{S}$ and let~${Y = \bigcup_{\sigma \in \Phi} \mathcal{P}_\sigma}$. 
    Then~$Y$ is a transition box between~$\mathcal{R}$ and~$\mathcal{S}$ (after~$X$).
\end{proof}

\begin{remarkdef}
    \label{rem:trans-link-combine}
    Let~$\Gamma$ be a graph and~${\epsilon \in \Omega(\Gamma)}$.
    Let $\Rcal_1$, $\Rcal_2$, $\Rcal_3$ be finite families of disjoint $\epsilon$-rays, $\mathcal{P}_1$ a transitional linkage from~$\Rcal_1$ to~$\Rcal_2$, 
    and let~$\mathcal{P}_2$ a transitional linkage from~$\Rcal_2$ to~$\Rcal_3$ after~${V(\bigcup \mathcal{P}_2)}$. Then
    \begin{enumerate}
        \item $\mathcal{P}_2$ is also a transitional linkage
        from ${(\Rcal_1 \circ_{\mathcal{P}_1} \Rcal_2)}$ to~$\Rcal_3$;\footnote{Formally, it is only the subset of $\mathcal{P}_2$ starting at the endpoints of $\mathcal{P}_1$ which is a linkage from ${(\Rcal_1 \circ_{\mathcal{P}_1} \Rcal_2)}$ to~$\Rcal_3$. Here and later in the paper, we will use such abuses of notation, when the appropriate subset of the path family is clear from context.}
        \item The linkage from~$\Rcal_1$ to~$\Rcal_3$ yielding the rays ${(\Rcal_1 \circ_{\mathcal{P}_1} \Rcal_2) \circ_{\mathcal{P}_2} \Rcal_3}$, which we call the \emph{concatenation~${\mathcal{P}_1 + \mathcal{P}_2}$ of~$\mathcal{P}_1$ and~$\mathcal{P}_2$}, is transitional.
    \end{enumerate}
\end{remarkdef}

The following lemmas are simple exercises.

\begin{lemma}
  \label{rem:tail-raygraph}
   Let~$\Gamma$ be a graph and ${( R_i \colon i \in I)}$ be a finite family of equivalent disjoint rays. Then the ray graph~${\RG( R_i \colon i \in I)}$ is connected.  Also, if~$R'_i$ is a tail of~$R_i$ for each~${i \in I}$, then we have that~${{\RG( R_i \colon i \in I)} = {\RG( R'_i \colon i \in I)}}$. 
   \qed
\end{lemma}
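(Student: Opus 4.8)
The statement has two independent parts, and for both the plan is a short pigeonhole argument from the definition of the ray graph (Definition~\ref{def_raygraph}) together with the definition of equivalence of rays.

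\textbf{Connectedness.} I would argue by contradiction: suppose $\RG(R_i \colon i \in I)$ splits as $I = A \sqcup B$ with $A,B$ nonempty and no edge of the ray graph between them (the cases $|I| \leq 1$ being trivial). Pick $a \in A$ and $b \in B$; since $R_a$ and $R_b$ are equivalent there is an infinite family $\mathcal{Q}$ of pairwise disjoint $R_a$--$R_b$ paths in $\Gamma$. Writing $U_A = \bigcup_{i \in A} V(R_i)$ and $U_B = \bigcup_{i \in B} V(R_i)$, which are disjoint since the $R_i$ are, each $Q \in \mathcal{Q}$ has its first vertex in $U_A$ and its last in $U_B$, so it contains a $U_A$--$U_B$ subpath $Q'$ whose internal vertices lie on no $R_k$ ($k \in I$); thus $Q'$ is an $R_i$--$R_j$ path for some $i \in A$, $j \in B$, meeting no $R_k$ with $k \neq i,j$. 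The $Q'$ are pairwise disjoint and there are infinitely many of them, so since $A \times B$ is finite some pair $(i^\ast,j^\ast)$ is realised infinitely often, producing infinitely many pairwise disjoint $R_{i^\ast}$--$R_{j^\ast}$ paths meeting no other $R_k$. Hence $i^\ast j^\ast$ is an edge of the ray graph across the partition, a contradiction.

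\textbf{Passing to tails.} The two ray graphs share the vertex set $I$, so only the edge sets need comparison. The key observation is that $F := \bigcup_{k \in I}\big(V(R_k)\setminus V(R'_k)\big)$ is finite, being a finite union of finite sets (each $R'_k$ is a tail of $R_k$), and that consequently $V(R_k)\setminus F \subseteq V(R'_k)$ and $V(R_k) \subseteq V(R'_k) \cup F$ for every $k$. Given an edge $ij$ of one of the two graphs, I would take an infinite family of pairwise disjoint witnessing paths, arrange by passing to minimal subpaths that each meets the union of the relevant rays only in its two endvertices, then discard the at most $|F|$ of them that meet $F$. Each surviving path $P$ then has its two endvertices on $R'_i$ and $R'_j$ (using $V(R_i)\setminus F \subseteq V(R'_i)$) and hence also on $R_i$ and $R_j$; its internal vertices avoid $F$ and avoid $R'_k$ for $k \neq i,j$, hence avoid $R_k$ for $k \neq i,j$ as well (using $V(R_k) \subseteq V(R'_k) \cup F$); so $P$ simultaneously witnesses the edge $ij$ in $\RG(R_i \colon i\in I)$ and in $\RG(R'_i \colon i\in I)$. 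Since infinitely many paths survive, the edge appears in both graphs; running this in both directions gives $\RG(R_i \colon i\in I) = \RG(R'_i \colon i\in I)$.

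\textbf{Main obstacle.} There is essentially none; the only care needed is the bookkeeping that makes the discards harmless --- namely that $F$ is finite, so that removing every (disjoint) witnessing path touching it still leaves infinitely many, and that after the discards one may pass to minimal subpaths so that the surviving paths meet the rays in question only at their endvertices, which is what lets one translate freely between ``$R_i$--$R_j$ path'' and ``$R'_i$--$R'_j$ path''.
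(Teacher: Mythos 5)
The paper offers no proof of this lemma: it is flagged as a ``simple exercise'' and closed immediately with a \verb|\qed|. Your proof is correct and is exactly the kind of routine pigeonhole argument the authors had in mind. For connectedness, the contraction to a minimal $U_A$--$U_B$ subpath and the pigeonhole over the finite set $A\times B$ are both sound, and the resulting paths meet no $R_k$ with $k\neq i^\ast,j^\ast$, as required by Definition~\ref{def_raygraph}. For the tail statement, the key facts you isolate ($F$ finite, $V(R_k)\setminus F\subseteq V(R'_k)$, $V(R_k)\subseteq V(R'_k)\cup F$) are exactly what is needed, and discarding the at most $|F|$ witnessing paths that touch $F$ (possible because they are pairwise disjoint) leaves infinitely many that simultaneously witness the edge $ij$ in both ray graphs. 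The only minor imprecision is that you phrase the final translation as if both directions were being handled at once; it reads more cleanly if you run the argument separately starting from an edge of $\RG(R_i : i\in I)$ and from an edge of $\RG(R'_i : i\in I)$, but the underlying reasoning is the same and nothing is missing.
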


\begin{lemma}[{\cite[Lemma 3.4]{BEEGHPTII}}]
    \label{l:rayinducedsubgraph}
    Let~$\Gamma$ be a graph, ${\Gamma' \subseteq \Gamma}$, ${\mathcal{R} = (R_i \colon i \in I)}$ be a finite family of disjoint rays in~$\Gamma'$, and let~${\mathcal{S} = (S_j \colon j \in J)}$ be a finite family of disjoint rays in~${\Gamma - V(\Gamma')}$, where~$I$ and~$J$ are disjoint. 
    Then~${\RG_{\Gamma'}(\mathcal{R})}$ is a subgraph of~${\RG_{\Gamma}(\mathcal{R} \cup \mathcal{S})\big[I \big]}$. 
    \qed
\end{lemma}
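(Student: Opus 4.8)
The plan is to check the two things a subgraph inclusion asks for, namely that the vertex sets and edge sets behave correctly. The vertex part is immediate: $\RG_{\Gamma'}(\mathcal{R})$ has vertex set $I$, and $\RG_{\Gamma}(\mathcal{R}\cup\mathcal{S})$ has vertex set $I\cup J$, so $\RG_{\Gamma}(\mathcal{R}\cup\mathcal{S})[I]$ has vertex set exactly $I$ as well. Hence all the content lies in showing $E(\RG_{\Gamma'}(\mathcal{R}))\subseteq E(\RG_{\Gamma}(\mathcal{R}\cup\mathcal{S})[I])$.

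For the edge part, I would fix an edge $ij\in E(\RG_{\Gamma'}(\mathcal{R}))$ and unpack Definition~\ref{def_raygraph}: this gives an infinite family $\mathcal{P}$ of pairwise vertex-disjoint $R_i$--$R_j$ paths inside $\Gamma'$, none of which meets $R_k$ for any $k\in I\setminus\{i,j\}$. The claim is that this very same family $\mathcal{P}$ witnesses $ij\in E(\RG_{\Gamma}(\mathcal{R}\cup\mathcal{S}))$. Each $P\in\mathcal{P}$ is a path in $\Gamma'\subseteq\Gamma$; it is still an $R_i$--$R_j$ path in $\Gamma$, because the property of meeting $R_i\cup R_j$ only in its endpoints is unchanged when we pass from $\Gamma'$ to $\Gamma$ (all of $R_i$, $R_j$ and $P$ lie in $\Gamma'$). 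The paths in $\mathcal{P}$ remain pairwise vertex-disjoint, and by hypothesis they avoid $R_k$ for every $k\in I\setminus\{i,j\}$. The only rays of $\mathcal{R}\cup\mathcal{S}$ not yet accounted for are the $S_j$, $j\in J$; but each $S_j$ lies in $\Gamma-V(\Gamma')$, so $V(P)\cap V(S_j)\subseteq V(\Gamma')\cap\bigl(V(\Gamma)\setminus V(\Gamma')\bigr)=\varnothing$. Thus no member of $\mathcal{P}$ meets any ray of $\mathcal{R}\cup\mathcal{S}$ other than $R_i$ and $R_j$, so $ij\in E(\RG_{\Gamma}(\mathcal{R}\cup\mathcal{S}))$, and since $i,j\in I$ this is an edge of $\RG_{\Gamma}(\mathcal{R}\cup\mathcal{S})[I]$, as required.

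I do not expect a genuine obstacle here — this is the "simple exercise" advertised before the statement. The only point needing care is the bookkeeping around the definition of the ray graph: one must confirm that both defining conditions for a path to contribute an edge ("$R_i$--$R_j$ path" and "meets no other ray") are local to $\Gamma'$, so that disjointness from the extra rays $\mathcal{S}$ — which by assumption live entirely outside $\Gamma'$ — comes for free rather than needing a new argument. No converse inclusion is claimed (and indeed passing to $\Gamma$ may well create additional edges between vertices of $I$, e.g. via paths through $\Gamma-V(\Gamma')$), so once the above is written out the lemma is proved.
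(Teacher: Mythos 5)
Your proof is correct, and it is the natural (and essentially the only) argument: the paper itself gives no proof, labelling this and Lemma~\ref{rem:tail-raygraph} as ``simple exercises'' and citing the statement from \cite{BEEGHPTII} with an immediate \qed. The one load-bearing observation --- that any path witnessing an edge of $\RG_{\Gamma'}(\mathcal{R})$ lies entirely in $V(\Gamma')$ and therefore automatically misses every $S_j\subseteq\Gamma-V(\Gamma')$ --- is exactly what you isolate, and the rest is bookkeeping through Definition~\ref{def_raygraph}.
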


\subsection{Separations and tree-decompositions of graphs}

\begin{definition}
    \label{def_separation}
    Let~${G = (V,E)}$ be a graph. 
    A \emph{separation} of~$G$ is a pair~${(A,B)}$ of subsets of vertices such that~${A \cup B = V}$ and such that there is no edge between~${B \setminus A}$ and~${A \setminus B}$. 
    Given a separation~${(A,B)}$, we write~$\overline{G[B]}$ for the graph obtained by deleting all edges in the \emph{separator}~${A \cap B}$ from~${G[B]}$.
    Two separations~${(A,B)}$ and~${(C,D)}$ are \emph{nested} if one of the following conditions hold: 
    \begin{align*}
        &{A \subseteq C} \textnormal{ and } {D \subseteq B},  \qquad \textnormal { or } \qquad 
        {B \subseteq C} \textnormal{ and } {D \subseteq A},  \qquad \textnormal { or } \qquad \\
        &{A \subseteq D} \textnormal{ and }  {C \subseteq B} ,  \qquad \textnormal { or } \qquad 
        {B \subseteq D} \textnormal{ and }  {C \subseteq A}.
    \end{align*}
\end{definition}
\begin{definition}
    Let~$T$ be a tree with a root~${v \in V(T)}$. 
    Given nodes~${x, y \in V(T)}$, let us denote by~$xTy$ the unique path in~$T$ between~$x$ and~$y$, 
    by~$T_x$ denote the component of~${T - E(vTx)}$ containing~$x$, 
    and by~$\overline{T_x}$ the tree~${T - T_x}$. 
    
    Given an edge~${e = tt' \in E(T)}$, we say that~$t$ is the \emph{lower vertex} of~$e$, denoted by~$e^-$, if~${t \in vTt'}$. 
    In this case, $t'$ is the \emph{higher vertex} of~$e$, denoted by~$e^+$.
    
    If~$S$ is a subtree of a tree~$T$, let us write~${\partial(S) = E(S,T \setminus S)}$ for the edge cut between~$S$ and its complement in~$T$. 
    
    We say that~$S$ is a \emph{initial subtree} of~$T$ if~$S$ contains~$v$. 
    In this case, we consider~$S$ to be rooted in~$v$ as well.
\end{definition}

A reader unfamiliar with tree-decompositions may also consult~\cite[Chapter 12.3]{D16}.

\begin{definition}[Tree-decomposition]
    Given a graph~${G = (V,E)}$, a \emph{tree-decomposition} of~$G$ is a pair ${(T,\mathcal{V})}$ consisting of a rooted tree~$T$, together with a family of subsets of vertices ${\mathcal{V} = (V_t \colon t \in V(T))}$, such that: 
    \begin{itemize}
        \item ${V(G) = \bigcup \mathcal{V}}$;
        \item For every edge~${e \in E(G)}$ there is a~${t \in V(T)}$ such that~$e$ lies in~${G[V_t]}$;
        \item ${V_{t_1} \cap V_{t_3} \subseteq V_{t_2}}$ whenever~${t_2 \in V(t_1 T t_3)}$. 
    \end{itemize}
    The vertex sets~$V_t$ for~${t \in V(T)}$ are called the \emph{parts} of the tree-decomposition~${(T,\mathcal{V})}$. 
\end{definition}

\begin{definition}[Tree-width]
    Suppose~${(T,\mathcal{V})}$ is a tree-decomposition of a graph~$G$. 
    The \emph{width} of~${(T,\mathcal{V})}$ is the number
    ${\sup\, \{ |V_t|-1 \colon t \in V(T) \} \in \Nbb_0 \cup \{\infty\}}$.
    The \emph{tree-width} of a graph~$G$ is the least width of any tree-decomposition of~$G$.
\end{definition}

\section{Extensive tree-decompositions and self minors}
\label{s:extensive}
The purpose of this section is to explain the extensive tree-decompositions mentioned in the proof sketch. 
Some ideas motivating this definition are already present in Andreae's proof that locally finite trees are ubiquitous under the topological minor relation~\cite[Lemma~2]{A79}.

\subsection{Extensive tree-decompositions}
\begin{definition}[Separations induced by tree-decompositions]
    Given a tree-decomposition ${(T,\mathcal{V})}$ of a graph~$G$, and an edge~${e \in E(T)}$, let 
    \begin{itemize}
        \item ${A(e) := \bigcup \{ V_{t'} \colon t' \notin V(T_{e^+}) \}}$;
        \item ${B(e) := \bigcup \{ V_{t'} \colon t' \in V(T_{e^+}) \}}$; 
        \item ${S(e) := A(e) \cap B(e)=V_{e^-}\cap V_{e^+}}$.
    \end{itemize}
    Then~${\left( A(e),B(e) \right)}$ is a separation of~$G$ (cf.~\cite[Chapter 12.3.1]{D16}). 
    We call~${B(e)}$ the \emph{bough of~${(T,\mathcal{V})}$ rooted in~$e$} and~${S(e)}$ the \emph{separator of~${B(e)}$}.  \
    When writing $\overline{G[B(e)]}$ it is implicitly understood that this refers to the separation~${\left( A(e),B(e) \right)}$ (cf.~Definition~\ref{def_separation}.)
\end{definition}

\begin{definition}
    \label{defn_treestuff}
    Let~${(T,\mathcal{V})}$ be a tree-decomposition of a graph~$G$. 
    For a subtree~${S \subseteq T}$, let us write
    \[
        G(S) = G\left[\bigcup_{t\in V(S)} V_t\right]
    \]
    and, if~$H$ is an~$IG$, we write~${H(S) = H(G(S))}$ for the restriction of~$H$ to~${G(S)}$.
\end{definition}

\begin{definition}[Self-similar bough]
    \label{d:selfsimilarbough} 
    Let~${(T, \mathcal{V})}$ be a tree-decomposition of a graph~$G$. 
    Given~${e \in E(T)}$, the bough~${B(e)}$ is called \emph{self-similar} (towards an end~$\omega$ of~$G$), 
    if there is a family ${\mathcal{R}_e = ( R_{e,s} \colon s \in S(e))}$ of disjoint $\omega$-rays in~$G$ such that for all~${n \in \Nbb}$ there is an edge~${e' \in E(T_{e^+})}$ with~${\dist(e^-,e'^-) \geq n}$ such that 
    \begin{itemize}
        \item for each~${s \in S(e)}$, the ray~$R_{e,s}$ starts in~$s$ and meets~${S(e')}$;
        \item there is a subgraph~${W \subseteq G[B(e')]}$ which is an inflated copy of~$\overline{G[B(e)]}$;
        \item for each~${s \in S(e)}$, we have~${V(R_{e,s}) \cap S(e') \subseteq W(s)}$. 
    \end{itemize}
    Such a~$W$ is called a \emph{witness for the self-similarity of~${B(e)}$ (towards an end~$\omega$ of~$G$) of distance at least~$n$}. 
\end{definition}

\begin{definition}[Extensive tree-decomposition]
    \label{d:extensive}
    A tree-decomposition~${( T, \mathcal{V} )}$ of~$G$ is \emph{extensive} if 
    \begin{itemize}
        \item $T$ is a locally finite, rooted tree;
        \item each part of~${(T, \mathcal{V})}$ is finite; 
        \item every vertex of~$G$ appears in only finitely many parts of~$\mathcal{V}$; 
        \item for each~${e \in E(T)}$, the bough~${B(e)}$ is self-similar towards some end~$\omega_e$ of~$G$.
    \end{itemize}
\end{definition}

\begin{remark}
    If ${( T, \mathcal{V} )}$ is extensive then, for each edge~${e \in E(T)}$ and every~${n \in \Nbb}$, there is an an edge~${e' \in E(T_{e^+})}$ with~${\dist(e^-,e'^-) \geq n}$, such that~${G[B(e')]}$ contains a witness for the self-similarity of~${B(e)}$. 
    Since~$T$ is locally finite, there is some ray~$R_e$ in~$T$ such that there are infinitely many such~$e'$ on~$R_e$.
\end{remark}

The following is the main result of this paper.

\begin{theorem}
    \label{t:nice}
    Every locally finite connected graph admitting an extensive tree-decomposi{-}tion is $\preceq$-ubiquitous.
\end{theorem}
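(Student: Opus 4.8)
The plan is to run the recursive embedding strategy sketched in Section~\ref{s:sketch}, now for a general locally finite connected graph~$G$ carrying an extensive tree-decomposition~$(T,\mathcal{V})$. So fix a graph~$\Gamma$ with $nG\preceq\Gamma$ for every $n\in\Nbb$; the task is to produce an $\aleph_0 G$-minor in~$\Gamma$. \textbf{Step 1 (Reduction to a concentrated end).} I would first phrase the hypothesis in terms of \emph{tribes} — collections of arbitrarily large families of disjoint $G$-minors in~$\Gamma$ — and apply the pruning arguments of~\cite{BEEGHPTI} (recalled in Section~\ref{s:tribes}) to conclude that either $\aleph_0 G\preceq\Gamma$ outright, or there is a single end~$\epsilon$ of~$\Gamma$ and a subtribe \emph{concentrated} towards~$\epsilon$, i.e.\ for every finite $X\subseteq V(\Gamma)$ arbitrarily large families of disjoint $G$-minors lie in $C(X,\epsilon)$. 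Since $G$ itself may have (infinitely many) ends, I would in addition keep track, at each stage and for only finitely many ends $\delta$ of~$G$ at a time, of whether $H(\delta)=\epsilon$ and of the linear (or cyclic) order of the canonical rays towards these $\delta$; a further pigeonhole makes this consistent across the relevant subtribe, and this finite bookkeeping is updated dynamically during the construction, exactly as in~\cite{BEEGHPTI}.

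\textbf{Step 2 (Structure of~$\epsilon$).} Next I would invoke the trichotomy for thick ends from~\cite{BEEGHPTII}, recalled in Section~\ref{s:nonpebbly}: either every injection between finite families of $\epsilon$-rays is a transition function, in which case $\Gamma$ already has a $K_{\aleph_0}$-minor and hence $\aleph_0 G\preceq\Gamma$; or every ray graph of a finite family of $\epsilon$-rays is a cycle and transition functions preserve the cyclic order; or every such ray graph is a path with a bounded number of hanging ``junk'' vertices and transition functions preserve the linear order along the central path. The last two cases are treated in parallel, and I would present the path case in detail, the cyclic case being the same after cutting the cycle open.

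\textbf{Step 3 (The recursive construction).} This is the substantive part. Write $G=\bigcup_n G(S_n)$ for an increasing sequence of finite initial subtrees $S_0\subseteq S_1\subseteq\cdots$ of~$T$ (possible since $T$ is locally finite), so that the edges in $\partial(S_n)$ index boughs whose self-similarity supplies the rays we will mark. At stage~$n$ I would maintain disjoint tidy inflated copies $H^n_1,\dots,H^n_n$ of $G(S_n)$ in $C(X_n,\epsilon)$, with $H^n_m$ extending $H^{n-1}_m$ fixing $G(S_{n-1})$, together with, for each $m$ and each relevant separator vertex $s$, a private $\epsilon$-ray $R^n_{m,s}$ whose initial segment is the pullback $H^n_m{}^{\downarrow}$ of the marked bough-ray, where all the $R^n_{m,s}$ (together with a bounded set of junk rays) lie consecutively, in the prescribed order, along the central path of their ray graph. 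To pass from stage~$n$ to $n+1$:
\begin{itemize}
  \item pick a large finite family $\mathcal{H}$ of disjoint $G$-minors from the concentrated subtribe, avoiding all current $H^n_m$ and their ray-bundles;
  \item build the transition box of Lemma~\ref{l:transition-box} rich enough to realise every rerouting of the rays $R^n_{m,s}$ that~$\epsilon$ permits;
  \item apply the strong linking lemma (Lemma~\ref{l:link}) after the transition box to link the $R^n_{m,s}$ transitionally onto the bundles of the members of $\mathcal{H}$ while keeping one $H\in\mathcal{H}$ untouched;
  \item exploit the linear order on the central path to reroute \emph{inside the transition box} so that property~\eqref{property} holds — each $R^n_{m,s}$ is linked to the correctly labelled ray of a single $H\in\mathcal{H}$, with the bundle of the forthcoming $H^{n+1}_{n+1}$ kept separate;
  \item use self-similarity of the boughs of $(T,\mathcal{V})$ (equivalently, the self-minors of $G$ guaranteed by extensiveness) to find, far enough along these rays, a witness glued correctly onto each $H^n_m$ along its separator, yielding $H^{n+1}_m$, and take an initial segment of the untouched $H$, with a fresh bundle, as $H^{n+1}_{n+1}$.
\end{itemize}
Because the parts are finite and every vertex of~$G$ lies in only finitely many parts, each $H_m:=\bigcup_{n\geq m}H^n_m$ is a well-defined tidy $IG$, the $H_m$ remain pairwise disjoint, and $(H_m\colon m\in\Nbb)$ witnesses $\aleph_0 G\preceq\Gamma$.

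\textbf{Main obstacle.} I expect the real difficulty to be Step~3, specifically reconciling property~\eqref{property} with the need to introduce at every stage a \emph{fresh} disjoint minor $H^{n+1}_{n+1}$ together with its own private bundle. The strong linking lemma yields a linkage avoiding one $H\in\mathcal{H}$ but not one with property~\eqref{property}; repairing it by rerouting can force the linkage across the new bundle (Figure~\ref{fig:only_rounting}). The resolution — fixing a transition box \emph{before} choosing the linkage, and later performing the needed rerouting among the rays of $\mathcal{R}$ \emph{before} the linkage rather than after it (Figure~\ref{l_fig_intro}) — is the key device, and weaving this together with the dynamic order-consistency bookkeeping over the growing finite set of ends of~$G$ is where the bulk of the work in Sections~\ref{s:tribes}--\ref{sec:countable-subtrees} lies. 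Establishing in the first place that the graphs of Theorems~\ref{c:locfin} and~\ref{c:finend} admit extensive tree-decompositions is a separate matter, carried out in Sections~\ref{s:extensive}--\ref{s:getnice} via Thomas's well-quasi-ordering theorem~\cite{T89}, and does not enter the proof of Theorem~\ref{t:nice} itself.
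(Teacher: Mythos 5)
Your proposal follows the paper's strategy at every stage---tribes concentrated at a single end~$\epsilon$, the pebbly/grid-like/half-grid-like trichotomy, transition boxes, the strong linking lemma, and the key device of rerouting the rays \emph{before} the linkage inside a transition box---and you correctly identify the need to dynamically track finitely many ends of~$G$ at a time. However, there is one genuine gap in your Step~3, concerning the boughs of $(T,\mathcal{V})$ whose canonical rays do \emph{not} go to~$\epsilon$. You plan to exhaust~$G$ by \emph{finite} initial subtrees $S_n$ and extend the inflated copies $H^n_m$ of $G(S_n)$ piece by piece. But as soon as $\partial(S_n)$ contains an edge~$e$ with $\omega_e$ cut from~$\epsilon$ (i.e.\ $H(\omega_e)\neq\epsilon$ for the relevant tribe), the pullback rays $H^{\downarrow}(\mathcal{R}_e)$ are not $\epsilon$-rays, so the entire linking/transition-box apparatus---which operates only on families of $\epsilon$-rays---gives you no means to extend $H^n_m$ in that direction while keeping the extension disjoint from the other $k$ copies and their bundles. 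This is an obstruction independent of, and in addition to, the obstacle you single out.

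The paper's resolution is to \emph{not} use the finite subtrees $T_n$ directly, but the (typically infinite) initial subtrees $T_n^{\neg\epsilon}$, which already contain the full boughs going towards the non-$\epsilon$ ends; correspondingly one embeds each such infinite bough $B(e)$ in a single step via a \emph{well-separated push-out} (Lemma~\ref{l:separatedpushout}): extensiveness supplies push-outs of arbitrary depth, so the whole infinite piece can be placed behind a fresh finite bounder, after which the induction only ever needs to link along genuine $\epsilon$-rays. This is formalised through the ``well-separated from~$\epsilon$'' condition on the tribe (Definition~\ref{d:tribes}) and propagated by the $G$-tribe refinement lemma (Lemma~\ref{lem:refinement-lemma}). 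Without this mechanism the piece-by-piece construction stalls at the first non-$\epsilon$ bough; with it, your sketch does become the paper's proof.
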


\subsection{Self minors in extensive tree-decompositions}

The existence of an extensive tree-decomposition of a graph~$G$ will imply the existence of many self-minors of~$G$, which will be essential to our proof. 

Throughout this subsection, let~$G$ denote a locally finite, connected graph with an extensive tree-decomposition~${(T, \mathcal{V})}$.

\begin{definition}
    \label{d:amalgamation}
    Let~${(A,B)}$ be a separation of~$G$ with~${A \cap B = \{v_1,v_2,\ldots,v_n \}}$. 
    Suppose ${H_1, H_2}$ are subgraphs of a graph~$\Gamma$, where 
    $H_1$ is an inflated copy of~${G[A]}$, 
    $H_2$ is an inflated copy of~$\overline{G[B]}$,
    and for all vertices~${x \in A}$ and~${y \in B}$, we have~${H_1(x) \cap H_2(y) \neq \emptyset}$ only if~${x = y = v_i}$ for some~$i$. 
    Suppose further that~$\mathcal{P}$ is a family of disjoint paths ${(P_i \colon i \in [n])}$ in~$\Gamma$ such that each~$P_i$ is a path from~${H_1(v_i)}$ to~${H_2(v_i)}$, which is otherwise disjoint from~${H_1 \cup H_2}$. 
    Note that~$P_i$ may be a single vertex if~${H_1(v_i) \cap H_2(v_i) \neq \emptyset}$. 
    
    We write~${H_1 \oplus_{\Pcal} H_2}$ for the~$IG$ given by~${(H,\phi)}$, where~${H = H_1 \cup H_2 \cup \bigcup_{i\in [n]} P_i}$ and
    \[
        H(v) := 
        \begin{cases} 
            H_1(v_i) \cup V(P_i) \cup H_2(v_i) & \text{ if } v = v_i \in A \cap B,\\
            H_1(v) & \text{ if } v \in A \setminus B,\\
            H_2(v) & \text{ if } v \in B \setminus A.
        \end{cases}
    \]
    We note that this may produce a non-tidy~$IG$, in which case in practise (in order to maintain our assumption that each~$IG$ we consider is tidy) we will always delete some edges inside the branch sets to make it tidy.
    
    We will often use this construction when the family $\mathcal{P}$ consists of certain segments of a family of disjoint rays $\Rcal$. If $\Rcal$ is such that each~$R_i$ has its first vertex in~${H_1(v_i)}$ and is otherwise disjoint from~$H_1$, and such that every~$R_i$ meets~$H_2$, and does so first in some vertex~${x_i \in H(v_i)}$, then we write 
    \[ 
        H_1 \oplus_\Rcal H_2 = H_1 \oplus_{(R_ix_i \colon i \in [n])} H_2.
    \]
\end{definition}

\begin{definition}[Push-out]
    \label{d:pushout}
        A self minor ${G' \subseteq G}$ (meaning~$G'$ is an~$IG$) is called a \emph{push-out of~$G$ along~$e$ to depth~$n$} for some~${e \in E(T)}$ if there is an edge~${e' \in T_{e^+}}$ such that ${\dist(e^-,e'^-) \geq n}$ and a subgraph ${W \subseteq G[B(e')]}$, which is an inflated copy of~$\overline{G[B(e)]}$, such that
        ${G' = G[A(e)] \oplus_{\Rcal_e} W}$.

    Similarly, if~$H$ is an~$IG$, then a subgraph~$H'$ of~$H$ is a \emph{push-out of~$H$ along~$e$ to depth~$n$} for some ${e \in E(T)}$ if there is an edge~${e' \in T_{e^+}}$ such that~${\dist(e^-,e'^-) \geq n}$ and a subgraph~${W \subseteq H(G[B(e')])}$, which is an inflated copy of~$\overline{G[B(e)]}$, such that
    \[
         H' = H(G[A(e)]) \oplus_{H^{\downarrow}(\Rcal_e)} W.
    \]
    
    Note that if~$G'$ is a push-out of~$G$ along~$e$ to depth~$n$, then~${H(G')}$ has a subgraph which is a push-out of~$H$ along~$e$ to depth~$n$.
\end{definition}

\begin{lemma}
    \label{lem:pushout2}
    For each~${e \in E(T)}$, each~${n \in \Nbb}$, and each witness~$W$ of the self-similarity of~${B(e)}$ of distance at least~$n$ there is a corresponding push-out 
    ${G_W := G[A(e)] \oplus_{\mathcal{R}_e} W}$
    of~$G$ along~$e$ to depth~$n$. 
\end{lemma}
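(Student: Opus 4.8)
The plan is to check directly that the amalgamation $G[A(e)]\oplus_{\mathcal{R}_e}W$ of Definition~\ref{d:amalgamation} is well defined, and then to observe that it is a subgraph of $G$, so that by Definition~\ref{d:pushout} it is a push-out of $G$ along $e$ to depth $n$. Unwinding Definition~\ref{d:selfsimilarbough}, the witness $W$ comes with an edge $e'\in E(T_{e^+})$ satisfying $\dist(e^-,e'^-)\geq n$ such that $W\subseteq G[B(e')]$ is an inflated copy of $\overline{G[B(e)]}$, each $R_{e,s}$ starts in $s$ and meets $S(e')$, and $V(R_{e,s})\cap S(e')\subseteq W(s)$ for all $s\in S(e)$. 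I would then apply Definition~\ref{d:amalgamation} to the separation $(A(e),B(e))$ (so that the separator is $S(e)$), with $H_1:=G[A(e)]$ viewed as a trivial inflated copy of $G[A(e)]$ with singleton branch sets, with $H_2:=W$, and with the family of rays $\mathcal{R}_e$.

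Two elementary facts about the tree-decomposition do the work. First, $e'\in E(T_{e^+})$ gives $V(W)\subseteq B(e')\subseteq B(e)$; hence if $x\in A(e)$ lies in a branch set $W(y)$ with $y\in B(e)$, then $x\in A(e)\cap B(e)=S(e)$. Second, the path condition of the tree-decomposition yields $S(e)\cap B(e')\subseteq S(e')$, so the above $x$ lies in $S(e')$; since $R_{e,x}$ starts in $x$, the hypothesis gives $x\in V(R_{e,x})\cap S(e')\subseteq W(x)$, and as the branch sets of $W$ are pairwise disjoint, $x\in W(x)\cap W(y)$ forces $y=x$. This is precisely the branch-set overlap condition of Definition~\ref{d:amalgamation}. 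For the linking segments, note that the tail of each $R_{e,s}$ after $s$ avoids $S(e)$ entirely: it cannot revisit $s$, and every other vertex of $S(e)$ is the initial vertex of a different ray of the pairwise disjoint family $\mathcal{R}_e$. Being connected and avoiding the separator $S(e)$, this tail lies wholly in $A(e)\setminus S(e)$ or in $B(e)\setminus S(e)$; since $R_{e,s}$ must meet $S(e')\subseteq B(e)$, it lies in $B(e)\setminus S(e)$ unless $s$ itself already lies on $W$ (equivalently $s\in S(e)\cap B(e')\subseteq S(e')$), in which case $s\in W(s)$ by the hypothesis and the relevant segment of $R_{e,s}$ is the single vertex $s$. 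In either case, writing $x_s$ for the first vertex of $R_{e,s}$ on $W$, the segment $R_{e,s}x_s$ meets $A(e)$ only in $s$; moreover $R_{e,s}$ can reach $V(W)\subseteq B(e')$ only after crossing $S(e')$, and its first vertex in $S(e')$ — hence $x_s$ — lies in $W(s)$ by the hypothesis. The segments $R_{e,s}x_s$ are pairwise disjoint because the $R_{e,s}$ are, and each meets $H_1\cup H_2$ only in its two endpoints $s\in H_1(s)$ and $x_s\in H_2(s)$. Thus $G_W:=G[A(e)]\oplus_{\mathcal{R}_e}W$ is defined; it is a subgraph of $G$ because $G[A(e)]$, $W\subseteq G[B(e')]$ and the segments $R_{e,s}x_s$ all are, and (after deleting any superfluous edges inside branch sets) it is a tidy $IG$, hence by Definition~\ref{d:pushout} a push-out of $G$ along $e$ to depth $n$.

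The part I expect to be delicate is the book-keeping at the two separators $S(e)$ and $S(e')$: guaranteeing that no ray of $\mathcal{R}_e$ slips back into $A(e)$ after its start, and that $R_{e,s}$ meets $W$ first in the matching branch set $W(s)$. Both reduce to the disjointness of the family $\mathcal{R}_e$ (which prevents any ray from re-crossing the finite separator $S(e)$) together with the hypothesis $V(R_{e,s})\cap S(e')\subseteq W(s)$ and the inclusion $S(e)\cap B(e')\subseteq S(e')$. The mildly awkward case — which I would treat separately — is when $S(e)$ and $S(e')$ overlap (possible for small $n$), where some linking segments collapse to single vertices and one must invoke the clause in Definition~\ref{d:amalgamation} allowing $P_i$ to be a single vertex when $H_1(v_i)\cap H_2(v_i)\neq\emptyset$.
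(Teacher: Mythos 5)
Your proof is correct and follows essentially the same approach as the paper's: verify the hypotheses of Definition~\ref{d:amalgamation} (the branch-set overlap condition for $G[A(e)]$ and $W$, and the conditions on the linking segments of~$\mathcal{R}_e$) and then invoke Definition~\ref{d:pushout}. You spell out one step the paper asserts rather tersely, namely that the tail of each $R_{e,s}$ past $s$ avoids $A(e)$, which you correctly obtain from the pairwise disjointness of $\mathcal{R}_e$ forcing that tail off $S(e)$ and hence, being connected and needing to reach $S(e')\subseteq B(e)$, wholly into $B(e)\setminus S(e)$.
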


\begin{proof}
    Let~${e' \in E(T_{e^+})}$ be the edge in Definition~\ref{d:selfsimilarbough} such that~${W \subseteq G[B(e')]}$. 
    By Definition~\ref{d:selfsimilarbough},
    each ray~$R_{e,s}$
    meets~${S(e')}$ and ${R_{e,s} \cap S(e') \subseteq W(s)}$. 
    Hence, the initial segment of~$R_{e,s}$ up to the first point in~$W$ only meets~${G[A(e)] \cup W}$ in~${\{s\} \cup W(s)}$. 
    Now, if ${s' \in S(e) \cap W(s)}$ for some~$s'$, then~${s' \in S(e')}$, and so~${R_{e,s'} \cap S(e') \not\subseteq W(s')}$, contradicting Definition~\ref{d:selfsimilarbough}. 
    
    Since~${G[A(e)]}$ is an~${IG[A(e)]}$ and~$W$ is an inflated copy of~$\overline{G[B(e)]}$, by Definitions~\ref{d:amalgamation} and~\ref{d:pushout} ${G[A(e)] \oplus_{\Rcal_e} W}$ is well-defined and is indeed a push-out of~$G$ along~$e$ to depth~$n$.
\end{proof}

The existence of push-outs of~$G$ along~$e$ to arbitrary depths is in some sense the essence of extensive tree-decompositions, and lies at the heart of our inductive construction in Section~\ref{sec:countable-subtrees}.

\section{Existence of extensive tree-decompositions}
\label{s:getnice}

The purpose of this section is to examine two classes of locally finite connected graphs that have extensive tree-decompositions: 
Firstly, the class of graphs with finitely many ends, all of which are thin, and secondly the class of graphs of finite tree-width. 
In both cases we will show the existence of extensive tree-decompositions using some results about the \emph{well-quasi-ordering} of certain classes of graphs.

A \emph{quasi-order} is a reflexive and transitive binary relation, such as the minor relation between graphs. 
A quasi-order~$\preceq$ on a set~$X$ is a \emph{well-quasi-order} if for all infinite sequences~${(x_i)_{i \in \Nbb}}$ with~${x_i \in X}$ for every~${i \in \Nbb}$ there exist~${i, j \in \Nbb}$ with~${i < j}$ such that~${x_i \preceq x_j}$. 
The following two consequences will be useful.

\begin{remark}
    \label{r:increasingsubsequence}
    A simple Ramsey type argument shows that if~$\preceq$ is a well-quasi-order on~$X$, then every infinite sequence~${(x_i)_{i \in \Nbb}}$ with~${x_i \in X}$ for every~${i \in \Nbb}$ contains an increasing infinite subsequence~${x_{i_1},x_{i_2}, \ldots \in X}$. 
    That is, an increasing infinite sequence~${i_1<i_2<\ldots}$ such that~${x_{i_j} \preceq x_{i_k}}$ for all~${j < k}$.
    
    Also, it is simple to show that if~$\preceq$ is a well-quasi-order on~$X$, then for every infinite sequence~${(x_i)_{i \in \Nbb}}$ with~${x_i \in X}$ for every~${i \in \Nbb}$ there is an~${i_0 \in \Nbb}$ such that for every~${i \geq i_0}$ there are infinitely many~${j \in \Nbb}$ with~${x_i \preceq x_j}$. 
\end{remark}

A famous result of Robertson and Seymour~\cite{RS04}, proved over a series of 20 papers, shows that finite graphs are well-quasi-ordered under the minor relation. 
Thomas~\cite{T89} showed that for any~${k \in \Nbb}$ the class of graphs with tree-width at most~$k$ and arbitrary cardinality is well-quasi-ordered by the minor relation.

We will use slight strengthenings of both of these results, Lemma~\ref{lem:labelled-wqo} and Lemma~\ref{lem:labelled-wqo-btw}, to show that our two classes of graphs admit extensive tree-decompositions.

In Section~\ref{s:WQO} we will discuss in more detail the connection between our proof and well-quasi-orderings, and indicate how stronger well-quasi-ordering results could be used to prove the ubiquity of larger classes of graphs.

\subsection{Finitely many thin ends}

We will consider the following strengthening of the minor relation. 

\begin{definition}
    Given~${\ell \in \Nbb}$, an $\ell$-\emph{pointed graph} is a graph~$G$ together with a function~${\pi \colon [\ell] \to V(G)}$, called a \emph{point function}. 
    For $\ell$-pointed graphs~${(G_1, \pi_1)}$ and~${(G_2, \pi_2)}$, we say~${(G_1, \pi_1) \preceq_p (G_2, \pi_2)}$ if~${G_1 \preceq G_2}$ and this can be arranged in such a way that~${\pi_2(i)}$ is contained in the branch set of~${\pi_1(i)}$ for every~${i \in [\ell]}$. 
\end{definition}

\begin{lemma}
    \label{lem:labelled-wqo}
    For~${\ell \in \Nbb}$ the set of $\ell$-pointed finite graphs is well-quasi-ordered under the relation~$\preceq_p$.
\end{lemma}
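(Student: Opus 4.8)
The plan is to derive this from the Robertson--Seymour theorem \cite{RS04} that finite graphs are well-quasi-ordered under the minor relation, in its standard strengthening to graphs whose vertices are labelled by a well-quasi-order. First, encode the point function as a labelling: to an $\ell$-pointed graph $(G,\pi)$ assign the vertex labelling $\lambda_G \colon V(G) \to \mathcal{P}([\ell])$, $\lambda_G(v) := \pi^{-1}(v)$, and order the (finite) label set $\mathcal{P}([\ell])$ by inclusion, so that it is trivially well-quasi-ordered. The labelled Graph Minor Theorem then says that for a well-quasi-order $(Q,\leq)$ the finite $Q$-labelled graphs are well-quasi-ordered under the relation $\leq_Q$, where $(G_1,\lambda_1)\leq_Q (G_2,\lambda_2)$ if $G_1$ has a minor model in $G_2$ such that for each $v\in V(G_1)$ the branch set of $v$ contains a vertex $u$ with $\lambda_1(v)\leq\lambda_2(u)$. (For the finite label set $\mathcal{P}([\ell])$ this labelled statement is itself an elementary consequence of \cite{RS04} via attaching rigid gadgets encoding the label, but it is cleanest to quote it directly.) Applying this with $Q=\mathcal{P}([\ell])$ yields a well-quasi-order on the $\ell$-pointed finite graphs equipped, via the above encoding, with $\leq_{\mathcal{P}([\ell])}$.

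It then remains to observe that $\leq_{\mathcal{P}([\ell])}$ refines $\preceq_p$ under this encoding. Indeed, if $(G_1,\lambda_{G_1})\leq_{\mathcal{P}([\ell])}(G_2,\lambda_{G_2})$, witnessed by a minor model in which the branch set of $v\in V(G_1)$ contains a vertex $u$ with $\pi_1^{-1}(v)\subseteq\pi_2^{-1}(u)$, then for every $i\in[\ell]$ with $\pi_1(i)=v$ we get $\pi_2(i)=u$, and $u$ lies in the branch set of $v=\pi_1(i)$; this is precisely the condition defining $(G_1,\pi_1)\preceq_p (G_2,\pi_2)$. Since $\preceq_p$ is a quasi-order (transitivity follows by composing minor models), and any quasi-order refined by a well-quasi-order is itself a well-quasi-order --- given an infinite sequence, a pair that is increasing in the finer relation is increasing in the coarser one as well --- this gives the lemma.

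I expect the only real content to be the labelled Graph Minor Theorem, which I would treat as a black-box strengthening of \cite{RS04}. The one point to handle with care is that $\leq_{\mathcal{P}([\ell])}$ is strictly stronger than $\preceq_p$: the definition of $\preceq_p$ allows the indices $i$ sharing a common value $\pi_1(i)$ to be sent to several distinct vertices inside one branch set, whereas $\leq_Q$ with this labelling forces them all onto a single vertex. Hence one must use the implication ``$\leq_{\mathcal{P}([\ell])}$-comparable $\Rightarrow$ $\preceq_p$-comparable'' and not attempt to identify the two relations. (An alternative route avoiding the labelled theorem is a direct gadget construction, attaching to $\pi(i)$ a rigid flag $F_i$ so as to reduce to plain minors of unlabelled graphs; there the main obstacle becomes ensuring the flags are recognised correctly by every minor embedding, which is exactly what the labelled theorem packages away.)
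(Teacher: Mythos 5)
Your proof is correct and takes essentially the same route as the paper, which simply cites a stronger well-quasi-ordering statement of Robertson and Seymour ([RS10, 1.7]) from which the pointed version follows; your explicit encoding of the point function as a $\mathcal{P}([\ell])$-labelling and the one-directional comparison of $\leq_{\mathcal{P}([\ell])}$ with $\preceq_p$ are both sound and merely spell out what the paper leaves implicit.
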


\begin{proof}
    This follows from a stronger statement of Robertson and Seymour in~\cite[1.7]{RS10}. 
\end{proof}

We will also need the following structural characterisation of locally finite one-ended graphs with a thin end due to Halin.

\begin{lemma}[{\cite[Satz~$3'$]{H65}}]\label{lem:oneended-td}
	Every one-ended, locally finite connected graph~$G$ with a thin end of degree~${k \in \Nbb}$ has a tree-decomposition~${(R,\mathcal{V})}$ of~$G$ such that ${R = t_0t_1t_2\dots}$ is a ray, and for every~${i \in \Nbb_0}$:
    \begin{itemize}
        \item ${|V_{t_i}|}$ is finite;
        \item ${|S(t_{i}t_{i+1})| = k}$; 
        \item ${S(t_{i}t_{i+1}) \cap S(t_{i+1}t_{i+2}) = \emptyset}$. 
   \end{itemize}
\end{lemma}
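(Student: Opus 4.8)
The plan is to construct a sequence of finite separators of~$G$ of order exactly~$k$ that ``converge to'' $\omega$, and then to read off the ray-decomposition from the finite pieces into which these separators cut~$G$. Since~$G$ is locally finite and connected, $V(G)$ is countable; fix an enumeration ${V(G) = \{w_1, w_2, \dots\}}$, and, using Halin's theorem that the degree of an end is an attained maximum~\cite[Satz~1]{H65}, fix~$k$ pairwise disjoint $\omega$-rays ${R_1, \dots, R_k}$. For finite ${S \subseteq V(G)}$ write ${A(S) := V(G) \setminus C(S, \omega)}$. The key claim I would prove is that there is a sequence ${(S_i)_{i \in \Nbb_0}}$ of pairwise disjoint $k$-element vertex sets with ${A(S_0) \subsetneq A(S_1) \subsetneq \cdots}$, with ${\bigcup_{i} A(S_i) = V(G)}$, and with ${S_i \cap A(S_{i-1}) = \emptyset}$ for all ${i \geq 1}$.

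To prove the claim I would build the~$S_i$ recursively together with an increasing sequence of finite sets~$B_i$. Set ${B_0 := \{w_1, \init(R_1), \dots, \init(R_k)\}}$; given~$B_i$, let~$S_i$ be a finite vertex set with ${S_i \cap B_i = \emptyset}$ and ${B_i \subseteq A(S_i)}$ of minimum size subject to this. By Menger's theorem in its version for ends (see~\cite[Chapter~8]{D16}), since ${\deg(\omega) = k}$ such an~$S_i$ exists with ${|S_i| \leq k}$; conversely, each~$R_j$ has its initial vertex in~${B_i \subseteq A(S_i) \setminus S_i}$ but a tail in~${C(S_i, \omega)}$, so~$R_j$ meets~$S_i$, and as the~$R_j$ are disjoint we get ${|S_i| \geq k}$, hence ${|S_i| = k}$. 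Now put ${B_{i+1} := B_i \cup A(S_i) \cup \{w_{i+1}\}}$, which is finite because ${A(S_i)}$ is: as~$G$ is one-ended and locally finite and~$S_i$ is finite, ${G - S_i}$ has exactly one infinite component, namely~${C(S_i, \omega)}$, so ${A(S_i)}$ is the union of~$S_i$ with finitely many finite components. Then ${A(S_{i+1}) \supseteq B_{i+1} \supseteq A(S_i)}$ together with ${S_{i+1} \cap B_{i+1} = \emptyset}$ gives ${S_{i+1} \cap A(S_i) = \emptyset}$, whence ${A(S_{i+1}) \supsetneq A(S_i)}$ (as ${\emptyset \neq S_{i+1} \subseteq A(S_{i+1})}$); pairwise disjointness follows since ${S_j \subseteq A(S_j) \subseteq A(S_i)}$ is disjoint from~$S_{i+1}$ whenever ${j \leq i}$; and ${w_i \in B_i \subseteq A(S_i)}$ gives the exhaustion.

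Given the claim, I would define the ray-decomposition ${(R, \mathcal{V})}$ with ${R = t_0 t_1 t_2 \dots}$ by ${V_{t_0} := A(S_0)}$ and ${V_{t_i} := S_{i-1} \cup \big(A(S_i) \setminus A(S_{i-1})\big)}$ for ${i \geq 1}$. Each~$V_{t_i}$ is finite, and checking the tree-decomposition axioms is routine, using only that~$S_i$ separates ${A(S_i) \setminus S_i}$ from~${C(S_i, \omega)}$ in~$G$: the~$V_{t_i}$ cover~$V(G)$ by the exhaustion; every edge of~$G$ lies inside some~$V_{t_i}$ because no edge of~$G$ has one end in~${A(S_i) \setminus S_i}$ and the other in~${C(S_i, \omega)}$; and the interpolation condition ${V_{t_a} \cap V_{t_c} \subseteq V_{t_b}}$ for ${a < b < c}$ holds since ${V_{t_a} \subseteq A(S_a)}$ while ${V_{t_c} \cap A(S_{c-1}) = S_{c-1}}$, which is disjoint from~$A(S_a)$ once ${a < c - 1}$ (the case ${a = c-1}$ being vacuous). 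A direct computation then gives ${S(t_i t_{i+1}) = V_{t_i} \cap V_{t_{i+1}} = S_i}$ for every ${i \in \Nbb_0}$, so that ${|S(t_i t_{i+1})| = k}$ and ${S(t_i t_{i+1}) \cap S(t_{i+1} t_{i+2}) = S_i \cap S_{i+1} = \emptyset}$, as required.

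The one genuinely non-trivial step is the upper bound ${|S_i| \leq k}$ in the key claim: this is exactly the Menger-type duality for thin ends (an end of degree~$k$ is blocked by finite separators of order~$k$), which is the substantive content of Halin's result. Everything else — the lower bound, the nestedness, the finiteness of~${A(S_i)}$, and the assembly of the decomposition — is elementary bookkeeping with the separations~${(A(S_i), C(S_i, \omega))}$.
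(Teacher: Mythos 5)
The paper does not prove this lemma at all --- it is quoted as Halin's Satz~$3'$ from~\cite{H65} --- so there is no in-paper argument to compare against. Your reconstruction is sound: the recursive choice of nested, pairwise disjoint minimum separators~$S_i$, the observation that one-endedness plus local finiteness makes each~$A(S_i)$ finite (every component of $G-S_i$ other than $C(S_i,\omega)$ is rayless, hence finite), the lower bound $|S_i|\geq k$ via the $k$ fixed disjoint $\omega$-rays rooted in~$B_0$, and the verification that the parts $V_{t_i}=S_{i-1}\cup\bigl(A(S_i)\setminus A(S_{i-1})\bigr)$ form a ray-decomposition with $S(t_it_{i+1})=S_i$ are all correct (you in fact get all separators pairwise disjoint, which is stronger than the consecutive disjointness the lemma asks for, and is what Remark~\ref{rem:oneended-td} later relies on).

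The one point to be careful about is the step you yourself flag as the crux: the existence, for every finite~$B$, of a separator~$S$ with $|S|\leq k$, $S\cap B=\emptyset$ and $B\cap C(S,\omega)=\emptyset$. This is true, but it is not an off-the-shelf ``Menger's theorem for ends'' that you can simply look up in~\cite[Chapter~8]{D16} in the form you need; note also that it is \emph{not} the naive duality ``min separator $=$ max disjoint $B$--$\omega$ rays'' (a single vertex of~$B$ may require a separator of size~$k$). What you need is essentially Halin's own theorem that a thin end of degree~$k$ admits a defining sequence of $k$-separators, proved by taking a minimum such separator, iterating to get an infinite nested sequence of minimum separators, applying the finite Menger theorem in the finite pieces between consecutive ones, and using a compactness (K\"onig's lemma) argument to assemble $m$ disjoint rays whenever the minimum separator size is~$m$ --- which bounds $m$ by $\deg(\omega)=k$. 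So your proof is best read as a correct reduction of Satz~$3'$ to that single-separator statement plus careful bookkeeping; to be fully self-contained you would either have to supply that iterated-Menger argument or cite Halin for it directly, which is exactly what the paper chooses to do for the whole lemma.
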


\begin{remark}
    \label{rem:oneended-td}
    Note that in the above lemma, for a given finite set~${X \subseteq V(G)}$, by taking the union over parts corresponding to an initial segment of the ray of the decomposition, one may always assume that~${X \subseteq V_{t_0}}$. 
    Moreover, note that since~${S(t_{i}t_{i+1}) \cap S(t_{i+1}t_{i+2}) = \emptyset}$, it follows that every vertex of~$G$ is contained in at most two parts of the tree-decomposition.
\end{remark}

\begin{lemma}
    \label{lem:one-ended-nice}
    Every one-ended, locally finite connected graph~$G$ with a thin end has an extensive tree-decomposition~${( R, \mathcal{V} )}$ where~${R = t_0t_1t_2\ldots}$ is a ray 
    rooted in its initial vertex. 
\end{lemma}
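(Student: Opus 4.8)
The goal is to upgrade the Halin tree-decomposition from Lemma~\ref{lem:oneended-td} into an extensive one by showing that each bough is self-similar. Start with the tree-decomposition $(R, \mathcal{V})$ from Lemma~\ref{lem:oneended-td}, with $R = t_0 t_1 t_2 \ldots$, $|S(t_i t_{i+1})| = k$ and consecutive separators disjoint. Root $R$ in $t_0$. Since each part is finite and each vertex lies in at most two parts (Remark~\ref{rem:oneended-td}), the first three conditions of Definition~\ref{d:extensive} are immediate. It remains to verify, for each edge $e = t_i t_{i+1}$ of $R$, that the bough $B(e)$ is self-similar towards the unique end $\omega$ of $G$.

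\textbf{Key steps.} Fix $e = t_i t_{i+1}$. First I would produce the family $\mathcal{R}_e = (R_{e,s} \colon s \in S(e))$ of disjoint $\omega$-rays: since $G$ is one-ended and locally finite, standard arguments (or a direct construction inside $\overline{G[B(e)]}$, which is again one-ended with a thin end) yield $k = |S(e)|$ disjoint rays in $\omega$ starting exactly at the vertices of $S(e)$; by the disjointness of consecutive separators and by passing to tails one may also assume each $R_{e,s}$ meets every separator $S(t_j t_{j+1})$ for $j \geq i$ in exactly one vertex. Second, for each $m \geq i$ consider the finite $k$-pointed graph obtained from $G[A(t_m t_{m+1})] \cap \overline{G[B(e)]}$ — roughly the finite ``slab'' of $G$ between level $i$ and level $m$ — pointed by the $k$ vertices where the rays $R_{e,s}$ cross $S(t_m t_{m+1})$. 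These are infinitely many finite $k$-pointed graphs, so by Lemma~\ref{lem:labelled-wqo} and Remark~\ref{r:increasingsubsequence} there is an infinite increasing subsequence $m_1 < m_2 < \cdots$ (with $m_1 = i$, after discarding an initial segment) in the $\preceq_p$ order. Third, given $n \in \Nbb$, pick $m_j$ with $\dist(t_i, t_{m_j}) \geq n$: the $\preceq_p$-embedding of the $(m_1)$-slab into the $(m_j)$-slab, suitably extended to include the pieces of $G$ hanging off $B(e)$ on the $A$-side that are not separated from it (i.e. realising an inflated copy of all of $\overline{G[B(e)]}$, not just the slab), gives a subgraph $W \subseteq G[B(e')]$ for an appropriate edge $e'$ deeper in $R$, and the point-compatibility of $\preceq_p$ ensures $V(R_{e,s}) \cap S(e') \subseteq W(s)$. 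This is exactly a witness for self-similarity of distance at least $n$, so $B(e)$ is self-similar, and $(R,\mathcal{V})$ is extensive.

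\textbf{Main obstacle.} The delicate point is matching the whole of $\overline{G[B(e)]}$ — not merely the finite slab between two levels — to a subgraph of a single bough $B(e')$ deeper down. The finite pieces of the well-quasi-ordering argument control only the slabs; one must argue that applying the embedding of a slab ``glues'' correctly with the identity on the infinite tail, i.e. that $\overline{G[B(e)]}$ decomposes as a slab amalgamated with $\overline{G[B(t_{m_1}t_{m_1+1})]}$ along the separator, and that the same decomposition holds for $B(e')$ along a separator $k$ levels analogous; then the embedding on slabs composes with the identity on tails because the point functions track exactly the separator vertices. Getting this gluing right — ensuring the $\preceq_p$-embeddings respect the separators so they concatenate into a genuine inflated copy of $\overline{G[B(e)]}$ — and checking the resulting $W$ sits inside $G[B(e')]$ for a legitimate $e' \in E(R_{e^+})$ with the required distance bound, is where the real work lies; the rest is bookkeeping with Definitions~\ref{d:selfsimilarbough} and~\ref{d:extensive}.
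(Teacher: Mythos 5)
Your setup is on the right track and matches the paper's up to the point where you must produce, for a given edge $e = t_it_{i+1}$ and $n \in \Nbb$, a witness $W \subseteq G[B(e')]$ that is an inflated copy of the \emph{entire} infinite graph $\overline{G[B(e)]}$. You correctly identify this as the hard step, but the resolution you sketch --- embed one finite slab into a deeper slab via a single application of $\preceq_p$ and use ``the identity on the infinite tail'' --- does not work. If the slab of $\overline{G[B(e)]}$ runs from $S(e)$ down to $S(t_ct_{c+1})$, its tail is $\overline{G[B(t_ct_{c+1})]}$; the receiving slab inside $G[B(e')]$ necessarily runs down to some $S(t_{c'}t_{c'+1})$ with $c' > c$, whose tail $\overline{G[B(t_{c'}t_{c'+1})]}$ is a \emph{strict subgraph} of the first. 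The identity thus carries your tail outside $G[B(e')]$. Since $B(e')$ is a proper subset of $B(e)$, genuine compression is needed on the infinite tail as well, and a single $\preceq_p$-embedding from Lemma~\ref{lem:labelled-wqo} compresses only a finite piece. (A further problem: $\preceq_p$ gives no control over \emph{where} the image of your slab embedding lands; the larger slab contains levels $i+1,\ldots,m_j$, some of which lie above $e'$ and hence outside $B(e')$.)

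The paper's proof avoids this by applying the well-quasi-order \emph{recursively, level by level}, rather than once. It takes the individual parts $G_j := G[W_{t'_j}]$ as $\ell$-pointed graphs with $\ell = 2k$ --- both boundary separators pointed, not just the exit one; your $k$-pointing is also insufficient, since the subsequent gluing needs control at both boundaries. After passing to $n_0$ as in Remark~\ref{r:increasingsubsequence}, it recursively chooses $m = m_0 < m_1 < m_2 < \cdots$ so that $(G_{n_0+i+j},\pi_{n_0+i+j}) \preceq_p (G_{n_0+m_j},\pi_{n_0+m_j})$ for every $j \ge 0$, yielding inflated copies $H_{m_j} \subseteq G_{n_0+m_j}$ of each consecutive part of $\overline{G[B(e)]}$. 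These lie in pairwise disjoint, ever-deeper levels, all inside $B(e')$ for $e' := t_{m-1}t_m$, and one glues $H_{m_{j-1}}$ to $H_{m_j}$ along the subpaths $P_{p,j}$ of the fixed rays $R_p$ between the corresponding separators; the $2k$-pointing is exactly what guarantees these ray segments start and end in the correct branch sets, so that the union is an $I\overline{G[B(e)]}$. It is this infinite recursive embedding and gluing, not a one-shot slab embedding plus identity, that produces $W$; that is the missing idea in your argument.
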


\begin{proof}
    Let ${k \in \Nbb}$ be the degree of the thin end of~$G$ and let~${\Rcal = (R_j \colon j \in [k])}$ be a maximal family of disjoint rays in~$G$. 
    Let~${(R',\mathcal{W})}$ be the tree-decomposition of~$G$ given by Lemma~\ref{lem:oneended-td} where~${R' = t_0't_1'\ldots}$. 
    
    By Remark~\ref{rem:oneended-td} (and considering tails of rays if necessary), we may assume that each ray in~$\Rcal$ starts in~${S(t'_0t'_1)}$. 
    Note that each ray in~$\Rcal$ meets the separator~${S(t'_{i-1}t'_i)}$ for each~${i \in \Nbb}$. 
    Since~$\Rcal$ is a family of~$k$ disjoint rays and~${|S(t'_{i-1}t'_i)| = k}$ for each~${i \in \Nbb}$, each vertex in~${S(t'_{i-1}t'_i)}$ is contained in a unique ray in~$\Rcal$. 
    
    Let~${\ell = 2k}$ and consider a sequence~${(G_i,\pi_i)_{i\in \Nbb}}$ of $\ell$-pointed finite graphs defined by ${G_i := G[W_{t'_i}]}$ and 
    \[
        \pi_i \colon [\ell] \to V(G_i) , \; j \mapsto 
        \begin{cases}
            \text{ the unique vertex in } S(t'_{i-1}t'_i) \cap V(R_j)  & \text{ for } 1 \leq j \leq k , \\
            \text{ the unique vertex in } S(t'_it'_{i+1}) \cap V(R_{j-k}) & \text{ for } k < j \leq 2k = \ell.
        \end{cases}
    \]
    
    By Lemma~\ref{lem:labelled-wqo} and Remark~\ref{r:increasingsubsequence} there is an~${n_0 \in \Nbb}$ such that for every~${n \geq n_0}$ there are infinitely many~${m > n}$ with~${(G_n,\pi_n) \preceq_p (G_m,\pi_m)}$. 

    Let~${V_{t_0} := \bigcup_{i=0}^{n_0} W_{t'_i}}$ and~${V_{t_i} := W_{t'_{n_0 + i}}}$ for all~${i \in \Nbb}$. 
    We claim that ${(R, (V_{t_i} \colon i\in \Nbb_0))}$ is the desired extensive tree-decomposition of~$G$ where~${R = t_0t_1t_2\ldots}$ is a ray with root~$t_0$.
    The ray~$R$ is a locally finite tree and all the parts are finite. 
    Moreover, every vertex of~$G$ is contained in at most two parts by Remark~\ref{rem:oneended-td}. 
    It remains to show that for every~${i \in \Nbb}$, the bough~${B(t_{i-1}t_i)}$ is self-similar towards the end of~$G$. 

    Let~${e = t_{i-1}t_i}$ for some~${i \in \Nbb}$. 
    For each~${s \in S(e)}$, we let~${p(s) \in [k]}$ be such that~${s \in R_{p(s)}}$ and set~${R_{e,s} = sR_{p(s)}}$. 
    We wish to show there is a witness~$W$ for the self-similarity of~${B(e)}$ of distance at least~$n$ for each~${n \in \Nbb}$. 
    Note that~${B(e) = \bigcup_{j \geq 0} V(G_{n_0+i+j})}$. 
    By the choice of~$n_0$ in Remark~\ref{r:increasingsubsequence}, there exists an~${m > i + n}$ such that~${(G_{n_0+i},\pi_{n_0+i}) \preceq_p (G_{n_0+m},\pi_{n_0+m})}$. 
    Let~${e' = t_{m-1}t_{m}}$. 
    We will show that there exists a~${W \subseteq G[B(e')]}$ witnessing the self-similarity of~${B(e)}$ towards the end of~$G$.
    
    Recursively, for each~${j \geq 0}$ we can find~${m = m_0 < m_1 < m_2 < \cdots}$ with 
    \[
        (G_{n_0+i+j},\pi_{n_0+i+j}) \preceq_p (G_{n_0+m_j},\pi_{n_0+m_j}).
    \]
    In particular, there are subgraphs~${H_{m_j} \subseteq G_{n_0+m_j}}$ which are inflated copies of~$G_{n_0+i+j}$, all compatible with the point functions, and so
    \[
        {S(t'_{n_0+m_j -1}t'_{n_0+m_j}) \cup S(t'_{n_0+m_j}t'_{n_0+m_j+1}) \subseteq H_{m_j}}
    \]
    for each~${j \geq 0}$. 
    
    Hence, for every~${j \in \Nbb}$ and~${p \in [k]}$ there is a unique $H_{m_{j-1}}$--$H_{m_{j}}$ subpath~$P_{p,j}$ of~$R_p$. 
    We claim that  
    \[
        W' := \bigcup_{j \geq 0 } H_{m_j} \cup \bigcup_{j \in \Nbb }\bigcup_{p \in [k]} P_{p,j}
    \]
    is a subgraph of~${G[B(e')]}$ that is an~${IG[B(e)]}$. 
    Hence, the desired~$W$ can be obtained as a subgraph of~$W'$. 
    
    To prove this claim it is sufficient to check that for each~${j \in \Nbb}$ and each~${s \in S(t_{j-1}t_{j})}$, the branch sets of~$s$ in~$H_{j-1}$ and in~$H_{j}$ are connected by~$P_{p(s),j}$. 
    Indeed, by construction, every~$P_{p,j}$ is a path from~${\pi_{n_0+m_{j-1}}(k+p)}$ to~${\pi_{n_0+m_{j}}(p)}$. 
    And, since the~$H_{m_j}$ are pointed minors of~$G_{n_0+m_j}$, it follows that~${\pi_{n_0+m_{j-1}}(k+p(s)) \in H_{m_{j-1}}(s)}$ and~${\pi_{n_0+m_{j}}(p(s)) \in H_{m_{j}}(s)}$ are as desired.
    
    Finally, since~${(G_{n_0+i},\pi_{n_0+i})~\preceq_p~(G_{n_0 + m},\pi_{n_0 + m})}$ as witnessed by~$H_{m_0}$, the branch set of each~${s \in S(t_{i-1}t_i)}$ must indeed include~${V(R_{e,s}) \cap S(e')}$. 
\end{proof}

\begin{lemma}
    \label{lem:finendsisext}
    If~$G$ is a locally finite connected graph with finitely many ends, each of which is thin, then~$G$ has an extensive tree-decomposition.
\end{lemma}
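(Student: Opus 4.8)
The plan is to reduce to the one-ended case handled by Lemma~\ref{lem:one-ended-nice}. We may assume $G$ is infinite, since a finite graph is trivially extensive via the one-node decomposition. Let $\omega_1,\dots,\omega_m$ be the ends of $G$; as there are finitely many, fix a nonempty finite $S_0\subseteq V(G)$ separating every pair $\omega_i,\omega_j$ (a union of pairwise separators). First I would record some structure. Since $G$ is connected and $S_0$ is a nonempty finite set (and $S_0\neq V(G)$ as $G$ is infinite), every component of $G-S_0$ sends an edge to $S_0$, so by local finiteness $G-S_0$ has only finitely many components; let $C_1,\dots,C_m$ be the infinite ones, indexed so that $C_i=C(S_0,\omega_i)$, and let $D:=G-\bigcup_i V(C_i)$, a finite subgraph containing $S_0$ and all finite components of $G-S_0$. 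Each $C_i$ is connected, locally finite, one-ended, and has a thin end: every ray of $C_i$ is a ray of $G$ lying in the unique end $\omega$ with $C(S_0,\omega)=C_i$, namely $\omega_i$; and any two rays of $C_i$ equivalent in $G$ are joined by infinitely many disjoint paths, all but at most $|S_0|$ of which avoid $S_0$ and hence lie in $C_i$, so they are already equivalent in $C_i$. As $C_i\subseteq G$, the degree of this end is at most $\deg(\omega_i)<\infty$.

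Next, for each $i$ I would apply Lemma~\ref{lem:one-ended-nice} to $C_i$, obtaining an extensive tree-decomposition $(R^i,\mathcal{V}^i)$ of $C_i$ with $R^i=t^i_0t^i_1t^i_2\dots$ a ray rooted at $t^i_0$. Writing $N_i:=V(C_i)\cap N_G(S_0)$ for the finite set of vertices of $C_i$ with a neighbour in $S_0$, I would then arrange, by merging a suitable initial segment of $R^i$ into a single new root part, that $N_i\subseteq V^i_{t^i_0}$. This is possible because extensiveness (Definition~\ref{d:extensive}) guarantees that $N_i$ meets only finitely many parts, and merging an initial segment of the ray preserves extensiveness: the new root part is a finite union of finite parts, every vertex still lies in only finitely many parts, the only bough issuing from the new root coincides (together with its separator, by the consistency axiom) with a bough of the old decomposition and hence is still self-similar, and all boughs deeper in the ray are literally unchanged.

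Now I would assemble the decomposition of $G$. Let $T$ be the tree with a root $r$ whose children are the vertices $t^i_1$ ($i\in[m]$), where each $t^i_1$ continues into the ray $t^i_1t^i_2t^i_3\dots$; set $V_r:=V(D)\cup\bigcup_{i=1}^m V^i_{t^i_0}$ (finite) and $V_{t^i_l}:=V^i_{t^i_l}$ for $l\geq 1$. That $(T,\mathcal{V})$ is a tree-decomposition of $G$ is a routine check: the parts cover $V(G)$; edges inside $D$ or inside some $C_i$ are covered as in the pieces, the only edges running between the pieces join $S_0\subseteq V_r$ to $N_i\subseteq V^i_{t^i_0}\subseteq V_r$, and distinct components of $G-S_0$ have no edges between them; and the consistency axiom follows from that of each $(R^i,\mathcal{V}^i)$ using that the $V(C_i)$ are pairwise disjoint and disjoint from $V(D)$ (so any intersection of parts living in different $C_i$'s, or in $D$, is empty). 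The tree $T$ is locally finite and rooted, all parts are finite, and each vertex lies in finitely many parts (a vertex of $D$ lies only in $V_r$; a vertex of $C_i$ lies in at most one more part than in $(R^i,\mathcal{V}^i)$).

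The crux is self-similarity of the boughs, and the key observation is that every bough of $(T,\mathcal{V})$ is literally a bough of some $(R^i,\mathcal{V}^i)$. For an edge $e=t^i_lt^i_{l+1}$ with $l\geq 1$ one has $B(e)=\bigcup_{l'\geq l+1}V^i_{t^i_{l'}}$, which is the bough of $t^i_lt^i_{l+1}$ in $(R^i,\mathcal{V}^i)$, with the same separator; for $e=rt^i_1$ one has $B(e)=\bigcup_{l'\geq 1}V^i_{t^i_{l'}}$, the bough of $t^i_0t^i_1$ in $(R^i,\mathcal{V}^i)$, and since $N_i\subseteq V^i_{t^i_0}$ while the remaining summands of $V_r$ are disjoint from $V(C_i)$, the separator $S(e)=V_r\cap V_{t^i_1}$ equals $V^i_{t^i_0}\cap V^i_{t^i_1}$. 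In both cases the bough and its separator lie inside $C_i$, so $G[B(e)]=C_i[B(e)]$ and $\overline{G[B(e)]}=\overline{C_i[B(e)]}$; moreover the sub-ray $T_{e^+}$ of $T$ is the sub-ray of $R^i$ used in $(R^i,\mathcal{V}^i)$, and distances $\dist(e^-,e'^-)$ along $T$ agree with those along $R^i$. Hence the family of rays and the witnesses certifying self-similarity of the bough in $(R^i,\mathcal{V}^i)$ — towards the unique end of $C_i$, which as a set of rays of $G$ is $\omega_i$ — verify all three clauses of Definition~\ref{d:selfsimilarbough} for $B(e)$ in $(T,\mathcal{V})$ verbatim. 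I expect the main obstacle to be precisely this point, which is also what forces the merging step: if instead each $C_i$ were attached to the root by an edge whose bough is all of $C_i$, that bough's separator would be $N_i$, which may exceed $\deg(\omega_i)$ and so could not support a family of $|N_i|$ disjoint $\omega_i$-rays, making self-similarity impossible; absorbing the first part $V^i_{t^i_0}$ into $V_r$ is exactly what shrinks this separator back to $V^i_{t^i_0}\cap V^i_{t^i_1}$ and lets the construction inherit self-similarity from Lemma~\ref{lem:one-ended-nice}. The rest is the bookkeeping of the three tree-decomposition axioms and the three clauses of Definition~\ref{d:selfsimilarbough}, which is routine.
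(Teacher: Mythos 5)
Your proof is correct and follows essentially the same route as the paper: separate the finitely many ends by a finite vertex set, apply Lemma~\ref{lem:one-ended-nice} to each resulting one-ended piece, absorb a suitable initial segment of each ray-decomposition into a common finite root part, and observe that every bough of the glued decomposition is literally a bough of one of the one-ended decompositions, so self-similarity is inherited. The only (immaterial) differences are that the paper applies the one-ended lemma to $G[C_i\cup X]$ rather than to the bare components and identifies the roots of the rays instead of adding a new root vertex.
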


\begin{proof}
    Let~${\Omega(G) = \{\omega_1,\ldots ,\omega_n\}}$ be the set of the ends of~$G$. 
    Let~${X \subseteq V(G)}$ be a finite set of vertices which separates the ends of~$G$, i.e.~so that all~${C_i = C(X,\omega_i)}$ are pairwise disjoint. 
    Without loss of generality, we may assume that~${V(G) = X \cup \bigcup_{i \in [n]} C_i}$. 
    
    Let ${G_i := G[C_i \cup X]}$. 
    Then each~$G_i$ is a locally finite connected one-ended graph, with a thin end~$\omega_i$, and hence by Lemma~\ref{lem:one-ended-nice} each of the~$G_i$ admits an extensive tree-decomposition~${(R^i,\V^i)}$, 
    where~$R^i$ is rooted in its initial vertex~$r^i$.
    Without loss of generality, ${X \subseteq V^i_{r^i}}$ for each~${i \in [n]}$.
    
    Let~$T$ be the tree formed by identifying the family of rays ${(R^i \colon i \in [n])}$ at their roots, let~$r$ be this identified vertex which we consider to be the root of~$T$, and let~${(T,\mathcal{V})}$ be the tree-decompositions whose root part is~${\bigcup_{i \in [n]} V^i_{r^i}}$, and which otherwise agrees with the~${(R^i,\V^i)}$. 
    It is a simple check that~${(T,\mathcal{V})}$ is an extensive tree-decomposition of~$G$. 
\end{proof}

\subsection{Finite tree-width}

\begin{definition}
    A rooted tree-decomposition~${(T,\V)}$ of~$G$ is \emph{lean} if for any~${k \in \Nbb}$, any nodes~${t_1,t_2 \in V(T)}$, and any~${X_1 \subseteq V_{t_1}, X_2 \subseteq V_{t_2}}$ such that~${|X_1|,|X_2| \geq k}$ there are either~$k$ disjoint paths in~$G$ between~$X_1$ and~$X_2$, or there is a vertex~$t$ on the path in~$T$ between~$t_1$ and~$t_2$ such that~${|V_t| < k}$. 
\end{definition}

\begin{remark}
    \label{rem:lean-tw}
    K{\v{r}}{\'\i}{\v{z}} and Thomas~\cite{KT91} showed that if~$G$ has tree-width at most~$m$ for some~${m \in \Nbb}$, then~$G$ has a lean tree-decomposition of width at most~$m$. 
\end{remark}

\begin{lemma}
    \label{lem:bough-connected}
    Let~$G$ be a locally finite connected graph and
    let~${(T,\V)}$ be a lean tree-decomposition of~$G$ 
    of width at most~$m$. 
    Then there exists a lean tree-decomposition of~$G$ of width at most~$m$ such that every bough is connected and the decomposition tree is locally finite. 
    Moreover, we may assume that every vertex appears in only finitely many parts.
\end{lemma}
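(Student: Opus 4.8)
I want to start from a lean tree-decomposition $(T,\V)$ of $G$ of width at most $m$ (which exists by Remark~\ref{rem:lean-tw}), and massage it into one that additionally has all boughs connected, is supported on a locally finite tree, and in which every vertex lies in only finitely many parts. The key observation is that each of these three properties can be arranged by a local surgery that does not increase the width and does not destroy leanness — the main work is checking that leanness survives, since connectedness of boughs and local finiteness are comparatively soft.

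\textbf{Step 1: making boughs connected.} Fix an edge $e = tt' \in E(T)$ with $t = e^-$ and consider the bough $B(e)$, i.e.\ the union of parts over $T_{e^+}$, with separator $S(e) = V_{e^-}\cap V_{e^+}$. The graph $\overline{G[B(e)]}$ (edges of the separator deleted) may have several components $D_1,\dots,D_r$, each of which meets $S(e)$ in a nonempty subset (if some $D_j$ missed $S(e)$ entirely it would be a component of $G$, contradicting connectedness of $G$ once we have handled the root). I would split the subtree $T_{e^+}$ accordingly: replace the single edge $e$ at $e^-$ by $r$ parallel edges $e_1,\dots,e_r$, each leading to a copy of (the relevant restriction of) $T_{e^+}$ whose parts are intersected with $D_j \cup S(e)$. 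More cleanly, one does this simultaneously over all edges by a standard argument: refine every part $V_s$ so that the decomposition tree is subdivided and each bough becomes a single component of the corresponding torso, using that $G$ itself is connected. This does not raise $|V_s|$, and one checks that the path-structure in the new tree between two bags is a ``contraction-minor'' of the old path structure so that the lean inequality is preserved (if a small separator was available before, it is still available; and the number of disjoint paths between $X_1,X_2$ can only have stayed the same or we pass through a cut already present).

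\textbf{Step 2: local finiteness of the tree.} A lean tree-decomposition of a locally finite graph of bounded width need not a priori have a locally finite decomposition tree, but we can fix this by contraction: whenever a node $t$ has infinitely many neighbours, infinitely many of the edges $e$ at $t$ have $S(e) \subseteq V_t$ of one of the finitely many possible ``types'' (subsets of the finite set $V_t$), and — crucially, using that $G[V_t]$ is finite and $G$ is locally finite, so only finitely many edges of $G$ leave any finite set — all but finitely many of those edges $e$ have $B(e)\setminus S(e)$ attached to $V_t$ along a separator, hence, since $G$ is connected and locally finite, there can only be finitely many \emph{nonempty} such boughs. Contracting the (at most finitely many, after Step~1) trivial edges and re-indexing gives a locally finite tree; again width is unchanged and leanness is inherited since contracting an edge of $T$ corresponds to taking a union of two bags, which only helps the ``$k$ disjoint paths'' side of the lean inequality. (I expect this is the step requiring the most care: arguing that a locally finite connected graph of bounded tree-width genuinely forces local finiteness of \emph{some} optimal-width lean decomposition, rather than merely of \emph{a} width-$m$ decomposition, needs the interplay of leanness with $G$ being locally finite.)

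\textbf{Step 3: each vertex in finitely many parts.} Once $T$ is locally finite and all parts are finite, a vertex $v$ lies in a subtree $T_v = \{ t : v \in V_t\}$ of $T$ (subtrees because of the third tree-decomposition axiom). If $T_v$ were infinite it would, being a locally finite infinite tree, contain a ray; but then $v$ together with its (finitely many, as $G$ is locally finite) neighbours would have to appear in all the bags along that ray, and one derives a contradiction with leanness by exhibiting, far along the ray, a single vertex $v$ separating nothing of size $>1$, while there are clearly $\geq 2$ ``copies'' of the neighbourhood structure forced to connect across — more simply, a standard pruning argument: truncate $T_v$ to a finite initial subtree by absorbing the infinite tails into finitely many bags, which does not change $G$, keeps width $\leq m$ and keeps the tree locally finite, and by leanness this truncation can be done consistently for all vertices at once. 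After this final adjustment every vertex appears in finitely many parts, and Steps~1 and~2 are undisturbed, giving the claimed decomposition. $\qed$
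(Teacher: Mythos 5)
Your overall strategy---split each subtree $T_{e^+}$ according to the components lying beyond the separator $S(e)$, and use local finiteness of $G$ to control both the degrees of the new decomposition tree and the number of parts containing a given vertex---is the same as the paper's. However, two steps have genuine gaps. In Step~1, the parts you propose, obtained by intersecting with $V(D_j)\cup S(e)$, do \emph{not} make the boughs connected: the new bough at the edge $e_j$ is $V(D_j)\cup S(e)$, and a vertex of $S(e)$ with no neighbour in $D_j$ may lie in a different component of $G[V(D_j)\cup S(e)]$ than $D_j$ does. The paper instead takes, for each component $C$ of $G-S(e)$ inside the bough, the part $V_{e^+}\cap\bigl(V(C)\cup N(V(C))\bigr)$, so that the new bough is exactly $V(C)\cup N(V(C))$, which is connected because every vertex of $N(V(C))$ has a neighbour in the connected set $C$. (Your leanness argument is fine in spirit: the essential point is that each new part is a subset of a corresponding old part and paths in the new tree project to paths in $T$, so a small separator part can be pushed forward; but beware that your Step~2 claim that contracting edges of $T$ ``only helps'' leanness is false for non-trivial contractions, since merging bags enlarges the sets $X_1,X_2$ that leanness must handle.)

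The more serious gap is Step~3: you never actually establish that every vertex lies in only finitely many parts. Your first sketch (a contradiction with leanness from a ray in $T_v$) is not an argument---a vertex appearing in infinitely many bags does not by itself contradict leanness, and the phrase about ``a single vertex $v$ separating nothing of size $>1$'' does not identify the sets $X_1,X_2$ to which the leanness condition would be applied. Your second sketch (``absorbing the infinite tails into finitely many bags \dots{} by leanness this truncation can be done consistently for all vertices at once'') is also unjustified: absorbing tails into bags would destroy the width bound, and if what is meant is deleting $v$ from all bags outside a finite convex subtree of $T_v$, then leanness plays no role, one must still verify the edge-coverage axiom, and no ``consistency'' issue actually arises since the pruning of different vertices is independent. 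The paper avoids all of this by proving, for its construction, the exact characterisation that $v\in V'_p$ if and only if $p=p_v$ or $p$ lies on the path in $T'$ from $p_v$ to $p_w$ for some neighbour $w$ of $v$; since $G$ is locally finite, this is a finite union of finite paths. That analysis is the real content of the ``finitely many parts'' clause and is missing from your proposal.
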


\begin{proof}
    We begin by defining the underlying tree~$T'$ of this decomposition. 
    The root of~$T'$ will be the root~$r$ of~$T$, and the other vertices will be pairs~${(e, C)}$ where~$e$ is an edge of~$T$ and~$C$ is a component of~${G - S(e)}$ meeting (or equivalently, included in)~${B(e)}$. 
    There is an edge from~$r$ to~${(e, C)}$ whenever~${e^- = r}$, and from~${(e, C)}$ to~${(f, D)}$ whenever~${f^- = e^+}$ and~${D \subseteq C}$. 
    For future reference, we define a graph homomorphism~$\pi$ from~$T'$ to~$T$ by setting~${\pi(r) = r}$ and~${\pi(e, C) = e^+}$. 
    Next, we set~${V'_r := V_r}$ and 
    \[
        V'_{(e,C)} :=  V_{e^+} \cap (V(C) \cup N(V(C))),
    \]
    where~$N(V(C))$ is the neighbourhood of~$V(C)$. 
    Moreover, we let~$\mathcal{V}'$ denote the family of all~$V'_p$ for all nodes~$p$ of~$T'$. 
    
    To see that~$T'$ is locally finite, note that for any child ${(e,C)}$ of~$p$ the set~$C$ is also a component of~${G \setminus V_{\pi(p)}}$ and that no two distinct children yield the same component; 
    if~${(e, C)}$ and~${(f,C)}$ were distinct children of~$p$, then we would have~${V(C) \subseteq B(f) \subseteq A(e)}$ and so~${V(C) \subseteq A(e) \cap B(e) = S(e)}$, which is impossible.
    
    We now analyse, for a given vertex~$v$ of~$G$, which of the sets~$V'_p$ contain~$v$. 
    Since~${(T, \mathcal{V})}$ is a tree-decomposition, $T$ induces a subtree on the set of nodes~$t$ of~$T$ with~${v \in V_t}$, and so this set has a minimal element~$t_v$ in the tree order. 
    We set~${p_v := r}$ if~${t_v = r}$ and otherwise set~${p_v := (e, C)}$, where~$e$ is the unique edge of~$T$ with~${e^+ = t_v}$ and~$C$ is the unique component of~${G - S(e)}$ containing~$v$. 
    This guarantees that~${v \in V'_{p_v}}$. 
    For any other node~$p$ of~$T'$ with~${v \in V'_p}$, we have~${p \neq r}$ and so~$p$ has the form~${(e,C)}$. 
    Since~${v \in V_{e^+}}$ and~${p \neq p_v}$, it follows that~$e^-$ lies on the path from~$t_v$ to~$e^+$ and so~${v \in V_{e^-}}$, from which~${v \in N(V(C))}$ follows. 
    Thus, some neighbour~$w$ of~$v$ lies in~$C$. 
    Then~${w \in B(e) \setminus S(e) = B(e) \setminus A(e)}$ and so~$t_w$ lies in~$T_{e^+}$. 
    That is,~$p$ lies on the path from~$p_v$ to~$p_w$. 
    Conversely, for any~${p = (e,C)}$ on this path we have~${w \in V(C)}$ and so~${v \in N(V(C)) \subseteq S(e) \subseteq V_{e^+}}$, so that~${v \in V'_p}$. 
    
    What we have shown is that~$v$ is in~$V'_p$ precisely when~${p = p_v}$ or there is some neighbour~$w$ of~$v$ in~$G$ such that~$p$ lies on the path in~$T'$ from~$p_v$ to~${p_w \in V(T'_{p_v})}$. 
    Using this information, it is easy to deduce that~${(T', \mathcal{V}')}$ is a tree-decomposition: 
    A vertex~$v$ is in~$V'_{p_v}$ and an edge~$vw$ with~$p_v$ no higher (in the tree order) than~$p_w$ in~$T$ is also in~$V'_{p_v}$. 
    The third condition in the definition of tree-decompositions follows from the fact that the $T'$ induces a subtree on the set of all nodes~$p$ with~${v \in V'_p}$. 
    These sets are also all finite, since~$G$ is locally finite.
    
    Next we examine the boughs of this decomposition. 
    Let~${f \in E(T')}$ with~${f^+ = (e, C)}$. 
    Our aim is to show that~${B(f) = V(C) \cup N(V(C))}$. 
    For any~${(e', C') \in V(T'_{f^+})}$, we have ${V'_{(e', C')} \subseteq V(C') \cup N(V(C')) \subseteq V(C) \cup N(V(C))}$, so that~${B(f) \subseteq V(C) \cup N(V(C))}$. 
    For ${v \in V(C)}$, we have~${p_v \in V(T_{f^+})}$ and so~${v \in B(f)}$ and for~${v \in N(V(C))}$, there is a neighbour~$w$ of~$v$ such that~$f^+$ lies on the path from~$p_v$ to~$p_w$, yielding once more that~${v \in B(f)}$. 
    This completes the proof that~${B(f) = V(C) \cup N(V(C))}$, and in particular~${B(f)}$ is connected. 
    
    Since~$G$ is locally finite, for each~$e$, there are only finitely many components of~${G - V_{e^-}}$, so that~$T'$ is also locally finite. 
    The final thing to show is that this decomposition is lean. 
    So, suppose we have~${X_1 \subseteq V'_{p_1}}$ and~${X_2 \subseteq V'_{p_2}}$ with ${|X_1|, |X_2| \geq k}$. 
    Then also~${X_1 \subseteq V_{\pi(p_1)}}$ and~${X_2 \subseteq V_{\pi(p_2)}}$, so that if there are no~$k$ disjoint paths from~$X_1$ to~$X_2$ in~$G$, then there is some~$t$ on the path from~$\pi(p_1)$ to~$\pi(p_2)$ in~$T$ with~${|V_t| \leq k}$. 
    But then there is some~$p$ on the path from~$p_1$ to~$p_2$ in~$T'$ with~${\pi(p) = t}$ and, since~${V'_p \subseteq V_t}$, we have~${|V'_p| \leq k}$. 
\end{proof}

\begin{lemma}
    \label{lem:labelled-wqo-btw}
    For all~${k,\ell \in \Nbb}$, the class of $\ell$-pointed graphs with tree-width at most~$k$ is well-quasi-ordered under the relation~$\preceq_p$.
\end{lemma}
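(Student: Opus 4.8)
The plan is to deduce the lemma from a labelled strengthening of Thomas's theorem~\cite{T89}, in exact parallel with the way Lemma~\ref{lem:labelled-wqo} is deduced from the labelled graph minor theorem~\cite[1.7]{RS10}. The argument has three steps.

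\emph{Step 1: reducing to injective point functions.} I would first note that it suffices to rule out a bad sequence $(G_n, \pi_n)_{n \in \Nbb}$---one with no $m < n$ satisfying $(G_m,\pi_m) \preceq_p (G_n,\pi_n)$, each $G_n$ of tree-width at most $k$---in which every $\pi_n$ is injective. Indeed, an arbitrary bad sequence reduces to this case: each $\pi_n$ induces the equivalence relation $\sim_n$ on $[\ell]$ with $i \sim_n j \iff \pi_n(i) = \pi_n(j)$, and since there are only finitely many equivalence relations on $[\ell]$ we may pass to a subsequence on which $\sim_n = {\sim}$ is constant. Fixing a transversal $T \subseteq [\ell]$ of $\sim$ and setting $\pi'_n := \pi_n \restriction T$, each $\pi'_n$ is injective, and a short check from the definition gives $(G_m, \pi_m) \preceq_p (G_n, \pi_n) \iff (G_m, \pi'_m) \preceq_p (G_n, \pi'_n)$: the forward implication is immediate by restriction, and for the converse one uses that $\pi_m(j)$ and $\pi_n(j)$ depend only on the $\sim$-class of $j$, hence agree with $\pi'_m$ and $\pi'_n$ at the transversal representative of that class, so the same minor model works. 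Thus a bad sequence of $\ell$-pointed graphs of tree-width at most $k$ yields one of $|T|$-pointed graphs of tree-width at most $k$ with injective point functions.

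\emph{Step 2: encoding by vertex labels.} Next I would pass to vertex-labelled graphs. Let $Q = \{0, 1, \dots, \ell\}$ be the finite poset with least element $0$ in which $1, \dots, \ell$ are pairwise incomparable (so each $i \geq 1$ is maximal); being finite, $Q$ is a well-quasi-order. To an $\ell$-pointed graph $(G,\pi)$ with $\pi$ injective, assign the $Q$-vertex-labelled graph $G^\star$ with the same underlying graph as $G$, in which $\pi(i)$ carries label $i$ for each $i \in [\ell]$ and every other vertex carries label $0$; clearly $G^\star$ still has tree-width at most $k$. For $Q$-labelled graphs use the natural minor relation $\preceq_Q$, where $G_1^\star \preceq_Q G_2^\star$ means there is a minor model $(B_v)_{v \in V(G_1)}$ of $G_1$ in $G_2$ such that each branch set $B_v$ contains a vertex whose label is at least that of $v$. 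Because each $i \in [\ell]$ is maximal in $Q$, the branch-set condition at $\pi_1(i)$ forces $\pi_2(i) \in B_{\pi_1(i)}$, while it is vacuous at vertices labelled $0$; hence $G_1^\star \preceq_Q G_2^\star$ if and only if $(G_1,\pi_1) \preceq_p (G_2,\pi_2)$.

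\emph{Step 3: applying a labelled Thomas theorem.} It then remains to invoke the labelled strengthening of Thomas's theorem: for every well-quasi-order $Q$ and every $k \in \Nbb$, the class of $Q$-vertex-labelled graphs of tree-width at most $k$, of arbitrary cardinality, is well-quasi-ordered under $\preceq_Q$. Applied to the finite poset $Q$ of Step 2 and combined with Steps 1 and 2, this contradicts the assumed bad sequence and proves the lemma. The step requiring genuine work, and the one I expect to be the main obstacle, is this labelled strengthening. It is precisely the bounded-tree-width analogue of~\cite[1.7]{RS10}, and I expect it to follow by carrying vertex labels through Thomas's proof in~\cite{T89}, whose core is a minimal-bad-sequence argument over tree-decompositions of bounded width: the labels only ever enter through comparisons in the well-quasi-order $Q$, so the Higman-type steps applied to the bounded-size bags go through essentially unchanged. (A reduction to the unlabelled theorem via bounded-size gadgets would be tempting for this particular finite $Q$, but one must then control the fact that a minor model may leak a gadget's image into a neighbouring branch set; the labelled route avoids this.)
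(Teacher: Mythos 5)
Your proposal is correct and takes essentially the same route as the paper: the paper's entire proof of this lemma is the one-line assertion that it ``is a consequence of a result of Thomas''~\cite{T89}, i.e.\ it too ultimately rests on a labelled strengthening of Thomas's well-quasi-ordering theorem for graphs of bounded tree-width. Your Steps 1 and 2 merely make explicit the routine reduction from pointed graphs to vertex-labelled graphs that the paper leaves implicit, and your Step 3 --- the labelled bounded-tree-width theorem --- is precisely the point the paper delegates wholesale to Thomas's paper.
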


\begin{proof}
    This is a consequence of a result of Thomas~\cite{T89}.
\end{proof}

\begin{lemma}
    \label{lem:boundwidthisext}
    Every locally finite connected graph of finite tree-width has an extensive tree-decomposition.
\end{lemma}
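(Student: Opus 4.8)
The plan is to adapt the argument of Lemma~\ref{lem:one-ended-nice}, replacing the single ray of the decomposition by a general locally finite tree and the finite-graph well-quasi-ordering by its bounded-tree-width version, Lemma~\ref{lem:labelled-wqo-btw}. We may assume $G$ is infinite, as otherwise the one-node tree-decomposition is (vacuously) extensive. Let $m$ be the tree-width of $G$. By Remark~\ref{rem:lean-tw} and Lemma~\ref{lem:bough-connected} I would fix a lean tree-decomposition $(T,\V)$ of $G$ of width at most $m$ whose boughs are all connected, whose tree is locally finite, and in which every vertex lies in only finitely many parts; then $|S(e)|\le m+1$ for all $e\in E(T)$, and every end of $G$ is thin (of degree at most $m+1$), since the tails of $m+2$ disjoint rays to one end would eventually all lie in one bough and hence all cross its separator. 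Contracting every finite subtree $T_t$ into its parent keeps all parts finite and the tree locally finite, and afterwards every bough is infinite, so by K\"onig's lemma every node of $T$ lies on a ray. Finally I view each $\overline{G[B(e)]}$ as an $(m+1)$-pointed graph via a fixed enumeration $s^e_1,\dots,s^e_{|S(e)|}$ of $S(e)$ and a padded point function $\pi_e$; each such graph has tree-width at most $m$, so by Lemma~\ref{lem:labelled-wqo-btw} these pointed graphs are well-quasi-ordered under $\preceq_p$.

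Next I would produce, along each ray $\rho$ of $T$, a suitable family of rays in $G$: if $k$ is the least separator size occurring infinitely often on $\rho$, then past some point every separator on $\rho$ (hence every part) has size at least $k$, so between consecutive separators of size $k$ leanness gives $k$ disjoint paths in $G$, and a standard trimming-and-concatenation argument combines these into $k$ disjoint rays of $G$ threading those separators, each meeting each of them in exactly one vertex. Since the boughs along $\rho$ are connected with empty intersection (using that each vertex lies in finitely many parts), these $k$ rays all lie in one end of $G$. Now call an edge $e\in E(T)$ \emph{good} if there is an end $\omega$, a family $\Rcal_e$ of $|S(e)|$ disjoint $\omega$-rays whose $i$-th member starts at $s^e_i$, and infinitely many edges $e'\in E(T_{e^+})$, each good, with $|S(e')|=|S(e)|$, with the $i$-th member of $\Rcal_e$ meeting $S(e')$ exactly in $s^{e'}_i$, and with $(\overline{G[B(e)]},\pi_e)\preceq_p(\overline{G[B(e')]},\pi_{e'})$; formally, ``good'' is the largest set of edges carrying such data (a union of sets carrying such data carries such data). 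The key claim is that every ray $\rho$ of $T$ contains infinitely many good edges. To see this, restrict to the separators of size $k$ along $\rho$; pigeonholing over the finitely many permutations of a $k$-set, infinitely many of them are crossed by the $k$ threading rays compatibly with the chosen enumerations, and among those Remark~\ref{r:increasingsubsequence} extracts an infinite increasing subsequence of pointed boughs. The edges of that subsequence, equipped with the tails of the threading rays, form a set carrying the data above, hence are good — and they are cofinal in $\rho$.

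Thus the good edges are dense in $T$, so every maximal subtree of $T$ lying strictly between consecutive good edges is finite (K\"onig), and contracting all these pieces produces a tree-decomposition $(T^*,\V^*)$ of $G$ which is still locally finite with finite parts and each vertex in finitely many parts, whose edges correspond bijectively to the good edges of $T$, and whose boughs are precisely the $\overline{G[B(e)]}$ for $e$ good. I would then verify directly that $(T^*,\V^*)$ is extensive. The first three conditions of Definition~\ref{d:extensive} are immediate; for an edge $e^*$ of $T^*$ corresponding to a good edge $e$, let $\omega_{e^*}$ be the end of $\Rcal_e$ and put $\Rcal_{e^*}:=\Rcal_e$. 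Given $n\in\Nbb$, goodness of $e$ yields infinitely many good witnesses $e'$ below $e$, which as edges of the locally finite tree $T^*$ include some $e'$ at $T^*$-distance at least $n$; the embedding $(\overline{G[B(e)]},\pi_e)\preceq_p(\overline{G[B(e')]},\pi_{e'})$ then provides an inflated copy $W\subseteq\overline{G[B(e')]}\subseteq G[B(e')]$ of $\overline{G[B(e)]}$ with $s^{e'}_i\in W(s^e_i)$, and the compatibility in the definition of ``good'' says exactly that the $i$-th member of $\Rcal_e$ starts in $s^e_i$ and meets $S(e')$ precisely in $s^{e'}_i\in W(s^e_i)$ — the three bulleted conditions of self-similarity of $B(e^*)$ towards $\omega_{e^*}$.

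I expect the real difficulty to lie in the second paragraph: one must juggle the well-quasi-ordering witnesses for the (infinite!) boughs, the threading rays needed to pin down a single end, and the alignment of the separator enumerations, all while building in coinductively the requirement that the witnesses themselves be good — which is what makes the final contraction to $(T^*,\V^*)$ legitimate.
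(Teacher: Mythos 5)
Your proposal is correct and follows essentially the same route as the paper's proof: a lean tree-decomposition with connected boughs (Remark~\ref{rem:lean-tw}, Lemma~\ref{lem:bough-connected}), threading rays obtained from leanness along each ray of the decomposition tree at the $\liminf$ of the separator sizes, an increasing subsequence of pointed boughs via Lemma~\ref{lem:labelled-wqo-btw} and Remark~\ref{r:increasingsubsequence}, and contraction of the remaining (rayless, hence finite) pieces. The only cosmetic differences are your coinductive ``good edge'' formulation, where the paper simply takes the extracted edge sets $F_\epsilon$ directly, and your fixing of separator enumerations in advance with a pigeonhole over permutations, which the paper sidesteps by defining the point functions through the threading rays themselves.
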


\begin{proof}
    Let~$G$ be a locally finite connected graph of tree-width~${m \in \Nbb}$. By Remark~\ref{rem:lean-tw}, $G$ has a lean tree-decomposition of width at most $m$ and so, by Lemma~\ref{lem:bough-connected}, there is a lean tree-decomposition~${(T,\V)}$ of~$G$ with width~$m$ in which every bough is connected, every vertex is contained in only finitely many parts, and such that~$T$ is a locally finite tree with root~$r$.
    
    Let~$\epsilon$ be an end of~$T$ and let~$R$ be the unique $\epsilon$-ray starting at the root of~$T$. 
    Let ${d_\epsilon = \liminf_{e \in R} |S(e)|}$ and fix a tail ${t^\epsilon_0t^\epsilon_1\ldots}$ of~$R$ such that ${|S(t^\epsilon_{i-1}t^\epsilon_i)| \geq d_\epsilon}$ for all~${i \in \Nbb}$. 
    Note that, ${|S(t^\epsilon_{i_k-1}t^\epsilon_{i_k})| = d_\epsilon}$ for an infinite sequence~${i_1<i_2<\cdots}$ of indices.
    
    Since~${(T,\V)}$ is lean, there are~$d_\epsilon$ disjoint paths between~${S(t^\epsilon_{i_k-1}t^\epsilon_{i_k})}$ and~${S(t^\epsilon_{i_{k+1}-1}t^\epsilon_{i_{k+1}})}$ for every~${k \in \Nbb}$. 
    Moreover, since each ${S(t^\epsilon_{i_k-1}t^\epsilon_{i_k})}$ is a separator of size~$d_\epsilon$, these paths are all internally disjoint. 
    Hence, since every vertex appears in only finitely many parts, by concatenating these paths we get a family of~$d_\epsilon$ many disjoint rays in~$G$. 
    
    Fix one such family of rays~${( R^\epsilon_j \colon j \in [d_\epsilon] )}$. 
    We claim that there is an end~$\omega$ of~$G$ such that~${R^\epsilon_j \in \omega}$ for all~${j \in [d_\epsilon]}$. 
    Indeed, if not, then there is a finite vertex set~$X$ separating some pair of rays~$R$ and~$R'$ from the family. 
    However, since each vertex appears in only finitely many parts, there is some~${k \in \Nbb}$ such that~${X \cap V_t = \emptyset}$ for all~${t \in V(T_{t^\epsilon_{i_k-1}})}$. 
    By construction~$R$ and~$R'$, have tails in~${B(t^\epsilon_{i_{k}-1}t^\epsilon_{i_{k}})}$, which is connected and disjoint from~$X$, contradicting the fact that~$X$ separates~$R$ and~$R'$.
    
    For every~${k \in \Nbb}$, we define a point function ${\pi^\epsilon_{i_k} \colon [d_\epsilon] \to S(t^\epsilon_{i_k-1}t^\epsilon_{i_k})}$ by letting~$\pi^\epsilon_{i_k}(j)$ be the unique vertex in~${V(R^\epsilon_j) \cap S(t^\epsilon_{i_k-1}t^\epsilon_{i_k})}$. 
    
    By Lemma~\ref{lem:labelled-wqo-btw} and Remark~\ref{r:increasingsubsequence}, the sequence ${(G[B(t^\epsilon_{i_k-1}t^\epsilon_{i_k})], \pi^\epsilon_{i_k})_{k\in\Nbb}}$ has an increasing subsequence ${(G[B(t^\epsilon_{i-1}t^\epsilon_i)], \pi^\epsilon_i)_{i\in I_\epsilon}}$, i.e.~there exists an~${I_{\epsilon} \subseteq \{ i_k \colon k \in \Nbb \}}$ such that for any~${k,j \in I_\epsilon}$ with~${k < j}$, we have 
    \[
        (G[B(t^\epsilon_{k-1}t^\epsilon_k)], \pi^\epsilon_k) \preceq_p (G[B(t^\epsilon_{j-1}t^\epsilon_j)], \pi^\epsilon_j).
    \]
    Let us define ${F_\epsilon = \{ t^\epsilon_{k-1}t^\epsilon_{k} \colon k \in I_\epsilon \} \subseteq E(T)}$. 
    
    Consider~${T^- = T - \bigcup_{\epsilon\in\Omega(T)} F_\epsilon}$, and let us write~${\mathcal{C}(T^-)}$ for the components of~$T^-$. 
    We claim that every component~${C \in \mathcal{C}(T^-)}$ is a locally finite rayless tree, and hence finite. 
    Indeed, if~$C$ contains a ray~${R \subseteq T}$, then~$R$ is in an end~$\epsilon$ of~$T$ and hence~${F_\epsilon \cap R \neq \emptyset}$, a contradiction. 
    Consequently, each set~${\bigcup_{t\in C} V_t}$ is finite. 
    
    Let us define a tree-decomposition~${(T', \V')}$ of~$G$ with ${T' = T / \mathcal{C}(T^-)}$, that is where we contract each component $C \in \mathcal{C}(T^-)$ to a single vertex and where~${V'_{t'} = \bigcup_{t\in t'} V_t }$. 
    We claim this is an extensive tree-decomposition. 
    
    Clearly $T'$ is a locally finite tree, each part of~${(T', \V')}$ is finite, and every vertex of~$G$ in contained in only finitely many parts of the tree-decomposition. 
    Given~${e \in E(T')}$, there is some~${\epsilon \in \Omega(T)}$ such that~${e \in F_\epsilon}$. 
    Consider the family of rays ${(R_{e,j} \colon j \in [d_\epsilon] )}$ given by~${R_{e,j} = R^\epsilon_j \cap B(e)}$. 
    Let~$\omega_e$ be the end of~$G$ in which the rays~$R_{e,j}$ lie.
    
    There is some~${k \in \Nbb}$ such that~${e = t^\epsilon_{k-1}t^\epsilon_{k}}$. 
    Given~${n \in \Nbb}$, let~${k' \in I_\epsilon}$ be such that there are at least~$n$ indices~${\ell \in I_\epsilon}$ with~${k < \ell < k'}$, and let~${e' = t^\epsilon_{k'-1}t^\epsilon_{k'}}$. 
    Note that,~${e' \in F_\epsilon}$ and hence~${e' \in E(T')}$. 
    Furthermore, by construction~$e'^-$ has distance at least~$n$ from~$e^-$ in~$T'$. 
    Then, since~${G[ B(e)] = G[B(t^\epsilon_{k-1}t^\epsilon_{k})]}$ and~${G[B(e') ]= G[B(t^\epsilon_{k'-1}t^\epsilon_{k'})]}$, it follows that ${(G[B(e)], \pi^\epsilon_{k}) \preceq_p (G[B(e')], \pi^\epsilon_{k'})}$, and so suitable subgraphs witness the self-similarity of~${B(e)}$ towards~$\omega_e$ with the rays ${(R_{e,j} \colon j \in [d_\epsilon])}$, as in Lemma~\ref{lem:one-ended-nice}.
\end{proof}

\begin{remark}
    If for every~${\ell \in \Nbb}$ the class of $\ell$-pointed locally finite graphs without thick ends is well-quasi-ordered under~$\preceq_p$, then every locally finite graph without thick ends has an extensive tree-decomposition. 
    This follows by a simple adaptation of the proof above.
\end{remark}

\subsection{Sporadic examples}

We note that, whilst Lemmas~\ref{lem:finendsisext} and~\ref{lem:boundwidthisext} show that a large class of locally finite graphs have extensive tree-decompositions, for many other graphs it is possible to construct an extensive tree-decomposition `by hand'. 
In particular, the fact that no graph in these classes has a thick end is an artefact of the method of proof, rather than a necessary condition for the existence of such a tree-decomposition, as is demonstrated by the following examples:

\begin{remark}
    The grid~${\mathbb{Z} \times \mathbb{Z}}$ has an extensive tree-decomposition, which can be seen in Figure~\ref{f:grid}. 
    More explicitly, we can take a ray decomposition of the grid given by a sequence of increasing diamond shaped regions around the origin. 
    It is easy to check that every bough is self-similar towards the end of the grid. 
    
    A similar argument shows that the half-grid has an extensive tree-decomposition. 
    However, we note that both of these graphs were already shown to be ubiquitous in~\cite{BEEGHPTII}. 
\end{remark}

\begin{figure}[ht]
    \center
    \includegraphics[width=0.7\linewidth, trim=15cm 7cm 15cm 7cm, clip]{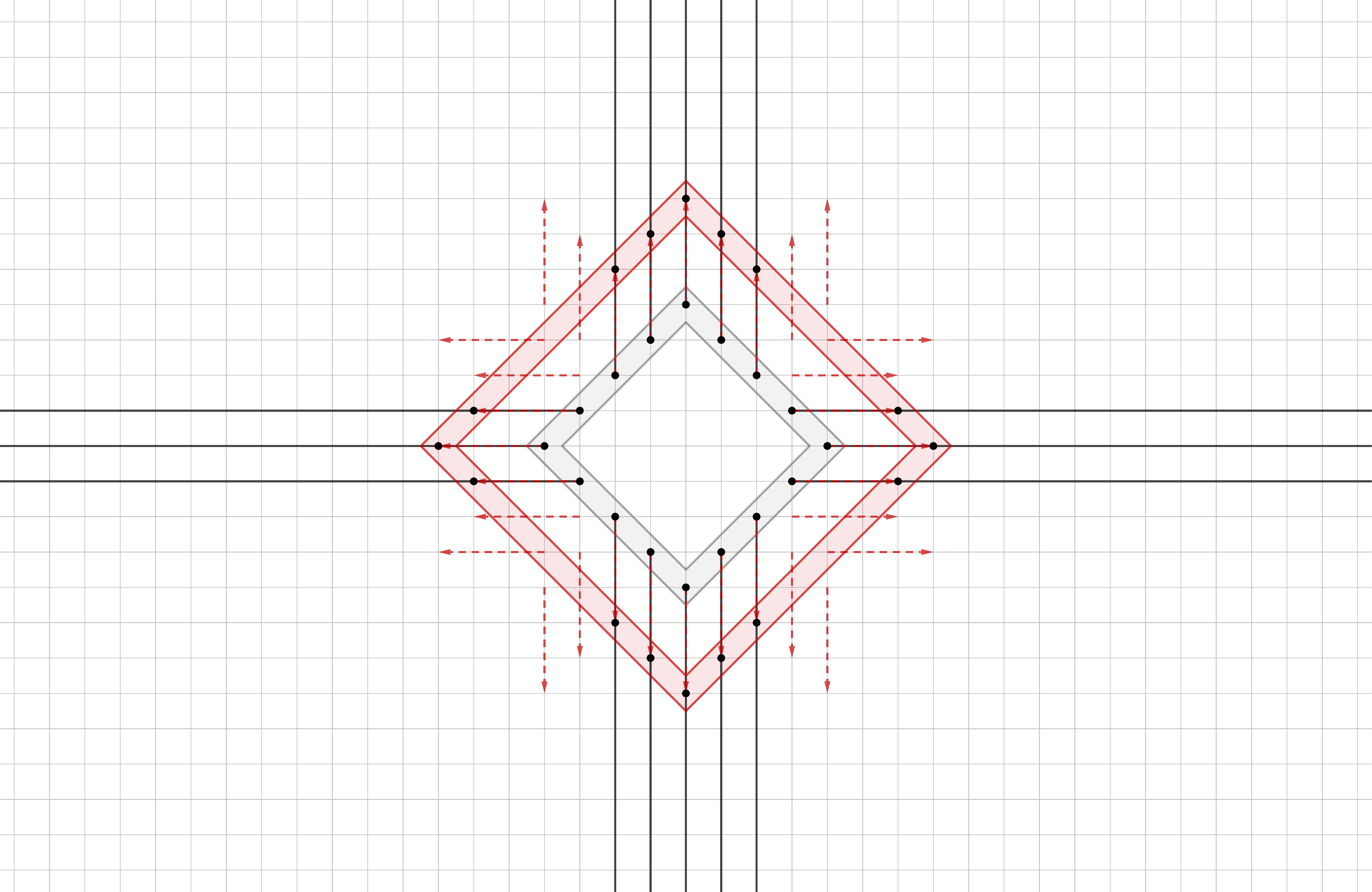}
    \caption{In the grid the boughs are self-similar.}
    \label{f:grid}
\end{figure}

In fact, we do not know of any construction of a locally finite connected graph which does not admit an extensive tree-decomposition.
\begin{question}
    \label{qst:loc_fin_admit ex_td}
    Do all locally finite connected graphs admit an extensive tree-decomposi{-}tion?
\end{question}

\section{The structure of non-pebbly ends}
\label{s:nonpebbly}

We will need a structural understanding of how the arbitrarily large families of~$IG$s (for some fixed graph~$G$) can be arranged inside some host graph~$\Gamma$.
In particular, we are interested in how the rays of these minors occupy a given end~$\epsilon$ of~$\Gamma$.
In~\cite{BEEGHPTII}, by considering a pebble pushing game played on ray graphs, we established a distinction between \emph{pebbly} and \emph{non-pebbly} ends. Furthermore, we showed that each non-pebbly end is either \emph{grid-like} or \emph{half-grid-like}.

\begin{theorem}[{\cite[Theorem 1.2]{BEEGHPTII}}]
    \label{t:trichotomy}
    Let~$\Gamma$ be a graph and let~$\epsilon$ be a thick end of~$\Gamma$. 
    Then~$\epsilon$ is either pebbly, half-grid-like or grid-like.
\end{theorem}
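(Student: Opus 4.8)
The plan is to determine, for the thick end $\epsilon$, all finite graphs that can occur as ray graphs $\RG_\Gamma(\mathcal{R})$ of finite families $\mathcal{R}$ of disjoint $\epsilon$-rays, together with all transition functions between them, and to organise this information via a pebble-pushing game. Since $\epsilon$ is thick, there are families of disjoint $\epsilon$-rays of every finite size, and by Lemma~\ref{rem:tail-raygraph} each such ray graph is connected and is unchanged when rays are replaced by their tails. An edge $ij$ of $\RG(\mathcal{R})$ records infinitely many disjoint paths from $R_i$ to $R_j$ avoiding the other rays, so one may \emph{push} the tail of $R_i$ along such paths onto $R_j$ whenever $R_j$ does not currently carry a marked tail; this is a token-sliding move on the ray graph. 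Chaining such moves produces transition functions in the sense of Definition~\ref{d:trans-func}, and conversely --- using the weak linking lemma (Lemma~\ref{l:weaklink}) and the monotonicity of ray graphs under adding auxiliary rays (Lemma~\ref{l:rayinducedsubgraph}) --- every transition function is, after enlarging the family by a bounded number of `junk' rays, realised by such a chain. I would call $\epsilon$ \emph{pebbly} when this game is maximally flexible: for every $k$ there is $\ell$ so that in some ray graph on $\ell$ rays one can drive any $k$ marked tails to any $k$ prescribed rays; this is equivalent to every injective map between two families of $\epsilon$-rays being a transition function.

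Next I would show that if $\epsilon$ is \emph{not} pebbly then its ray graphs are structurally thin. The key input is a $15$-puzzle type lemma, in the spirit of Wilson's theorem: a finite $2$-connected graph that is large and is not a cycle (say, one containing a subdivided $K_4$, or a $3$-connected minor on many vertices) admits, with only a couple of free `junk' vertices, every permutation of any bounded set of marked tokens. Hence if ray graphs of arbitrarily large families of $\epsilon$-rays were `rich' in this sense, $\epsilon$ would be pebbly. So for non-pebbly $\epsilon$ a Ramsey/compactness argument over the connected ray graphs of growing families shows that, after passing to a large subfamily and to tails, every ray graph is a cycle or a path with a bounded number of extra vertices; minimality of the relevant separators, together with the observation that a vertex of degree $\geq 3$ or a long `ear' would again open up room to slide one token past another, forces those extra vertices to hang from the ends of the path, so that every interior vertex of the `central path' has degree $2$.

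It then remains to split the thin case and to verify that the split does not depend on the chosen family. If the thin ray graphs are (essentially) cycles, $\epsilon$ is grid-like; if they are (essentially) paths with bounded junk at the ends, $\epsilon$ is half-grid-like. For independence, compare two large families by placing tails of both inside one common larger family and applying Lemma~\ref{l:rayinducedsubgraph}: a long cyclic ray graph cannot be an induced piece of a `path $+$ bounded junk' ray graph, nor conversely, once the families are large enough. Finally, that transition functions preserve the cyclic (respectively linear) order along the central cycle (respectively path) is precisely the statement that the pebble game cannot push one marked tail past another --- the failure of pebbliness --- transported to arbitrary linkages via Lemma~\ref{l:rayinducedsubgraph} and the tail-invariance of Lemma~\ref{rem:tail-raygraph}.

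The main obstacle is the second step: converting the soft, global hypothesis ``$\epsilon$ is not pebbly'' into a uniform structural statement about \emph{every} ray graph of $\epsilon$-rays. This requires, first, the token-sliding lemma for rich graphs, and second, a bootstrapping argument that promotes a structural feature witnessed in one large ray graph to all of them, for which the monotonicity of Lemma~\ref{l:rayinducedsubgraph} and the tail-invariance of Lemma~\ref{rem:tail-raygraph} are the essential levers. Once uniform thinness is established, the grid-versus-half-grid dichotomy and the order-preservation of transition functions follow comparatively routinely.
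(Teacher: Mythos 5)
Theorem~\ref{t:trichotomy} is quoted verbatim from the companion paper \cite{BEEGHPTII}; the present paper gives no proof of it (it does not even state the definitions of \emph{pebbly}, \emph{grid-like} and \emph{half-grid-like}, since only the downstream consequences, Corollary~\ref{c:pebblyubiq} and Lemmas~\ref{l:gridstructure} and~\ref{l:halfgridstructure}, are used here). So there is no in-paper proof to compare your attempt against; you are effectively trying to reconstruct a full theorem from another paper.

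That said, your outline does match the framework that \cite{BEEGHPTII} is reported to use: a pebble-pushing game on ray graphs, a Wilson's-theorem-type lemma forcing ray graphs of non-pebbly ends to be cycles or paths with bounded junk, and order/cyclic-order preservation of transition functions as the formalisation of ``tokens cannot slide past one another.'' Your high-level structure is sound.

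The substantive gaps are precisely the ones you flag yourself, and they are not routine. (i) The ``$15$-puzzle'' lemma as you state it is far more than classical Wilson-type results deliver: you need to realise arbitrary injections of a bounded set of \emph{tails} onto prescribed targets, uniformly in the family and \emph{after} an arbitrary finite set $X$ of vertices; controlling the interaction with $X$ is a real issue, since the infinite path systems witnessing ray-graph edges can pass through $X$. (ii) The claim that \emph{every} transition function is, after adding boundedly many junk rays, realised by a chain of token slides needs an actual argument; Lemma~\ref{l:weaklink} only gives existence of \emph{some} linkage, not one along prescribed edges of the ray graph. (iii) The bootstrapping from ``some large ray graph is thin'' to ``all large ray graphs of $\epsilon$ are thin, with consistent correct orientation'' is the crux of the theorem; monotonicity via Lemma~\ref{l:rayinducedsubgraph} and tail-invariance via Lemma~\ref{rem:tail-raygraph} are indeed the right levers, but your sketch does not show how they force uniformity, nor why a cycle and a path-plus-junk cannot both arise as ray graphs of the same thick end. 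Until these three points are filled in, your proposal is a plausible roadmap rather than a proof.
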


The precise technical definition of such ends is not relevant, in what follows we will simply need to use the following results from~\cite{BEEGHPTII}. 

\begin{corollary}[{\cite[Corollary 5.3]{BEEGHPTII}}]
    \label{c:pebblyubiq}
    Let~$\Gamma$ be a graph with a pebbly end~$\epsilon$ and let~$G$ be a countable graph. 
    Then~${\aleph_0 G \preceq \Gamma}$.
\end{corollary}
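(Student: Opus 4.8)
The plan is to reduce the statement to a single fact about pebbly ends — that such an end forces a $K_{\aleph_0}$ minor in the host graph — and then to exploit the universality of the countable complete graph. So the first step is to establish, or rather to recall from the structural analysis of~\cite{BEEGHPTII}, that whenever $\Gamma$ has a pebbly end $\epsilon$ one has $K_{\aleph_0} \preceq \Gamma$. The intuition is that pebbliness amounts to maximal flexibility of the associated transition functions: any injection between two finite families of disjoint $\epsilon$-rays is realised by some linkage after any prescribed finite vertex set. This is exactly what is needed to build an inflated copy of $K_{\aleph_0}$ greedily. Fix a countably infinite family $(R_i \colon i \in \Nbb)$ of disjoint $\epsilon$-rays, to be used (after repeatedly passing to tails) as the branch sets; enumerate the pairs $\{i,j\}$ and process them one at a time, at each step using the pebble-pushing machinery of~\cite{BEEGHPTII} to find a path from $R_i$ to $R_j$ that avoids the other rays and all previously chosen connecting paths, shortening the involved rays to tails so that later steps still have room. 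Since at every finite stage only finitely many rays and paths have been touched, the construction never gets stuck, and the union of the rays and connecting paths is an inflated $K_{\aleph_0}$ inside $\Gamma$.

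Granting $K_{\aleph_0} \preceq \Gamma$, the rest is routine bookkeeping. Since $G$ is countable, the disjoint union $\aleph_0 G$ is again a countable graph, so it is isomorphic to a subgraph of the complete graph $K_{\aleph_0}$ on a countably infinite vertex set; a subgraph is in particular a minor, hence $\aleph_0 G \preceq K_{\aleph_0}$. As $\preceq$ is transitive, $\aleph_0 G \preceq K_{\aleph_0} \preceq \Gamma$, as required.

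The hard part is the first step: extracting the $K_{\aleph_0}$ minor from a pebbly end. This is a genuinely nontrivial construction — the payoff of the pebble-pushing game developed in~\cite{BEEGHPTII}, used together with the trichotomy of Theorem~\ref{t:trichotomy} — and in the present paper it should simply be quoted rather than reproved. Everything downstream of it is elementary cardinal arithmetic and transitivity of the minor relation.
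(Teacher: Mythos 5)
Your proposal is correct and matches the intended argument: the paper itself does not reprove this corollary but cites it from~\cite{BEEGHPTII}, and in Section~\ref{s:sketch} it explicitly describes the relevant result from that paper as showing that any graph with a pebbly end contains a $K_{\aleph_0}$ minor, after which the corollary follows exactly as you say, since $\aleph_0 G$ is countable, hence a subgraph (so a minor) of $K_{\aleph_0}$, and $\preceq$ is transitive. Your sketch of the greedy construction of the $K_{\aleph_0}$ minor via iterated linking of ray pairs is also the right picture of what the pebble-pushing machinery is used for, and you are right to treat that as a black box to be quoted rather than reproved here.
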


\begin{lemma}[{\cite[Lemma 7.1 and Corollary 7.3]{BEEGHPTII}}]
    \label{l:gridstructure}
    Let~$\Gamma$ be a graph with a grid-like end~$\epsilon$. 
    Then there exists an~${N \in \Nbb}$ such that the ray graph for any family~${(R_i \colon i \in I)}$ of disjoint $\epsilon$-rays in~$\Gamma$ with~${|I| \geq N+2}$ is a cycle. 
    
    Furthermore, there is a choice of a cyclic orientation, which we call the \emph{correct orientation}, of each such ray graph such that any transition function between two families of at least~${N + 3}$ disjoint $\epsilon$-rays preserves the correct orientation.
\end{lemma}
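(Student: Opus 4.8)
The plan is to deduce this from the structure theory of grid-like ends established in the companion paper~\cite{BEEGHPTII}; in fact the statement is precisely \cite[Lemma~7.1 and Corollary~7.3]{BEEGHPTII}, so I shall only indicate the idea behind it. Recall from Theorem~\ref{t:trichotomy} that a thick end is grid-like, half-grid-like or pebbly, and that the grid-like case is the one modelled on the end of the grid~$\Z \times \Z$: the thick part of~$\epsilon$ carries a nested system of finite separators arranged in concentric ``cycles'' (the diamonds of Figure~\ref{f:grid} being the prototype), and, equivalently, the pebble-pushing game on the ray graphs over~$\epsilon$ behaves ``cyclically''. Fix such an end~$\epsilon$.

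For the first assertion, let $\Rcal = (R_i \colon i \in I)$ be a family of disjoint $\epsilon$-rays with $|I|$ large. All the~$R_i$ are equivalent, so by Lemma~\ref{rem:tail-raygraph} the ray graph $\RG(\Rcal)$ is connected and we may freely replace each~$R_i$ by a tail; doing so we may assume that the rays lie deep inside the thick part of~$\epsilon$. Beyond a suitable finite set~$X$, the union of the rays then separates the relevant component of~$\Gamma - X$ into $|I|$ pieces carrying a cyclic order inherited from the grid-like structure, with~$R_i$ and~$R_j$ bordering a common piece exactly when they are cyclically consecutive. A routing argument inside such a piece produces infinitely many disjoint $R_i$--$R_j$ paths avoiding all other rays, so cyclically consecutive rays are adjacent in~$\RG(\Rcal)$; conversely, if~$R_i$ and~$R_j$ are not consecutive, then the rays strictly between them together with a finite connecting region separate~$R_i$ from~$R_j$ in $\Gamma - \bigcup_{k \neq i,j} R_k$, so there are only finitely many such paths and $R_i \not\sim R_j$. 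Hence $\RG(\Rcal)$ is connected with every vertex of degree exactly~$2$, i.e.~a cycle. The threshold $|I| \geq N+2$, with~$N$ depending only on~$\epsilon$, is what excludes families of one or two rays (whose ray graphs are too small to be cycles) and, more generally, lets us pass beyond any finite-scale irregularity before the cyclic structure of~$\epsilon$ takes over; the value of~$N$ is extracted from the structure theorem.

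For the second assertion, each such ray graph is a cycle and so admits exactly two cyclic orientations; the \emph{correct} one is taken to be the orientation agreeing with the cyclic order of the pieces above. Given a transition function~$\sigma$ between two families~$\Rcal$, $\Scal$ of at least~$N+3$ disjoint $\epsilon$-rays, realise~$\sigma$ by a linkage~$\Pcal$ after a finite set chosen large enough that the transitioned rays $\Rcal \circ_{\Pcal} \Scal$ remain inside the thick part. Following these pairwise disjoint rays through a sufficiently late concentric separator, they must cross it in the cyclic order prescribed by the grid-like structure at both~$\Rcal$ and~$\Scal$; since they are disjoint, $\sigma$ cannot reverse this order, as a reversal would force two of the transitioned rays to meet. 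This gives orientation preservation, the extra margin of~$N+3$ over~$N+2$ serving only to keep all the cyclic orders involved non-degenerate.

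The hard part — and the reason this analysis is done once and for all in~\cite{BEEGHPTII} rather than reproved here — is that a grid-like end carries no actual planar embedding, so the convenient picture of ``pieces bounded by rays'' has no literal meaning in an arbitrary graph. The real work in~\cite{BEEGHPTII} is to replace that intuition by the combinatorics of the pebble-pushing game on ray graphs, showing that a thick, non-pebbly, non-half-grid-like end forces every sufficiently large ray graph over~$\epsilon$ to be a cycle and every transition function between such families to be an orientation-preserving cyclic map. Granting that machinery, the statement follows, and in this paper I would simply cite it.
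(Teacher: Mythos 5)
Your proposal correctly identifies that the statement is imported verbatim from~\cite{BEEGHPTII} (Lemma~7.1 and Corollary~7.3), and the paper itself offers no proof beyond that citation; your plan to cite it is exactly what the paper does. The intuitive sketch you append — concentric separators, cyclic adjacency, orientation preservation via disjointness — is a reasonable heuristic gloss, but as you yourself note it is not a proof and is not attempted in the paper either, so the two treatments coincide.
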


\begin{lemma}[{\cite[Lemma 7.6, Corollary 7.7 and Corollary 7.9]{BEEGHPTII}}]
    \label{l:halfgridstructure}
    Let~$\Gamma$ be a graph with a half-grid-like end~$\epsilon$. 
    Then there exists an~${N \in \Nbb}$ such that the ray graph~$K$ for any family~${(R_i \colon i \in I)}$ of disjoint $\epsilon$-rays in~$\Gamma$ with~${|I| \geq N+2}$ contains a bare path with at least~${|I|-N}$ vertices, which we call the \emph{central path} of~$K$, such that the following statements are true:
    \begin{enumerate}
        \item For any ${i \in I}$, if ${K - i}$ has precisely two components, each of size at least~${N+1}$, then~$i$ is an inner vertex of the central path of~$K$. 
        \item There is a choice of an orientation, which we call the \emph{correct orientation}, of the central path of each such ray graph such that any transition function between two families of at least~${N + 3}$ disjoint $\epsilon$-rays sends vertices of the central path to vertices of the central path and preserves the correct orientation.
    \end{enumerate}
\end{lemma}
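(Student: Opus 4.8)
The statement repackages three results from \cite{BEEGHPTII}, so the plan is to recall what each delivers and to synchronise their quantitative thresholds. I would take the constant $N$ of the present lemma to be the maximum of the constants furnished by \cite[Lemma~7.6]{BEEGHPTII}, \cite[Corollary~7.7]{BEEGHPTII} and \cite[Corollary~7.9]{BEEGHPTII}. From \cite[Lemma~7.6]{BEEGHPTII} one obtains, for the half-grid-like end $\epsilon$, that every ray graph $\RG(R_i \colon i \in I)$ with $|I|$ large decomposes into a long bare backbone together with at most $N$ further \emph{junk} vertices clustered near its two ends; deleting the junk leaves a bare path on at least $|I|-N$ vertices, which we call the central path. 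This is the substantive input — in \cite{BEEGHPTII} it is extracted from the analysis of the pebble-pushing game that distinguishes pebbly, grid-like and half-grid-like ends — and here I would only need to observe that the central path is canonical, which will in fact follow from item~(1).

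For item~(1) I would invoke \cite[Corollary~7.7]{BEEGHPTII}: a junk vertex can always be separated from the rest of $K$ together with only boundedly many companions, so $K-i$ cannot split into two components each of size at least $N+1$ unless $i$ lies strictly inside the backbone; hence the hypothesis of~(1) forces $i$ to be an inner vertex of the central path. The slack (writing $N+1$ rather than the constant from \cite[Corollary~7.7]{BEEGHPTII} itself) is pure bookkeeping, absorbing the differences between the three constants, and as a by-product it pins the central path down uniquely. For item~(2) I would use \cite[Corollary~7.9]{BEEGHPTII}, which states that a transition function between two families of at least $N+3$ disjoint $\epsilon$-rays maps central path to central path and, once each central path carries its correct orientation, preserves that orientation; the content behind this is that a linkage realising a transition function cannot reverse the order of two rays along a backbone, which is again read off from the half-grid model. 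That the choice of correct orientation can be made coherently across all sufficiently large families is part of the same analysis in \cite{BEEGHPTII}; Lemma~\ref{rem:tail-raygraph} and Lemma~\ref{l:rayinducedsubgraph} let one compare the ray graphs of a family and of a sub-family, which is the mechanism by which the orientations downstairs are matched up.

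The main obstacle is not really located in this paper: it is the two hard inputs from \cite{BEEGHPTII}, namely the uniform bound on the number of junk vertices and the order-rigidity of transition functions for a half-grid-like end, both of which rest on the pebble-game analysis there. Granting those, all that remains here is the synchronisation of thresholds together with the remark that the `central path' appearing (implicitly) in each of the three cited results refers to the same object, the unique maximal bare path of length at least $|I|-N$, so the statement as phrased is unambiguous.
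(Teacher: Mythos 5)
Your proposal correctly identifies that Lemma~\ref{l:halfgridstructure} is not proved in this paper at all: it is stated as a direct import of \cite[Lemma~7.6, Corollary~7.7, Corollary~7.9]{BEEGHPTII}, with the three cited results supplying, respectively, the existence of the long bare (central) path, the characterisation of inner vertices of that path via the two-component splitting criterion, and the order-rigidity of transition functions. Your synchronisation of the three constants into a single $N$ and the mapping of items (1) and (2) onto Corollaries~7.7 and~7.9 is exactly the intended reading, so this matches the paper's treatment.
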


By Corollary~\ref{c:pebblyubiq}, if we wish to show that a countable graph~$G$ is $\preceq$-ubiquitous we can restrict our attention to host graphs~$\Gamma$ where each end is non-pebbly. 
In which case, by Lemmas~\ref{l:gridstructure} and~\ref{l:halfgridstructure} for any end~$\epsilon$ of~$\Gamma$, the possible ray graphs, and the possible transition functions between two families of rays, are severely restricted. 

Later on in our proof we will be able to restrict our attention to a single end~$\epsilon$ of~$\Gamma$ and the proof will split into two cases according to whether~$\epsilon$ is half grid-like or grid-like. 
However, the two cases are very similar, with the grid-like case being significantly simpler. 
Therefore, in what follows we will prove only the results necessary for the case where~$\epsilon$ is half-grid-like, and then later, in Section~\ref{s:gridlike}, we will shortly 
sketch the differences for the grid-like case.

\subsection{Core rays in the half-grid-like case}
\label{s:core}

By Lemma~\ref{l:halfgridstructure}, in a half-grid-like end~$\epsilon$ every ray graph consists, apart for possibly some bounded number of rays on either end, of a bare-path, each of which comes with a correct orientation, which must be preserved by transition functions.

However, in the half-grid itself even more can be seen to true. 
There is a natural partial order defined on the set of all rays in the half-grid, where two rays are comparable if they have disjoint tails, and a ray~$R$ is less than a ray~$S$ if the tail of~$R$ lies `to the left' of the tail of~$S$ in the half-grid. 
Then it can be seen that the correct orientations of the central path of any disjoint family of rays can be chosen to agree with this global partial order.

In a general half-grid-like end~$\epsilon$ a similar thing will be true, but only for a subset of the rays in the end which we call the core rays. 

Let us fix for the rest of this section a graph~$\Gamma$ and a half-grid-like end~$\epsilon$. 
By Lemma~\ref{l:halfgridstructure}, there is some~${N \in \Nbb}$ such that all but at most $N$ vertices of the ray graph of any large enough family of disjoint $\epsilon$-rays lie on the \emph{central path}. 

\begin{definition}[Core rays]
    \label{d:coreray}
    Let~$R$ be an $\epsilon$-ray. 
    We say~$R$ is a \emph{core ray (of~$\epsilon$)} if there is a finite family~${\mathcal{R} = ( R_i \colon i \in I)}$ of disjoint $\epsilon$-rays with~${R = R_c}$ for some~${c \in I}$ such that~${\RG(\mathcal{R})- c}$ has precisely two components, each of size at least~${N+1}$.
\end{definition}

Note that, by Lemma~\ref{l:halfgridstructure}, such a ray $R_c$ is an inner vertex of the central path of $\RG(\mathcal{R})$. In order to define our partial order on the core rays, we will need to consider what it means for a ray to lie `between' two other rays.

\begin{definition}
    Given three $\epsilon$-rays $R,S,T$ such that $R,S,T$ have disjoint tails, we say that~$S$ \emph{separates~$R$ from~$T$} if the tails of~$R$ and~$T$ disjoint from~$S$ belong to different ends of~${\Gamma-S}$.
\end{definition}

\begin{lemma}
    \label{l:raygraph_separates}
    Let $\mathcal{R}={(R_i \colon i \in I)}$ be a finite family of disjoint $\epsilon$-rays and let~${i_1, i_2, j \in I}$. Then~$i_1$ and~$i_2$ belong to different components of~${\RG(\mathcal{R})-j}$ if and only if~$R_j$ separates~$R_{i_1}$ from~$R_{i_2}$. 
\end{lemma}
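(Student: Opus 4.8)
The statement relates two notions of "betweenness" for a family $\mathcal{R} = (R_i \colon i \in I)$ of disjoint $\epsilon$-rays: a combinatorial one (separation in the ray graph $\RG(\mathcal{R})$) and a topological one (separation of tails in $\Gamma - R_j$). I would prove the two implications separately.

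For the forward direction, suppose $R_j$ does \emph{not} separate $R_{i_1}$ from $R_{i_2}$. Then the tails of $R_{i_1}$ and $R_{i_2}$ lying outside $R_j$ belong to the same end of $\Gamma - R_j$, hence there are infinitely many disjoint paths between these tails in $\Gamma - R_j$. Using this infinite collection of paths, I would build a path in $\RG(\mathcal{R}) - j$ from $i_1$ to $i_2$: walk along one of the connecting paths, and each time it first leaves a ray $R_k$ (with $k \neq j$) and later first meets another ray $R_{k'}$, record the "jump" from $k$ to $k'$; to make these into genuine edges of the ray graph one must upgrade each jump to an \emph{infinite} family of disjoint paths meeting no other $R_\ell$, which is done by a pigeonhole argument over the infinitely many available connecting paths (shrinking to the subcollection that jumps in the same way, or passing to suitable subpaths between consecutive ray-hits). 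This shows $i_1$ and $i_2$ lie in the same component of $\RG(\mathcal{R}) - j$.

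For the backward direction, suppose $R_j$ \emph{does} separate $R_{i_1}$ from $R_{i_2}$, so the relevant tails lie in different components (equivalently different ends, since these are ray-tails) $C_1 \ni$ (tail of $R_{i_1}$) and $C_2 \ni$ (tail of $R_{i_2}$) of $\Gamma - R_j$. I claim no edge of $\RG(\mathcal{R}) - j$ joins the index set of rays with tails in $C_1$ to that of rays with tails in $C_2$: an edge $kk'$ of the ray graph requires infinitely many disjoint paths from $R_k$ to $R_{k'}$ meeting no other $R_\ell$, in particular not meeting $R_j$; but all but finitely many vertices of $R_k$ (resp. $R_{k'}$) lie in a single component of $\Gamma - R_j$, and a path avoiding $R_j$ cannot cross between components of $\Gamma - R_j$ — so almost all of these paths stay within one component, forcing $k$ and $k'$ to the same side. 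Hence $i_1$ and $i_2$ are separated in $\RG(\mathcal{R}) - j$. One should be slightly careful that "almost all vertices of $R_k$ lie in one component of $\Gamma - R_j$" — this holds because $R_k$ is connected and $R_k \cap R_j$ is finite (the rays are disjoint! so in fact $R_k \subseteq \Gamma - R_j$ entirely, and $R_k$ connected means it lies in a single component), which actually makes this direction clean.

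The main obstacle is the forward direction: turning finitely-many-or-infinitely-many crossing paths in $\Gamma - R_j$ into a \emph{path} in the ray graph, where each edge demands an infinite disjoint family of paths meeting \emph{no other ray}. The bookkeeping — ensuring the recorded jumps avoid all $R_\ell$ with $\ell \notin \{k,k'\}$, and that infinitely many of the original connecting paths realize the same sequence of jumps so that each individual jump is witnessed infinitely often — is the delicate part; a clean way is to take a single connecting path $P$ in $\Gamma - R_j$, list the rays it meets in order as $R_j$-avoiding "stations," note consecutive stations are ray-graph-adjacent via the infinitely many parallel copies of the relevant segment of $P$ (after discarding the boundedly many $R_\ell$ it passes through, or rather taking minimal subpaths between consecutive station-hits which then meet no third ray), and conclude $i_1, i_2$ are connected in $\RG(\mathcal{R}) - j$. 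I would also invoke Lemma~\ref{rem:tail-raygraph} or the remark that passing to tails does not change the ray graph, to freely replace the $R_i$ by disjoint tails at the outset so that "disjoint" can be assumed with comfortable slack.
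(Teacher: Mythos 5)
Your argument is correct, and the backward direction matches the paper's: an edge of $\RG(\mathcal{R})-j$ is witnessed by paths that avoid $R_j$, and such paths cannot cross between components of $\Gamma - V(R_j)$, so the two sides of the separation cannot be joined. For the forward direction, though, you take a genuinely different route. The paper restricts to the subfamily $\mathcal{R}'$ of all rays in $\mathcal{R}\setminus\{R_j\}$ equivalent to $R_{i_1}$ in $\Gamma - V(R_j)$, applies Lemma~\ref{rem:tail-raygraph} to get that $\RG_{\Gamma - V(R_j)}(\mathcal{R}')$ is connected, and then observes that this ray graph is a subgraph of $\RG_\Gamma(\mathcal{R}) - j$ (this last step is terse in the paper and actually requires a small pigeonhole of exactly the kind you are doing, since a path in $\Gamma - V(R_j)$ avoiding $\mathcal{R}'$ might still hit rays of $\mathcal{R}$ outside $\mathcal{R}'$). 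You instead manufacture a connecting walk in $\RG(\mathcal{R})-j$ directly from the infinite family of disjoint connecting paths; this is more self-contained and avoids invoking the subgraph observation. One of your two phrasings is off, however: a \emph{single} connecting path $P$ does not come with ``infinitely many parallel copies of the relevant segment,'' so the final sentence of your sketch does not stand on its own. The version that works, and that your first paragraph gestures at, is the component-based contrapositive: if $i_1, i_2$ lay in different components $C_1, C_2$ of $\RG(\mathcal{R})-j$, then from each of the infinitely many disjoint connecting paths take the segment between the last vertex on $\bigcup_{k\in C_1}R_k$ preceding, and the first vertex on some $R_\ell$ with $\ell\notin C_1\cup\{j\}$; these segments are internally disjoint from all of $\mathcal{R}$ and pairwise disjoint, and pigeonholing on the endpoint pair $(k,\ell)$ (finitely many choices) produces an edge of $\RG(\mathcal{R})-j$ leaving $C_1$, a contradiction. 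This is also preferable to pigeonholing on the entire jump sequence, which is not a finite set of possibilities.
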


\begin{proof}
    Suppose that~$R_{i_1}$ and~$R_{i_2}$ belong to the same end of~${\Gamma-V(R_j)}$, and let~$\mathcal{R}'$ be the subset of~${\mathcal{R} \setminus \{R_j\}}$ which belong to this end.
    
    Then, $\mathcal{R}'$ is a disjoint family of rays in the same end of~${\Gamma-V(R_j)}$ and so by Lemma~\ref{rem:tail-raygraph} the ray graph~${\RG_{{\Gamma-V(R_j)}}(\mathcal{R}')}$ is connected.
    However, it is apparent that~${\RG_{{\Gamma-V(R_j)}}(\mathcal{R}')}$ is a subgraph of~${\RG_{\Gamma}(\mathcal{R})}$,
    and so~$i_1$ and~$i_2$ belong to the same component of~${\RG(\mathcal{R})-j}$.
    
    Conversely, suppose~$i_1$ and $i_2$ belong to the same component of~${\RG(\mathcal{R})-j}$. Then, it is clear that for any two adjacent vertices~$k$ and~$\ell$ in~${\RG(\mathcal{R})-j}$ the rays~$R_k$ and~$R_\ell$ are equivalent in $\Gamma - R_j$, and hence~$R_{i_1}$ and~$R_{i_2}$ belong to a common end of $\Gamma - R_j$. It follows that~$R_j$ does not separate~$R_{i_1}$ from~$R_{i_2}$.
\end{proof}

\begin{lemma}
    \label{l:ray_tripple_seps}
    If $R,S,T$ are $\epsilon$-rays and~$S$ separates~$R$ from~$T$, then~$T$ does not separate~$R$ from~$S$ and~$R$ does not separate~$S$ from~$T$.
\end{lemma}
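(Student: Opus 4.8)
The plan is to reduce everything to a small combinatorial fact about the ray graph of the three-element family $(R,S,T)$ and then invoke Lemma~\ref{l:raygraph_separates} a few times.

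First I would arrange for $R$, $S$, $T$ to be pairwise disjoint by passing to tails. Since $S$ separates $R$ from $T$, the three rays have pairwise disjoint tails, so each of $R\cap S$, $R\cap T$, $S\cap T$ is finite; letting $X$ be their union and replacing each ray by its tail after $X$ therefore yields pairwise disjoint $\epsilon$-rays. This replacement changes neither the ray graph, by Lemma~\ref{rem:tail-raygraph}, nor any of the three separation relations in question: deleting from $\Gamma$ the finitely many further vertices that separate a tail from the rest of its ray does not affect which end of the remaining graph each tail lies in. So I may assume $\mathcal{R}=(R,S,T)$ is a family of three pairwise disjoint $\epsilon$-rays; write $K=\RG(\mathcal{R})$ for its ray graph, whose vertex set I identify with $\{R,S,T\}$.

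Now, since $R,S,T$ are equivalent, $K$ is connected by Lemma~\ref{rem:tail-raygraph}, hence $K$ is either a triangle or a path on three vertices. If $K$ were a triangle, then $K-v$ would be connected for each vertex $v$, and by Lemma~\ref{l:raygraph_separates} none of the three rays would separate the other two --- contradicting the hypothesis. So $K$ is a path; and since $S$ separates $R$ from $T$, Lemma~\ref{l:raygraph_separates} forces $R$ and $T$ to lie in distinct components of $K-S$, so $S$ is the middle vertex and $K$ is the path $R\,S\,T$.

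Finally I would read off the conclusion: $K-T$ is the edge $R\,S$, so $R$ and $S$ lie in the same component of $K-T$, whence by Lemma~\ref{l:raygraph_separates} $T$ does not separate $R$ from $S$; and $K-R$ is the edge $S\,T$, so $S$ and $T$ lie in the same component of $K-R$, whence $R$ does not separate $S$ from $T$. The only point needing any care is the reduction to a disjoint family at the start --- that is, checking that passing to tails really does preserve the separation relations --- and this is routine; everything else is immediate from Lemmas~\ref{rem:tail-raygraph} and~\ref{l:raygraph_separates}.
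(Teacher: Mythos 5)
Your proof is correct, but it takes a different route from the paper's. The paper argues directly from the definition of ends: since $R$ and $T$ are equivalent there are infinitely many disjoint $R$--$T$ paths; because $S$ separates $R$ from $T$ it must meet infinitely many of them, and truncating those paths at their first encounter with $S$ yields infinitely many disjoint $R$--$S$ paths avoiding $T$ (and symmetrically infinitely many $S$--$T$ paths avoiding $R$), which is exactly the conclusion. You instead encode everything in the ray graph of the three-ray family and apply Lemma~\ref{l:raygraph_separates} three times, after first passing to pairwise disjoint tails. Both arguments are sound and there is no circularity, since Lemma~\ref{l:raygraph_separates} is established beforehand and independently. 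The trade-off is that your route needs the preliminary reduction to a disjoint family (the ray graph and Lemma~\ref{l:raygraph_separates} are only stated for disjoint rays), and your justification that passing to tails preserves the separation relations is stated a little loosely --- the clean way to see it is that the deleted vertex sets $S\setminus S'$ etc.\ are finite, so only finitely many of any infinite disjoint path system can meet them; but this is indeed routine. What your approach buys is a uniform mechanism: once the ray graph of $(R,S,T)$ is pinned down as the path $R\,S\,T$, all three separation statements drop out of Lemma~\ref{l:raygraph_separates} at once, whereas the paper's argument, though shorter and more elementary, handles each assertion by a separate (if symmetric) path-truncation.
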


\begin{proof}
    As~$R$ and~$T$ both belong to~$\epsilon$, there are infinitely many disjoint paths between them. 
    As~$S$ separates~$R$ from~$T$, we know that~$S$ must meet infinitely many of these paths. 
    Hence, there are infinitely many disjoint paths from~$S$ to~$R$, all disjoint from~$T$. 
    Similarly, there are infinitely many disjoint paths from~$S$ to~$T$, all disjoint from~$R$. 
    Hence~$T$ does not separate~$R$ from~$S$ and~$R$ does not separate~$S$ from~$T$. 
\end{proof}

\begin{lemma}
    \label{l:centraltwoend}
   Let~$R$ be a core ray of~$\epsilon$. 
   Then in~${\Gamma - V(R)}$ the end~$\epsilon$ splits into precisely two different ends. 
   (That is, there are two ends~$\epsilon'$ and~$\epsilon''$ of~${\Gamma - V(R)}$ such that every $\epsilon$-ray in~$\Gamma$ which is disjoint from~$R$ is in~$\epsilon'$ or~$\epsilon''$ in~${\Gamma - V(R)}$.)
\end{lemma}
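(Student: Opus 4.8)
The plan is to derive both halves of the statement---that $\epsilon$ splits into at least two, and into at most two, ends of $\Gamma - V(R)$---from Lemma~\ref{l:raygraph_separates} together with the half-grid structure recorded in Lemma~\ref{l:halfgridstructure}. Since $R$ is a core ray, I fix a finite family $\mathcal{R} = (R_i \colon i \in I)$ of disjoint $\epsilon$-rays with $R = R_c$ such that $\RG(\mathcal{R}) - c$ has exactly two components $A$ and $B$, each of size at least $N+1$ (so in particular $|I| \geq 2N+3$). Pick $a \in A$ and $b \in B$ and let $\epsilon^-$, $\epsilon^+$ be the ends of $\Gamma - V(R)$ containing $R_a$, $R_b$. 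Because $a$ and $b$ lie in different components of $\RG(\mathcal{R}) - c$, Lemma~\ref{l:raygraph_separates} says $R$ separates $R_a$ from $R_b$, so $\epsilon^- \neq \epsilon^+$; this settles ``at least two''. The same lemma applied inside $\mathcal{R}$ moreover shows that every $R_i$ with $i \neq c$ lies in $\epsilon^-$ (if $i \in A$) or in $\epsilon^+$ (if $i \in B$), a fact I will use in the next step.

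For the ``at most two'' half I will argue by contradiction: suppose some $\epsilon$-ray $S$ of $\Gamma$ disjoint from $R$ lies in an end $\epsilon^*$ of $\Gamma - V(R)$ different from both $\epsilon^-$ and $\epsilon^+$. First I would pass to a tail of $S$ disjoint from all of $\mathcal{R}$. This is possible because $\epsilon^*$ can be separated in $\Gamma - V(R)$ from the two ends $\epsilon^-$, $\epsilon^+$ by a finite set $Y$, and then each $R_i$ (with $i \neq c$), having its tail in $C(Y,\epsilon^-)$ or $C(Y,\epsilon^+)$, meets the component $C(Y,\epsilon^*)$ only finitely; adding those finitely many vertices to $Y$ and taking the tail of $S$ after the enlarged set yields a ray $S'$ disjoint from $R$ and from every ray of $\mathcal{R}$. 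Now $\mathcal{G} := \mathcal{R} \cup \{S'\}$ is a finite family of disjoint $\epsilon$-rays, so $\RG(\mathcal{G})$ is connected by Lemma~\ref{rem:tail-raygraph}. Applying Lemma~\ref{l:raygraph_separates} within $\mathcal{G}$: since $R_a, R_b, S'$ lie in three pairwise distinct ends of $\Gamma - V(R)$, $R$ separates each of the three pairs, so their indices lie in three pairwise distinct components of $\RG(\mathcal{G}) - c$; moreover (again by that lemma, since all rays of $A$ lie in $\epsilon^-$ and all of $B$ in $\epsilon^+$) every vertex of $A$ lies in the component $U^-$ containing $a$ and every vertex of $B$ lies in the component $U^+$ containing $b$. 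Hence $\RG(\mathcal{G}) - c$ has at least three components, two of which, $U^-$ and $U^+$, have size at least $N+1$.

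To finish I will contradict Lemma~\ref{l:halfgridstructure}. As $|\mathcal{G}| \geq 2N+3 \geq N+2$, the graph $\RG(\mathcal{G})$ contains a central bare path $P$ with at least $|\mathcal{G}| - N$ vertices, so at most $N$ vertices of $\RG(\mathcal{G})$ lie off $P$. A short case check on the position of $c$ relative to $P$ forces $c$ to be an inner vertex of $P$: if $c \notin P$, or $c$ is an endpoint of $P$, then $\RG(\mathcal{G}) - c$ contains a connected subpath that is all of $P$ (or all of $P$ but $c$), which together with the bound $N$ on the number of vertices off $P$ leaves at most one component of size exceeding $N$---impossible, since $U^-$ and $U^+$ are distinct and both have size at least $N+1$. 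But an inner vertex of a bare path has degree $2$ in $\RG(\mathcal{G})$, so deleting $c$ from the connected graph $\RG(\mathcal{G})$ leaves at most two components, contradicting the three found above. Therefore no such $S$ exists; every $\epsilon$-ray of $\Gamma$ disjoint from $R$ lies in $\epsilon^-$ or $\epsilon^+$, and these two distinct ends witness the claim. I expect the main obstacle to be the bookkeeping in the middle paragraph---producing a tail $S'$ genuinely disjoint from every ray of $\mathcal{R}$, so that $\mathcal{G}$ is a legitimate family for the ray-graph lemmas---together with making the case analysis that pins $c$ to the interior of the central path fully rigorous.
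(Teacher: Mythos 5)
Your proof is correct and follows the same overall strategy as the paper's: fix the witness family $\mathcal{R}$ certifying that $R = R_c$ is core, derive the ``at least two'' half from Lemma~\ref{l:raygraph_separates} applied to the two components of $\RG(\mathcal{R}) - c$, then for the ``at most two'' half pass to a tail $S'$ of the putative third ray that is disjoint from $\bigcup \mathcal{R}$, augment $\mathcal{R}$ by $S'$, and derive a contradiction from the structure of the augmented ray graph. Where you diverge is in the last step. The paper observes that the identity is a transition function from $\mathcal{R}$ to the augmented family, so by Lemma~\ref{l:halfgridstructure} the index $c$ remains an inner vertex of the central path and hence has degree two, from which it concludes $s$ is adjacent to some $i \neq c$ and so $S'$ lies in $\epsilon'$ or $\epsilon''$. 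You instead show directly that $\RG(\mathcal{G}) - c$ would have at least three components, two of size at least $N+1$, and then re-derive ``$c$ is an inner vertex of the central path'' by hand via a case analysis on where $c$ sits relative to the central path; the degree-two conclusion then contradicts the three components. Both routes work, and your case analysis is sound (it is essentially reproving the relevant direction of Lemma~\ref{l:halfgridstructure}(1) without the hypothesis of \emph{precisely} two components); the paper's transition-function argument lets you skip that bookkeeping, while yours is more self-contained. Your detour for producing the tail $S'$ via a separator $Y$ also works, though the paper's observation---that a ray meeting $\bigcup\mathcal{R}$ infinitely often would share an end with some $R_i$ in $\Gamma - V(R)$---gets there more quickly.
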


\begin{proof}
    Let ${\mathcal{R} = ( R_i \colon i \in I)}$ be a finite family of disjoint $\epsilon$-rays witnessing that~${R = R_c}$ for some~${c \in I}$ is a core ray. 
    Then there are precisely two ends $\epsilon'$ and $\epsilon''$ in~${\Gamma-V(R)}$ that contain rays in~$\mathcal{R}$, since connected components of ${\RG(\mathcal{R})-c}$ are equivalent sets of rays in~${\Gamma-V(R)}$ and moreover, the two connected components do not contain rays belonging to the same end of~${\Gamma-V(R)}$ by Lemma~\ref{l:raygraph_separates}. 
    
    Suppose there is a third end in~${\Gamma-V(R)}$ that contains an $\epsilon$-ray~$S$. 
    We first claim that there is a tail of~$S$ which is disjoint from~${\bigcup \mathcal{R}}$. 
    Indeed, clearly~$S$ is disjoint from~$R$, and if~$S$ meets~${\bigcup \mathcal{R}}$ infinitely often then it would meet some~${R_i \in \mathcal{R}}$ infinitely often, and hence lie in the same end of~${\Gamma-V(R)}$ as~$R_i$. 
    So let~$S'$ be a tail of~$S$ which is disjoint from~${\bigcup \mathcal{R}}$.
    
    Let us consider the family~${\mathcal{R}' := \mathcal{R} \cup \{S'\}}$, 
    where the ray~$S'$ is indexed by some additional index~$s$. 
    Since~$S'$ is an $\epsilon$-ray, the ray graph~$\RG(\mathcal{R'})$ is connected. Furthermore, since the identity on~$I$ is clearly a transition function from~$\mathcal{R}$ to~$\mathcal{R}'$, by Lemma~\ref{l:halfgridstructure}, $c$ is an inner vertex of the central path of~$\RG(\mathcal{R}')$, and hence has degree two.
    
    We claim that~$s$ is adjacent to some~${i \neq c}$ in~$\RG(\mathcal{R}')$. 
    Indeed, if not, then~$s$ must be a leaf of~$\RG(\mathcal{R}')$ adjacent to~$c$. In which case, there must be some neighbour~$i$ of~$c$ in~$\RG(\mathcal{R})$ which is not adjacent to~$c$ in~$\RG(\mathcal{R}')$. However, then~$s$ must be adjacent to~$i$ in~$\RG(\mathcal{R}')$. 
    
    However, then clearly~$s$ lies in the same end of~${\Gamma-V(R)}$ as~$R_i$, and hence in either~$\epsilon'$ or~$\epsilon''$. 
\end{proof}

Hence, every core ray~$R$ splits~$\epsilon$ into two ends. 
We would like to use this partition to define our partial order on core rays; the core rays in one end will be less than~$R$ and the core rays in the other end will be greater than~$R$. 
However, if we want this partial order to agree with the correct orientation of the central path for any disjoint family of rays in~$\epsilon$, then every family of rays~${(R_i \colon i \in I)}$ in whose ray graph~${R=R_c}$ is a vertex of the central path will choose which end of~${\Gamma-V(R)}$ is less than~$R$ and which is greater than~$R$, and we must make sure that this choice is consistent. 

So, given a finite family of disjoint $\epsilon$-rays~${\Rcal=(R_i \colon i \in I)}$ in whose ray graph~${R=R_c}$ is a vertex of the central path, we denote by~${\top(R,\Rcal)}$ the end of~${\Gamma-V(R)}$ containing rays~$R_i$ satisfying~${i<c}$, where~$<$ refers to the correct orientation of the vertices of the central path, 
and with~${\bot(R,\Rcal)}$ the end containing rays~$R_i$ satisfying~${i>c}$. 
We will show that the labelling~$\top$ and~$\bot$ is in fact independent of the choice of family~${\Rcal}$. 

\begin{definition}
    Given two (possibly infinite) vertex sets~$X$ and~$Y$ in~$\Gamma$, 
    we say that an end~$\epsilon$ of~${\Gamma-X}$ is a \emph{sub-end} of an end~$\epsilon'$ of~${\Gamma-Y}$ if every ray in~$\epsilon$ has a tail in~$\epsilon'$.
\end{definition}

\begin{lemma}
    \label{l:end_distribution}
    Let~$R$ and~$S$ be disjoint core rays of~$\epsilon$. 
    Let us suppose that~$\epsilon$ splits in~${\Gamma-V(S)}$ into~$\epsilon'_S$ and~$\epsilon''_S$ and in~${\Gamma-V(R)}$ into~$\epsilon'_R$ and~$\epsilon''_R$. 
    If~$R$ belongs to~$\epsilon'_S$ and~$S$ belongs to~$\epsilon'_R$, then~$\epsilon''_S$ is a sub-end of~$\epsilon'_R$ and~$\epsilon''_R$ is a sub-end of~$\epsilon'_S$.
\end{lemma}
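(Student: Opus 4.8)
The plan is to prove the statement in the equivalent, one-sided form ``every ray in $\epsilon''_S$ has a tail lying in $\epsilon'_R$'', and then obtain the second half by exchanging the roles of $R$ and $S$: the hypotheses are symmetric under this exchange (both simply fix which of the two ends of $\Gamma-V(S)$, resp.\ $\Gamma-V(R)$, is the distinguished one), while it turns ``$\epsilon''_S$ is a sub-end of $\epsilon'_R$'' into ``$\epsilon''_R$ is a sub-end of $\epsilon'_S$''. The geometric picture driving the argument is that of $R$ lying ``to the left'' of $S$: then $\epsilon'_S$ is the half to the left of $S$ (containing $R$), $\epsilon''_S$ the half to its right, $\epsilon'_R$ the half to the right of $R$ (containing $S$), $\epsilon''_R$ the half to its left, and ``right of $S$ is also right of $R$'' is precisely the desired containment. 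The two ingredients I would use are Lemma~\ref{l:centraltwoend}, which tells us that each of $\Gamma-V(R)$ and $\Gamma-V(S)$ has exactly two ends reachable by $\epsilon$-rays, and the ``no three-way crossing'' statement Lemma~\ref{l:ray_tripple_seps}.

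Concretely, I would fix a ray $T$ in $\epsilon''_S$. The first step is a tail adjustment: since $R\in\epsilon'_S$ and $T\in\epsilon''_S$ lie in different ends of $\Gamma-V(S)$, the rays $R$ and $T$ cannot share infinitely many vertices, for otherwise the one-vertex paths at the common vertices would form infinitely many disjoint $R$--$T$ paths inside $\Gamma-V(S)$ and make $R$ and $T$ equivalent there. Hence $T$ has a tail $T'$ disjoint from $V(R)$; it is also disjoint from $V(S)$ and still lies in $\epsilon''_S$, so $R$, $S$, $T'$ are three $\epsilon$-rays with pairwise disjoint tails. Now $S$ separates $R$ from $T'$, because the tails of $R$ and $T'$ disjoint from $S$ are $R$ and $T'$ themselves, lying in the distinct ends $\epsilon'_S$, $\epsilon''_S$ of $\Gamma-V(S)$. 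Applying Lemma~\ref{l:ray_tripple_seps} to the triple $(R,S,T')$ then yields that $R$ does not separate $S$ from $T'$, i.e.\ the tails of $S$ and $T'$ disjoint from $R$ lie in a common end of $\Gamma-V(R)$. Since $S\in\epsilon'_R$ and, by Lemma~\ref{l:centraltwoend}, every $\epsilon$-ray disjoint from $R$ lies in $\epsilon'_R$ or $\epsilon''_R$, this common end must be $\epsilon'_R$, so $T'\in\epsilon'_R$; thus $T$ has a tail in $\epsilon'_R$, and as $T$ was arbitrary, $\epsilon''_S$ is a sub-end of $\epsilon'_R$.

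I expect the only real subtlety to be that very first tail adjustment. The definition of ``sub-end'' requires \emph{every} ray of $\epsilon''_S$ to have a tail in $\epsilon'_R$, an end of the \emph{different} vertex-deleted graph $\Gamma-V(R)$, so one genuinely has to rule out the possibility that such a ray meets $V(R)$ infinitely often; the short disjoint-paths argument above does exactly that, and once it is in place, the remainder is a mechanical dictionary translation between the ``separates'' relation on triples of rays and membership in one of the two ends supplied by Lemma~\ref{l:centraltwoend}, combined with Lemma~\ref{l:ray_tripple_seps}. No well-quasi-ordering or ray-graph combinatorics enter directly here: the half-grid-like structure is used only through Lemma~\ref{l:centraltwoend}, invoked as a black box, together with the hypothesis that $R$ and $S$ are core rays.
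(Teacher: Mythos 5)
Your proof is correct and takes essentially the same approach as the paper: fix $T\in\epsilon''_S$, pass to a tail $T'$ disjoint from $R$, observe that $S$ separates $R$ from $T'$, apply Lemma~\ref{l:ray_tripple_seps} to get that $R$ does not separate $S$ from $T'$, conclude $T'\in\epsilon'_R$, and finish by symmetry. The extra care you take spelling out the tail adjustment and invoking Lemma~\ref{l:centraltwoend} to pin down the end is a harmless elaboration of what the paper leaves implicit.
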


\begin{proof}
    Let~$T$ be a ray in~$\epsilon''_S$. 
    As~$R$ belongs to a different end of~${\Gamma-V(S)}$ than~$T$, there is a tail~$T'$ of~$T$ which is disjoint from~$R$. 
    As~$S$ separates~$R$ from~$T'$, we know, by Lemma~\ref{l:ray_tripple_seps}, that~$R$ does not separate~$S$ from~$T'$, hence~$T'$ belongs to~$\epsilon'_R$. 
    Hence,~$\epsilon_S''$ is a sub-end of~$\epsilon_R'$. 
    Proving that~$\epsilon''_R$ is a sub-end of~$\epsilon'_S$ works analogously. 
\end{proof}

\begin{lemmadef}
    \label{l:unique_above}
    Let ${\mathcal{R}_1 = ( R_i \colon i \in I_1 )}$ and ${\mathcal{R}_2 = ( R_i \colon i \in I_2 )}$ be two finite families of disjoint $\epsilon$-rays, such for some~${c \in I_1 \cap I_2}$ the ray~$R_c$ lies on the central path of both $\RG(\mathcal{R}_1)$ and $\RG(\mathcal{R}_2)$. 
    Then~${\top(R_c,\Rcal_1) = \top(R_c,\Rcal_2)}$ and~${\bot(R_c,\Rcal_1) = \bot(R_c,\Rcal_2)}$. 
    
    We therefore write~${\top(\epsilon,R_c)}$ for the end~${\top(R_c,\Rcal_1)}$ 
    and~$\bot(\epsilon,R_c)$ accordingly, i.e.\ ${\top(\epsilon, R_c)}$ is the end of~${\Gamma - V(R_c)}$ containing rays that appear on the central path of some ray graph before~$R_c$ according to the correct orientation and~${\bot(\epsilon, R_c)}$ is the end of~${\Gamma - V(R_c)}$ containing rays that appear on the central path of some ray graph after~$R_c$ according to the correct orientation. 
    Note that~${\top(\epsilon,R_c) \cap \bot(\epsilon,R_c) = \emptyset}$. 
\end{lemmadef}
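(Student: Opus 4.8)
The plan is to pin down the two ends of $\Gamma-V(R_c)$ into which $\epsilon$ splits and to show that every admissible family labels them the same way. The ray $R_c$ is a core ray (this is implicit in $\top(R_c,\cdot)$ and $\bot(R_c,\cdot)$ being defined), so by Lemma~\ref{l:centraltwoend} there are exactly two ends $\epsilon',\epsilon''$ of $\Gamma-V(R_c)$ containing tails of $\epsilon$-rays disjoint from $R_c$, and from a family witnessing that $R_c$ is a core ray (Definition~\ref{d:coreray}) one reads off that each of $\epsilon',\epsilon''$ has at least $N+1$ disjoint rays in $\Gamma-V(R_c)$. Adding such rays to $\Rcal_1$ and, separately, to $\Rcal_2$, I may assume both families have at least $N+3$ members; by Lemma~\ref{l:raygraph_separates} the ray graph minus $c$ then has exactly two components, one consisting of the rays lying in $\epsilon'$ and one of those lying in $\epsilon''$, each of size at least $N+1$, so by Lemma~\ref{l:halfgridstructure}(1) $R_c$ is an inner vertex of both central paths. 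This enlargement changes neither $\top$ nor $\bot$, since the inclusion of the old family into the new one is a transition function realised by the trivial linkage, so by Lemma~\ref{l:halfgridstructure}(2) it carries central path to central path preserving the correct orientation, and it fixes $c$. Finally, once $\top(R_c,\Rcal_1)=\top(R_c,\Rcal_2)$ is proved, equality of the $\bot$'s follows because $\Gamma-V(R_c)$ has only the two ends $\epsilon',\epsilon''$, and $\top(\epsilon,R_c)\cap\bot(\epsilon,R_c)=\emptyset$ because distinct ends are disjoint as sets of rays.

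The heart of the matter is the claim that there is a transition function $\sigma$ from $\Rcal_1$ to $\Rcal_2$ with $\sigma(c)=c$, realised, after any prescribed finite set, by a transitional linkage in which $R_c$ is linked to itself along a path running on $R_c$, so that the resulting family of disjoint rays contains $R_c$ unchanged. Granting this, Lemma~\ref{l:halfgridstructure}(2) gives that $\sigma$ maps the central path of $\RG(\Rcal_1)$ into that of $\RG(\Rcal_2)$ preserving the correct orientation; as $\sigma(c)=c$, the left neighbour $a$ of $c$ on the first central path is sent to an index $\sigma(a)<c$ on the second, so $R_a\in\top(R_c,\Rcal_1)$ and $R_{\sigma(a)}\in\top(R_c,\Rcal_2)$ by definition. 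For each $n$, apply $\sigma$ with a linkage after a finite set containing the first $n$ vertices of $R_c$; since the resulting disjoint family of rays contains $R_c$ itself, the linking path from $R_a$ to $R_{\sigma(a)}$ must avoid $R_c$, and letting $n$ grow produces infinitely many disjoint $R_a$--$R_{\sigma(a)}$ paths in $\Gamma-V(R_c)$. Hence $R_a$ and $R_{\sigma(a)}$ lie in a common end of $\Gamma-V(R_c)$, i.e.\ $\top(R_c,\Rcal_1)=\top(R_c,\Rcal_2)$.

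I expect proving that claim to be the main obstacle, the difficulty being that $R_c$ lies in both families (and $\Rcal_1,\Rcal_2$ may share further rays, which however can each be linked to themselves). The approach is to route $R_c$ to itself along a trivial path chosen far out on $R_c$ and to link the remaining rays of $\Rcal_1$ to the remaining rays of $\Rcal_2$ using the linking machinery (Lemma~\ref{l:weaklink}, or Lemma~\ref{l:link} to keep clear of a tail of $R_c$) applied after a large finite set, then to invoke Lemma~\ref{l:trans-link} to make the linkage transitional and to check that it induces a transition function fixing $c$; the disjointness bookkeeping needs care but should be routine once this is set up. An alternative finish that avoids arranging $\sigma(c)=c$ is to assume $\top(R_c,\Rcal_1)\neq\top(R_c,\Rcal_2)$, take any transition function between the two (large) families, and derive a contradiction from the order-preservation in Lemma~\ref{l:halfgridstructure}(2) together with how the splittings of $\epsilon$ at $R_c$ and at the neighbouring core rays interact, as governed by Lemmas~\ref{l:ray_tripple_seps} and~\ref{l:end_distribution}.
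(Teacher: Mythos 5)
Your first paragraph (identifying the two ends $\epsilon',\epsilon''$ of $\Gamma-V(R_c)$ via Lemma~\ref{l:centraltwoend}, enlarging both families so that $c$ is an inner vertex of both central paths, and the intended finish via order-preservation) is sound. The gap is in the ``heart of the matter'': the transition function you need --- one from $\Rcal_1$ to $\Rcal_2$ with $\sigma(c)=c$, realised by linkages that leave $R_c$ unchanged --- does not exist in general, and your proposed construction of it cannot be repaired. If the resulting family of rays contains $R_c$ itself, every other resulting ray avoids $R_c$ entirely; taking the linkage after a finite set containing a separator of $\epsilon'$ and $\epsilon''$ in $\Gamma-V(R_c)$, each such ray must then stay on the same side of $R_c$ as the ray of $\Rcal_1$ it extends, so $\sigma$ would have to map the $\epsilon'$-rays of $\Rcal_1$ injectively into the $\epsilon'$-rays of $\Rcal_2$ and likewise for $\epsilon''$. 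There is no reason for both $|\Rcal_1\cap\epsilon'|\le|\Rcal_2\cap\epsilon'|$ and $|\Rcal_1\cap\epsilon''|\le|\Rcal_2\cap\epsilon''|$ to hold: already in the half-grid one can take two families of equal size in which $R_c$ sits at different positions along the central path (say second of four in one, third of four in the other). In that situation not even a transition function with $\sigma(c)=c$ exists, since by Lemma~\ref{l:halfgridstructure} it would have to preserve the correct orientation of the central paths, which is arithmetically impossible. Your fallback sketch (linking $\Rcal_1\setminus\{R_c\}$ to $\Rcal_2\setminus\{R_c\}$ by the weak linking lemma and adding a trivial link on $R_c$) founders on exactly this point, since the linkage must be carried out in $\Gamma-V(R_c)$ and hence side by side; and the ``alternative finish'' is too vague to close the gap.

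The paper circumvents this by never linking $\Rcal_1$ to $\Rcal_2$ directly. It forms an auxiliary family $\Scal=\Scal'\cup\{R_c\}\cup\Scal''$, where $\Scal'$ is the \emph{larger} of the $\epsilon'$-parts of $\Rcal_1$ and $\Rcal_2$ and $\Scal''$ the larger of the $\epsilon''$-parts. Then both $\Rcal_1$ and $\Rcal_2$ admit transition functions \emph{into} $\Scal$ fixing $R_c$ and mapping each side into the corresponding side (built by linking each side separately inside $\Gamma-V(R_c)$ after a separator of $\epsilon'$ and $\epsilon''$, together with the trivial linkage on $R_c$); the cardinality condition is now satisfied by construction. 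Orientation-preservation of these two transition functions transfers the labelling $\top/\bot$ from $\Scal$ back to each of $\Rcal_1$ and $\Rcal_2$, which yields the claim. If you insert this intermediate family, the remainder of your argument goes through.
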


\begin{proof}
    Let~$\epsilon'$ and~$\epsilon''$ be the two ends of~${\Gamma - V(R_c)}$ and let~$\Rcal_1'$ and~$\Rcal_2'$ be the set of rays in~$\Rcal_1$ and~$\Rcal_2$ respectively that belong to~$\epsilon'$, and similarly~$\Rcal_1''$ and~$\Rcal_2''$ be the set of rays in~$\Rcal_1$ and~$\Rcal_2$ respectively that belong to $\epsilon''$. 
    Let~$\Scal'$ be the larger of~$\Rcal_1'$ and~$\Rcal_2'$, and similarly~$\Scal''$ the larger of~$\Rcal_1''$ and~$\Rcal_2''$.
    
    Let us consider the family of rays~${\Scal := \Scal' \cup \{R_c\} \cup \Scal''}$. 
    Since the rays in~$\Scal'$ and~$\Scal''$ belong to different ends of~${\Gamma - V(R_c)}$, we may, after replacing some of the rays with tails, assume that~$\Scal$ is a family of disjoint rays. 
    We claim that there is a transition function~$\sigma_1$ from~$\Rcal_1$ to~$\Scal$ which maps~$R_c$ to itself, $\Rcal_1'$ to~$\Scal'$, and~$\Rcal_1''$ to~$\Scal''$. 
    
    Indeed, let us take a finite separator~$X$ which separates~$\epsilon'$ and~$\epsilon''$ in~${\Gamma - V(R_c)}$. 
    By Lemma~\ref{l:trans-link}, there is a finite set~$Y$ such that any linkage after~$Y$ from~$\Rcal_1$ to~$\Scal$ is transitional. 
    Then, since the rays in~$\Rcal_1'$ and~$\Scal'$ belong to the same end of~${\Gamma - V(R_c)}$ and~${|\Rcal_1'| \leq |\Scal'|}$, there is a linkage after~${X \cup Y}$ from~$\Rcal_1'$ to~$\Scal'$ in~${\Gamma - V(R_c)}$, and similarly there is a linkage after~${X \cup Y}$ from~$\Rcal_1''$ to~$\Scal''$ in~${\Gamma - V(R_c)}$. 
    If we combine these two linkages with a trivial linkage from~$R_c$ to itself after~${X \cup Y}$, we obtain a transitional linkage which induces an appropriate transition function.
    
    The same argument shows that there is a transition function~$\sigma_2 $ from~$\Rcal_2$ to~$\Scal$ which maps~$R_c$ to itself, $\Rcal_2'$ to~$\Scal'$, and~$\Rcal_2''$ to~$\Scal''$. 
    By Lemma~\ref{l:halfgridstructure}, both transition functions map vertices of the central path to vertices of the central path and preserve the correct orientation. 
    In particular, $R_c$ lies on the central path of $\RG(\mathcal{S})$.
    
    Moreover, both~$\sigma_1$ and~$\sigma_2$ map $\epsilon'$-rays to $\epsilon'$-rays and $\epsilon''$-rays to $\epsilon''$-rays. 
    Therefore, if ${\epsilon' = \top(R_c,\Scal)}$, then~$\sigma_1$ shows that ${\epsilon' = \top(R_c,\Rcal_1)}$ and~$\sigma_2$ shows that ${\epsilon' = \top(R_c,\Rcal_2)}$, and similarly if~${\epsilon' = \bot(R_c,\Scal)}$.
\end{proof}

\begin{lemmadef}
    \label{def:core-order}
    Let~${\core(\epsilon)}$ denote the set of core rays in~$\epsilon$. 
    We define a partial order~$\leq_\epsilon$ on~${\core(\epsilon)}$ by
    \begin{align*}
        R \leq_\epsilon S 
            &\text{ if and only if either } R = S, \\
            &\text{ or } R \text{ and } S \text{ have disjoint tails $xR$ and $yS$ and } xR \in \top(\epsilon, yS)
    \end{align*}
    for~${R, S \in \core(\epsilon)}$.
\end{lemmadef}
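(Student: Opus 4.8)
The plan is to show that $\leq_\epsilon$ is well defined and is a partial order on $\core(\epsilon)$, with the whole argument driven by two facts established just above: the consistency of the labels $\top,\bot$ (Lemma and Definition~\ref{l:unique_above}) and the sub-end relations of Lemma~\ref{l:end_distribution}. A standing reduction is that a tail of a core ray is again a core ray, and more precisely that replacing a ray by a tail changes neither the ray graph of a family containing it (Lemma~\ref{rem:tail-raygraph}) nor the correct orientation of its central path — the latter because the trivial linkage from a family to its family of tails, and back, are transition functions, so Lemma~\ref{l:halfgridstructure}(2) applies (padding with junk rays if a family is too small).

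\emph{Well-definedness.} The point to prove is: if $S$ is a core ray, $S'$ a tail of $S$, and $P$ an $\epsilon$-ray disjoint from $S$, then $P\in\top(\epsilon,S)$ if and only if $P'\in\top(\epsilon,S')$ for some, equivalently every, tail $P'$ of $P$ disjoint from $S'$. Indeed, a family that realises $\top(\epsilon,S)$ through its central path still realises $\top(\epsilon,S')$ once $S$ is replaced by $S'$, by the remark above; and whether a given $\epsilon$-ray lies in $\top(\epsilon,S)$ or in $\top(\epsilon,S')$ depends only on which of the two ends (Lemma~\ref{l:centraltwoend}) of $\Gamma-V(S)$, respectively $\Gamma-V(S')$, it lies in, and these two graphs differ only by the finitely many vertices of $V(S)\setminus V(S')$, whose deletion neither merges nor creates ends incident with $\epsilon$. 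Together with the triviality that two tails of the same ray lie in the same end of $\Gamma$ minus a fixed finite vertex set, this shows that the truth of ``$xR\in\top(\epsilon,yS)$'' does not depend on the choice of disjoint tails $xR$, $yS$; hence $\leq_\epsilon$ is well defined, and from now on one may freely replace the core rays under comparison by disjoint tails.

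\emph{Reflexivity} is immediate. For \emph{antisymmetry}, suppose $R\neq S$ are disjoint core rays with $R\in\top(\epsilon,S)$ and $S\in\top(\epsilon,R)$. By Definition~\ref{d:coreray} together with Lemmas~\ref{l:raygraph_separates} and~\ref{l:centraltwoend}, each of $\top(\epsilon,R),\bot(\epsilon,R),\top(\epsilon,S),\bot(\epsilon,S)$ contains a family of $N+1$ disjoint $\epsilon$-rays, and Lemma~\ref{l:end_distribution} (applied to $R,S$) gives that $\bot(\epsilon,S)$ is a sub-end of $\top(\epsilon,R)$ and $\bot(\epsilon,R)$ is a sub-end of $\top(\epsilon,S)$. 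Form the family $\mathcal F$ consisting of $R$, $S$, $N+1$ disjoint rays lying in $\bot(\epsilon,R)$ and $N+1$ disjoint rays lying in $\bot(\epsilon,S)$; these sub-end relations let one replace every member by a tail so that $\mathcal F$ is a family of disjoint $\epsilon$-rays. By Lemma~\ref{l:raygraph_separates} the two components of $\RG(\mathcal F)-R$ are precisely the rays of $\mathcal F$ lying in $\top(\epsilon,R)$ and those lying in $\bot(\epsilon,R)$; counting (using $S\in\top(\epsilon,R)$ and that the $\bot(\epsilon,S)$-rays lie in $\top(\epsilon,R)$) each side has at least $N+1$ vertices, so Lemma~\ref{l:halfgridstructure}(1) places $R$ on the central path of $\RG(\mathcal F)$, and symmetrically $S$. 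By Lemma and Definition~\ref{l:unique_above}, $\top(R,\mathcal F)=\top(\epsilon,R)$ and $\top(S,\mathcal F)=\top(\epsilon,S)$. Hence $R\in\top(\epsilon,S)$ forces $R$ to precede $S$ along the central path in the correct orientation, while $S\in\top(\epsilon,R)$ forces $S$ to precede $R$ — a contradiction, so $R=S$.

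For \emph{transitivity}, let $R\leq_\epsilon S\leq_\epsilon T$ with $R,S,T$ distinct; after passing to tails, $R,S$ are disjoint and $S,T$ are disjoint. From $R\leq_\epsilon S$ and antisymmetry, $S\in\bot(\epsilon,R)$; likewise $T\in\bot(\epsilon,S)$ while $R\in\top(\epsilon,S)$. Since $\top(\epsilon,S)\neq\bot(\epsilon,S)$ (Lemma~\ref{l:centraltwoend}), $R$ and $T$ lie in different ends of $\Gamma-V(S)$, so they have disjoint tails and $R\neq T$; pass to those tails. Applying Lemma~\ref{l:end_distribution} to the pairs $R,S$ and $S,T$ yields that $\top(\epsilon,R)$ is a sub-end of $\top(\epsilon,S)$, which is a sub-end of $\top(\epsilon,T)$, and dually $\bot(\epsilon,T)$ is a sub-end of $\bot(\epsilon,S)$, which is a sub-end of $\bot(\epsilon,R)$. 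Now form a disjoint family $\mathcal F$ consisting of $R$, $S$, $T$, $N+1$ rays lying in $\top(\epsilon,R)$ and $N+1$ rays lying in $\bot(\epsilon,T)$ (replacing members by tails, which the sub-end chains permit); counting as before, each of $R$, $S$, $T$ has at least $N+1$ vertices of $\RG(\mathcal F)$ on each side, so all three lie on the central path (Lemmas~\ref{l:halfgridstructure}(1) and~\ref{l:raygraph_separates}). By Lemma and Definition~\ref{l:unique_above}, $R\in\top(\epsilon,S)=\top(S,\mathcal F)$ and $S\in\top(\epsilon,T)=\top(T,\mathcal F)$, so along the central path $R$ precedes $S$ and $S$ precedes $T$, whence $R$ precedes $T$, i.e.\ $R\in\top(\epsilon,T)$, giving $R\leq_\epsilon T$. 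The conceptual content is carried entirely by Lemma~\ref{l:end_distribution} and Lemma and Definition~\ref{l:unique_above}; I expect the main obstacle to be the bookkeeping in assembling $\mathcal F$ — choosing the auxiliary families and passing to tails repeatedly so that $\mathcal F$ is genuinely a disjoint family while each compared core ray still retains at least $N+1$ members of $\mathcal F$ on each side, which is exactly what Lemma~\ref{l:halfgridstructure}(1) needs — with the comparison of ends across $\Gamma-V(S)$ and $\Gamma-V(S')$ in the well-definedness step being the other delicate point.
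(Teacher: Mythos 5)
Your proof is correct. The antisymmetry argument is essentially the paper's own: you assemble the family $\{R,S\}\cup\mathcal R_{\bot(R)}\cup\mathcal R_{\bot(S)}$, use Lemma~\ref{l:end_distribution} to see that the two auxiliary bundles land on opposite sides of both $R$ and $S$, place both rays on the central path via Lemma~\ref{l:halfgridstructure}, and contradict Lemma~\ref{l:unique_above}. Where you diverge is transitivity: the paper argues directly with separation properties of ray triples --- from $T\in\bot(\epsilon,S)$ it concludes that $S$ separates $R$ from $T$, invokes Lemma~\ref{l:ray_tripple_seps} to see that $T$ does not separate $S$ from $R$, and hence that $R$ and $S$ lie in the same end of $\Gamma-V(T)$, giving $R\in\top(\epsilon,T)$ in three lines. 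You instead build a second large auxiliary family containing $R$, $S$, $T$ together with $N+1$ rays in $\top(\epsilon,R)$ and $N+1$ in $\bot(\epsilon,T)$, put all three rays on one central path, and read transitivity off the linear order there. Both routes are sound; the paper's is shorter and avoids the bookkeeping you flag as the main obstacle, while yours has the mild virtue of reusing exactly the machinery already set up for antisymmetry (Lemmas~\ref{l:halfgridstructure} and~\ref{l:unique_above}) and of making visible that $R$, $S$, $T$ genuinely occur in that order on a common central path. Your explicit well-definedness check (independence of the choice of disjoint tails) is an addition the paper leaves implicit, resting on Remark~\ref{rem:core-remarks}\ref{rem:core-tail}, Lemma~\ref{rem:tail-raygraph} and Lemma~\ref{l:unique_above}; your sketch of it is slightly informal but the underlying claims are correct.
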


\begin{proof}
    We must show that~$\leq_\epsilon$ is indeed a partial order. 
    For the anti-symmetry, let us suppose that~$R$ and~$S$ are disjoint rays in~${\core(\epsilon)}$ such that~${R \leq_\epsilon S}$ and~${S \leq_\epsilon R}$, so that~${R \in \top(\epsilon, S)}$ and~${S \in \top(\epsilon, R)}$. 
    Let~$\mathcal{R}_S$ be a family of rays witnessing that~$S$ is a core ray and~$\mathcal{R}_R$ a family witnessing that~$R$ is a core ray. 
    By Lemma~\ref{l:end_distribution}, ${\bot(\epsilon,S)}$ is a sub-end of~${\top(\epsilon,R)}$ and~${\bot(\epsilon,R)}$ is a sub-end of~${\top(\epsilon,S)}$. 
    Let~$\mathcal{R}_{\bot(S)}$ be the subset of~$\mathcal{R}_S$ of rays which belong to~${\bot(\epsilon, S)}$. 
    Let~$\mathcal{R}_{\bot(R)}$ be defined accordingly.
    By replacing rays with tails, we may assume that all rays in~${\mathcal{R} := \mathcal{R}_{\bot(S)} \cup \mathcal{R}_{\bot(R)} \cup \{R\} \cup\{S\}}$ are pairwise disjoint. Note that, by the comment after Definition~\ref{d:coreray}, both $R$ and~$S$ are inner vertices of the central path of~${\RG(\mathcal{R})}$. 
    Thus, either~${S \in \bot(\epsilon,R)}$ or~${R \in \bot(\epsilon,S)}$, contradicting Lemma~\ref{l:unique_above}.
    
    For the transitivity, let us suppose that $R,S,T$ are rays in~${\core(\epsilon)}$, such that~${R \leq_\epsilon S}$ and~${S \leq_\epsilon T}$. 
    We may assume that~$R$ and~$S$, and~$S$ and~$T$ are disjoint. 
    As~$\leq_\epsilon$ is anti-symmetric, we have~${T \not \leq_\epsilon S}$, hence~${T \in \bot(\epsilon,S)}$. 
    Thus,~$R$ and~$T$ belong to different ends of~${\Gamma-V(S)}$, and we may assume that they are also disjoint. 
    As~$S$ therefore separates~$R$ from~$T$, by Lemma~\ref{l:ray_tripple_seps}, we know that~$T$ does not separate~$S$ from~$R$. 
    Thus, $R$ and~$S$ belong to the same end of~${\Gamma-V(T)}$. 
    Hence~${R \in \top(\epsilon,T)}$. 
\end{proof}

\begin{remark}
    \label{rem:core-remarks}
    Let~${R, S \in \core(\epsilon)}$ and let~$\mathcal{R}$ be a finite family of disjoint $\epsilon$-rays.
    \begin{enumerate}
        \item \label{rem:core-tail} Any ray which shares a tail with $R$ is also a core ray of~$\epsilon$.
        \item \label{rem:core-comparable} If~$R$ and~$S$ are disjoint, then~$R$ and~$S$ are comparable under~$\leq_\epsilon$.
        \item \label{rem:core-correct} If~$R$ and~$S$ are on the central path of~${\RG(\Rcal)}$, then~${R \leq_\epsilon S}$ if and only if~$R$ appears before~$S$ in the correct orientation of the central path of~$\RG(\Rcal)$.
        \item \label{rem:non-core-bounded} The maximum number of disjoint rays in ${\epsilon \setminus \core(\epsilon)}$ is bounded by~${2N+2}$.
    \end{enumerate}
\end{remark}

\begin{lemma}
    \label{lem:core-exchange}
    Let~${R, S \in \core(\epsilon)}$ and let~${Z \subseteq V(\Gamma)}$ be a finite set such that~${\top(\epsilon, S)}$ and~${\bot(\epsilon, S)}$ are separated by~$Z$ in~${\Gamma - V(S)}$.
    Let~${H \subseteq \Gamma - Z}$ be a connected subgraph which is disjoint to~$S$ and contains~$R$
    and let~${T \subseteq H}$ be some core $\epsilon$-ray.
    Then~$S$ is in the same relative $\leq_\epsilon$-order to~$T$ as to~$R$.
\end{lemma}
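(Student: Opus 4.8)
The plan is to argue by contradiction, exploiting the fact that $R$ and $T$ both lie in the connected subgraph $H$ which is disjoint from $S$ and avoids the separator $Z$. Suppose the conclusion fails, so that $S$ lies in the \emph{opposite} $\leq_\epsilon$-relation to $T$ than it does to $R$; by Remark~\ref{rem:core-remarks}\ref{rem:core-comparable} all three pairs are comparable, so without loss of generality $R \leq_\epsilon S$ while $S \leq_\epsilon T$ (the symmetric case is identical). After passing to tails we may assume $R$, $S$, $T$ are pairwise disjoint. Unravelling Definition~\ref{def:core-order}, $R \leq_\epsilon S$ means $R \in \top(\epsilon, S)$, and $S \leq_\epsilon T$ means (using anti-symmetry, so $T \not\leq_\epsilon S$, hence $T \in \bot(\epsilon,S)$) that $R$ and $T$ lie in \emph{different} ends of $\Gamma - V(S)$.

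The key step is to observe that this is impossible because of $H$ and $Z$. By Lemma~\ref{l:centraltwoend}, $\epsilon$ splits in $\Gamma - V(S)$ into exactly the two ends $\top(\epsilon,S)$ and $\bot(\epsilon,S)$, and by hypothesis $Z$ separates these two ends in $\Gamma - V(S)$: that is, $\top(\epsilon,S)$ and $\bot(\epsilon,S)$ live in different components of $(\Gamma - V(S)) - Z = \Gamma - (V(S) \cup Z)$. Now $H$ is a connected subgraph of $\Gamma - Z$ which is disjoint from $S$, hence $H$ is a connected subgraph of $\Gamma - (V(S) \cup Z)$, and so $H$ is contained in a single component of $\Gamma - (V(S) \cup Z)$. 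Since $R \subseteq H$ and $T \subseteq H$, any tails of $R$ and $T$ avoiding $V(S) \cup Z$ lie in that one component, hence $R$ and $T$ converge to the \emph{same} end of $\Gamma - V(S)$ — contradicting the previous paragraph. (A small point to check: a tail of the core ray $R$ avoiding the finite set $V(S)\cup Z$ still lies in $H$ up to finitely many initial vertices, and likewise for $T$; this is immediate since $H \subseteq \Gamma - Z$ and $R,T$ are disjoint from $S$, so in fact \emph{every} vertex of $R$ and of $T$ already avoids $V(S)$, and only finitely many vertices of each can lie in $Z$.)

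The main obstacle — really the only thing requiring care — is making sure the ``same component of $\Gamma - (V(S)\cup Z)$'' bookkeeping lines up with the definition of ``$\epsilon$ splits into two ends separated by $Z$''. Concretely one must confirm that $C(Z, \top(\epsilon,S))$ and $C(Z,\bot(\epsilon,S))$, computed inside $\Gamma - V(S)$, are genuinely distinct components, and that a ray of $H$ witnesses which of the two ends $\top(\epsilon,S), \bot(\epsilon,S)$ its tail belongs to by virtue of which component of $\Gamma - (V(S)\cup Z)$ it eventually lies in. This is exactly the content of the phrase ``$\top(\epsilon,S)$ and $\bot(\epsilon,S)$ are separated by $Z$ in $\Gamma - V(S)$'' together with the definition of the end-component $C(X,\epsilon)$ from Section~\ref{s:prelim}, so no new idea is needed — just a clean invocation of Lemma~\ref{l:centraltwoend} and the separation hypothesis. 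Once the contradiction is reached in the case $R \leq_\epsilon S \leq_\epsilon T$, the remaining case $T \leq_\epsilon S \leq_\epsilon R$ follows verbatim after swapping the roles of $R$ and $T$, completing the proof. $\qed$
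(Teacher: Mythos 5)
Your proof is correct and rests on exactly the same observation as the paper's: since $H$ is a connected subgraph of $\Gamma - (V(S)\cup Z)$ and $Z$ separates $\top(\epsilon,S)$ from $\bot(\epsilon,S)$ in $\Gamma - V(S)$, the rays $R\subseteq H$ and $T\subseteq H$ must lie in the same end of $\Gamma-V(S)$ and hence stand in the same $\leq_\epsilon$-relation to $S$. The paper gives this as a direct two-line implication while you phrase it as a contradiction and spell out the role of $Z$ more explicitly (incidentally correcting the published proof's slip of writing $R\in\top(\epsilon,S)$ where $R\in\bot(\epsilon,S)$ is what $S\leq_\epsilon R$ actually yields), but there is no substantive difference.
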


\begin{proof}
    Assume~${S \leq_\epsilon R}$ and hence~${R \in \top(\epsilon, S)}$. 
    Since~$H$ is connected, we obtain that ${T \in \top(\epsilon, S)}$ as well and hence~${S \leq_\epsilon T}$. 
    The other case is analogous.
\end{proof}

Since, by \ref{rem:core-remarks}~\ref{rem:core-correct}, the order $\leq_\epsilon$ will agree with correct order on the central path, which is preserved by transition functions by Lemma~\ref{l:halfgridstructure}, the order $\leq_\epsilon$ will also be preserved by transition functions, as long as they map core rays to core rays. In order to guarantee that this holds, before linking a family of core rays $\mathcal{R}$ we will first enlarge it slightly by adding some `buffer' rays.

\begin{lemmadef}
    \label{def:central-core}
    Let~$\mathcal{R} = (R_i \colon i \in I)$ be a finite family of disjoint core $\epsilon$-rays. 
    Then there exists a finite family~$\overline{\mathcal{R}} \supset \mathcal{R}$ of disjoint $\epsilon$-rays such that
    \begin{itemize}
    \item For each $i\in I$, the graph $\RG(\overline{\mathcal{R}})-i$ has precisely two components, each of size at least $N+1$;
    \item Each $i\in I$ is an inner vertex of the central path of~$\RG(\overline{\mathcal{R}})$;
    \item $|\overline{\mathcal{R}}| = |\mathcal{R}|+2N+2$.
    \end{itemize}
    Even though such a family is not unique, we denote by~${\overline{\mathcal{R}}}$ an arbitrary such family.
\end{lemmadef}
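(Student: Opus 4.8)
The plan is to add $N+1$ ``buffer'' rays lying below all rays of $\mathcal{R}$ in the order $\leq_\epsilon$, and $N+1$ lying above them, so that each $R_i$ becomes an inner vertex of the central path, flanked on both sides by at least $N+1$ rays. Since the rays of $\mathcal{R}$ are pairwise disjoint core rays, Remark~\ref{rem:core-remarks}\ref{rem:core-comparable} makes them pairwise comparable under $\leq_\epsilon$, so $\leq_\epsilon$ restricts to a linear order; write $\mathcal{R} = \{R_1 <_\epsilon \cdots <_\epsilon R_k\}$ with $k = |I|$ (the case $k=0$ being vacuous).

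I first record some facts about each $R_i$. As $R_i$ is a core ray, Lemma~\ref{l:centraltwoend} says $\epsilon$ splits in $\Gamma - V(R_i)$ into exactly the two ends $\top(\epsilon,R_i)$ and $\bot(\epsilon,R_i)$. Because the $R_\ell$ are pairwise disjoint, Definition~\ref{def:core-order} applies with the $R_\ell$ themselves as the required ``tails'', and together with the previous sentence yields $R_\ell \in \top(\epsilon,R_i)$ for $\ell < i$ and $R_\ell \in \bot(\epsilon,R_i)$ for $\ell > i$. Finally, a family witnessing that $R_i$ is a core ray has, by Lemma~\ref{l:raygraph_separates} and Lemma~\ref{l:centraltwoend}, at least $N+1$ of its (disjoint) rays in $\top(\epsilon,R_i)$ and at least $N+1$ in $\bot(\epsilon,R_i)$.

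Now I construct the buffer rays. By the last sentence applied to $R_1$, the end $\top(\epsilon,R_1)$ contains $N+1$ pairwise disjoint $\epsilon$-rays, while $R_2,\dots,R_k \in \bot(\epsilon,R_1)$. Fix a finite set $X_0$ separating $\top(\epsilon,R_1)$ from $\bot(\epsilon,R_1)$ in $\Gamma - V(R_1)$, and enlarge it to a finite set $F$ by adding, for each $\ell \geq 2$, the finitely many vertices of $R_\ell$ outside the component $C(X_0,\bot(\epsilon,R_1))$. Each chosen ray meets $F$ finitely often (a $\top(\epsilon,R_1)$-ray meets a $\bot(\epsilon,R_1)$-ray finitely often, else they would be equivalent in $\Gamma - V(R_1)$), so passing to tails inside $C(X_0,\top(\epsilon,R_1)) \setminus F$ gives $N+1$ pairwise disjoint $\epsilon$-rays $S_1,\dots,S_{N+1} \in \top(\epsilon,R_1)$, each disjoint from every $R_i$. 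Symmetrically, using $R_k$, get $T_1,\dots,T_{N+1} \in \bot(\epsilon,R_k)$, pairwise disjoint $\epsilon$-rays each disjoint from every $R_i$. A short argument with Lemma~\ref{l:ray_tripple_seps} (using $R_1 <_\epsilon R_k$, hence $R_1 \in \top(\epsilon,R_k)$, and $R_k \in \bot(\epsilon,R_1)$) shows $S_j \in \top(\epsilon,R_k)$; as $T_j \in \bot(\epsilon,R_k)$ and these are distinct ends of $\Gamma - V(R_k)$, a final passage to tails on the two sides of a finite separator of these ends makes all the $S_j, T_{j'}$ pairwise disjoint without disturbing any disjointness or end-membership arranged so far. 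Set $\overline{\mathcal{R}} := \mathcal{R} \cup \{S_1,\dots,S_{N+1}\} \cup \{T_1,\dots,T_{N+1}\}$, a family of $|I| + 2N + 2$ pairwise disjoint $\epsilon$-rays, which is the third bullet.

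Finally, fix $i \in I$. By Lemma~\ref{l:centraltwoend} every ray of $\overline{\mathcal{R}} \setminus \{R_i\}$ lies in $\top(\epsilon,R_i)$ or $\bot(\epsilon,R_i)$, and by Lemma~\ref{l:raygraph_separates} two such rays lie in the same component of $\RG(\overline{\mathcal{R}}) - i$ precisely when they lie in the same one of these two ends. Now $R_1,\dots,R_{i-1}$ and all the $S_j$ lie in $\top(\epsilon,R_i)$, and $R_{i+1},\dots,R_k$ and all the $T_j$ lie in $\bot(\epsilon,R_i)$: the claims about the $R_\ell$ were noted in the second paragraph, and those about the $S_j$ and $T_j$ follow from $S_j \in \top(\epsilon,R_1)$, $T_j \in \bot(\epsilon,R_k)$ and the location of the $R_\ell$ relative to $R_i$ by another application of Lemma~\ref{l:ray_tripple_seps} (or of Lemma~\ref{l:end_distribution}). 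Hence $\RG(\overline{\mathcal{R}}) - i$ has exactly two components, of sizes $(i-1)+(N+1) \geq N+1$ and $(k-i)+(N+1) \geq N+1$, which is the first bullet; the second bullet then follows from the first part of Lemma~\ref{l:halfgridstructure}, since $|\overline{\mathcal{R}}| \geq N+2$. The main obstacle is the bookkeeping of the third paragraph — choosing all $2N+2$ buffer rays simultaneously disjoint from $\mathcal{R}$ and from one another while keeping the precise end-memberships used in the last paragraph — where the ``no two-sided separation'' content of Lemma~\ref{l:ray_tripple_seps} is exactly what pins down which end of $\Gamma - V(R_i)$ each buffer ray falls into.
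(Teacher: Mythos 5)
Your proposal is correct and follows essentially the same route as the paper's proof: linearly order $\mathcal{R}$ by $\leq_\epsilon$, extract $N+1$ buffer rays from witness families of the two extremal rays (one batch in $\top(\epsilon,\cdot)$ of the smallest, one in $\bot(\epsilon,\cdot)$ of the greatest), pass to tails for disjointness, and use Lemma~\ref{l:end_distribution}, Lemma~\ref{l:centraltwoend} and Lemma~\ref{l:raygraph_separates} to locate every ray relative to each $R_i$ and count the two components of $\RG(\overline{\mathcal{R}})-i$. The paper's write-up is slightly terser about the tail-taking bookkeeping, but the substance is identical.
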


\begin{proof}
    By Remark~{\ref{rem:core-remarks}\ref{rem:core-comparable}}, the rays in~$\mathcal{R}$ are linearly ordered by~$\leq_\epsilon$.
    Let~$R$ denote the $\leq_\epsilon$-greatest and~$S$ denote the $\leq_\epsilon$-smallest element of~$\mathcal{R}$.

    As in the proof of Lemma~\ref{def:core-order}, let~$\Scal_R$ and~$\Scal_S$ be families of disjoint rays witnessing that~$R$ and~$S$ are core rays, and let~$\Scal_{\bot(R)}$ be the subset of rays of~$\Scal_R$ belonging to~${\bot(\epsilon,R)}$ and~$\Scal_{\top(S)}$ be the subset of rays of~$\Scal_S$ belonging to~${\top(\epsilon,S)}$. Note that, by definition both $\Scal_{\bot(R)}$ and $\Scal_{\top(S)}$ contain at least $N+1$ rays, and we may in fact assume without loss of generality that they both contain exactly $N+1$ rays.
    
    Now~${\mathcal{S}_{\bot(R)} \subseteq \bot(\epsilon, R)}$ and~${R' \in \top(\epsilon, R)}$ for every~${R' \in \mathcal{R} \setminus \{R\}}$, and each ray in~$\mathcal{S}_{\bot(R)}$ has a tail disjoint to~${\bigcup \mathcal{R}}$. 
    Analogously, ${\mathcal{S}_{\top(S)} \subseteq \top(\epsilon, S)}$ and~${R' \in \bot(\epsilon, S)}$ for every~${R' \in \mathcal{R} \setminus \{S\}}$ and each ray in~$\mathcal{S}_{\top(S)}$ has a tail disjoint to~${\bigcup \mathcal{R}}$. 
    Now, ${\mathcal{S}_{\top(S)} \subseteq \top(\epsilon,R)}$ and ${\mathcal{S}_{\bot(R)} \subseteq \bot(\epsilon,S)}$ by Lemma~\ref{l:end_distribution}, 
    yielding that tails of rays in~$\mathcal{S}_{\top(S)}$ are necessarily disjoint from tails in~$\mathcal{S}_{\bot(R)}$.
    
    Let~$\overline{\mathcal{R}}$ be the union of $\mathcal{R}$ with appropriate tails of each ray in $\mathcal{S}_{\bot(R)} \cup \mathcal{S}_{\top(S)}$. Note that $|\overline{\mathcal{R}}| = |\mathcal{R}|+2N+2$. For any ray $R_i \in \mathcal{R}$, we first note that that $S \leq_\epsilon R_i$ and so $S \in \top(\epsilon,R_i)$ and $R_i \in \bot(\epsilon, S)$. Then, since $\mathcal{S}_{\top(S)} \subseteq \top(\epsilon, S)$ it follows from Lemma~\ref{l:end_distribution} that $\mathcal{S}_{\top(S)} \subseteq \top(\epsilon, R_i)$, and hence one of the components of~${\RG(\overline{\mathcal{R}}) - i}$ has size at least~${N+1}$. 
    A similar argument shows that a second component has size at least~${N+1}$, and finally, since~$R_i$ is a core ray, by Lemma~\ref{l:centraltwoend}, there are no other components of~${\RG(\overline{\mathcal{R}}) - i}$. Finally, by the comment after Definition~\ref{d:coreray}, it follows that $R_i$ is an inner vertex of the central path of this ray graph.
\end{proof}

\begin{lemma}[{\cite[Lemma 7.10]{BEEGHPTII}}]
    \label{l:transitionseparate}
    Let~$\mathcal{R}$ and~$\mathcal{T}$ be families of disjoint rays, each of size at least~${N+3}$, and let~$\sigma$ be a transition function from~$\Rcal$ to~$\Tcal$. 
    Let~${x \in \RG(\mathcal{R})}$ be an inner vertex of the central path. 
    If~${v_1,v_2\in \RG(\mathcal{R})}$ lie in different components of~${\RG(\mathcal{R})-x}$, then~$\sigma(v_1)$ and~$\sigma(v_2)$ lie in different components of~${\RG(\Tcal)-\sigma(x)}$. 
    Moreover, $\sigma(x)$ is an inner vertex of the central path of~$\RG(\Tcal)$.
\end{lemma}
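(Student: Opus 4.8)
The statement is a strengthening of Lemma~\ref{l:halfgridstructure}\,(2): that lemma already tells us that $\sigma$ maps the central path of $\RG(\mathcal R)$ into that of $\RG(\mathcal T)$ respecting the correct orientation, and here we want to promote this to a statement about the separations realised by inner vertices of the central path. I would prove the ``moreover'' clause first, and then reduce the main claim to the assertion that $\sigma$ respects the two‑fold partition of $\RG(\mathcal R)-x$ into components.

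For the ``moreover'' clause: since $|\mathcal R|\geq N+3$, the central path $P$ of $\RG(\mathcal R)$ has at least three vertices, so $x$ has a predecessor $y$ and a successor $z$ on $P$ in the correct orientation. By Lemma~\ref{l:halfgridstructure}\,(2), $\sigma$ sends $y,x,z$ into the central path of $\RG(\mathcal T)$, is injective, and preserves the correct orientation, so $\sigma(y),\sigma(x),\sigma(z)$ occur in this order along the central path of $\RG(\mathcal T)$; hence $\sigma(x)$ is an inner vertex of it. For the main claim, note that $\RG(\mathcal R)$ is connected (Lemma~\ref{rem:tail-raygraph}), that $x$ has degree $2$ in it, and that the at most $N$ vertices off $P$ hang at the two ends of $P$; so $\RG(\mathcal R)-x$ has exactly two components $A\ni y$ and $B\ni z$, and, using the ``moreover'' clause, $\RG(\mathcal T)-\sigma(x)$ likewise has exactly two components $A'\ni\sigma(y)$ and $B'\ni\sigma(z)$. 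Since $\sigma(A)\cup\sigma(B)\subseteq A'\cup B'$, it is enough to show $\sigma(A)\subseteq A'$ (then $\sigma(B)\subseteq B'$ by the same argument applied to $B$). If $v\in A$ lies on $P$, then $v$ precedes $x$, so $\sigma(v)$ precedes $\sigma(x)$ on the central path, and hence $\sigma(v)\in A'$.

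The remaining case --- $v\in A$ an off‑path (``junk'') vertex --- is the crux, and the difficulty is that transition functions need not preserve ray‑graph adjacency, so one cannot merely follow a ray‑graph path from $v$ to $y$. Instead I would repackage the datum ``$v\in A$'' as a statement about ends of $\Gamma-V(R_x)$. Since $x$ is an inner vertex of a central path, the argument of Lemma~\ref{l:centraltwoend} shows that $\epsilon$ splits in $\Gamma-V(R_x)$ into exactly two ends $\epsilon_A$ (containing all rays indexed by $A$) and $\epsilon_B$, and similarly $\epsilon$ splits in $\Gamma-V(T_{\sigma(x)})$ into $\epsilon_{A'}$ and $\epsilon_{B'}$, with $A'$, $B'$ the index sets of the rays lying in $\epsilon_{A'}$, $\epsilon_{B'}$. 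A ray‑graph path from $v$ to $y$ inside $A$ yields, for each consecutive pair, infinitely many disjoint paths missing every other ray of $\mathcal R$, in particular missing $R_x$, so $R_v$ and $R_y$ lie in the common end $\epsilon_A$ of $\Gamma-V(R_x)$. It then remains to verify that $\sigma$ carries every vertex of $\RG(\mathcal R)$ whose ray lies in $\epsilon_A$ to a vertex whose ray lies in $\epsilon_{A'}$; here I would use that $\sigma$ preserves the correct orientation on the central path (which settles the central‑path vertices, and these are cofinal in $\epsilon_A$) together with the methods of Section~\ref{s:core} --- Lemma~\ref{l:end_distribution}, Lemma~\ref{lem:core-exchange}, and, where needed, a harmless enlargement of $\mathcal R$ and $\mathcal T$ by further $\epsilon$‑rays (the inclusions being transition functions) --- to pin down the $\top$/$\bot$ labelling and conclude $\sigma(v)\in A'$.

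I expect this last step to be the main obstacle: one must show that ``lying on the $y$‑side of $x$'' is robust enough to survive $\sigma$ even for the bounded reservoir of off‑central‑path rays, which forces a detour through the core‑ray order of Section~\ref{s:core} and some care in matching up the various enlargements and in checking that the relevant inclusions compose to transition functions.
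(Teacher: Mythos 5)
First, a point of order: this lemma is not proved in the paper at all --- it is imported verbatim from \cite{BEEGHPTII} (Lemma 7.10), where it is established as part of the finer structure theory of half-grid-like ends, so there is no in-paper proof to compare against. Judged on its own terms, your argument for the ``moreover'' clause is correct: the central path of $\RG(\mathcal{R})$ has at least three vertices, $\sigma$ is injective and maps central-path vertices to central-path vertices preserving the correct orientation, so $\sigma(x)$ sits strictly between $\sigma(y)$ and $\sigma(z)$ and is therefore an inner vertex. Your treatment of the central-path vertices of $A$ is likewise fine.

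Two genuine gaps remain, and together they are essentially the whole content of the lemma. First, you claim that $\RG(\mathcal{T})-\sigma(x)$ has exactly two components with $\sigma(y)$ and $\sigma(z)$ in different ones, ``using the moreover clause''. But that clause only gives that $\sigma(x)$ has degree $2$, hence \emph{at most} two components; from Lemma~\ref{l:halfgridstructure} as quoted (the ray graph merely \emph{contains} a long bare path) one cannot exclude that $\RG(\mathcal{T})-\sigma(x)$ is connected. The assertion that $\sigma(y)$ and $\sigma(z)$ lie in different components is precisely the instance $v_1=y$, $v_2=z$ of the statement being proved, so as written this step is circular. Second, for the off-path vertices you defer to the core-ray machinery of Section~\ref{s:core} (Lemmas~\ref{l:end_distribution}, \ref{l:unique_above}, \ref{lem:core-exchange}); but the order $\leq_\epsilon$ and those lemmas apply only to \emph{core} rays, and the rays indexed by off-path vertices are exactly the ones that need not be core (cf.\ Remark~\ref{rem:core-remarks}\ref{rem:non-core-bounded}). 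So the crux --- that membership in the end $\top(\epsilon,R_x)$ versus $\bot(\epsilon,R_x)$ survives the rerouting of tails performed by a linkage inducing $\sigma$, even for the bounded reservoir of possibly non-core rays --- is identified but not carried out, and the tools you name do not reach it. (There is also no circularity \emph{within} this paper in invoking Section~\ref{s:core}, since that section does not use Lemma~\ref{l:transitionseparate}; but the missing step genuinely requires the half-grid-like structure theory of \cite{BEEGHPTII} that is not reproduced here.)
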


\begin{definition}
    Let~$\mathcal{R}$, $\mathcal{S}$ be finite families of disjoint $\epsilon$-rays and let~$\mathcal{R}'$ be a subfamily of~$\mathcal{R}$ consisting of core rays.
    A linkage~$\mathcal{P}$ between~$\mathcal{R}$ and~$\mathcal{S}$ is \emph{preserving on~$\mathcal{R}'$} if~$\mathcal{P}$ links~$\mathcal{R}'$ to core rays and preserves the order~$\leq_\epsilon$. 
\end{definition}

\begin{lemma}\label{l:core-preserving}
    Let~${\mathcal{R} = (R_i \colon i \in I)}$ be a finite family of disjoint core $\epsilon$-rays and let ${\mathcal{S} = (S_j \colon j \in J)}$ be a finite family of disjoint $\epsilon$-rays. 
    Let ${\overline{\mathcal{R}}=(R_i \colon i \in \overline{I})}$ be as in Lemma~\ref{def:central-core} and let~$\mathcal{P}$ be a linkage from~$\overline{\mathcal{R}}$ to~$\mathcal{S}$. If~$\mathcal{P}$ is transitional, then it is preserving on~$\mathcal{R}$.
\end{lemma}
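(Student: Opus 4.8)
The plan is to deduce the statement from the rigidity of transition functions in a half-grid-like end, using the strong form of core-ray-hood that Lemma~\ref{def:central-core} builds into $\overline{\mathcal{R}}$. Since being \emph{preserving on $\mathcal{R}$} is vacuous when $I=\emptyset$, assume $I\neq\emptyset$; then $|\overline{\mathcal{R}}| = |\mathcal{R}|+2N+2\geq N+3$, and since the linkage $\mathcal{P}$ induces an injection $\sigma$ we also have $|\mathcal{S}|\geq|\overline{\mathcal{R}}|\geq N+3$, so Lemmas~\ref{l:halfgridstructure} and~\ref{l:transitionseparate} apply to $\overline{\mathcal{R}}$ and to $\mathcal{S}$. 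Being transitional, $\mathcal{P}$ induces a transition function $\sigma$ from $\overline{\mathcal{R}}$ to $\mathcal{S}$ (Definition~\ref{d:trans-func}), and since a linkage realising $\sigma$ only uses rays in the image of $\sigma$, the same $\sigma$ is a transition function from $\overline{\mathcal{R}}$ onto the subfamily $\mathcal{S}' := (S_{\sigma(k)}\colon k\in\overline{I})$. Fix $i\in I$. By Lemma~\ref{def:central-core}, $i$ is an inner vertex of the central path of $\RG(\overline{\mathcal{R}})$ and $\RG(\overline{\mathcal{R}})-i$ has precisely two components $A_i$ and $B_i$, each of size at least $N+1$; by Lemma~\ref{l:transitionseparate}, $\sigma(i)$ is an inner vertex of the central path of both $\RG(\mathcal{S})$ and $\RG(\mathcal{S}')$, and whenever $v_1\in A_i$ and $v_2\in B_i$ the vertices $\sigma(v_1),\sigma(v_2)$ lie in distinct components of $\RG(\mathcal{S}')-\sigma(i)$.

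The heart of the argument is to show that $S_{\sigma(i)}$ is a core ray. The difficulty is that Lemma~\ref{l:transitionseparate} separates only the \emph{images} of $A_i$ from the images of $B_i$, while a priori the at most $N$ \emph{junk} vertices of $A_i$ could be sent by $\sigma$ into several different components of $\RG(\mathcal{S}')-\sigma(i)$, so that no single component is forced to be large. I would get around this by constructing an inverse transition function. Since $\mathcal{S}'$ and $\overline{\mathcal{R}}$ are families of disjoint $\epsilon$-rays of the same size, Lemma~\ref{l:weaklink} produces a linkage from $\mathcal{S}'$ to $\overline{\mathcal{R}}$ after any prescribed finite set, and Lemma~\ref{l:trans-link} shows that after a large enough such set every linkage is transitional; hence there is a transition function $\tau$ from $\mathcal{S}'$ to $\overline{\mathcal{R}}$. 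By Remark~\ref{rem:trans-link-combine} the composite $\tau\circ\sigma$ is a transition function from $\overline{\mathcal{R}}$ to itself; being a self-injection of the finite set $\overline{I}$ it is a bijection, so Lemma~\ref{l:halfgridstructure} forces it to restrict to an order-preserving bijection, hence to the identity, on the central path of $\RG(\overline{\mathcal{R}})$. In particular $\tau(\sigma(i))=i$, and since the central path meets both $A_i$ and $B_i$, a further application of Lemma~\ref{l:transitionseparate} shows $\tau\circ\sigma$ fixes each of $A_i$ and $B_i$ setwise. Now if $v,v'\in A_i$ had $\sigma(v),\sigma(v')$ in distinct components of $\RG(\mathcal{S}')-\sigma(i)$, then Lemma~\ref{l:transitionseparate} applied to $\tau$ at the inner central-path vertex $\sigma(i)$ would place $\tau(\sigma(v)),\tau(\sigma(v'))$ in distinct components of $\RG(\overline{\mathcal{R}})-i$, contradicting $\tau(\sigma(v)),\tau(\sigma(v'))\in A_i$. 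Thus $\sigma(A_i)$ lies in a single component $C^A$ of $\RG(\mathcal{S}')-\sigma(i)$, and similarly $\sigma(B_i)$ in a component $C^B$, with $C^A\neq C^B$ and $|C^A|,|C^B|\geq N+1$. Since separation of two rays by $S_{\sigma(i)}$ is a property of $\Gamma$ and therefore survives passing to subfamilies, the family $\{S_j\colon j\in C^A\cup\{\sigma(i)\}\cup C^B\}$ has a ray graph whose deletion of $\sigma(i)$ has precisely two components, one on the vertex set $C^A$ and one on $C^B$ (using that deleting rays only adds edges to a ray graph, so $C^A$ and $C^B$ stay connected), each of size at least $N+1$. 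Hence $S_{\sigma(i)}\in\core(\epsilon)$, and by Remark~\ref{rem:core-remarks}\ref{rem:core-tail} the same holds for the ray $R_i\circ_{\mathcal{P}}\mathcal{S}$ to which $\mathcal{P}$ links $R_i$.

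It remains to check that $\mathcal{P}$ preserves $\leq_\epsilon$ on $\mathcal{R}$. The rays $R_j$ ($j\in I$) are core rays lying on the central path of $\RG(\overline{\mathcal{R}})$, so by Remark~\ref{rem:core-remarks}\ref{rem:core-correct} they appear along its correct orientation in $\leq_\epsilon$-order; by Lemma~\ref{l:halfgridstructure} the transition function $\sigma$ sends central-path vertices to central-path vertices respecting the correct orientation, so the rays $S_{\sigma(j)}$ appear along the central path of $\RG(\mathcal{S})$ in the order inherited from the $R_j$. By the previous paragraph each $S_{\sigma(j)}$ is a core ray, so Remark~\ref{rem:core-remarks}\ref{rem:core-correct} applies once more and their $\leq_\epsilon$-order coincides with this central-path order. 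Composing these equivalences yields $R_j\leq_\epsilon R_{j'}$ if and only if $S_{\sigma(j)}\leq_\epsilon S_{\sigma(j')}$ for all $j,j'\in I$, which together with the previous paragraph is exactly the assertion that $\mathcal{P}$ is preserving on $\mathcal{R}$. The step I expect to be the main obstacle is the middle paragraph: setting up the inverse transition function $\tau$ and using $\tau\circ\sigma$ to pin the images of the junk rays of $A_i$ and $B_i$ down to a single component on each side, since without such control one cannot extract a witnessing family showing $S_{\sigma(i)}\in\core(\epsilon)$.
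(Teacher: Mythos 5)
Your proof is correct, but the obstacle you identify is not actually present, which makes the detour through the inverse transition function $\tau$ unnecessary; the paper's argument is much shorter and works directly with the full family $\mathcal{S}$. By Lemma~\ref{l:transitionseparate}, $\sigma(i)$ is an inner vertex of the central path of $\RG(\mathcal{S})$, and since that path is bare, $\sigma(i)$ has degree two in the connected graph $\RG(\mathcal{S})$, so $\RG(\mathcal{S})-\sigma(i)$ has at most two components. Lemma~\ref{l:transitionseparate} sends the two sides $A_i$ and $B_i$ of $\RG(\overline{\mathcal{R}})-i$ into different components, and with only two components available this already forces $\sigma(A_i)$ entirely into one and $\sigma(B_i)$ entirely into the other: fixing any $b\in B_i$, for each $a\in A_i$ the image $\sigma(a)$ must avoid the component containing $\sigma(b)$, hence all of $\sigma(A_i)$ falls into the unique other component. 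As $\sigma$ is injective and $|A_i|,|B_i|\geq N+1$, both components have size at least $N+1$, so $\mathcal{S}$ itself witnesses that $S_{\sigma(i)}$ is core. Your construction of $\tau$, the analysis of $\tau\circ\sigma$ on the central path, and the passage to the subfamily indexed by $C^A\cup\{\sigma(i)\}\cup C^B$ (which in fact equals all of $\mathcal{S}'$, since $\overline{I}=A_i\cup\{i\}\cup B_i$) are all sound and do yield an alternative derivation, but they are redundant once the two-component count is noted. Your final paragraph recovering the $\leq_\epsilon$-preservation from Remark~\ref{rem:core-remarks} is exactly the paper's opening observation.
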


\begin{proof}
    We first note that, by Lemma~\ref{l:halfgridstructure}, if~$\mathcal{P}$ links the rays in~$\mathcal{R}$ to core rays, then it will be preserving.
    
    So, let~${\sigma: \overline{I} \rightarrow J}$ be the transition function induced by~$\mathcal{P}$. 
    For each~${i \in I}$, since~$i$ is an inner vertex of the central path of~${\RG(\overline{\mathcal{R}})}$, by Lemma~\ref{l:transitionseparate}, $\sigma(i)$ is an inner vertex of the central path of~${\RG(\mathcal{S})}$. 
    Since the central path is a bare path, it follows that~${\RG(\mathcal{S}) - \sigma(i)}$ has precisely two components.
    
    Furthermore, by Lemma~\ref{def:central-core}, the graph~${\RG(\overline{\mathcal{R}}) - i}$ has precisely two components, each of size at least~${N+1}$, and so by Lemma~\ref{l:transitionseparate} the two components of~${\RG(\mathcal{S}) - \sigma(i)}$ each have size at least~${N+1}$. 
    Hence, the family~$\mathcal{S}$ witnesses that~$S_{\sigma(i)}$ is a core ray.
\end{proof}

\begin{definition}
    If~$\mathcal{P}$ is a linkage from~$\mathcal{R}$ to~$\mathcal{S}$, then a \emph{sub-linkage} of~$\Pcal$ is just a subset of~$\Pcal$, considered as a linkage from the corresponding subset of~$\mathcal{R}$ to~$\mathcal{S}$.
\end{definition}

\begin{remark}
    \label{rem:transitional-sub}
    A sub-linkage of a transitional linkage is transitional.
\end{remark}

The following remarks are a direct consequence of the definitions and Lemma~\ref{l:halfgridstructure}.

\begin{remark}
    \label{rem:core-preserving}
    Let $\mathcal{R}$ be a finite family of disjoint core $\epsilon$-rays and let $\mathcal{S}$ and $\mathcal{T}$ be finite families of disjoint $\epsilon$-rays. Let $\overline{\mathcal{R}}$ be as in Lemma~\ref{def:central-core} and let~$\mathcal{P}_1$ and~$\mathcal{P}_2$ be linkages from~$\overline{\mathcal{R}}$ to~$\mathcal{S}$ and from~${(\overline{\mathcal{R}} \circ_{\mathcal{P}_1} \mathcal{S})}$ to~$\mathcal{T}$ respectively.
    \begin{enumerate}
        \item \label{rem:core-preserving-sub} If~$\mathcal{P}_1$ is preserving on~$\mathcal{R}$, then any~${\mathcal{P}_1' \subseteq \mathcal{P}_1}$ as a linkage between the respective subfamilies is preserving on the respective subfamily of~$\mathcal{R}$.
        \item \label{rem:core-preserving-concat} If~$\mathcal{P}_1$ is preserving on~$\mathcal{R}$ and~$\mathcal{P}_2$ is preserving on~${\mathcal{R} \circ_{\mathcal{P}_1} \mathcal{S}}$, then the concatenation~${\mathcal{P}_1 + \mathcal{P}_2}$ is preserving on~$\mathcal{R}$.
    \end{enumerate}
\end{remark}

\begin{lemma}
   \label{lem:core-preserving2}
   Let~$\mathcal{R}$ and~$\mathcal{S}$ be finite families of disjoint core rays of~$\epsilon$ and let~${\mathcal{S}' \subseteq \mathcal{S}}$ be a subfamily of~$\mathcal{S}$ with~${|\mathcal{R}| = |\mathcal{S}'|}$.
   Then there is a transitional linkage from~$\overline{\mathcal{R}}$ to~$\overline{\mathcal{S}}$ which is preserving on~$\mathcal{R}$ and links the rays in~$\mathcal{R}$ to rays in~$\mathcal{S}'$. 
\end{lemma}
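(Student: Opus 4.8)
The plan is to pass from $\overline{\mathcal{R}}$ to $\overline{\mathcal{S}}$ in two steps: first link $\overline{\mathcal{R}}$ to a carefully chosen enlargement $\overline{\mathcal{S}'}$ of the subfamily $\mathcal{S}'$, where rigidity will force the rays of $\mathcal{R}$ to land precisely on the rays of $\mathcal{S}'$, and then re-enlarge $\overline{\mathcal{S}'}$ back up to $\overline{\mathcal{S}}$ by a trivial linkage and concatenate the two. Throughout, "the property" of a ray $R$ in a finite family $\mathcal{T}$ of disjoint $\epsilon$-rays means: $R$ is an inner vertex of the central path of $\RG(\mathcal{T})$ and $\RG(\mathcal{T})-R$ has precisely two components, each of size $\geq N+1$.

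\textbf{Construction of $\overline{\mathcal{S}'}$.} Enumerate $\mathcal{S}=(S_1<_\epsilon\dots<_\epsilon S_m)$ and $\mathcal{S}'=(S_{j_1}<_\epsilon\dots<_\epsilon S_{j_k})$. Since $\overline{\mathcal{S}}$ is the Lemma~\ref{def:central-core}-enlargement of $\mathcal{S}$, for each $S\in\mathcal{S}$ the graph $\RG(\overline{\mathcal{S}})-S$ has precisely two components each of size $\geq N+1$; in particular $\overline{\mathcal{S}}$ itself witnesses that $S$ is a core ray and exhibits at least $N+1$ rays of $\overline{\mathcal{S}}$ on either side of $S$. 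Using this, and Lemma~\ref{l:end_distribution} to compare $\top(\epsilon,S_1)$ with $\top(\epsilon,S_{j_1})$ and $\bot(\epsilon,S_m)$ with $\bot(\epsilon,S_{j_k})$, I can pick $N+1$ rays of $\overline{\mathcal{S}}$ with tails in $\top(\epsilon,S_{j_1})$ and $N+1$ with tails in $\bot(\epsilon,S_{j_k})$, and — passing to pairwise disjoint tails exactly as in the proof of Lemma~\ref{def:central-core}, using Lemma~\ref{l:centraltwoend} and Lemma~\ref{l:end_distribution} — obtain a disjoint family $\overline{\mathcal{S}'}=\mathcal{B}^-\cup\mathcal{S}'\cup\mathcal{B}^+$ of $\epsilon$-rays with $|\overline{\mathcal{S}'}|=k+2N+2=|\overline{\mathcal{R}}|$, in which every ray of $\overline{\mathcal{S}'}$ is a tail of a distinct ray of $\overline{\mathcal{S}}$, and with the following two key features. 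Firstly, each $S_{j_i}$ has the property: in $\RG(\overline{\mathcal{S}'})-S_{j_i}$ the side below $S_{j_i}$ contains $\mathcal{B}^-\cup\{S_{j_1},\dots,S_{j_{i-1}}\}$ and the side above contains $\mathcal{B}^+\cup\{S_{j_{i+1}},\dots,S_{j_k}\}$, each of size $\geq N+1$, and there are exactly two components since $S_{j_i}$ is a core ray (Lemma~\ref{l:centraltwoend}); the inner-central-path part then follows from the remark after Definition~\ref{d:coreray}. Secondly, no buffer ray $B\in\mathcal{B}^-\cup\mathcal{B}^+$ has the property: if $B$ is a core ray then by Lemma~\ref{l:centraltwoend} the graph $\RG(\overline{\mathcal{S}'})-B$ has exactly two components, one of which — the component towards $\top(\epsilon,B)$ — contains only some of the at most $N$ other rays of $\mathcal{B}^-$ (everything in $\mathcal{S}'\cup\mathcal{B}^+$ lies strictly above $B$ in $\leq_\epsilon$, hence in $\bot(\epsilon,B)$), so it has size $\leq N$; and if $B$ is not a core ray then $\RG(\overline{\mathcal{S}'})-B$ is not a union of precisely two components each of size $\geq N+1$ by Definition~\ref{d:coreray}.

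\textbf{The linking.} Apply Lemma~\ref{l:weaklink} together with Lemma~\ref{l:trans-link} to obtain a transitional linkage $\mathcal{Q}$ from $\overline{\mathcal{R}}$ to $\overline{\mathcal{S}'}$. By Lemma~\ref{l:core-preserving} it is preserving on $\mathcal{R}$; moreover, arguing as in the proof of Lemma~\ref{l:core-preserving} — applying Lemma~\ref{l:transitionseparate} to the fact (Lemma~\ref{def:central-core}) that each $R_i\in\mathcal{R}$ is an inner central-path vertex of $\RG(\overline{\mathcal{R}})$ whose removal leaves two components each of size $\geq N+1$, and using connectedness of $\RG(\overline{\mathcal{S}'})$ (Lemma~\ref{rem:tail-raygraph}) — each $R_i$ is linked by $\mathcal{Q}$ to a ray of $\overline{\mathcal{S}'}$ that has the property. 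By the previous paragraph this ray lies in $\mathcal{S}'$, so $\mathcal{Q}$ links $\mathcal{R}$ into $\mathcal{S}'$. Finally, since every ray of $\overline{\mathcal{S}'}$ is a tail of a distinct ray of $\overline{\mathcal{S}}$, let $\mathcal{Q}'$ be the trivial linkage from $\overline{\mathcal{S}'}$ to $\overline{\mathcal{S}}$ using one-vertex paths chosen past $V(\bigcup\mathcal{Q})$; it induces the inclusion $\overline{\mathcal{S}'}\hookrightarrow\overline{\mathcal{S}}$, which is a transition function, so $\mathcal{Q}'$ is transitional, it is after $V(\bigcup\mathcal{Q})$, and it restricts to the identity on $\mathcal{S}'$. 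By Remark~\ref{rem:trans-link-combine} the concatenation $\mathcal{Q}+\mathcal{Q}'$ is a transitional linkage from $\overline{\mathcal{R}}$ to $\overline{\mathcal{S}}$; it links $\mathcal{R}$ into $\mathcal{S}'$ (each $S_{j_i}$ is fixed by the inclusion); and it is preserving on $\mathcal{R}$ by Lemma~\ref{l:core-preserving} (or Remark~\ref{rem:core-preserving}\ref{rem:core-preserving-concat}). This is the linkage asserted by the lemma.

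\textbf{Main obstacle.} The crux is the construction of $\overline{\mathcal{S}'}$: arranging simultaneously that its buffer rays lie inside $\overline{\mathcal{S}}$ (so that the re-enlargement $\mathcal{Q}'$ can be taken trivial, avoiding any rerouting around the "extra'' rays $\mathcal{S}\setminus\mathcal{S}'$) and that the rays of $\overline{\mathcal{S}'}$ having the property are exactly those of $\mathcal{S}'$ (so that transitionality of $\mathcal{Q}$ pins the images of $\mathcal{R}$ down to $\mathcal{S}'$). The fiddly points are the order bookkeeping with $\top(\epsilon,\cdot)$ and $\bot(\epsilon,\cdot)$ through Lemma~\ref{l:end_distribution}, the passage to pairwise disjoint tails, and the core/non-core case split for the buffer rays.
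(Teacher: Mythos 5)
Your proof is correct and is in spirit the same as the paper's, but it includes one detour the paper avoids, and one unnecessary technical step.

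Both you and the paper construct an intermediate family of size $|\overline{\mathcal{R}}| = |\mathcal{R}| + 2N+2$ consisting of $\mathcal{S}'$ plus $2N+2$ other rays drawn from $\overline{\mathcal{S}}$, take a transitional linkage $\mathcal{P}$ from $\overline{\mathcal{R}}$ into that family, and then argue by rigidity (size-counting together with Lemma~\ref{l:transitionseparate}) that $\mathcal{P}$ must map $\mathcal{R}$ onto $\mathcal{S}'$. The paper's intermediate family is simply $\mathcal{T} := (\overline{\mathcal{S}} \setminus \mathcal{S}) \cup \mathcal{S}'$, i.e.\ the buffer rays of $\overline{\mathcal{S}}$ together with $\mathcal{S}'$; it observes that $\mathcal{T}$ already satisfies the conclusions of Lemma~\ref{def:central-core} for $\mathcal{S}'$. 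The key point you miss, and which makes your concatenation step unnecessary, is that since $\mathcal{T} \subseteq \overline{\mathcal{S}}$, any transition function $\sigma$ from $\overline{\mathcal{R}}$ to $\mathcal{T}$ is \emph{already} a transition function from $\overline{\mathcal{R}}$ to $\overline{\mathcal{S}}$: a linkage from $\overline{\mathcal{R}}$ to $\mathcal{T}$ after a set $X$ is literally a linkage from $\overline{\mathcal{R}}$ to $\overline{\mathcal{S}}$ after $X$ inducing the same $\sigma$, viewed as a map into the index set of $\overline{\mathcal{S}}$. So the single linkage $\mathcal{P}$ from $\overline{\mathcal{R}}$ to $\mathcal{T}$ is itself the linkage asserted by the lemma, with no need to re-enlarge to $\overline{\mathcal{S}}$ and concatenate with a trivial linkage $\mathcal{Q}'$. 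Relatedly, passing to tails when forming your $\overline{\mathcal{S}'}$ is superfluous: the rays of $\overline{\mathcal{S}}$ are already pairwise disjoint, so you may simply select $2N+2$ of them (not in $\mathcal{S}'$) outright, as the paper does.

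Two further minor remarks. First, your counting argument -- that the buffer rays of $\overline{\mathcal{S}'}$ cannot ``have the property'' and hence rigidity forces $\mathcal{R}$ onto $\mathcal{S}'$ -- is a fine reformulation of the paper's contradiction argument (which instead bounds the sizes of the two components of $\RG(\mathcal{T}) - \sigma(i)$ directly). Second, the claim that for a core buffer ray $B$ the graph $\RG(\overline{\mathcal{S}'}) - B$ has \emph{exactly} two components is not always true (e.g.\ if $B$ is $\leq_\epsilon$-extremal in $\overline{\mathcal{S}'}$ there is only one), but this only strengthens the conclusion that $B$ lacks the property, so it is a harmless imprecision rather than a gap.
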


\begin{proof}
    Consider~${\mathcal{T} := (\overline{\mathcal{S}} \setminus \mathcal{S}) \cup \mathcal{S}' \subseteq \overline{\mathcal{S}}}$. 
    It is apparent that the family~$\mathcal{T}$ satisfies the conclusions of Lemma~\ref{def:central-core} for~$\mathcal{S}'$.
    
    Let~$\sigma$ be some transition function between~$\overline{\mathcal{R}}$ and~$\mathcal{T}$ and let~$\mathcal{P}$ be a linkage inducing this transition function. 
    By Lemma~\ref{l:core-preserving} this linkage is preserving on~$\mathcal{R}$. 
    Note that, since~$\sigma$ is a transition function from~$\overline{\mathcal{R}}$ to~$\mathcal{T}$, it is also a transition function from~$\overline{\mathcal{R}}$ to~$\overline{\mathcal{S}}$, and so~$\mathcal{P}$ is also a preserving, transitional linkage from~$\overline{\mathcal{R}}$ to~$\overline{\mathcal{S}}$. 
    We claim further that~$\mathcal{P}$ links the rays in~$\mathcal{R}$ to the rays in~$\mathcal{S}'$.
    
    Indeed, since $|\overline{\mathcal{R}}| = |\mathcal{T}| = |\mathcal{R}| + 2N+2$, we may assume for a contradiction that there is some $R_i \in \mathcal{R}$ such that $S_{\sigma(i)} \not\in \mathcal{S}'$. Note that, since $i$ is an inner vertex of the central path of $\RG(\overline{\mathcal{R}})$, by Lemma~\ref{l:transitionseparate} $\sigma(i)$ is an inner vertex of the central path of $\RG(\mathcal{T})$, and so in particular $\RG(\mathcal{T}) - \sigma(i)$ has precisely two components.

    Since for each~${S_j \in \mathcal{S'}}$, $j$ lies on the central path of~$\RG(\mathcal{T})$, if ${S_{\sigma(i)} \not\in \mathcal{S'}}$ then it is clear that~${\RG(\mathcal{T}) \setminus \sigma(i)}$ contains one component of size at least~${|\mathcal{S}'| + N+1 = |\mathcal{R}|+N+1}$. 
    However, since~$i$ is an inner vertex of the central path of~$\RG(\overline{\mathcal{R}})$, by Lemma~\ref{l:transitionseparate} and Lemma~\ref{l:core-preserving} there must be two components of~${\RG(\mathcal{T}) \setminus \sigma(i)}$ of size at least~${N+1}$, a contradiction.
\end{proof}

\section{%
\texorpdfstring{$G$-tribes and concentration of $G$-tribes towards an end}%
{G-tribes and concentration of G-tribes towards an end}%
}
\label{s:tribes}

To show that a given graph~$G$ is $\preceq$-ubiquitous, we shall assume that~${n G \preceq \Gamma}$ for every~${n \in \Nbb}$ and need to show that this implies~${\aleph_0 G \preceq \Gamma}$. 
To this end we use the following notation for such collections of~$nG$ in~$\Gamma$, which we introduced in~\cite{BEEGHPTI} and~\cite{BEEGHPTII}.

\begin{definition}[\Gtribe s]
    Let~$G$ and~$\Gamma$ be graphs.
    \begin{itemize}
        \item A \emph{\Gtribe\ in~$\Gamma$ (with respect to the minor relation)} is a family~$\mathcal{F}$ of finite collections~$F$ of disjoint subgraphs~$H$ of~$\Gamma$, such that each \emph{member}~$H$ of~$\Fcal$ is an~$IG$.
        \item A \Gtribe\ $\mathcal{F}$ in~$\Gamma$ is called \emph{thick} if for each~${n \in \Nbb}$, there is a \emph{layer}~${F \in \mathcal{F}}$ with~${|F| \geq n}$; otherwise, it is called \emph{thin}.
        \item A $G$-tribe~$\Fcal$ is \emph{connected} if every member~$H$ of~$\Fcal$ is connected. 
            Note that, this is the case precisely if~$G$ is connected.
        \item A \Gtribe\ $\mathcal{F}'$ in~$\Gamma$ is a \emph{\Gsubtribe}\footnote{When~$G$ is clear from the context we will often refer to a $G$-subtribe as simply a subtribe.}
            of a \Gtribe\ $\mathcal{F}$ in~$\Gamma$, denoted by~${\Fcal' \preceq \Fcal}$, if there is an injection~${\Psi \colon \Fcal' \to \Fcal}$ such that for each~${F' \in \mathcal{F}'}$, there is an injection ${\varphi_{F'} \colon F' \to \Psi(F')}$ with~${V(H') \subseteq V(\varphi_{F'}(H'))}$ for every~${H' \in F'}$. 
            The \Gsubtribe~$\Fcal'$ is called \emph{flat}, denoted by~${\Fcal' \subseteq \Fcal}$, if there is such an injection~$\Psi$ satisfying~${F' \subseteq \Psi(F')}$.
        \item A thick \Gtribe\ $\mathcal{F}$ in~$\Gamma$ is \emph{concentrated at an end~$\epsilon$} of~$\Gamma$ 
            if for every finite vertex set~$X$ of~$\Gamma$, the \Gtribe\ ${\Fcal_X = \{F_X \colon F \in \Fcal \}}$ consisting of the layers
            \[
                {F_X = \{H \in F \colon H \not\subseteq C(X,\epsilon)\} \subseteq F}
            \] 
            is a thin subtribe of~$\Fcal$.
    \end{itemize}
\end{definition}

We note that, if~$G$ is connected, every thick $G$-tribe~$\Fcal$ contains a thick subtribe~$\Fcal'$ such that every~${H \in \bigcup\Fcal'}$ is a tidy~$IG$. 
We will use the following lemmas from~\cite{BEEGHPTI}.

\begin{lemma}[Removing a thin subtribe, {\cite[Lemma 5.2]{BEEGHPTI}}]
    \label{l:removethin}
    Let~$\Fcal$ be a thick $G$-tribe in~$\Gamma$ and let~$\Fcal'$ be a thin subtribe of~$\Fcal$, witnessed by~${\Psi \colon \Fcal'\to \Fcal}$ and~${(\varphi_{F'} \colon F' \in \mathcal{F}')}$. 
    For~${F \in \Fcal}$, if~${F \in \Psi(\Fcal')}$, let~${\Psi^{-1}(F) = \{F'_F\}}$ and set~${\hat{F} = \varphi_{F'_F}(F'_F)}$. 
    If~${F \notin \Psi(\Fcal')}$, set~${\hat{F} = \emptyset}$. 
    Then 
    \[
        \Fcal'' := \{F\setminus \hat{F}\colon F\in \Fcal\}
    \] 
    is a thick flat $G$-subtribe of~$\Fcal$.
\end{lemma}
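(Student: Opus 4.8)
The plan is to verify, one at a time, the three things that make $\Fcal''$ a thick flat \Gsubtribe\ of $\Fcal$: that it is a \Gtribe\ in $\Gamma$, that it is a \emph{flat} subtribe of $\Fcal$, and that it is \emph{thick}. Only the last needs a genuine argument; the first two are bookkeeping.

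For the bookkeeping, I would first record that, since $\Psi$ is injective, $\Psi^{-1}(F)$ is a singleton whenever it is nonempty, so $\hat F$ is well defined for every $F \in \Fcal$; moreover $\hat F = \varphi_{F'_F}(F'_F) \subseteq \Psi(F'_F) = F$ in the nonempty case and $\hat F = \emptyset$ otherwise, so in all cases $\hat F$ is a subcollection of $F$ and $F \setminus \hat F$ is an honest set difference of collections of subgraphs of $\Gamma$. Hence every member of a layer $F \setminus \hat F$ is a member of $F$, so it is an $IG$, and any two such members are disjoint, since they already were as members of $F$; thus $\Fcal''$ is a \Gtribe. For flatness I would take $\Psi'' \colon \Fcal'' \to \Fcal$ to be the map $F \setminus \hat F \mapsto F$ — that is, the identity on the index set of $\Fcal$, which is injective — together with the inclusion $\varphi''_{F \setminus \hat F}\colon F \setminus \hat F \hookrightarrow F$ as the layer-wise injection. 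This $\varphi''$ trivially satisfies $V(H) \subseteq V(H)$ for each $H \in F \setminus \hat F$, and $F \setminus \hat F \subseteq F = \Psi''(F \setminus \hat F)$, which is precisely the extra condition needed for $\Fcal'' \subseteq \Fcal$.

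The substantive step is thickness, and here the key input is that $\Fcal'$ is \emph{thin}. By definition this means there is some $n_0 \in \Nbb$ with $|F'| < n_0$ for every $F' \in \Fcal'$; set $m := n_0 - 1$ (and $m := 0$ if $\Fcal' = \emptyset$), so that $|F'| \leq m$ for all $F' \in \Fcal'$. Since each $\varphi_{F'}$ is injective, $|\hat F| = |F'_F| \leq m$ whenever $F \in \Psi(\Fcal')$, and $|\hat F| = 0$ otherwise; thus $|\hat F| \leq m$ for every $F \in \Fcal$. Now I would fix an arbitrary $n \in \Nbb$: since $\Fcal$ is thick there is a layer $F$ with $|F| \geq n + m$, and then, using $\hat F \subseteq F$,
\[
    |F \setminus \hat F| = |F| - |\hat F| \geq (n + m) - m = n .
\]
So $F \setminus \hat F \in \Fcal''$ is a layer of size at least $n$; as $n$ was arbitrary, $\Fcal''$ is thick, completing the verification.

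I do not expect a real obstacle. The only point that should not be glossed over is that \emph{thin} is exactly what is needed to bound $\sup\{\,|F'| : F' \in \Fcal'\,\}$ by a finite number $m$, and that $\hat F$ is then a subcollection of $F$ of size at most $m$; this is what guarantees that the arbitrarily large layers supplied by thickness of $\Fcal$ survive the removal of the various $\hat F$.
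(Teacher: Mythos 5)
Your proof is correct. Note that the paper itself does not reprove this lemma — it quotes it with a citation to [BEEGHPTI, Lemma 5.2] — but your argument is the natural one and matches the standard proof: the bookkeeping for the $G$-tribe and flatness conditions is exactly as you lay it out (with $\Psi''$ being, up to the harmless set-vs.-indexed-family abuse of notation in the statement, the identity on the index set), and the thickness step is precisely the observation that thinness of $\Fcal'$ gives a uniform bound $m$ on $|\hat F|$, so that a layer $F$ of $\Fcal$ of size at least $n+m$ yields a layer $F\setminus\hat F$ of $\Fcal''$ of size at least $n$.
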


\begin{lemma}[Pigeon hole principle for thick $G$-tribes, {\cite[Lemma 5.3]{BEEGHPTI}}]
    \label{Lem_finitechoice}
    Let~${k \in \Nbb}$ and let~${c \colon \bigcup \mathcal{F} \to [k]}$ be a $k$-colouring of the members of some thick $G$-tribe~$\mathcal{F}$ in~$\Gamma$. 
    Then there is a monochromatic, thick, flat $G$-subtribe~$\mathcal{F}'$ of~$\mathcal{F}$.
\end{lemma}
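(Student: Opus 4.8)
The plan is a pigeonhole argument carried out at the level of \emph{layers} rather than individual members. First I would fix, for each layer $F \in \mathcal{F}$ and each colour $i \in [k]$, the colour class $F^{(i)} := \{H \in F \colon c(H) = i\}$, so that each $F$ is partitioned as $F = F^{(1)} \cup \dots \cup F^{(k)}$ into $k$ (possibly empty) sub-collections, each again consisting of pairwise disjoint $IG$s. The only numerical fact I need is that $|F| = \sum_{i=1}^{k} |F^{(i)}|$ for every layer $F$.

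Next I would argue that there is a single colour $i^\ast \in [k]$ for which the family ${(F^{(i^\ast)} \colon F \in \mathcal{F})}$ is thick, i.e.\ contains layers of arbitrarily large size. Suppose not: then for every colour $i$ there is a bound $n_i \in \Nbb$ with $|F^{(i)}| < n_i$ for all $F \in \mathcal{F}$, whence $|F| = \sum_i |F^{(i)}| < \sum_i n_i$ for every layer $F$. This contradicts the thickness of $\mathcal{F}$, which supplies a layer of size at least $\sum_i n_i$. Hence such an $i^\ast$ exists.

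Finally I would set ${\mathcal{F}' := (F^{(i^\ast)} \colon F \in \mathcal{F})}$, indexed by the same index set as $\mathcal{F}$ (one may discard the layers that became empty; this does not affect thickness). This $\mathcal{F}'$ is monochromatic by construction and thick by the choice of $i^\ast$. It is a flat $G$-subtribe of $\mathcal{F}$: take $\Psi$ to be the identity on the index set (so $\Psi(F^{(i^\ast)}) = F$) and $\varphi_F \colon F^{(i^\ast)} \hookrightarrow F$ the inclusion; then $\Psi$ and the $\varphi_F$ are injective, ${F^{(i^\ast)} \subseteq \Psi(F^{(i^\ast)})}$, and trivially ${V(H) \subseteq V(\varphi_F(H))}$ for each member $H$. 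Moreover every member of $\bigcup \mathcal{F}'$ is a member of $\bigcup \mathcal{F}$, hence an $IG$, and members within a layer of $\mathcal{F}'$ remain pairwise disjoint since they did so in $\mathcal{F}$.

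I do not expect a genuine obstacle here. The one point needing a little care is purely bookkeeping: whether $\mathcal{F}$ is read as an indexed family (so that $\Psi$ may literally be the identity on indices) or as a plain set of collections (in which case, should several layers of $\mathcal{F}$ produce the same colour class, one simply selects a single preimage when defining $\Psi$); under either reading the argument above goes through verbatim.
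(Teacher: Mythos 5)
Your proof is correct and is essentially the same pigeonhole argument used in the cited reference: the contrapositive step (if every colour class were thin, every layer would be bounded by $\sum_i n_i$, contradicting thickness) is exactly the right observation, and your verification of flatness via the inclusion maps matches the definition. The bookkeeping caveat about indexed families versus sets of layers is handled appropriately and poses no problem.
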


\begin{lemma}[{\cite[Lemma 5.4]{BEEGHPTI}}]
    \label{l:concentrated}
    Let~$G$ be a connected graph and~$\Gamma$ a graph containing a thick connected $G$-tribe~$\mathcal{F}$. 
    Then either ${\aleph_0 G \preceq \Gamma}$, or there is a thick flat subtribe~$\Fcal'$ of~$\Fcal$ and an end~$\epsilon$ of~$\Gamma$ such that~$\Fcal'$ is concentrated at~$\epsilon$.
\end{lemma}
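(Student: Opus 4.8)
\emph{Setup and reduction to a countable host.} The plan is to assume ${\aleph_0 G \not\preceq \Gamma}$ and to produce the required concentrated subtribe by following the members of~$\Fcal$ along an exhausting sequence of finite separators until they single out an end. First pass to a thick subtribe consisting of countably many layers~$(F_n)_{n\in\Nbb}$ with~${|F_n|\geq n}$, and let~$\Gamma_0$ be the subgraph of~$\Gamma$ spanned by the members of all these layers (countable when~$G$ is). All members still lie in~$\Gamma_0$, so it suffices to find a thick flat subtribe concentrated at an end of~$\Gamma_0$: if such a subtribe has, for a finite~${X\sub V(\Gamma)}$, all but thinly many members lying in the component of~${\Gamma_0-(X\cap V(\Gamma_0))}$ containing tails of that end, then those members --- being connected and disjoint from~$X$ --- lie in a common component of~${\Gamma-X}$, which is of the form~${C(X,\epsilon)}$ for a corresponding end~$\epsilon$ of~$\Gamma$. (Also~${\aleph_0 G\preceq\Gamma_0}$ would give~${\aleph_0 G\preceq\Gamma}$.)

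\emph{The basic step.} Let~$\mathcal{D}$ be a thick flat subtribe of~$\Fcal$ and~${X\sub V(\Gamma_0)}$ finite. As the members of any layer are pairwise disjoint, at most~$|X|$ of them meet~$X$, so by Lemma~\ref{l:removethin} we may discard this thin subtribe and assume every member of~$\mathcal{D}$ avoids~$X$; since~$G$, and hence each member, is connected, every member lies in a single component of~${\Gamma_0-X}$. If infinitely many components of~${\Gamma_0-X}$ contain a member of~$\mathcal{D}$, then one member per component exhibits infinitely many disjoint $G$-minors, so~${\aleph_0 G\preceq\Gamma_0}$, contradicting our assumption. Hence only finitely many components do, and colouring each member of~$\mathcal{D}$ by its component and applying Lemma~\ref{Lem_finitechoice} yields a thick flat subtribe of~$\mathcal{D}$ all of whose members lie in a single component~$C$ of~${\Gamma_0-X}$.

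\emph{Iterating.} Fix an exhaustion~${X_1\sub X_2\sub\cdots}$ of~$V(\Gamma_0)$ by finite sets. With~${\mathcal{D}_0:=\Fcal}$, apply the basic step at stage~$i$ to~$\mathcal{D}_{i-1}$ and~$X_i$, obtaining a thick flat subtribe~${\mathcal{D}_i\preceq\mathcal{D}_{i-1}}$ and a component~$C_i$ of~${\Gamma_0-X_i}$ containing every member of~$\mathcal{D}_i$; since the members of~$\mathcal{D}_{i-1}$ already lie in the connected set~$C_{i-1}$, which avoids~${X_{i-1}\sub X_i}$, we get~${C_i\sub C_{i-1}}$. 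Choosing members~$H_i$ of the nonempty~$\mathcal{D}_i$ and vertices~${v_i\in V(H_i)\sub C_i}$, and concatenating paths from~$v_i$ to~$v_{i+1}$ inside the connected set~$C_i$, gives a walk through infinitely many vertices from which we extract a ray~$R$ whose tail after~$X_i$ lies in~$C_i$ for every~$i$. Since~${\bigcup_i X_i=V(\Gamma_0)}$, the ray~$R$ lies in an end~$\epsilon$ of~$\Gamma_0$ with~${C(X_i,\epsilon)=C_i}$ for all~$i$.

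\emph{Diagonalising and conclusion.} For each~$k$ choose a layer~${L_k\in\mathcal{D}_k}$ with~${|L_k|\geq k}$, doing so inductively so that the layers of~$\Fcal$ into which~${L_1,\dots,L_k}$ inject are pairwise distinct (possible, as each~$\mathcal{D}_k$ has infinitely many layers of size at least~$k$). Then~${\Fcal':=\{L_k\colon k\in\Nbb\}}$ is a thick flat subtribe of~$\Fcal$ whose layer~$L_k$ consists of subgraphs of~$C_k$. Given a finite~${X\sub V(\Gamma_0)}$, pick~$m$ with~${X\sub X_m}$; then for every~${k\geq m}$ all members of~$L_k$ lie in~${C_k\sub C_m=C(X_m,\epsilon)\sub C(X,\epsilon)}$, so only the finitely many layers~${L_1,\dots,L_{m-1}}$ contribute members outside~${C(X,\epsilon)}$ and~$\Fcal'$ is concentrated at~$\epsilon$ in~$\Gamma_0$; the reduction transfers this to~$\Gamma$. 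I expect the crux to be precisely this coherence --- obtaining a \emph{single} subtribe that simultaneously witnesses concentration for every finite~$X$, the per-$X$ dichotomy being easy --- together with identifying the direction cut out by the~$C_i$ with a genuine end. When~$G$ is uncountable one cannot pass to a countable~$\Gamma_0$, and the recursion must instead be run transfinitely over a well-ordering of the finite subsets of~$V(\Gamma)$, diagonalising at limit stages of countable cofinality and checking that thickness is not lost at the remaining limits.
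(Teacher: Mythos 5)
Your proof is correct and, given that it builds directly on Lemma~\ref{l:removethin} and Lemma~\ref{Lem_finitechoice} (the two tools proved immediately before this lemma in the cited paper), it follows what is almost certainly the same approach as the original: restrict to a countable host, fix an exhaustion by finite sets, at each stage use the removal lemma and the pigeonhole principle to confine a thick flat subtribe to a single component, and then diagonalise over the resulting nested chain of components and subtribes, identifying the direction cut out by the $C_i$ with an end. One small remark: the extraction of a ray $R$ with $C(X_i,\epsilon)=C_i$ from the walk through the $v_i$ needs the (routine) observation that each vertex of $\Gamma_0$ is eventually swallowed by the exhaustion, so the walk leaves every finite set and can be shortcut to a ray whose positions in the walk increase without bound; alternatively one can invoke directly that a decreasing chain of components along an exhaustion determines an end. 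Your closing caveat about uncountable $G$ is sensible but is not needed here, since for this paper $G$ is locally finite, every tidy~$IG$ is countable, and hence $\Gamma_0$ is countable.
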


\begin{lemma}[{\cite[Lemma 5.5]{BEEGHPTI}}]
    \label{lem_subtribesinheritconcentration}
    Let~$G$ be a connected graph and~$\Gamma$ a graph containing a thick connected $G$-tribe~$\mathcal{F}$ concentrated at an end~$\epsilon$ of~$\Gamma$. 
    Then the following assertions hold: 
    \begin{enumerate}
        \item For every finite set~$X$, the component~${C(X,\epsilon)}$ contains a thick flat $G$-subtribe of~$\Fcal$.
        \item Every thick subtribe~$\Fcal'$ of~$\Fcal$ is concentrated at~$\epsilon$.
    \end{enumerate}
\end{lemma}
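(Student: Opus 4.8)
The plan is to treat the two assertions separately: the first falls out directly from the tool for removing thin subtribes (Lemma~\ref{l:removethin}) applied to the subtribe provided by the definition of concentration, and the second follows by pushing that defining property through the injections witnessing the subtribe relation.

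For assertion~(1), I would fix a finite set $X \subseteq V(\Gamma)$. Since $\Fcal$ is concentrated at~$\epsilon$, the $G$-tribe $\Fcal_X = \{F_X \colon F \in \Fcal\}$, with $F_X = \{H \in F \colon H \not\subseteq C(X,\epsilon)\}$, is a thin subtribe of $\Fcal$, witnessed by the obvious inclusions $F_X \hookrightarrow F$. Applying Lemma~\ref{l:removethin} to this thin subtribe and unravelling its notation, the set $\hat F$ removed from each layer $F$ is exactly $F_X$; hence the resulting thick flat $G$-subtribe $\Fcal'' = \{F \setminus F_X \colon F \in \Fcal\}$ has the property that every member $H$ of every layer satisfies $H \subseteq C(X,\epsilon)$. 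That is precisely a thick flat $G$-subtribe of $\Fcal$ contained in $C(X,\epsilon)$.

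For assertion~(2), let $\Fcal'$ be a thick subtribe of $\Fcal$, witnessed by an injection $\Psi \colon \Fcal' \to \Fcal$ and injections $\varphi_{F'} \colon F' \to \Psi(F')$ with $V(H') \subseteq V(\varphi_{F'}(H'))$ for every $H' \in F'$. Fix a finite set $X$; I must show $\Fcal'_X = \{F'_X \colon F' \in \Fcal'\}$ is thin. Since $C(X,\epsilon)$ is a component of $\Gamma - X$, hence an induced subgraph of $\Gamma$, a subgraph $H$ of $\Gamma$ satisfies $H \subseteq C(X,\epsilon)$ if and only if $V(H) \subseteq V(C(X,\epsilon))$. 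Consequently, if $H' \in F'$ is not contained in $C(X,\epsilon)$, then $V(H') \not\subseteq V(C(X,\epsilon))$, so $V(\varphi_{F'}(H')) \not\subseteq V(C(X,\epsilon))$ and thus $\varphi_{F'}(H') \not\subseteq C(X,\epsilon)$ as well. Hence $\varphi_{F'}$ restricts to an injection $F'_X \to \Psi(F')_X$, giving $|F'_X| \leq |\Psi(F')_X|$. Because $\Fcal$ is concentrated at $\epsilon$, the subtribe $\Fcal_X$ is thin, so there is some $n \in \Nbb$ with $|F_X| < n$ for all $F \in \Fcal$; combining, $|F'_X| < n$ for all $F' \in \Fcal'$, so $\Fcal'_X$ (flat via the inclusion maps) is thin. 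Therefore $\Fcal'$ is concentrated at $\epsilon$.

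The whole argument is essentially bookkeeping, and the only step that genuinely needs care — the one I would flag as the crux — is the observation that concentration is really a statement about where the \emph{vertices} of the $IG$s lie, which holds precisely because $C(X,\epsilon)$ is an induced subgraph; this is exactly the feature transported along the vertex-inclusions $V(H') \subseteq V(\varphi_{F'}(H'))$ that are built into the definition of a $G$-subtribe. A purely subgraph-level reading of the containment $H' \subseteq C(X,\epsilon)$ would not obviously transfer along $\varphi_{F'}$, so it is worth making this reduction to vertex sets explicit.
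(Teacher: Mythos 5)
This lemma is quoted from \cite[Lemma 5.5]{BEEGHPTI}; the present paper supplies no proof of its own, so there is nothing internal to compare against. Your reconstruction is correct and uses exactly the tools one would expect from the cited source. Part~(1) is a clean application of Lemma~\ref{l:removethin}: the thin subtribe $\Fcal_X$ furnished by concentration is removed, and what remains, $\{F\setminus F_X\colon F\in\Fcal\}$, is by definition a thick flat subtribe all of whose members lie in $C(X,\epsilon)$. Part~(2) correctly reduces the containment $H\subseteq C(X,\epsilon)$ to the vertex-level statement $V(H)\subseteq V(C(X,\epsilon))$ (valid because $C(X,\epsilon)$ is a component, hence an induced subgraph), transports the negation of that along the vertex inclusions $V(H')\subseteq V(\varphi_{F'}(H'))$ to get an injection $F'_X\hookrightarrow\Psi(F')_X$, and inherits the uniform bound from the thinness of $\Fcal_X$. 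The point you flag as the crux is the right one: a $G$-subtribe only guarantees vertex containment, not subgraph containment, so the reduction to vertex sets is where the argument actually has to do something; a purely subgraph-level reading of $H'\not\subseteq C(X,\epsilon)$ would not pass through $\varphi_{F'}$ without this observation.
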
 

The following lemma from~\cite{BEEGHPTII} shows that we can restrict ourself to thick $G$-tribes which are concentrated at thick ends.

\begin{lemma}[{\cite[Lemma 8.7]{BEEGHPTII}}]
    \label{l:concentratedatthin}
    Let~$G$ be a connected graph and~$\Gamma$ a graph containing a thick $G$-tribe $\Fcal$ concentrated at an end~${\epsilon \in \Omega(\Gamma)}$ which is thin.
    Then~${\aleph_0 G \preceq \Gamma}$.
\end{lemma}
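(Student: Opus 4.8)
The plan is to build an $\aleph_0 G$-minor in $\Gamma$ directly, by peeling off pairwise disjoint $G$-minors one at a time while marching towards the thin end $\epsilon$. Write $d := \deg(\epsilon) \in \Nbb$. The first ingredient is a \emph{defining sequence} for $\epsilon$: a sequence $(S_n)_{n \geq 0}$ of finite vertex sets with $S_0 = \emptyset$ (so $C(S_0,\epsilon) = \Gamma$), with $|S_n| \leq d$ for $n \geq 1$, with $C(S_0,\epsilon) \supseteq C(S_1,\epsilon) \supseteq C(S_2,\epsilon) \supseteq \cdots$, and such that every vertex of $\Gamma$ lies outside $C(S_n,\epsilon)$ for all sufficiently large $n$. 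Such a sequence exists by standard properties of thin ends (see \cite[Chapter~8]{D16} and \cite{H65}): fixing an enumeration $v_1, v_2, \ldots$ of $V(\Gamma)$, one recursively chooses $S_n$ to be a vertex set of minimum size separating $\epsilon$ from the finite set $S_{n-1} \cup \{v_1, \ldots, v_n\}$, arranging $C(S_n,\epsilon) \subseteq C(S_{n-1},\epsilon)$; this minimum size is at most $d$, since by the end-version of Menger's theorem $d+1$ disjoint paths from a finite set to $\epsilon$ would yield $d+1$ disjoint $\epsilon$-rays, contradicting $\deg(\epsilon) = d$.

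The crucial observation is the following claim: if $\mathcal{D}$ is a collection of $d+1$ pairwise disjoint connected subgraphs of $\Gamma$, each contained in $C(S_\ell,\epsilon)$ for some fixed $\ell$, then some $D \in \mathcal{D}$ satisfies $D \subseteq C(S_\ell,\epsilon) \setminus C(S_m,\epsilon)$ for some $m > \ell$. To see this, suppose for contradiction that every $D \in \mathcal{D}$ meets $C(S_m,\epsilon)$ for all $m$. Pick a vertex in each $D \in \mathcal{D}$; since $\mathcal{D}$ is finite and every vertex is eventually excluded, there is an $m^*$ with all these vertices outside $C(S_{m^*},\epsilon)$. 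Then each $D$ meets both $C(S_{m^*},\epsilon)$ and its complement and, being connected, must meet the separator $S_{m^*}$; but the $d+1$ members of $\mathcal{D}$ are pairwise disjoint while $|S_{m^*}| \leq d$, a contradiction. Hence some $D$ is disjoint from $C(S_m,\epsilon)$ for some $m$, and $m > \ell$ follows since $D$ is nonempty while $C(S_\ell,\epsilon) \subseteq C(S_m,\epsilon)$ whenever $m \leq \ell$.

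Now we run the recursion. Suppose we have pairwise disjoint $G$-minors $H_1, \ldots, H_{k-1}$ in $\Gamma$ and indices $0 = n_0 < n_1 < \cdots < n_{k-1}$ with $H_i \subseteq C(S_{n_{i-1}},\epsilon) \setminus C(S_{n_i},\epsilon)$ for each $i < k$. By Lemma~\ref{lem_subtribesinheritconcentration}(1), the component $C(S_{n_{k-1}},\epsilon)$ contains a thick flat $G$-subtribe of $\Fcal$; choosing a layer of size at least $d+1$ and $d+1$ of its members, we obtain pairwise disjoint $G$-minors contained in $C(S_{n_{k-1}},\epsilon)$, which are connected since $G$ is connected. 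The claim then yields one of them, call it $H_k$, with $H_k \subseteq C(S_{n_{k-1}},\epsilon) \setminus C(S_{n_k},\epsilon)$ for a suitable $n_k > n_{k-1}$. Moreover $H_k$ is disjoint from each $H_i$ with $i < k$, because $H_i \subseteq \Gamma \setminus C(S_{n_i},\epsilon)$ while $H_k \subseteq C(S_{n_{k-1}},\epsilon) \subseteq C(S_{n_i},\epsilon)$, using $n_i \leq n_{k-1}$. Carrying this out for all $k$ produces pairwise disjoint inflated copies $H_1, H_2, \ldots$ of $G$ in $\Gamma$, whose union is an inflated copy of $\aleph_0 G$; hence $\aleph_0 G \preceq \Gamma$.

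The only real obstacle is the claim, which addresses the one genuinely new phenomenon compared with finite $G$: an individual member of the tribe sitting deep inside $C(S_n,\epsilon)$ may itself run all the way out to $\epsilon$, so one cannot simply move deeper into the end to dodge the members chosen so far. Thinness of $\epsilon$ is exactly the feature that prevents too many pairwise disjoint members from doing this simultaneously, and hence always leaves one member confined to a bounded annular region $C(S_\ell,\epsilon) \setminus C(S_m,\epsilon)$, which is what makes the peeling-off construction go through at every step.
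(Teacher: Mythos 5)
Your proof is correct. The paper does not prove this lemma itself but imports it from the companion paper (cited as \cite[Lemma 8.7]{BEEGHPTII}), and your argument --- building a defining sequence of separators of size at most $d = \deg(\epsilon)$, then pigeonholing $d+1$ disjoint connected members of a deep layer (supplied by Lemma~\ref{lem_subtribesinheritconcentration}(1)) against such a separator to confine one member to an annulus $C(S_\ell,\epsilon)\setminus C(S_m,\epsilon)$ --- is exactly the standard approach used there, with all the key points (including the only delicate one, that an individual member may itself converge to $\epsilon$) handled correctly; the only quibble is the harmless parenthetical claim that $C(\emptyset,\epsilon)=\Gamma$, which requires $\Gamma$ connected but is never actually used.
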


Given an extensive tree-decomposition ${(T,\mathcal{V})}$ of~$G$, broadly our strategy will be to obtain a family of disjoint~$IG$'s by choosing a sequence of initial subtrees ${T_0 \subseteq T_1 \subseteq \ldots}$ such that~${\bigcup T_i = T}$ and constructing inductively a family of finitely many~${IG(T_{k+1})}$'s which extend the~${IG(T_{k})}$'s built previously (cf.~Definition~\ref{defn_treestuff}). 
The extensiveness of the tree-decomposition will ensure that, at each stage, there will be some edges in~${\partial(T_i) = E(T_i, T \setminus T_i)}$, each of which has in~$G$ a family of rays~$\mathcal{R}_e$ along which the graph $G$ displays self-similarity.

In order to extend our~$IG(T_k)$ at each step, we will want to assume that the~$IG$s in our thick $G$-tribe~$\mathcal{F}$ lie in a `uniform' manner in the graph~$\Gamma$ in terms of these rays~$\mathcal{R}_e$.

More specifically, for each edge~${e \in \partial(T_i)}$, the rays~$\mathcal{R}_e$ provided by the extensive tree-decomposition in Definition~\ref{d:extensive} tend to a common end~$\omega_e$ in~$G$, and for each~${H \in \bigcup \mathcal{F}}$, the corresponding rays in~$H$ converge to an end~${H(\omega_e) \in \Omega(\Gamma)}$ (cf.~Definition~\ref{def:H(omega)}), which might either be the end~$\epsilon$ of~$\Gamma$ at which~$\mathcal{F}$ is concentrated, or another end of~$\Gamma$. 
We would like that our \Gtribe~$\mathcal{F}$ makes a consistent choice across all members $H$ of $\mathcal{F}$   of whether~$H(\omega_e)$ is~$\epsilon$, for each~${e \in \partial(T_i)}$. 

Furthermore, if~${H(\omega_e) = \epsilon}$ for every~${H \in \bigcup \mathcal{F}}$, then this imposes some structure on the end~$\omega_e$ of~$G$.
More precisely, by~\cite[Lemma~10.1]{BEEGHPTII}, we may assume that~${\RG_H(H^{\downarrow}(\mathcal{R}_e))}$ is a path for each member~$H$ of the $G$-tribe~$\mathcal{F}$, or else we immediately find that $\aleph_0 G \preccurlyeq \Gamma$ and are done.

By moving to a thick subtribe, we may assume that every $\epsilon$-ray in $H$ is a core ray for every~${H \in \bigcup \mathcal{F}}$, in which case~$\leq_\epsilon$ imposes a linear order on every family of rays~$H^{\downarrow}(\mathcal{R}_e)$, which induces one of the two distinct orientations of the path~${\RG_H(H^{\downarrow}(\mathcal{R}_e))}$. 
We will also want that our tribe~$\mathcal{F}$ induces this orientation in a consistent manner. 

Let us make the preceding discussion precise with the following definitions:

\begin{definition}
    \label{d:tribes}
    Let~$G$ be a connected locally finite graph with an extensive tree-\linebreak decomposition~${(T,\mathcal {V})}$ and~$S$ be an initial subtree of~$T$.
    Let~${H \subseteq \Gamma}$ be a tidy~$IG$, $\mathcal{H}$ be a set of tidy~$IG$s in~$\Gamma$, and~$\epsilon$ an end of~$\Gamma$.
    \begin{itemize}
        \item 
            Given an end~$\omega$ of~$G$, we say that~$\omega$ \emph{converges to~$\epsilon$ according to~$H$} if $H(\omega) = \epsilon$ (cf.~Definition~\ref{def:H(omega)}). 
            The end~$\omega$ \emph{converges to~$\epsilon$ according to~$\Hcal$} if it converges to~$\epsilon$ according to every element of~$\Hcal$. 
            
            We say that~$\omega$ is \emph{cut from~$\epsilon$ according to~$H$} if~$H(\omega) \neq \epsilon$. 
            The end~$\omega$ is \emph{cut from~$\epsilon$ according to~$\Hcal$} if it is cut from~$\epsilon$ according to every element of~$\Hcal$. 
            
            Finally, we say that~$\Hcal$ \emph{determines whether~$\omega$ converges to~$\epsilon$} if either~$\omega$ converges to~$\epsilon$ according to~$\Hcal$ or~$\omega$ is cut from~$\epsilon$ according to~$\Hcal$. 
        
        \item 
            Given~${E \subseteq E(T)}$, we say~$\Hcal$ \emph{weakly agrees about~$E$} if for each~${e \in E}$, the set~$\Hcal$ determines whether~$\omega_e$ (cf.~Definition~\ref{d:extensive}) converges to~$\epsilon$.
            If~$\Hcal$ weakly agrees about~${\partial(S)}$ we let
            \begin{align*}
                \partial_\epsilon (S) &:= \{e \in \partial (S) \colon \omega_e \text{ converges to~$\epsilon$ according to } \Hcal \}\,, \\
                \partial_{\neg \epsilon} (S)  &:=  \{e \in \partial (S) \colon \omega_e \text{ is cut from~$\epsilon$ according to } \Hcal \} \, ,
            \end{align*}
            and write 
            \begin{align*}
                S^{\neg \epsilon} &\text{ for the component of the forest } T - \partial_\epsilon (S) \text{ containing the root of } T\,, \\
                S^{\epsilon} &\text{ for the component of the forest } T - \partial_{\neg \epsilon} (S) \text{ containing the root of } T\,.
            \end{align*}
            Note that~${S = S^{\neg \epsilon} \cap S^{\epsilon}}$. 
            
        \item 
            We say that~$\Hcal$ is \emph{well-separated from~$\epsilon$ at~$S$} if~$\Hcal$ weakly agrees about~${\partial(S)}$ and~${H(S^{\neg \epsilon})}$ can be separated from~$\epsilon$ in~$\Gamma$ for all elements~${H \in \Hcal}$, i.e.~for every~$H$ there is a finite~${X \subseteq V(\Gamma)}$ such that~${H(S^{\neg \epsilon}) \cap C_\Gamma(X,\epsilon) = \emptyset}$.
    \end{itemize}
    
    In the case that~$\epsilon$ is half-grid-like, we say that~$\Hcal$ \emph{strongly agrees} about~$\partial(S)$ if 
    \begin{itemize}
        \item it weakly agrees about~${\partial(S)}$; 
        \item for each~${H \in \Hcal}$, every $\epsilon$-ray~${R \subseteq H}$ is in~${\core(\epsilon)}$; 
        \item for every~${e \in \partial_\epsilon(S)}$, there is a linear order~$\leq_{\mathcal{H},e}$ on~${S(e)}$ (cf.~Definition~\ref{d:extensive}), such that the order induced on~${H^{\downarrow}(\mathcal{R}_e)}$ by~$\leq_{\mathcal{H},e}$ agrees with~$\leq_\epsilon$ on~${H^{\downarrow}(\mathcal{R}_e)}$ for all~${H \in \Hcal}$. 
    \end{itemize}
    
    If~$\Fcal$ is a thick $G$-tribe concentrated at an end~$\epsilon$, we use these terms in the following way:
    \begin{itemize}
        \item Given~${E \subseteq E(T)}$, we say that~$\Fcal$ \emph{weakly agrees about~$E$} if~${\bigcup \Fcal}$ weakly agrees about~$E$ w.r.t.~$\epsilon$.
        \item We say that~$\Fcal$ is \emph{well-separated from~$\epsilon$ at~$S$} if~${\bigcup \Fcal}$ is.
        \item We say that~$\Fcal$ \emph{strongly agrees} about~${\partial(S)}$ if~${\bigcup \Fcal}$ does.
    \end{itemize}
\end{definition}

For ease of presentation, when a $G$-tribe $\mathcal{F}$ strongly agrees about $\partial(S)$ we will write $\leq_{\mathcal{F},e}$ for $\leq_{\bigcup \mathcal{F},e}$.

\begin{remark}
    \label{r:hereditaryprop}
    The properties of weakly agreeing about~$E$, being well-separated from~$\epsilon$, and strongly agreeing about~$\partial(S)$ are all preserved under taking subsets, and hence under taking flat subtribes. 
\end{remark}

Note that by the pigeon hole principle for thick $G$-tribes, given a finite edge set~${E \subseteq E(T)}$, any thick $G$-tribe~$\Fcal$ concentrated at~$\epsilon$ has a thick (flat) subtribe which weakly agrees about~$E$.

The next few lemmas show that, with some slight modification, we may restrict to a further subtribe which strongly agrees about~$E$ and is also well-separated from~$\epsilon$. 

\begin{definition}[{\cite[Lemma 3.5]{BEEGHPTII}}]
    \label{d:linear}
    Let~$\omega$ be an end of a graph~$G$. 
    We say~$\omega$ is \emph{linear} if~${\RG(\mathcal{R})}$ is a path for every finite family~$\mathcal{R}$ of disjoint $\omega$-rays.
\end{definition}

\begin{lemma}[{\cite[Lemma 10.1]{BEEGHPTII}}]
    \label{l:linearsubtribe}
    Let~$\epsilon$ be a non-pebbly end of~$\Gamma$ and let~$\Fcal$ be a thick $G$-tribe, such that for every~${H \in \bigcup \Fcal}$, there is an end~${\omega_H \in \Omega(G)}$ such that~${H(\omega_H) = \epsilon}$. 
    Then there is a thick flat subtribe~$\Fcal'$ of~$\Fcal$ such that~$\omega_H$ is linear for every~${H \in \bigcup \Fcal'}$.
\end{lemma}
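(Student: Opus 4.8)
The plan is to show that one may take $\Fcal'$ to be a monochromatic thick flat subtribe of $\Fcal$. First I would fix, for each $H\in\bigcup\Fcal$, an end $\omega_H$ of $G$ with $H(\omega_H)=\epsilon$, and apply the pigeonhole principle for thick tribes (Lemma~\ref{Lem_finitechoice}) to the two-colouring $c\colon\bigcup\Fcal\to\{1,2\}$ with $c(H)=1$ exactly when $\omega_H$ is linear, obtaining a monochromatic thick flat subtribe $\Fcal'$ of $\Fcal$. If its colour is $1$ we are done, so the whole content is to rule out the case that $\omega_H$ is non-linear for every $H\in\bigcup\Fcal'$, which I would do by contradicting the non-pebbliness of $\epsilon$.

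So assume $\omega_H$ is non-linear for all $H\in\bigcup\Fcal'$. For each such $H$ I would fix, via Definition~\ref{d:linear}, a finite family $\mathcal{R}_H$ of disjoint $\omega_H$-rays in $G$ with $\RG_G(\mathcal{R}_H)$ not a path; since these rays are equivalent, $\RG_G(\mathcal{R}_H)$ is connected by Lemma~\ref{rem:tail-raygraph}, and a connected non-path graph must contain a vertex of degree at least $3$ or a cycle (hence has at least $3$ vertices). The key transfer step is the routine fact that for a tidy $IG$ $H$ one has $\RG_G(\mathcal{R}_H)\subseteq\RG_H\!\bigl(H^{\downarrow}(\mathcal{R}_H)\bigr)$ under the natural bijection $R\mapsto H^{\downarrow}(R)$: any $R_i$--$R_j$ path in $G$ avoiding the other rays lifts, through the branch sets $H(v)$ and the unique edges between them, to an $H^{\downarrow}(R_i)$--$H^{\downarrow}(R_j)$ path in $H$ avoiding the other $H^{\downarrow}(R_k)$, and disjoint paths lift to disjoint paths. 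Writing $\mathcal{R}_H^{*}:=H^{\downarrow}(\mathcal{R}_H)$, which by Lemma~\ref{l:pullbackray} and Definition~\ref{def:H(omega)} (using $H(\omega_H)=\epsilon$) is a finite family of disjoint $\epsilon$-rays, this gives that $\RG_\Gamma(\mathcal{R}_H^{*})$ contains a vertex of degree at least $3$ or a cycle.

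Next I would assemble many disjoint copies. Given $n$, use thickness of $\Fcal'$ to pick a layer containing pairwise disjoint members $H_1,\dots,H_n$ and set $\mathcal{A}_n:=\bigcup_{j=1}^{n}\mathcal{R}_{H_j}^{*}$, a family of disjoint $\epsilon$-rays partitioned into blocks $I_1,\dots,I_n$ with $|\mathcal{A}_n|\geq 3n$. As the $H_j$ are pairwise disjoint, Lemma~\ref{l:rayinducedsubgraph} (applied with $\Gamma'=H_j$) gives $\RG_{H_j}(\mathcal{R}_{H_j}^{*})\subseteq\RG_\Gamma(\mathcal{A}_n)[I_j]$, so together with the transfer step, for every $j$ the graph $\RG_\Gamma(\mathcal{A}_n)$ is \emph{bad on} $I_j$ in the sense that some vertex of $I_j$ has degree at least $3$ in $\RG_\Gamma(\mathcal{A}_n)$, or $\RG_\Gamma(\mathcal{A}_n)$ contains a cycle with all vertices in $I_j$. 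In particular $\epsilon$ is thick, so Theorem~\ref{t:trichotomy} applies and, as $\epsilon$ is non-pebbly, $\epsilon$ is grid-like or half-grid-like with an associated constant $N$. Now choose $n$ large enough that Lemma~\ref{l:gridstructure} respectively Lemma~\ref{l:halfgridstructure} applies to $\mathcal{A}_n$ and $n>2N+2$. If $\epsilon$ is grid-like then $\RG_\Gamma(\mathcal{A}_n)$ is a cycle, which has no vertex of degree $\geq 3$ and contains only one cycle, so at most one of the $n\geq 2$ pairwise disjoint blocks can be bad --- a contradiction. If $\epsilon$ is half-grid-like then $\RG_\Gamma(\mathcal{A}_n)$ has a central bare path missing at most $N$ vertices, so at most $N+2$ of its vertices have degree $\geq 3$ and, since a bare path is chordless, every cycle meets the at most $N$ off-path vertices, whence at most $N$ pairwise disjoint cycles exist; thus at most $2N+2$ blocks can be bad, contradicting $n>2N+2$. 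Therefore the monochromatic subtribe has colour $1$ and is the desired $\Fcal'$.

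I expect there to be no single deep obstacle: the only non-formal ingredient is the transfer inclusion $\RG_G(\mathcal{R})\subseteq\RG_H(H^{\downarrow}(\mathcal{R}))$, whose verification is a careful but elementary bookkeeping with branch sets and the tidiness of $H$, and the rest is assembling the pigeonhole principle for tribes, the known normal forms for non-pebbly ends (Lemmas~\ref{l:gridstructure} and~\ref{l:halfgridstructure}), and the elementary counting that only boundedly many pairwise disjoint blocks of a grid-like or half-grid-like ray graph can carry a branching or a cycle. The most delicate point I anticipate is the counting in the half-grid-like case, where one must be careful about cycles that pass through the bounded set of vertices off the central path.
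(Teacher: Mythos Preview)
The paper does not prove this lemma: it is quoted verbatim from \cite[Lemma~10.1]{BEEGHPTII} and no argument is given here. So there is nothing in the present paper to compare your proposal against. That said, your outline is sound and is exactly in the spirit of the tools the paper imports (Lemma~\ref{Lem_finitechoice}, Lemma~\ref{l:rayinducedsubgraph}, and Lemmas~\ref{l:gridstructure}, \ref{l:halfgridstructure}): colour members of $\Fcal$ by whether $\omega_H$ is linear, and in the non-linear case exhibit, inside each of $n$ disjoint members, a non-path ray graph, which forces unboundedly many disjoint ``bad'' blocks in $\RG_\Gamma(\mathcal{A}_n)$, contradicting the structural bounds for grid-like and half-grid-like ends. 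The transfer inclusion $\RG_G(\mathcal{R}_H)\subseteq \RG_H(H^{\downarrow}(\mathcal{R}_H))$ is indeed the only place where one has to be a little careful, and your description of how to lift disjoint paths through branch sets of a tidy $IG$ is correct.

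One small correction in the half-grid-like count: it is not true that every cycle of the ray graph must meet an off-path vertex. The two endpoints of the central bare path may be adjacent to each other, so the central path together with that edge can form a cycle using only central-path vertices. What \emph{is} true is that every cycle must contain a vertex that is not an \emph{inner} vertex of the central path (since the inner vertices induce a path), i.e.\ one of the at most $N+2$ ``special'' vertices (the $\leq N$ off-path vertices or the $2$ endpoints). Hence there are at most $N+2$ pairwise disjoint cycles, and together with the at most $N+2$ vertices of degree $\geq 3$ you get at most $2N+4$ bad blocks rather than $2N+2$. This does not affect your argument: simply take $n>2N+4$.
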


\begin{corollary}
    \label{c:linearend}
    Let~$G$ be a connected locally finite graph with an extensive tree-decomposi{-}tion ${(T,\mathcal {V})}$, $S$ an initial subtree of~$T$, and let~$\Fcal$ be a thick $G$-tribe which is concentrated at a non-pebbbly end~$\epsilon$ of a graph~$\Gamma$ and weakly agrees about~${\partial(S)}$. 
    Then~$\omega_e$ is linear for every~${e \in \partial_\epsilon(S)}$.
\end{corollary}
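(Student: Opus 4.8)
The plan is to reduce the statement directly to Lemma~\ref{l:linearsubtribe}. The key observation is that the property of an end~$\omega$ of~$G$ being \emph{linear} (Definition~\ref{d:linear}) is intrinsic to~$G$ and~$\omega$ and refers neither to~$\Gamma$, nor to~$\epsilon$, nor to any tribe; so in order to conclude that~$\omega_e$ is linear it suffices to produce \emph{some} thick $G$-tribe to which Lemma~\ref{l:linearsubtribe} applies and in which~$\omega_e$ plays the role of the end~$\omega_H$ attached to each member.

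So fix an edge~${e \in \partial_\epsilon(S)}$; we show that~$\omega_e$ is linear. First I would unwind the definitions. Since~$\Fcal$ weakly agrees about~${\partial(S)}$, by definition~${\bigcup \Fcal}$ weakly agrees about~${\partial(S)}$ with respect to~$\epsilon$, so~${\bigcup \Fcal}$ determines whether~$\omega_e$ converges to~$\epsilon$ (Definition~\ref{d:tribes}). As~${e \in \partial_\epsilon(S)}$, this forces~$\omega_e$ to converge to~$\epsilon$ according to~${\bigcup \Fcal}$, i.e.~${H(\omega_e) = \epsilon}$ for every member~${H \in \bigcup \Fcal}$ (cf.~Definition~\ref{def:H(omega)}).

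Next I would set~${\omega_H := \omega_e}$ for every~${H \in \bigcup \Fcal}$. By the previous step the hypotheses of Lemma~\ref{l:linearsubtribe} are met: $\epsilon$ is non-pebbly by assumption, and each member~$H$ of the thick $G$-tribe~$\Fcal$ satisfies~${H(\omega_H) = \epsilon}$. Hence Lemma~\ref{l:linearsubtribe} yields a thick flat subtribe~$\Fcal'$ of~$\Fcal$ such that~$\omega_H$ is linear for every~${H \in \bigcup \Fcal'}$. Since~$\Fcal'$ is thick it has a nonempty layer, so we may pick some~${H_0 \in \bigcup \Fcal'}$, and then~${\omega_e = \omega_{H_0}}$ is linear. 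As~$e$ was an arbitrary element of~${\partial_\epsilon(S)}$, this proves the corollary.

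I do not expect a genuine obstacle here: the whole content is already packaged in Lemma~\ref{l:linearsubtribe}, and the only points requiring care are the bookkeeping of chasing the definition of ``weakly agrees'' down to the statement ${H(\omega_e)=\epsilon}$ for all~$H$, and the remark that linearity of~$\omega_e$ is independent of which subtribe the lemma returns, so that the passage to~$\Fcal'$ costs nothing.
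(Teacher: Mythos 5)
Your proposal is correct and is essentially identical to the paper's own (one-line) proof: apply Lemma~\ref{l:linearsubtribe} to~$\Fcal$ with~${\omega_H = \omega_e}$ for each~${H \in \bigcup\Fcal}$, which is legitimate because~${e \in \partial_\epsilon(S)}$ means precisely that~${H(\omega_e) = \epsilon}$ for every member~$H$. Your added remarks that linearity is intrinsic to~$G$ and that passing to the subtribe~$\Fcal'$ costs nothing are exactly the right bookkeeping.
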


\begin{proof}
    For any~${e \in \partial_\epsilon(S)}$ apply Lemma~\ref{l:linearsubtribe} to~$\Fcal$ with~${\omega_H = \omega_e}$ for each~${H \in \bigcup \Fcal}$. 
\end{proof}

\begin{lemma}\label{lem:stronglyagreesubtribe}
    Let~$G$ be a connected locally finite graph with an extensive tree-decomposition ${(T, \mathcal{V})}$ and let~$S$ be an initial subtree of~$T$ with~${\partial(S)}$ finite.
    Let~$\Fcal$ be a thick $G$-tribe in a graph~$\Gamma$, which weakly agrees about~${\partial(S) \subseteq E(T)}$, concentrated at a half-grid-like end~$\epsilon$ of~$\Gamma$. 
    Then~$\Fcal$ has a thick flat subtribe~$\Fcal'$ so that~$\Fcal'$ strongly agrees about~${\partial(S)}$. 
\end{lemma}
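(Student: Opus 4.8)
The plan is to obtain $\Fcal'$ from $\Fcal$ by applying the pigeon hole principle for thick $G$-tribes (Lemma~\ref{Lem_finitechoice}) finitely many times, once for each edge $e \in \partial_\epsilon(S)$, to fix a consistent choice of data, and then to pass to a subtribe whose members consist of core rays only. First I would note that, by Corollary~\ref{c:linearend}, each $\omega_e$ with $e \in \partial_\epsilon(S)$ is linear, so that $\RG_H(H^{\downarrow}(\mathcal{R}_e))$ is a path for every $H \in \bigcup \Fcal$ and every such $e$. Next, since $\epsilon$ is half-grid-like and every $\epsilon$-ray in a member $H$ converges to $\epsilon$, the number of disjoint $\epsilon$-rays in $H$ which fail to be core rays is bounded by $2N+2$ (Remark~\ref{rem:core-remarks}\ref{rem:non-core-bounded}); thus for each $e$, among the $|S(e)|$ rays in $H^{\downarrow}(\mathcal{R}_e)$ all but a bounded number are core rays. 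A cleaner route, which avoids bookkeeping about exceptional rays, is to colour each $H \in \bigcup \Fcal$ by the set of indices $s \in S(e)$ for which $H^{\downarrow}(R_{e,s})$ is \emph{not} a core ray; since $S(e)$ is finite and $\partial_\epsilon(S)$ is finite, there are only finitely many colours, and Lemma~\ref{Lem_finitechoice} yields a thick flat subtribe on which this set is constant in $e$. I would then argue (using that the members of a layer are disjoint and each converges to $\epsilon$, together with the bound $2N+2$) that this constant exceptional set must be empty for every $e$, so on the resulting subtribe every $\epsilon$-ray of every member is a core ray.

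Having arranged that all relevant rays are core rays, I would now fix the linear orders $\leq_{\mathcal{H},e}$. For a fixed $e \in \partial_\epsilon(S)$, each member $H$ of the current tribe induces, via $\leq_\epsilon$ restricted to the core rays $H^{\downarrow}(\mathcal{R}_e) = (H^{\downarrow}(R_{e,s}) \colon s \in S(e))$, a linear order on the index set $S(e)$ (this is a genuine linear order by Remark~\ref{rem:core-remarks}\ref{rem:core-comparable}, since the rays in $H^{\downarrow}(\mathcal{R}_e)$ are pairwise disjoint). Colour $H$ by this linear order on $S(e)$: there are only $|S(e)|!$ possibilities, a finite number, so Lemma~\ref{Lem_finitechoice} produces a thick flat subtribe on which the order is constant; call it $\leq_{\mathcal{H},e}$. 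Iterating this once for each of the finitely many edges $e \in \partial_\epsilon(S)$ — each step passing to a thick flat subtribe, and flatness/thickness being preserved under iteration — produces a thick flat subtribe $\Fcal'$ on which, for every $e \in \partial_\epsilon(S)$, there is a single linear order $\leq_{\mathcal{F}',e}$ on $S(e)$ inducing $\leq_\epsilon$ on $H^{\downarrow}(\mathcal{R}_e)$ for all $H \in \bigcup \Fcal'$. By Remark~\ref{r:hereditaryprop}, $\Fcal'$ still weakly agrees about $\partial(S)$ (since this property is hereditary), and by construction every $\epsilon$-ray in every member is a core ray, so all three bullet points in the definition of strong agreement about $\partial(S)$ hold, i.e.\ $\Fcal'$ strongly agrees about $\partial(S)$, as required.

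The main obstacle I expect is the step showing that the constant exceptional set of non-core rays is in fact empty, rather than merely constant. One has to rule out the possibility that, say, a fixed index $s \in S(e)$ gives a non-core $\epsilon$-ray in \emph{every} member of some thick subtribe; the way around this is precisely the bound $2N+2$ from Remark~\ref{rem:core-remarks}\ref{rem:non-core-bounded}: if in some layer $F$ with $|F| > 2N+2$ each member $H \in F$ contributed a non-core $\epsilon$-ray, these rays would be pairwise disjoint (members of a layer being vertex-disjoint) and all converge to $\epsilon$, contradicting that at most $2N+2$ disjoint rays of $\epsilon$ lie outside $\core(\epsilon)$. Hence on a thick subtribe the exceptional set is empty. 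The remaining steps are routine applications of Lemma~\ref{Lem_finitechoice} and the hereditariness recorded in Remark~\ref{r:hereditaryprop}.
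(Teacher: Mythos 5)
Your proposal is correct and follows essentially the same route as the paper: first use the bound of Remark~\ref{rem:core-remarks}\ref{rem:non-core-bounded} together with the disjointness of the members of a layer to pass to a thick flat subtribe in which all $\epsilon$-rays are core, then apply the pigeon hole principle (Lemma~\ref{Lem_finitechoice}) to the finitely many induced linear orders on the finite sets $S(e)$ — the paper does this in one shot with a product colouring rather than iterating over the edges, which is immaterial. One small caveat: the definition of strong agreement requires \emph{every} $\epsilon$-ray contained in a member $H$ to be core, not only the designated pullbacks $H^{\downarrow}(R_{e,s})$, so your colouring by ``exceptional index sets'' does not by itself yield the conclusion ``every $\epsilon$-ray of every member is a core ray''; however, the argument you give in your final paragraph (in a layer $F$ the members are disjoint, so at most $2N+2$ of them can contain any non-core $\epsilon$-ray, and the rest may be kept) is exactly the paper's direct argument and does establish the full statement — you should apply it to arbitrary non-core $\epsilon$-rays in the members rather than only to the rays $H^{\downarrow}(R_{e,s})$.
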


\begin{proof}
    Since~$\epsilon$ is half-grid-like, there is some~${N \in \Nbb}$ 
    as in Lemma~\ref{l:halfgridstructure}. 
    Then, by 
    Remark~\ref{rem:core-remarks}\ref{rem:non-core-bounded}, 
    given any family of 
    disjoint $\epsilon$-rays, at least~${m-2N-2}$ of them are core rays. 
    Thus, since all members of a layer~$F$ of~$\Fcal$ are disjoint, at least~${|F|-2N-2}$ members of~$F$ do not contain any $\epsilon$-ray which is not core. 
    Thus, there is a thick flat subtribe~$\Fcal^*$ of~$\Fcal$ such that all $\epsilon$-rays in members of~$\Fcal^*$ are core. 
    
    Given a member~$H$ of~$\Fcal^*$ and~${e \in \partial_\epsilon(S)}$, we consider the order~$\leq_{H,e}$ induced on~${S(e)}$ by the order~$\leq_\epsilon$ on~${H^{\downarrow}(\mathcal{R}_e)}$. 
    Let~$O_e$ be the set of potential orders on~${S(e)}$ which is finite since~${S(e)}$ is finite\footnote{Note that there are in fact at most two orders of~${S(e)}$ induced by one of the members of~$\Fcal^*$ since~$\omega_e$ is linear by Corollary~\ref{c:linearend}.}. 
    Consider the colouring~${c \colon \bigcup \Fcal^* \to \prod_{e \in \partial_\epsilon(S)} O_e}$ where we map every~$H$ to the product of the orders~$\leq_{H,e}$ it induces. 
    By the pigeon hole principle for thick G-tribes, Lemma~\ref{Lem_finitechoice}, there is a monochromatic, thick, flat $G$-subtribe~$\Fcal'$ of~$\Fcal^*$. 
    We can now set $\le _{\Fcal',e} := \leq_{H,e}$ for some~${H \in \Fcal'}$. 
    Then, by Remark~\ref{r:hereditaryprop}, this order~$\leq_{\Fcal',e}$ witnesses that~$\Fcal'$ is a thick flat subtribe of~$\Fcal$ which strongly agrees about~${\partial(S)}$. 
\end{proof}

\begin{lemma}
    \label{l:push-away}
    Let~$G$ be a connected locally finite graph with an extensive tree-decomposi{-}tion~${(T, \mathcal{V})}$. 
    Let~${H \subseteq \Gamma}$ be a tidy~$IG$ and~$\epsilon$ an end of~$\Gamma$. 
    Let~$e$ be an edge of~$T$ such that~${H(\omega_e) \neq \epsilon}$. 
    Then there is a finite set~${X \subseteq V(\Gamma)}$ 
    such that for every finite~${X' \supseteq X}$, there exists a push-out ${H_e = H(G[A(e)]) \oplus_{H^\downarrow(\Rcal_e)} W_e}$ of~$H$ along~$e$ to some depth~${n \in \Nbb}$ so that ${C_\Gamma(X', H(\omega_e)) \neq C_\Gamma(X', \epsilon)}$ 
    and ${W_e \subseteq C_\Gamma(X', H(\omega_e))}$.
\end{lemma}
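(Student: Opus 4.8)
The plan is as follows. Set $\delta := H(\omega_e)$; by hypothesis $\delta \neq \epsilon$, so fix a finite set $X \subseteq V(\Gamma)$ separating $\delta$ from $\epsilon$, i.e.\ with $C_\Gamma(X,\delta) \neq C_\Gamma(X,\epsilon)$. For every finite $X' \supseteq X$ we then have $C_\Gamma(X',\delta) \subseteq C_\Gamma(X,\delta)$ and $C_\Gamma(X',\epsilon) \subseteq C_\Gamma(X,\epsilon)$, so $C_\Gamma(X',\delta) \neq C_\Gamma(X',\epsilon)$; this is the easy half of the conclusion. The real content is to show that, for each such $X'$, a push-out can be chosen with $W_e \subseteq C_\Gamma(X',\delta)$, and the idea is simply to push the bough $B(e)$ far enough out along a suitable ray of $T$ that the corresponding witness inside $H$ ends up beyond $X'$ and on the $\delta$ side.

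Fix $X' \supseteq X$. Since $G$ is locally finite and $H$ is tidy, every branch set $H(v)$ is finite, so $Y := \{\, v \in V(G) \colon H(v) \cap X' \neq \emptyset \,\}$ is finite. By the remark after Definition~\ref{d:extensive} there is a ray $R_e$ in $T$ along which $\dist(e^-,e'^-)$ is unbounded and infinitely many edges $e'$ admit a witness $W \subseteq G[B(e')]$ for the self-similarity of $B(e)$. The boughs $B(e')$ for $e'$ on $R_e$ form a nested decreasing family of vertex sets, and a routine argument using local finiteness of $T$ together with the fact that every vertex of $G$ lies in only finitely many parts shows $\bigcap_{e'\in E(R_e)} B(e') = \emptyset$. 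Hence, for all witnessing $e'$ far enough along $R_e$, we have $B(e') \cap Y = \emptyset$, and — since $S(e') \subseteq B(e')$ — the separator $S(e')$ avoids any prescribed finite set of vertices of $G$.

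Consequently the first vertex $v_s$ of a canonical ray $R_{e,s}$ (for $s \in S(e)$) lying in $S(e')$ recedes to infinity along $R_{e,s}$ as $e'$ moves out along $R_e$. Now $H^{\downarrow}(R_{e,s})$ is a ray (Lemma~\ref{l:pullbackray}) converging to $\delta = H(\omega_e)$ (Definition~\ref{def:H(omega)}), so it has a tail in $C_\Gamma(X',\delta)$; since it runs through the branch sets $H(v)$, $v \in V(R_{e,s})$, in the order of $R_{e,s}$, there is an index $j_0(s)$ such that, for every $v$ on $R_{e,s}$ beyond its $j_0(s)$-th vertex, $H^{\downarrow}(R_{e,s}) \cap H(v)$ lies in that tail, and hence $H(v) \subseteq C_\Gamma(X',\delta)$ whenever moreover $v \notin Y$. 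So I choose a witnessing $e'$ far enough along $R_e$ that $B(e') \cap Y = \emptyset$ and, for every $s \in S(e)$, the vertex $v_s$ lies beyond the $j_0(s)$-th vertex of $R_{e,s}$. By Lemma~\ref{lem:pushout2} and the remark after Definition~\ref{d:pushout}, the witness $W$ yields a push-out $H_e = H(G[A(e)]) \oplus_{H^{\downarrow}(\mathcal{R}_e)} W_e$ of $H$ along $e$ (to depth $\dist(e^-,e'^-) \in \Nbb$), with $W_e \subseteq H(G[B(e')])$ an inflated copy of $\overline{G[B(e)]}$.

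It remains to verify $W_e \subseteq C_\Gamma(X',\delta)$. Every vertex of $W_e$ lies in some $H(v)$ with $v \in B(e')$, and such $H(v)$ avoids $X'$ since $B(e') \cap Y = \emptyset$; so $W_e$ avoids $X'$. For $s \in S(e)$, the self-similarity condition gives $v_s \in W(s)$, hence $H(v_s) \subseteq W_e(s)$, whence $W_e(s)$ meets $C_\Gamma(X',\delta)$ by the choice of $e'$ and, being connected and disjoint from $X'$, is contained in it. Finally, every component of $\overline{G[B(e)]}$ contains a vertex of $S(e)$: a component disjoint from $S(e)$ would lie in $B(e) \setminus S(e)$, which (by the separation property) has no $G$-neighbours outside $B(e)\setminus S(e)$ and, since edges from $B(e)\setminus S(e)$ to $S(e)$ survive in $\overline{G[B(e)]}$, no $G$-neighbours in $S(e)$ either — so the component would be a connected component of $G$ contained in $B(e)\setminus S(e)$, impossible as $G$ is connected and $A(e) \neq \emptyset$. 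Hence every component of $W_e$ contains some branch set $W_e(s)$ and therefore, being connected and avoiding $X'$, lies in $C_\Gamma(X',\delta)$, giving $W_e \subseteq C_\Gamma(X',\delta) = C_\Gamma(X', H(\omega_e))$, as required. The one genuinely delicate point is the bookkeeping that coordinates three scales — depth in the decomposition tree $T$, progress along the rays $R_{e,s}$ in $G$, and depth inside the end $\delta$ of $\Gamma$ — and the two finiteness reductions above (finiteness of $Y$ and emptiness of $\bigcap_{e'} B(e')$) are exactly what makes this coordination possible.
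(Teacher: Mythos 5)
Your proof is correct and follows essentially the same route as the paper's: fix a finite $X$ separating $H(\omega_e)$ from $\epsilon$, then use that only finitely many branch sets meet $X'$ and that each vertex of $G$ lies in only finitely many parts to choose a witness deep enough that $H(G[B(e')])$ avoids $X'$. Your additional argument that $W_e$ really lies in the component $C_\Gamma(X', H(\omega_e))$ rather than merely avoiding $X'$ --- anchoring each $W_e(s)$ via a tail of the pullback ray $H^{\downarrow}(R_{e,s})$ and noting that every component of $\overline{G[B(e)]}$ meets $S(e)$ --- fills in a step the paper asserts without detail, and is sound.
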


\begin{proof}
    Let~${X \subseteq V(\Gamma)}$ be a finite vertex set such that~${C_\Gamma(X, H(\omega_e)) \neq C_\Gamma(X, \epsilon)}$. Then ${C_\Gamma(X', H(\omega_e)) \neq C_\Gamma(X', \epsilon)}$ holds for any finite vertex set~${X' \supseteq X}$. Furthermore, since~$X'$ is finite, there are only finitely many~${v \in V(G)}$ whose branch sets~${H(v)}$ meet~$X'$. By extensiveness, every vertex of~$G$ is contained in only finitely many parts of the tree-decomposition, and so there exists an $n \in \Nbb$ such that whenever $e' \in E(T_{e^+})$ is such that ${\dist(e^-,e'^-)\geq n}$, then
    \[
        H(G[B(e')]) \cap X' = \emptyset, \; \text{and so} \; H(G[B(e')]) \subseteq C_\Gamma(X', H(\omega_e)).
    \]
    
    Since ${(T,\mathcal{V})}$ is an extensive tree-decomposition, there is a witness~$W$ of the self-similarity of~${B(e)}$ at distance at least~$n$. 
    Then by Definition~\ref{d:pushout} and Lemma~\ref{lem:pushout2}, there is a push-out ${H_e = H(G[A(e)]) \oplus_{H^\downarrow(\Rcal_e)} H(W)}$ of~$H$ along~$e$ to depth~$n$.
    Let~${W_e = H(W)}$, then by Definition~\ref{d:pushout},
    $V(W_e) \subseteq V(H(G[B(e')]))\subseteq C_\Gamma(X', H(\omega_e))$.
\end{proof}

\begin{lemma}
	\label{l:induction-start}
    Let~$G$ be a connected locally finite graph with an extensive tree-decomposition ${(T,\mathcal {V})}$ with root~$r \in T$. 
    Let~$\Gamma$ be a graph and~$\Fcal$ a thick $G$-tribe concentrated at a half-grid-like end~$\epsilon$ of~$\Gamma$. 
    Then there is a thick subtribe~$\Fcal'$ of~$\Fcal$ such that 
    \begin{enumerate}[label=(\arabic*)]
    	\item \label{item:ind-start-1} $\Fcal'$ is concentrated at~$\epsilon$.
        \item \label{item:ind-start-2} $\Fcal'$ strongly agrees about~${\partial (\{r\})}$.
        \item \label{item:ind-start-3} $\Fcal'$ is well-separated from~$\epsilon$ at~${\{r\}}$.
    \end{enumerate}
\end{lemma}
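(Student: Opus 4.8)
The plan is to build $\Fcal'$ from $\Fcal$ in three successive refinement steps, one for each conclusion, using throughout the fact that passing to a thick flat subtribe preserves concentration (Lemma~\ref{lem_subtribesinheritconcentration}) and preserves the ``agreement'' properties (Remark~\ref{r:hereditaryprop}). Since $T$ is locally finite, $\partial(\{r\})$ is a finite edge set, so all the pigeon-hole arguments below are over finitely many colours.

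\emph{Step 1: weak agreement and separation of the non-$\epsilon$ part.} First I would apply the pigeon hole principle for thick $G$-tribes (Lemma~\ref{Lem_finitechoice}) with the colouring that sends each member $H \in \bigcup\Fcal$ to the function $e \mapsto [H(\omega_e) = \epsilon]$ on $\partial(\{r\})$; since $\partial(\{r\})$ is finite there are finitely many such colours, so we obtain a thick flat subtribe $\Fcal_1 \preceq \Fcal$ which weakly agrees about $\partial(\{r\})$, still concentrated at $\epsilon$. This determines $\partial_\epsilon(\{r\})$ and $\partial_{\neg\epsilon}(\{r\})$ and hence the subtrees $\{r\}^{\neg\epsilon}$ and $\{r\}^\epsilon$. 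For well-separatedness, for each $e \in \partial_{\neg\epsilon}(\{r\})$ and each member $H$ we have $H(\omega_e) \neq \epsilon$, so by Lemma~\ref{l:push-away} (or directly, since $H(\omega_e)$ and $\epsilon$ are distinct ends and hence separated by a finite set) there is a finite $X$ separating $H(\omega_e)$ from $\epsilon$. The slightly delicate point is that $\{r\}^{\neg\epsilon}$ need not be finite, so $H(\{r\}^{\neg\epsilon})$ is an infinite graph; but each of the finitely many boughs $B(e)$ for $e \in \partial_{\neg\epsilon}(\{r\})$ has all its $\omega_e$-rays going to $H(\omega_e)\neq\epsilon$, and I would use this (together with the remark after Definition~\ref{d:extensive} that each bough is self-similar, so the rays $\Rcal_e$ control how $B(e)$ sits in $\Gamma$) to argue that only finitely many vertices of $H(B(e))$ lie in $C_\Gamma(X,\epsilon)$ after enlarging $X$ appropriately, giving a finite set $X_H$ with $H(\{r\}^{\neg\epsilon}) \cap C_\Gamma(X_H,\epsilon) = \emptyset$. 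This yields \ref{item:ind-start-3} for $\Fcal_1$.

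\emph{Step 2: strong agreement.} Now I would apply Lemma~\ref{lem:stronglyagreesubtribe} to $\Fcal_1$: it weakly agrees about the finite set $\partial(\{r\})$ and is concentrated at the half-grid-like end $\epsilon$, so that lemma produces a thick flat subtribe $\Fcal' \preceq \Fcal_1$ which strongly agrees about $\partial(\{r\})$ — i.e., all $\epsilon$-rays in members of $\Fcal'$ are core rays, and for each $e \in \partial_\epsilon(\{r\})$ there is a linear order $\leq_{\Fcal',e}$ on $S(e)$ consistent with $\leq_\epsilon$ across all members. This gives \ref{item:ind-start-2}.

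\emph{Step 3: assembling the conclusions.} Since $\Fcal' \preceq \Fcal_1 \preceq \Fcal$ is a chain of thick flat subtribes, Lemma~\ref{lem_subtribesinheritconcentration}(2) gives that $\Fcal'$ is still concentrated at $\epsilon$, which is \ref{item:ind-start-1}; and by Remark~\ref{r:hereditaryprop} weak agreement and well-separatedness from Step~1 are inherited by $\Fcal'$, so \ref{item:ind-start-3} holds for $\Fcal'$ as well. Thus $\Fcal'$ satisfies all three conclusions. The main obstacle I anticipate is Step~1's separation claim: one must be careful that ``$H(\omega_e) \neq \epsilon$'' for each relevant $e$ actually gives a \emph{single} finite set separating the whole (infinite) subgraph $H(\{r\}^{\neg\epsilon})$ from $\epsilon$, rather than just separating each individual end $H(\omega_e)$; this is where the structure of extensive tree-decompositions (every vertex of $G$ in only finitely many parts, plus the control the rays $\Rcal_e$ give over the boughs) has to be invoked, essentially repeating the argument of Lemma~\ref{l:push-away} for each of the finitely many $e \in \partial_{\neg\epsilon}(\{r\})$ and taking a union of the resulting finite sets.
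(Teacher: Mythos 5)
Your Steps 2 and 3 are fine, but Step 1 has a genuine gap at exactly the point you yourself flag as delicate, and the fix you sketch does not work. Weak agreement about $\partial(\{r\})$ only controls the single end $\omega_e$ of $G$ for each $e \in \partial_{\neg\epsilon}(\{r\})$; the bough $B(e)$ is an infinite graph which may contain many other ends $\delta$ of $G$, and nothing prevents $H(\delta) = \epsilon$ for some of these. In that case $H(G[B(e)])$ contains $\epsilon$-rays and cannot be separated from $\epsilon$ by \emph{any} finite set, so no enlargement of $X$ will give $H(\{r\}^{\neg\epsilon}) \cap C_\Gamma(X,\epsilon) = \emptyset$. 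The self-similarity rays $\Rcal_e$ give no control here either: they all tend to $\omega_e$ and say nothing about the remaining ends of $B(e)$. So the claim that ``only finitely many vertices of $H(B(e))$ lie in $C_\Gamma(X,\epsilon)$ after enlarging $X$'' is false in general.

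The missing idea is that one must \emph{modify the members of the tribe}, not merely pass to a flat subtribe. The paper's proof first arranges weak and then strong agreement (essentially your Steps 1--2 without the separation claim), and then, for each member $H$ and each $e \in \partial_{\neg\epsilon}(\{r\})$, invokes Lemma~\ref{l:push-away} to obtain a finite set $X_e$ and a push-out $H_e = H(G[A(e)]) \oplus_{H^{\downarrow}(\Rcal_e)} W_e$ of $H$ along $e$ whose witness $W_e$ lies entirely inside $C_\Gamma(X, H(\omega_e))$, a component of $\Gamma - X$ different from $C_\Gamma(X,\epsilon)$, where $X$ is the union of the finitely many $X_e$ together with $H(\{r\})$. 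Gluing these push-outs yields a new inflated copy $H' \subseteq H$ in which the image of each bough $B(e)$ with $e \in \partial_{\neg\epsilon}(\{r\})$ equals $W_e$ up to a finite part, hence is separated from $\epsilon$ by a finite set. The resulting tribe $\Fcal'$ is a (non-flat) subtribe of $\Fcal$ which still strongly agrees about $\partial(\{r\})$, since $H'^{\downarrow}(\Rcal_e) = H^{\downarrow}(\Rcal_e)$ for $e \in \partial_\epsilon(\{r\})$ and every $\epsilon$-ray of $H' \subseteq H$ remains core, and it is now well-separated, giving \ref{item:ind-start-3}. Your construction, which only ever takes flat subtribes and keeps the members intact, cannot achieve conclusion \ref{item:ind-start-3} in general.
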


\begin{proof}
    Since~$T$ is locally finite, also ${d(r)}$ is finite, and, by choosing a thick flat subtribe of~$\Fcal$, we may assume that~$\Fcal$ weakly agrees about~${\partial(\{r\})}$. 
    Moreover, by Lemma~\ref{lem:stronglyagreesubtribe}, we may even assume that~$\Fcal$ strongly agrees about~${\partial(\{r\})}$. Using Lemma~\ref{lem_subtribesinheritconcentration}(2), this $\Fcal$ would then satisfy \ref{item:ind-start-1} and \ref{item:ind-start-2}. So, it remains to arrange for \ref{item:ind-start-3}:
    
    For every member~$H$ of~$\Fcal$, and for every~${e \in \partial_{\neg \epsilon}(\{r\})}$, there exists, by Lemma~\ref{l:push-away}, a finite set~${X_e \subseteq V(\Gamma)}$, such that for every finite vertex set~${X' \supseteq X_e}$ there is a push-out ${H_e = H(G[A(e)]) \oplus_{H^\downarrow(\Rcal_e)} W_e}$ of~$H$ along~$e$, so that
    ${C_\Gamma(X', H(\omega_e)) \neq C_\Gamma(X', \epsilon)}$ and ${W_e \subseteq C_\Gamma(X', H(\omega_e))}$.
    Let~$X$ be the union of all these~$X_e$ together with~${H(\{r\})}$. 
    For each~${e \in \partial_{\neg \epsilon}(\{r\})}$, let~$H_e$ be the push-out whose existence is guaranteed by the above with respect to this set~$X$.
    
    Let us define an~$IG$ 
    \[
        H' := \bigcup_{e \in \partial_{\neg \epsilon} (\{r\})} \mkern-18mu H_e \left( \{r\}^{\epsilon} \cup T_{e^+} \right).
    \]
    It is straightforward, although not quick, to check that this is indeed an~$IG$ and so we will not do this in detail. 
    Briefly, this can be deduced from multiple applications of Definition~\ref{d:amalgamation}, and, since each ${H_e(G[A(e)])}$ extends~${H(G[A(e)])}$ fixing~${A(e) \setminus S(e)}$, all that we need to check is that the extra vertices added to the branch sets of vertices in~${S(e)}$ are distinct for each edge~$e$. 
    However, this follows from Definition~\ref{d:pushout}, since these vertices come from~${\bigcup H^\downarrow(\Rcal_e)}$ and the rays~$R_{e,s}$ and~$R_{e',s'}$ are disjoint except in their initial vertex when~${s = s'}$. 
    Let~$\Fcal'$ be the tribe given by~${\{F' \colon F \in \Fcal \}}$, where~${F' = \{ H' \colon H \in F\}}$ for each~${F \in \Fcal}$. 
    We claim that~$\Fcal'$ satisfies the conclusion of the lemma.
    
    Firstly, by Lemma~\ref{lem_subtribesinheritconcentration}(2), $\Fcal'$ is concentrated at~$\epsilon$, i.e.~\ref{item:ind-start-1} holds. Next, we claim that~$\mathcal{F}'$ strongly agrees about~${\partial(\{r\})}$. 
    Indeed, by construction for each~${e \in \partial_{\neg \epsilon}(\{r\})}$ we have ${W_e \subseteq C_\Gamma(X, H(\omega_e))}$, and hence~$\omega_e$ is cut from~$\epsilon$ according to~$H'$. 
    Furthermore, by construction ${H(\{r\}^{\epsilon}) \setminus X = H'(\{r\}^{\epsilon}) \setminus X}$ and so~$\omega_e$ converges to~$\epsilon$ according to~$H'$ for every~${e \in \partial_{\epsilon}(\{r\})}$. 
    In fact, ${H^{\downarrow}(\mathcal{R}_e) = H'^{\downarrow}(\mathcal{R}_e)}$ for every~${e \in \partial_{ \epsilon}(\{r\})}$. 
    Finally, since ${H' \subseteq H}$, and~$\Fcal$ strongly agrees about~${\partial(\{r\})}$, it follows that every $\epsilon$-ray in~$H'$ is in $\core(\epsilon)$, and so \ref{item:ind-start-2} holds.
    
    It remains to show that~$\Fcal'$ is well-separated from~$\epsilon$ at~${\{r\}}$.
    However, $H'(\{r\}^{\neg \epsilon}) \setminus \bigcup_{e \in \partial_{\neg \epsilon}(\{r\})} W_e$ is finite, and each $W_e$ is separated from $\epsilon$ by $X$. Hence, there is some finite set $Y$ separating $H'(\{r\}^{\neg \epsilon})$ from $\epsilon$, and so \ref{item:ind-start-3} holds.
\end{proof}

\begin{lemma}[Well-separated push-out]
    \label{l:separatedpushout}
    Let~$G$ be a connected locally-finite graph with an extensive tree-decomposition~${(T, \mathcal{V})}$. 
    Let~${H \subseteq \Gamma}$ be a tidy~$IG$ and~$\epsilon$ an end of~$\Gamma$. 
    Let~$S$ be a finite initial subtree of~$T$, such that~${\{H\}}$ is well-separated from~$\epsilon$ at~$S$,
    and let~${f \in \partial_\epsilon(S)}$.
    Then there exists exists a push-out~$H'$ of~$H$ along~$f$ to depth~$0$ (see Definition~\ref{d:pushout}) such that~${\{H'\}}$ is well-separated from~$\epsilon$ at~${\tilde S := S + f \subseteq T}$.
\end{lemma}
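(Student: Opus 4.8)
The plan is to obtain $H'$ in two stages: first, by a single push-out of $H$ along $f$, relocate the part of $H$ lying below $f$ into a proper sub-bough of $B(f)$; and then, working \emph{inside} that relocated inflated copy of $\overline{G[B(f)]}$, push each child bough whose end is cut from $\epsilon$ away from $\epsilon$, exactly as in the proof of Lemma~\ref{l:induction-start}\ref{item:ind-start-3}. The first stage is unavoidable: a push-out of $H$ along $f$ must place a copy of $\overline{G[B(f)]}$ inside $H(G[B(e')])$ for some $e'\in E(T_{f^+})$, whereas a copy produced by a Lemma~\ref{l:induction-start}-style construction applied directly to $H$ would instead span all of $B(f)$.

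Concretely, since $\{H\}$ is well-separated from $\epsilon$ at $S$, I would first fix a finite $Y\subseteq V(\Gamma)$ separating $H(S^{\neg\epsilon})$ from $\epsilon$. As $f\in\partial_\epsilon(S)$ we have $H(\omega_f)=\epsilon$, so by extensiveness there is a witness $\hat W\subseteq G[B(e')]$, with $e'\in E(T_{f^+})$, for the self-similarity of $B(f)$ towards $\omega_f$; set $W_0:=H(\hat W)$, tidied up, a tidy inflated copy of $\overline{G[B(f)]}$ contained in $H(G[B(e')])$. Next I would observe that the restriction $(T_{f^+},\V\restriction T_{f^+})$, rooted at $f^+$, is again an extensive tree-decomposition, now of $\overline{G[B(f)]}$ (locally finite tree, finite parts, each vertex in finitely many parts; each bough $B(e)$, $e\in E(T_{f^+})$, is self-similar towards $\omega_e$ with witnessing data inside $G[B(e)]$, hence inside $\overline{G[B(f)]}$ after passing to tails of $\mathcal{R}_e$ where necessary). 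Writing $g_1,\dots,g_k$ for the edges of $T$ with $g_i^-=f^+$, one then has $\partial(\tilde S)=(\partial(S)\setminus\{f\})\cup\{g_1,\dots,g_k\}$ and $\{g_1,\dots,g_k\}=\partial(\{f^+\})$ inside $T_{f^+}$.

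Now I would rerun the construction of Lemma~\ref{l:induction-start}\ref{item:ind-start-3} with $W_0$ in place of $H$ and $f^+$ in place of the root: for each $i$ with $W_0(\omega_{g_i})\neq\epsilon$, Lemma~\ref{l:push-away} supplies a finite $X_{g_i}$; put $X:=Y\cup\bigcup_i X_{g_i}\cup V(W_0(G[V_{f^+}]))$, take the corresponding well-separated push-outs of $W_0$ along the $g_i$, whose relocated copies $W_{g_i}$ lie in $C_\Gamma(X,W_0(\omega_{g_i}))\neq C_\Gamma(X,\epsilon)$, and amalgamate them as in that proof into a push-out $W'$ of $W_0$ — again an inflated copy of $\overline{G[B(f)]}$, still contained in $H(G[B(e')])$ — which is well-separated from $\epsilon$ at $\{f^+\}$. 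Since $W'$ only enlarges the branch sets $W_0(v)$ for $v\in\bigcup_i S(g_i)$, we get $W'(s)\supseteq W_0(s)$ for every $s\in S(f)\subseteq V_{f^+}$, so $H^\downarrow(R_{f,s})$ still reaches $W'$, first inside $W'(s)$; hence $H':=H(G[A(f)])\oplus_{H^\downarrow(\mathcal{R}_f)}W'$ is well-defined and, by construction, a push-out of $H$ along $f$ to depth $\dist(f^-,e'^-)\geq 0$, in particular to depth $0$.

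Finally I would check that $\{H'\}$ is well-separated from $\epsilon$ at $\tilde S$ (weak agreement is automatic for a single $IG$). Because $H'$ agrees with $H$ on all branch sets of $A(f)\setminus S(f)$ and enlarges those of $S(f)$ only by finite sets, $H'(\omega_e)=H(\omega_e)$ for each $e\in\partial(S)\setminus\{f\}$ (use an $\omega_e$-ray with a tail in $B(e)\setminus S(e)\subseteq A(f)\setminus S(f)$), while $H'(\omega_{g_i})=W'(\omega_{g_i})$ for each $i$ (use a ray with a tail in $B(g_i)\setminus S(g_i)$); consequently, as subtrees of $T$, $\tilde S^{\neg\epsilon}=S^{\neg\epsilon}\cup\{f^+\}^{\neg\epsilon}$, the latter computed for $(\{W'\},\{f^+\})$ inside $T_{f^+}$. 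Thus $H'(\tilde S^{\neg\epsilon})\subseteq H(S^{\neg\epsilon})\cup W'(\{f^+\}^{\neg\epsilon})\cup F$ for a finite set $F$ (the finitely many branch sets of $S(f)$ together with the initial segments $H^\downarrow(R_{f,s})$ up to $W'$); the first piece is separated from $\epsilon$ by $Y$, the second by $X$ (this is precisely ``$\{W'\}$ well-separated from $\epsilon$ at $\{f^+\}$''), and $F$ is finite, so $Y\cup X\cup F$ separates $H'(\tilde S^{\neg\epsilon})$ from $\epsilon$. The main obstacle — the part needing the most care — is the passage to this ``inner'' problem: verifying that $(T_{f^+},\V\restriction T_{f^+})$ is extensive for $\overline{G[B(f)]}$ and that $W_0$ is a tidy inflated copy of it, so that Lemma~\ref{l:push-away} and the amalgamation step of Lemma~\ref{l:induction-start}\ref{item:ind-start-3} transfer verbatim, and that the modified witness $W'$ still glues correctly to $H(G[A(f)])$ along $H^\downarrow(\mathcal{R}_f)$ so that $H'$ really is a push-out of $H$ along $f$; these are elementary but fiddly manipulations of $\oplus$, of $H(\cdot)$, and of pullbacks.
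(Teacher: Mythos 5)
Your route is genuinely different from the paper's, and the difference is where the problem lies. The paper does \emph{not} perform your ``first stage'': it reads a push-out along $f$ to depth~$0$ as permitting the witness to sit at $f$ itself (i.e.\ $W\subseteq H(G[B(f)])$), applies Lemma~\ref{l:push-away} directly to the full inflated copy $H$ once for each new cut edge $e\in\partial_{\neg\epsilon}(\tilde S)\setminus\partial(S)$, and takes the union of the resulting restrictions. Because each application is to $H$ itself, the pullbacks $H^{\downarrow}(\Rcal_e)$ exist for free, whatever the rays $\Rcal_e$ look like inside $G$. Your insistence that the copy of $\overline{G[B(f)]}$ must first be relocated into a proper sub-bough forces you into an ``inner problem'' on $\overline{G[B(f)]}$, and that is where the gap is.

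The gap: you assert that $(T_{f^+},\V\restrict{T_{f^+}})$ is an extensive tree-decomposition of $\overline{G[B(f)]}$, with the witnessing rays ``inside $\overline{G[B(f)]}$ after passing to tails of $\mathcal{R}_e$ where necessary''. Definition~\ref{d:selfsimilarbough} only guarantees that $R_{e,s}$ is a ray \emph{in $G$} starting at $s$ and meeting $S(e')$ inside $W(s)$; nothing prevents it from leaving $B(f)$ through $S(f)$ and returning. Passing to a tail is not an available fix, because self-similarity (and the $\oplus_{\Rcal_e}$ construction in Definition~\ref{d:pushout}) requires $R_{e,s}$ to \emph{start at} $s\in S(e)$. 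If some $R_{g_i,s}$ is not contained in $\overline{G[B(f)]}$, then it is not a ray of the graph that $W_0$ is an inflated copy of, the pullback $W_0^{\downarrow}(\Rcal_{g_i})$ is undefined, and your second stage (the Lemma~\ref{l:push-away}/Lemma~\ref{l:induction-start}-style amalgamation inside $W_0$) cannot be carried out ``verbatim''. Likewise $W_0(\omega_{g_i})$ is only defined via these pullbacks. Either you must re-prove that the self-similarity rays can always be chosen inside the relevant boughs (a nontrivial rerouting claim that preserves disjointness, the starting vertices, and the condition $V(R_{e,s})\cap S(e')\subseteq W(s)$), or you should drop the inner reduction and, as the paper does, push out $H$ itself along each $g_i$ with $H(\omega_{g_i})\neq\epsilon$ and glue; the rest of your argument (the gluing of $W'$ to $H(G[A(f)])$ along $H^{\downarrow}(\Rcal_f)$, and the final separation of $H'(\tilde S^{\neg\epsilon})$ by $Y\cup X\cup F$) is sound.
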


\begin{proof}
    Let ${X' \subseteq V(\Gamma)}$ be a finite set with~${H(S^{\neg\epsilon}) \cap C_\Gamma(X', \epsilon) = \emptyset}$. 
    If ${\partial_{\neg \epsilon} (\tilde S) \setminus \partial(S) = \emptyset}$, then~${H' = H}$ satisfies the conclusion of the lemma, hence we may assume that~${\partial_{\neg \epsilon} (\tilde S) \setminus \partial(S)}$ is non-empty.
    
    By applying Lemma~\ref{l:push-away} to every~${e \in \partial_{\neg \epsilon} (\tilde S) \setminus \partial(S)}$, we obtain a finite set~${X \supseteq X'}$ and a family ${(H_e \colon e \in  \partial_{\neg \epsilon} (\tilde S) \setminus \partial(S))}$ where each~${H_e = H(G[A(e)]) \oplus_{H^\downarrow(\Rcal_e)} W_e}$ is a push-out of~$H$ along~$e$ such that~${W_e \subseteq C_{\Gamma}(X,H(\omega_e)) \neq C_{\Gamma}(X,\epsilon)}$.
    
    Let
    \[
        H' :=  \mkern-24mu \bigcup_{e \in \partial_{\neg \epsilon} (\tilde S)\setminus \partial(S)} \mkern-24mu H_e \left( S^{\epsilon} \cup T_{e^+} \right).
    \]
    As before, it is straightforward to check that~$H'$ is an~$IG$, and that~$H'$ is a push-out of~$H$ along~$f$ to depth~$0$. 
    We claim that~$H'$ is well-separated from~$\epsilon$ at~${\tilde S}$. 
    
    Since $X'$ separates $H(S^{\neg \epsilon})$ from $\epsilon$, and $\partial_{\neg \epsilon} (\tilde S)\setminus \partial(S)$ is finite, it will be sufficient to show that for each $e \in \partial_{\neg \epsilon} (\tilde S)\setminus \partial(S)$, there is a finite set $X_e$ which separates $H'(G[B(e)])$ from $\epsilon$ in $\Gamma$. 
    However, by construction $X$ separates $W_e$ from $\epsilon$, and $H'(G[B(e)]) \setminus W_e$ is finite, and so the claim follows.
\end{proof}

The following lemma contains a large part of the work needed for our inductive construction. The idea behind the statement is the following: 
At step~$n$ in our construction, we will have a thick $G$-tribe~$\Fcal_n$ which agrees about~${\partial(T_n)}$, where~$T_n$ is an initial subtree of the decomposition tree~$T$ with finite~${\partial(T_n)}$, which will allow us to extend our~${IG(T_n)}$'s to~${IG(T_{n+1})}$'s, where~$T_{n+1}$ is a larger initial subtree of~$T$, again with finite~${\partial(T_{n+1})}$. 
In order to perform the next stage of our construction, we will need to `refine'~$\Fcal_n$ to a thick $G$-tribe~$\Fcal_{n+1}$ which agrees about~${\partial(T_{n+1})}$.

This would be a relatively simple application of the pigeon hole principle for $G$-tribes, Lemma~\ref{Lem_finitechoice}, except that, in our construction, we cannot extend by a member of~$\Fcal_{n+1}$ naively. 
Indeed, suppose we wish to use an~$IG$, say~$H$, to extend an~$IG(T_{n})$ to an~$IG(T_{n+1})$. 
There is some subgraph, ${H(T_{n+1}\setminus T_n)}$, of~$H$ which is an~${IG(T_{n+1}\setminus T_n)}$, however in order to use this to extend the~${IG(T_{n})}$ we first have to link the branch sets of the boundary vertices to this subgraph, and there may be no way to do so without using other vertices of~${H(T_{n+1}\setminus T_n)}$.

For this reason, we will ensure the existence of an `intermediate $G$-tribe'~$\Fcal^*$, which has the property that for each member~$H$ of~$\Fcal^*$, there are push-outs at arbitrary depth of~$H$ which are members of~$\Fcal_{n+1}$. 
This allows us to first link our~${IG(T_{n})}$ to some~${H \in \Fcal^*}$ and then choose a push-out~${H' \in \Fcal_{n+1}}$ of~$H$, such that~${H'(T_{n+1}\setminus T_n)}$ avoids the vertices we used in our linkage. 

\begin{lemma}[$G$-tribe refinement lemma]
    \label{lem:refinement-lemma}
    Let~$G$ be a connected locally finite graph with an extensive tree-decomposition~${(T,\mathcal {V})}$, let~$S$ be an initial subtree of~$T$ with~${\partial(S)}$ finite, and let~$\Fcal$ be a thick $G$-tribe of a graph~$\Gamma$ such that
    \begin{enumerate}[label=(\arabic*)]
    	\item \label{itemconcentratedF} $\Fcal$ is concentrated at a half-grid-like end~$\epsilon$; 
        \item \label{itemagreeF} $\Fcal$ strongly agrees about~${\partial (S)}$; 
        \item \label{itemSeparationF} $\Fcal$ is well-separated from~$\epsilon$ at~$S$. 
    \end{enumerate}
    Suppose~${f \in \partial_\epsilon(S)}$ and let~${\tilde S := S + f \subseteq T}$. 
    Then there is a thick flat subtribe~$\Fcal^*$ of~$\Fcal$ and a thick $G$-tribe~$\Fcal'$ in~$\Gamma$ with the following properties:
    \begin{enumerate}[label=(\roman*)]
    	\item \label{itemconcentrated} $\Fcal'$ is concentrated at~$\epsilon$.
        \item \label{itemagree} $\Fcal'$ strongly agrees about~${\partial (\tilde{S})}$.  
        \item \label{itemSeparation} $\Fcal'$ is well-separated from~$\epsilon$ at~$\tilde{S}$.
        \item \label{itemconsistentagree} ${\Fcal' \cup \Fcal}$ strongly agrees about~${\partial (S) \setminus \{f\}}$.
        \item \label{itemnested} $S^{\neg \epsilon}$ w.r.t.~$\Fcal$ is a subtree of~${\tilde{S}^{\neg \epsilon}}$ w.r.t.~$\Fcal'$. 
        \item \label{consistentpushingalong} For every~${F \in \Fcal^*}$ and every~${m \in \Nbb}$, there is an~${F' \in \Fcal'}$ such that for all~${H \in F}$, there is an~${H' \in F'}$ which is a push-out of~$H$ to depth~$m$ along~$f$. 
        \end{enumerate}
\end{lemma}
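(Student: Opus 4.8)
The plan is to build $\Fcal^*$ and $\Fcal'$ in stages, each stage using the pigeon hole principle for thick $G$-tribes (Lemma~\ref{Lem_finitechoice}) or one of the structural lemmas established above to pass to a thick flat subtribe with an additional uniformity property. First I would note that $\partial(\tilde S)$ is finite, since $\partial(S)$ is finite, $T$ is locally finite, and $\tilde S = S + f$. The key point is that, for any member $H$ of $\Fcal$ and any $m \in \Nbb$, the extensiveness of $(T,\mathcal V)$ gives a witness $W$ of self-similarity of $B(f)$ at distance at least $m$, hence (by Lemma~\ref{lem:pushout2} and the remark following Definition~\ref{d:pushout}) a push-out $H'$ of $H$ along $f$ to depth $m$. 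These push-outs are subgraphs of $H$, so all the ``agreement'' properties of Remark~\ref{r:hereditaryprop} are inherited by any tribe of such push-outs. The whole difficulty is to make a \emph{single consistent choice} of the relevant finite data across a thick tribe, which is exactly what Lemma~\ref{Lem_finitechoice} is for.

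The steps, in order, would be as follows. \textbf{Step 1 (extend the subtree).} Since $H'$ is a push-out of $H$ along $f$ to depth $m$, for $m$ large enough $H'(G[B(f)])$ is contained in the witness $W \subseteq G[B(e')]$ with $e'$ far down $T_{f^+}$; in particular $H'$ restricted to $G(\tilde S)$ is an $IG(\tilde S)$ built from $H$ restricted to $G(S^\epsilon)$ together with the pullbacks $H^\downarrow(\Rcal_f)$ and $W$. \textbf{Step 2 (weak agreement about $\partial(\tilde S)$).} The edges $\partial(\tilde S) \setminus \partial(S)$ all lie in $T_{f^+}$ and correspond to ends $\omega_e$ of $G$. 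For each member $H$ of $\Fcal$, look at the finite tuple recording, for each $e \in \partial(\tilde S)$, whether $\omega_e$ converges to $\epsilon$ according to $H$ — but we must first use Lemma~\ref{l:push-away} on the edges where $H(\omega_e) \neq \epsilon$ to push those parts of $H$ away from $\epsilon$, exactly as in the proof of Lemma~\ref{l:induction-start} and Lemma~\ref{l:separatedpushout}. Applying Lemma~\ref{Lem_finitechoice} to this finite colouring yields a thick flat subtribe $\Fcal^*$ of $\Fcal$ on which, after the pushes along $f$ are taken, there is a uniform such tuple; this simultaneously delivers \ref{itemSeparation} (well-separatedness at $\tilde S$, via Lemma~\ref{l:separatedpushout} applied along $f$) and is compatible with \ref{itemnested} because pushing along edges in $\partial_{\neg\epsilon}(\tilde S) \setminus \partial(S)$ only enlarges the $\neg\epsilon$-part. \textbf{Step 3 (strong agreement).} Now apply Lemma~\ref{lem:stronglyagreesubtribe} to upgrade weak agreement about $\partial(\tilde S)$ to strong agreement, passing to a further thick flat subtribe; Corollary~\ref{c:linearend} ensures the relevant ends $\omega_e$ are linear so there are only finitely many induced orders to colour with. \textbf{Step 4 (consistency with $\Fcal$).} Since all the push-outs are subgraphs of members of $\Fcal$ and the pushing at $f$ does not touch the branch sets of $S(e)$ for $e \in \partial(S) \setminus \{f\}$ (the rays $R_{f,s}$ are disjoint, except in their initial vertices, from those separators, exactly the disjointness argument in Lemma~\ref{l:induction-start}), the orders $\leq_{\Fcal,e}$ for $e \in \partial(S) \setminus \{f\}$ are still induced on $H'^\downarrow(\Rcal_e)$, giving \ref{itemconsistentagree} and \ref{itemnested}. \textbf{Step 5 (building $\Fcal'$).} Finally, for $\Fcal'$ itself: for each layer $F \in \Fcal^*$ and each $m \in \Nbb$, replace every $H \in F$ by a chosen push-out $H'$ of $H$ to depth $m$ along $f$; these remain pairwise disjoint since $H' \subseteq H$, so this is a thick $G$-tribe, and by the uniformity secured in Steps 2--4 it strongly agrees about $\partial(\tilde S)$, is well-separated at $\tilde S$, and is concentrated at $\epsilon$ by Lemma~\ref{lem_subtribesinheritconcentration}(2) — giving \ref{itemconcentrated}, \ref{itemagree}, \ref{itemSeparation}, and \ref{consistentpushingalong} by construction.

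\textbf{Main obstacle.} The routine part is the bookkeeping of which finite colour classes to pigeon-hole on; the genuinely delicate point is \ref{consistentpushingalong} together with well-separatedness: we need that \emph{after} pushing along $f$ to arbitrary depth $m$, the resulting tribe still strongly agrees about all of $\partial(\tilde S)$ and is well-separated at $\tilde S$, uniformly in $m$. This works because the push-out $H'$ agrees with $H$ outside a region that can be driven arbitrarily far down $T_{f^+}$ (by extensiveness, every vertex of $G$ lies in only finitely many parts, so for any fixed finite $X \subseteq V(\Gamma)$ a deep-enough push-out has its new part $W$ inside $C_\Gamma(X, \omega_f)$), so the colour of $H'$ under all the relevant colourings — the convergence tuple for $\partial(\tilde S)$ and the induced-order tuple — is independent of $m$ once $m$ is large, and equals that of $H$ on the overlap. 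Hence a single pass of Lemma~\ref{Lem_finitechoice} on $\Fcal$ (before pushing) fixes the colour for \emph{all} depths $m$ simultaneously, which is precisely what \ref{consistentpushingalong} demands. I would also need to verify carefully, as in the proofs of Lemmas~\ref{l:induction-start} and~\ref{l:separatedpushout}, that the various amalgamations $\oplus$ genuinely produce an $IG$ — the only thing to check being that the vertices added to the branch sets of $S(f)$ come from the disjoint rays $H^\downarrow(\Rcal_f)$ and hence do not collide — but this is the same straightforward verification used there and I would not spell it out in detail.
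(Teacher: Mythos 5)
Your overall strategy matches the paper's: build $\Fcal'$ from push-outs of members of $\Fcal$ along $f$ to arbitrary depths, pigeon-hole (Lemma~\ref{Lem_finitechoice}) for a consistent finite choice of convergence and order data, and apply Lemma~\ref{l:separatedpushout} for well-separatedness. You also rightly flag, in your ``Main obstacle'' paragraph, that the delicate point is making property~\ref{consistentpushingalong} and the agreement/separation properties hold uniformly in the depth $m$. However, the resolution you offer there has a gap: you assert that the convergence-to-$\epsilon$ tuple for $\partial(\tilde S)$ and the induced-order tuple of a push-out $H'$ of $H$ to depth $m$ along $f$ ``is independent of $m$ once $m$ is large, and equals that of $H$ on the overlap,'' so that one application of Lemma~\ref{Lem_finitechoice} to $\Fcal$ \emph{before} pushing fixes the colour for all depths simultaneously. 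This stabilisation claim is not justified and there is no reason it should hold. For edges $e\in\partial(S)\setminus\{f\}$ the colour is indeed inherited from $H$, but for the new boundary edges $e\in\partial(\tilde S)\setminus\partial(S)$ (which lie in $T_{f^+}$) the end $H'(\omega_e)$ of $\Gamma$ is determined by the particular witness $W$ of self-similarity: $W$ is an inflated copy of $\overline{G[B(f)]}$ inside $H(G[B(e')])$ for some $e'$ at distance at least $m$, and the pullback ray $W^\downarrow(R)$ for $R\in\omega_e$ is a ray in $H$ whose end in $\Gamma$ depends entirely on the internal structure of this particular $W$. Different depths give different witnesses, and there is no reason these should all induce the same end of $\Gamma$. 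Your observation that $W\subseteq C_\Gamma(X,\epsilon)$ for any fixed finite $X$ once $m$ is large only says that $W$ is ``near $\epsilon$''; it does not determine which ends of $\Gamma$ the various $\omega_e$-rays of $W$ converge to, let alone show that this is eventually constant in $m$.

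The paper handles this with a step your proposal omits: for each member $H$ separately, it fixes a sequence $(H^{(i)}\colon i\in\Nbb)$ of push-outs of $H$ to depth at least $i$, and passes to a subsequence (relabelling monotonically, so that $H^{(i)}$ remains a push-out to depth at least $i$) along which the convergence tuple for $\partial(\tilde S)$, and subsequently the order data $d_H(H^{(i)})\in\{-1,1\}^{\partial_\epsilon(\tilde S)}$, are constant. This is an infinite pigeon-hole in $i$ over the finite colour spaces $2^{\partial(\tilde S)}$ and $\{-1,1\}^{\partial_\epsilon(\tilde S)}$. Only after this per-member subsequence step is the common value a well-defined colour of $H$, so that Lemma~\ref{Lem_finitechoice} can be applied across the tribe. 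With that step inserted, the rest of your argument (passing to the monochromatic subtribes $\Fcal_1,\Fcal_2$, applying Lemma~\ref{l:separatedpushout} to each $H^{(i)}$ to get $H'^{(i)}$, and assembling $\Fcal'$ from the layers $\{H'^{(i)}\colon H\in F\}$ for $F\in\Fcal^*$ and $i\in\Nbb$) proceeds as in the paper.
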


\begin{proof}
    For every member~$H$ of~$\Fcal$, consider a sequence~${(H^{(i)} \colon i \in \Nbb)}$, where~${H^{(i)}}$ is a push-out of~$H$ along~$f$ to depth at least~$i$. 
    After choosing a subsequence of~${(H^{(i)} \colon i \in \Nbb)}$ and relabelling (monotonically), we may assume that for each~$H$, the set ${\{H^{(i)} \colon i \in \Nbb\}}$ weakly agrees on~${\partial(\tilde S)}$, i.e.~for every~${e \in \partial(\tilde S)}$ either~${{H^{(i)}}^{\downarrow}(R) \in \epsilon}$ 
    for every~${R \in \omega_e}$ and all~$i$ or~${{H^{(i)}}^{\downarrow}(R) \notin \epsilon}$ for every~${R \in \omega_e}$ and all~$i$. 
    Note that a monotone relabelling preserves the property of~${H^{(i)}}$ being a push-out of~$H$ along~$f$ to depth at least~$i$. 

    This uniform behaviour of~${(H^{(i)} \colon i \in \Nbb)}$ on~${\partial(\tilde S)}$ for each member~$H$ of~$\Fcal$ gives rise to a finite colouring~${c \colon \bigcup \Fcal \to 2^{\partial(\tilde S)}}$. 
    By Lemma~\ref{Lem_finitechoice}, we may choose a thick flat subtribe~${\Fcal_1 \subseteq \Fcal}$ 
    such that~$c$ is constant on~${\bigcup \Fcal_1}$. 
    
    Recall that, by Corollary~\ref{c:linearend}, for every~${e \in \partial_\epsilon(\tilde S)}$ (w.r.t.~$\Fcal_1$), the ray graph~${\RG_G(\Rcal_e)}$ is a path. 
    We pick an arbitrary orientation of this path 
    and denote by~$\le_e$ the corresponding linear order on~$\Rcal_e$.
    
    Note that, since $\Fcal_1$ is a flat subtribe of $\mathcal{F}$ which strongly agrees about $\partial(S)$, every $\epsilon$-ray in every member~${H \in \bigcup\Fcal_1}$ is core. 
    Let us define, for each member~${H \in \bigcup\Fcal_1}$,
    \begin{align*}
        d_H &\colon \{H^{(i)} \colon i \in \Nbb \} \to \{-1,1\}^{\partial_\epsilon(\tilde S)}, \\
        \intertext{where}
        d_H(H^{(i)})_e &= 
        \begin{cases} 
            1 & \text{if $\leq_{\epsilon}$ agrees with the $\leq_e$}, \\ 
            -1 & \text{if $\leq_{\epsilon}$ agrees with the reverse order $\geq_e$ of $\leq_e$}.
        \end{cases}
    \end{align*}
    Since~$d_H$ has finite range, we may assume by Lemma~\ref{Lem_finitechoice}, after choosing a subsequence and relabelling, 
    that~$d_H$ is constant on ${\{H^{(i)}\colon i\in \Nbb\}}$ and that~$H^{(i)}$ is still a push-out of~$H$ along~$f$ to depth at least~$i$. 
    
    Now, consider ${d \colon \bigcup\Fcal_1 \to \{-1,1\}^{\partial_\epsilon(\tilde S)}}$,
    with ${d(H) = d_H(H^{(1)})}$ (${ = d_H(H^{(i)})}$ for all~${i \in \mathbb{N}}$).
    Again, we may choose a thick flat subtribe ${\Fcal_2 \subseteq \Fcal_1}$ such that~$d$ is constant on~$\Fcal_2$. 
    
    Since~$\mathcal{F}$ is well-separated from~$\epsilon$ at~$S$, we get that ${\{ H^{(i)} \colon H \in \mathcal{F} \}}$ is well-separated from~$\epsilon$ at~$S$. 
    So, we can now apply Lemma~\ref{l:separatedpushout} to each~$H^{(i)}$ to obtain~$H'^{(i)}$, yielding a collection which is well-separated from~$\epsilon$ at~$\tilde{S}$. 
    Note that~$H'^{(i)}$ is still a push-out of~$H$ along~$f$ to depth at least~$i$. 
    
    Now, let~${\Fcal^* = \Fcal_2}$ and~${\Fcal' = \{ \{H'^{(i)} \colon H \in F\} \colon i \in \Nbb, F\in \Fcal^* \}}$. 
    Let us verify that these satisfy~\ref{itemconcentrated}--\ref{consistentpushingalong}.
    $\Fcal^*$ is concentrated at~$\epsilon$ because it is a thick flat subtribe of~$\Fcal$ by Lemma~\ref{lem_subtribesinheritconcentration}. 
    By a comparison, layer by layer, since all members of~$\Fcal'$ are push-outs of members of~$\Fcal^*$ along~$f$, the tribe~$\Fcal'$ is also concentrated at~$\epsilon$, satisfying~\ref{itemconcentrated}. 
    
    Property~\ref{itemagree} is satisfied: 
    Since~$c$ and~$d$ are constant on~${\bigcup\Fcal_2}$ the collection of the~$H^{(i)}$ (for~${H \in \bigcup\Fcal_2}$) strongly agrees on~${\partial(\tilde S)}$, since we have chosen an appropriate subsequence in which~${d_H(H^{(i)})}$ is constant. 
    The~$H'^{(i)}$ are constructed such that this property is preserved.
    Property~\ref{itemSeparation} is immediate from the choice of~$H'^{(i)}$.
    Properties~\ref{itemconsistentagree} and~\ref{itemnested} follow from~\ref{itemagreeF} and the fact that every member of~$\Fcal'$ is a push-out of a member of~$\Fcal$ along~$f$.
    Property~\ref{consistentpushingalong} is immediate from the construction of~$\Fcal'$\!. 
\end{proof}

\section{The inductive argument}
\label{sec:countable-subtrees}

In this section we prove Theorem~\ref{t:nice}, our main result. 
Given a locally finite connected graph $G$ which admits an extensive tree-decomposition~${(T,\mathcal{V})}$ and a graph~$\Gamma$ which contains a thick $G$-tribe~$\mathcal{F}$, our aim is to construct an infinite family~${(Q_i \colon i \in \Nbb)}$ of disjoint $G$-minors in~$\Gamma$ inductively. 

Our work so far will allow us to make certain assumptions about~$\Fcal$. 
For example, by Lemma~\ref{l:concentrated}, we may assume that~$\Fcal$ is concentrated at some end~$\epsilon$ of~$\Gamma$, which, by Lemma~\ref{l:concentratedatthin}, we may assume is a thick end, and, by Lemma~\ref{c:pebblyubiq}, we may assume is not pebbly. 
Hence, by Theorem~\ref{t:trichotomy}, we may assume that~$\epsilon$ is either half-grid-like or grid-like. 

At this point our proof will split into two different cases, depending on the nature of~$\epsilon$. 
As we mentioned before, the two cases are very similar, with the grid-like case being significantly simpler. Therefore, we will first prove Theorem~\ref{t:nice} in the case where~$\epsilon$ is half-grid-like, and then in Section~\ref{s:gridlike} we will shortly 
sketch the differences for the grid-like case.

So, to briefly recap, in the following section we will be working under the standing assumptions that there is a thick $G$-tribe~$\mathcal{F}$ in~$\Gamma$, and an end~$\epsilon$ of $\Gamma$ such that 
\begin{itemize}
    \item[--] $\mathcal{F}$ is concentrated at $\epsilon$; 
    \item[--] $\epsilon$ is thick;
    \item[--] $\epsilon$ is half-grid-like.
\end{itemize}

\subsection{The half-grid-like case}
\label{sec:half-grid-like}

As explained in Section~\ref{s:sketch}, our strategy will be to take some sequence of initial subtrees ${S_1 \subseteq S_2 \subseteq S_3 \ldots}$ of~$T$ such that~${\bigcup_{i \in \Nbb} S_i = T}$, and to inductively build a collection of~$n$ inflated copies of~${G(S_n)}$, at each stage extending the previous copies. 
However, in order to ensure that we can continue the construction at each stage, we will require the existence of additional structure.

Let us pick an enumeration ${\{ t_i \colon i \geq 0 \}}$ of~${V(T)}$ such that~$t_0$ is the root of~$T$ and ${T_n := T[\{ t_i \colon  0\leq i \leq n \}]}$ is connected for every~${n \in \Nbb}$. 
We will not take the~$S_n$ above to be the subtrees~$T_n$, but instead the subtrees~$T_n^{\neg \epsilon}$ with respect to some tribe~$\Fcal_n$ that weakly agrees about~${\partial(T_n)}$. 
This will ensure that every edge in the boundary~${\partial(S_n)}$ will be in~${\partial_\epsilon(T_n)}$. 
For every edge~${e \in E(T)}$, let us fix a family~${\mathcal{R}_e = ( R_{e,s} \colon s \in S(e) )}$ of disjoint rays witnessing the self-similarity of the bough~${B(e)}$ towards an end~$\omega_e$ of~$G$, where~${\init(R_{e,s}) = s}$. 
By taking ${S_n = T_n^{\neg \epsilon}}$, we guarantee that for each edge in~${e \in \partial(S_n)}$, ${s \in S(e)}$, and every~${H \in \bigcup \Fcal_n}$, the ray~${H^{\downarrow}(R_{e,s})}$ is an $\epsilon$-ray.

Furthermore, since~${\partial(T_n)}$ is finite, we may assume by Lemma~\ref{lem:stronglyagreesubtribe}, that~$\Fcal_n$ strongly agrees about~${\partial(T_n)}$. 
We can now describe the additional structure that we require for the induction hypothesis.

At each stage of our construction we will have built some inflated copies of~${G(S_n)}$, which we wish to extend in the next stage. 
However, $S_n$ will not in general be a finite subtree, and so we will need some control over where these copies lie in~$\Gamma$ to ensure we have not `used up' all of~$\Gamma$. 
The control we will want is that there is a finite set of vertices~$X$, which we call a \emph{bounder}, that separates all that we have built so far from the end~$\epsilon$. 
This will guarantee, since~$\Fcal$ is concentrated at~$\epsilon$, that we can find arbitrarily large layers of~$\Fcal$ which are disjoint from what we have built so far.

Furthermore, in order to extend these copies in the next step, we will need to be able to link the boundary of our inflated copies of~${G(S_n)}$ to this large layer of~$\Fcal$. 
To this end we, will also want to keep track of some structure which allows us to do this, which we call an \emph{extender}. Let us make the preceding discussion precise.

\begin{definition}[Bounder, extender]
    Let~$\Fcal$ be a thick $G$-tribe, which is concentrated at~$\epsilon$ and strongly agrees about~${\partial(S)}$ for some initial subtree~$S$ of~$T$, and let~${k \in \Nbb}$.
    Let ${\mathcal{Q} = (Q_i \colon i \in [k])}$ be a family of disjoint inflated copies of~${G(S^{\neg \epsilon}})$ in~$\Gamma$ (note, $S^{\neg \epsilon}$ depends on~$\Fcal$).
    \begin{itemize}
        \item A \emph{bounder} for~$\mathcal{Q}$ is a finite set~$X$ of vertices in~$\Gamma$ separating each~$Q_i$ in~$\mathcal{Q}$ from~$\epsilon$, i.e.~such that 
            \[
                C(X,\epsilon) \cap \bigcup_{i=1}^k Q_i  = \emptyset. 
            \]
        \item For ${A \subseteq E(T)}$, let~${I(A,k)}$ denote the set~${\{ (e,s,i) \colon e \in A, s \in S(e), i \in [k] \}}$.
        \item An \emph{extender} for~$\mathcal{Q}$ is a family ${\mathcal{E} = ( E_{e,s,i} \colon (e,s,i) \in I(\partial_\epsilon(S),k))}$ of $\epsilon$-rays in~$\Gamma$ such that the graphs in~${\mathcal{E}^{-} \cup \mathcal{Q}}$ are pairwise disjoint and such that~${\init(E_{e,s,i}) \in Q_i(s)}$ for every~${(e,s,i) \in I(\partial_\epsilon(S),k)}$ (using the notation as in Definition~\ref{def_concat}).
        \item Given an extender~$\Ecal$, an edge~${e \in \partial_{\epsilon}(S)}$, and~${i \in [k]}$, we let
            \[
                \Ecal_{e,i} := ( E_{e,s,i} \colon s \in S(e)).
            \]
    \end{itemize}
\end{definition}

Recall that, since~$\epsilon$ is half-grid like, there is a partial order~$\leq_\epsilon$ defined on the core rays of~$\epsilon$, see Lemma~\ref{def:core-order}. 
Furthermore, if~$\Fcal$ strongly agrees about~${\partial(S)}$ then, as in Definition~\ref{d:tribes}, for each~${e \in \partial_\epsilon(S)}$, there is a linear order~$\leq_{\Fcal,e}$ on~${S(e)}$.

\begin{definition}[Extension scheme]
    Under the conditions above, we call a tuple $(X,\Ecal)$ an \emph{extension scheme} for $\mathcal{Q}$ if the following holds:
    \begin{enumerate}[label=(ES\arabic*)]
        \item\label{item:ES-defs} $X$ is a bounder for~$\mathcal{Q}$ and~$\Ecal$ is an extender for~$\mathcal{Q}$;
        \item\label{item:ES-core} $\mathcal{E}$ is a family of core rays;
        \item\label{item:ES-correct} the order~$\leq_\epsilon$ on~$\Ecal_{e,i}$ (and thus on~$\Ecal_{e,i}^-$) agrees with the order induced by~$\leq_{\mathcal{F},e}$ on~$\Ecal_{e,i}^-$ for all~${e \in \partial_\epsilon(S)}$ and~${i \in [k]}$;
        \item\label{item:ES-interval} the sets~$\mathcal{E}_{e,i}^-$ are intervals 
            with respect to~$\leq_\epsilon$ on~${\mathcal{E}^-}$ for all~${e \in \partial_\epsilon(S)}$ and~${i \in [k]}$. 
    \end{enumerate}
\end{definition}

We will in fact split our inductive construction into two types of extensions, which we will do on odd and even steps respectively.

In an even step~${n = 2k}$, starting with a $G$-tribe~$\mathcal{F}_k$, $k$ disjoint inflated copies ${(Q_{i}^n \colon i \in [k])}$ of~${G(T_k^{\neg \epsilon})}$, and an appropriate extension scheme, we will construct~$Q_{k+1}^n$, a further disjoint inflated copy of~${G(T_k^{\neg \epsilon})}$, and an appropriate extension scheme for everything we built so far.

In an odd step~${n = 2k - 1}$ (for~${k \geq 1}$), starting with the same $G$-tribe~$\mathcal{F}_{k-1}$ from the previous step, $k$ disjoint inflated copies of~${G(T_{k-1}^{\neg \epsilon})}$, and an appropriate extension scheme, we will refine to a new $G$-tribe~$\mathcal{F}_{k}$, which strongly agrees on~${\partial(T_k)}$, extend each copy~$Q_i^n$ of~${G(T_{k-1}^{\neg \epsilon})}$ to a copy~$Q_i^{n+1}$ of~${G(T_{k}^{\neg \epsilon})}$ for~${i \in [k]}$, and construct an appropriate extension scheme for everything we built so far.

So, we will assume inductively that for some ${n \in \Nbb_0}$, with~${\rho := \lfloor n/2 \rfloor}$ and~${\sigma := \lceil n/2 \rceil}$ 
we have:
\begin{enumerate}[label=(I\arabic*)]
    \item \label{item:induction1} a thick $G$-tribe~$\mathcal{F}_{\rho}$ in~$\Gamma$ which
        \begin{itemize}
            \item is concentrated at~$\epsilon$;
            \item strongly agrees about~${\partial(T_\rho)}$;
            \item is well-separated from~$\epsilon$ at~$T_\rho$; 
            \item whenever~${k < l \leq \rho}$, the tree~$T_k^{\neg\epsilon}$ with respect to~$\mathcal{F}_k$ is a subtree of~$T_l^{\neg\epsilon}$ with respect to~$\mathcal{F}_l$.
        \end{itemize}
    \item \label{item:induction2} 
        a family ${\mathcal{Q}_n = ( Q_i^n \colon i \in [\sigma] )}$ of~$\sigma$ pairwise disjoint inflated copies of~${G(T^{\neg \epsilon}_{\rho})}$ (where $T^{\neg \epsilon}_{\rho}$ is considered with respect to~$\mathcal{F}_\rho$) in~$\Gamma$;\\
        if~${n \geq 1}$, we additionally require that~$Q^n_i$ extends~$Q^{n-1}_i$ for all~${i \leq \sigma-1}$;
    \item \label{item:induction3} an extension scheme~${(X_n,\Ecal_n)}$ for~$\mathcal{Q}_n$;
    \item \label{item:induction4} if~$n$ is even and~${\partial_\epsilon(T_\rho) \neq \emptyset}$, we require that there is a set~$\mathcal{J}_\rho$ of disjoint core $\epsilon$-rays disjoint to~$\mathcal{E}_n$, with ${|\mathcal{J}_\rho| \geq (|\partial_\epsilon(T_\rho)|+1) \cdot |\mathcal{E}_n|}$.
\end{enumerate}

Suppose we have inductively constructed~$\mathcal{Q}_n$ for all~${n \in \Nbb}$. 
Let us define~${H_i := \bigcup_{n \geq 2i-1} Q^n_i}$. 
Since $T_k^{\neg\epsilon}$ with respect to~$\mathcal{F}_k$ is a subtree of~$T_l^{\neg\epsilon}$ with respect to~$\mathcal{F}_l$ for all~${k<l}$, we have that ${\bigcup_{n \in \Nbb} T^{\neg \epsilon}_n = T}$ (where we considered~$T^{\neg \epsilon}_n$ w.r.t.~$\mathcal{F}_n$), and due to the extension property~\ref{item:induction2}, the collection ${(H_i \colon i \in \Nbb)}$ is an infinite family of disjoint $G$-minors, as required. 

So let us start the construction. 
To see that our assumptions can be fulfilled for the case~${n = 0}$, we first note that since~${T_0 = t_0}$, by Lemma~\ref{l:induction-start} there is a thick subtribe~$\Fcal_0$ of~$\Fcal$ which satisfies~\ref{item:induction1}. 
Let us further take~${\mathcal{Q}_0 = \mathcal{E}_0 = X_0 = \mathcal{J}_0 = \emptyset}$. 

\vspace{0.2cm}

The following notation will be useful throughout the construction. 
Given~${e \in E(T)}$ and some inflated copy~$H$ of~$G$, recall that~${H^{\downarrow}(\mathcal{R}_e)}$ denotes the family~${(H^{\downarrow}(R_{e,s}) \colon s \in S(e))}$. 
Given a $G$-tribe~$\mathcal{F}$, a layer~${F \in \mathcal{F}}$ and a family of disjoint rays~$\Rcal$ in~$G$ we will write ${F^{\downarrow}(\mathcal{R}) = ( H^{\downarrow}(R) \colon H \in F, R \in \Rcal)}$.

\vspace{.5cm}
\noindent{\bf Construction part 1: ${n=2k}$ is even}

\vspace{.5cm}
\noindent{\bf Case 1: ${\partial_\epsilon (T_k) = \emptyset}$.}

In this case, ${T^{\neg \epsilon}_k = T}$ 
and so picking any member~${H \in \Fcal_k}$ with ${H \subseteq C(X_n,\epsilon)}$ and setting ${Q_{k+1}^{n+1} = H(T^{\neg \epsilon}_k)}$ gives us a further inflated copy of~${G(T^{\neg \epsilon}_k)}$ disjoint from all the previous ones. 
We set~${Q^{n+1}_i = Q^{n}_i}$ for all~${i \in [k]}$ and~${\mathcal{Q}_{n+1} = ( Q^{n+1}_i \colon i \in [k+1])}$. 
Since~$\mathcal{F}_k$ is well-separated from~$\epsilon$ at~$T_k$, there is a suitable bounder~${X_{n+1}\supseteq X_n}$ for~$\mathcal{Q}_{n+1}$. 
Then ${(X_{n+1}, \emptyset)}$ is an extension scheme for~$\mathcal{Q}_{n+1}$ while~$\mathcal{F}_k$ remains unchanged.

\vspace{.5cm}
\noindent \textbf{Case 2: ${\partial_\epsilon (T_k) \neq \emptyset}$.} (See Figure~\ref{f:adding})

Consider the family ${\mathcal{R}^{-} := \bigcup \{ \mathcal{R}_e^{-} \colon e \in \partial_\epsilon(T_k)\}}$. 
Moreover, set~${\mathcal{C} := \mathcal{E}_n^- \cup \mathcal{J}_k}$ and consider~$\overline{\mathcal{C}}$ as in Definition~\ref{def:central-core}. 
Let ${Y \subseteq C(X_n,\epsilon)}$ be a finite subgraph, which is a transition box between~$\overline{\mathcal{E}_n^-}$ and~$\overline{\mathcal{C}}$ after~$X_n$ as in Lemma~\ref{l:transition-box}. 
Let~$\mathcal{F}'$ be a flat thick $G$-subtribe of~$\mathcal{F}_k$, such that each member of~$\mathcal{F}'$ is contained in~${C(X_n \cup V(Y), \epsilon)}$, which exists, by Lemma~\ref{lem_subtribesinheritconcentration}, since both~$X_n$ and~$V(Y)$ are finite. 

Let ${F \in \mathcal{F}'}$ be large enough such that 
we may apply Lemma~\ref{l:link} to find a transitional linkage~$\mathcal{P}$, such that~${\bigcup \mathcal{P} \subseteq C(X_n \cup V(Y), \epsilon)}$, from~${\overline{\mathcal{C}}}$ to~${F^{\downarrow}(\Rcal^{-})}$ after~${X_n \cup V(Y)}$ avoiding some member~${H \in F}$. 
Note that, since~$X_n$ is a bounder and ${\bigcup \mathcal{P} \subseteq C(X_n \cup V(Y), \epsilon)}$, we get that each element of~$\mathcal{P}$ is disjoint from all~$\mathcal{Q}_n$ and~$Y$.

Let
\[
    Q^{n+1}_{k+1} := H(T^{\neg \epsilon}_k).
\]
Note that $Q^{n+1}_{k+1}$ is an inflated copy of~${G(T^{\neg \epsilon}_k)}$. 
Moreover, let ${Q^{n+1}_{i} := Q^{n}_{i}}$ for all~${i \in [k]}$ and~${\mathcal{Q}_{n+1} := ( Q_{i}^{n+1} \colon i \in [k+1] )}$, 
yielding property~\ref{item:induction2}. 

Since~$\mathcal{F}_k$ is well-separated from~$\epsilon$ at~$T_k$, and~${H \in \bigcup \Fcal_k}$, there is a finite set~${X_{n+1} \subseteq V(\Gamma)}$ containing ${X_{n} \cup V(Y)}$, such that~${C(X_{n+1},\epsilon) \cap Q^{n+1}_{k+1} = \emptyset}$. 
This set~$X_{n+1}$ is a bounder for~$\mathcal{Q}_{n+1}$.

Since~$\mathcal{P}$ is transitional, Lemma~{\ref{l:core-preserving}} implies that the linkage is preserving on~$\mathcal{C}$. 
Since all rays in~${F^{\downarrow}(\mathcal{R}^-)}$ are core rays, we have that~$\le_{\epsilon}$ is a linear order on~${F^{\downarrow}(\mathcal{R}^-)}$. 
Moreover, for each~${e \in \partial_\epsilon(T_k)}$, the rays in~${H^{\downarrow}(\mathcal{R}^-_{e})}$ correspond to an interval in this order. 
Thus, deleting these intervals from~${F^{\downarrow}(\mathcal{R}^-)}$ leaves behind at most~${|\partial_\epsilon(T_k)|+1}$ intervals in~${F^{\downarrow}(\mathcal{R}^-)}$ (with respect to~$\le_{\epsilon}$) which do not contain any rays in~${H^{\downarrow}(\mathcal{R}^-)}$.
Since~${|\mathcal{J}_k| \geq (|\partial_\epsilon(T_k)|+1) \cdot |\mathcal{E}_n|}$, by the pigeonhole principle there is one such 
interval on~${F^{\downarrow}(\mathcal{R}^-)}$ that
\begin{itemize}
    \item[--] does not contain rays in~${H^{\downarrow}(\mathcal{R})}$; 
    \item[--] where a subset ${\mathcal{P}' \subseteq \mathcal{P}}$ of size~$|\mathcal{E}_n^-|$ links a corresponding subset~$\mathcal{A}$ of~$\mathcal{C}$ to a set of rays~$\mathcal{B}$ in that interval.
\end{itemize}
By Lemmas~\ref{l:transition-box} , ~{\ref{lem:core-preserving2}} and~\ref{l:core-preserving}, and Remark~\ref{rem:core-preserving}\ref{rem:core-preserving-sub},
there is a linkage~$\mathcal{P}''$ from~${\overline{\mathcal{E}_n^{-}}}$ to~${\mathcal{A}}$ contained in~$Y$ which is preserving on~$\mathcal{E}_n^{-}$. 

For~${e \in \partial_\epsilon(T_k)}$ and~${s \in S(e)}$, define
\[
    E^{n+1}_{e,s,k+1} = H^{\downarrow}(R_{e,s}) \text{ for the corresponding ray } R_{e,s} \in \mathcal{R}_e.
\]
Moreover for each~${i \in [k]}$, we define 
\[
    E_{e,s,i}^{n+1} = (E^{n}_{e,s,i} \circ_{\mathcal{P}''} \mathcal{A}) \circ_{\mathcal{P}'} \mathcal{B},
\]
noting that~$\mathcal{P}''$ is also a linkage from~$\mathcal{E}_n$ to~${\mathcal{A}}$.

By construction, all these rays are, except for their first vertex, disjoint from~$\mathcal{Q}_{n+1}$. 
Moreover, ${\mathcal{E}_{n+1} := ( E^{n+1}_{e,s,i} \colon (e,s,i) \in I(\partial_\epsilon(T_k),k+1) )}$ is an extender for~$\mathcal{Q}_{n+1}$.
Note that each ray in~$\mathcal{E}_{n+1}$ shares a tail with a ray in~${F^{\downarrow}(\mathcal{R}^{-})}$.

We claim that ${(X_{n+1}, \mathcal{E}_{n+1})}$ is an extension scheme for~$\mathcal{Q}_{n+1}$ and hence property~\ref{item:induction3} is satisfied. 
Since every ray in~$\mathcal{E}_{n+1}$ 
has a tail which is also a tail of a ray in~$F^{\downarrow}(\mathcal{R}^{-})$, property~\ref{item:ES-core} is satisfied by Remark~{\ref{rem:core-remarks}\ref{rem:core-tail}}. 
Since~$\mathcal{P}'$ is preserving on~$\mathcal{A'}$ and~$\mathcal{P}''$ is preserving on~$\mathcal{E}_n^-$, 
Remark~{\ref{rem:core-preserving}\ref{rem:core-preserving-concat}} implies that the linkage~${\Pcal'' + \Pcal'}$ is preserving on~$\mathcal{E}_n^-$. 
Hence, property~\ref{item:ES-correct} holds for each~${i \in [k]}$. 
Furthermore, since ${E^{n+1}_{e,s,k+1} = H^{\downarrow}(R_{e,s})}$ for each~${e \in \partial_\epsilon(T_k)}$ and~${s \in S(e)}$ and~$\mathcal{F}_k$ strongly agrees about~$\partial(T_k)$, it is clear that property~\ref{item:ES-correct} holds for~${i = k+1}$. 
Finally, property~\ref{item:ES-interval} holds for~${i=k+1}$ since for each~${e \in \partial_\epsilon(T_k)}$, the rays in~${H^{\downarrow}(\mathcal{R}_{e})}$ are an interval with respect to~$\leq_\epsilon$ on~$F^{\downarrow}(\mathcal{R}^-)$, and it holds for~${i \in [k]}$ by the fact that ${\Pcal'' + \Pcal'}$ is preserving on~$\mathcal{E}_n^-$ together with the fact that ${\Pcal'' + \Pcal'}$ links~$\mathcal{E}_n^-$ to an interval of~${F^{\downarrow}(\mathcal{R}^-)}$ containing no ray in~${H^{\downarrow}(\mathcal{R})}$.

Finally, note that~\ref{item:induction1} is still satisfied by~$\Fcal_k$ and~$T_k$, and~\ref{item:induction4} is vacuously satisfied. 

\begin{sidewaysfigurepage}
    \centering
    \resizebox{.8\textwidth}{!}{
        
\begin{tikzpicture}[scale=1.1]
    \colorlet{junkcolor}{purple}
    \tikzstyle{JunkRay}=[junkcolor,->]
    \colorlet{extendercolor}{green!80!black}
    \tikzstyle{ExtenderRay}=[extendercolor,->]
    \draw[junkcolor] (0, 2) node[above right] {$ \mathcal{J}_k $};
    \begin{scope}[yshift=2cm]
        \foreach \y in {0,...,-2}
            \draw[JunkRay] (0, 0.2*\y) -- +(16,0);
    \end{scope}

    \newcommand\Qin[1]{
        \draw[blue] (0,0) circle [radius=1];
        \draw[blue] (0:1) arc (70:0:-1);
        \draw[blue] (-5:1) arc (-60:-5:-1);
        \draw[blue] (0,0) node {$ Q_{#1}^n $};
        \begin{scope}
            \clip (0,0) circle [radius=1.8];
            \draw[red] (160:1.75) circle [radius=1];
        \end{scope}
        \draw[red] (160:1.75) node {$ Q_{#1}^n(T_k^{\neg\epsilon}) $};
        \draw[ExtenderRay] (.6,.6) -- (16, .6);
        \draw[ExtenderRay] (.8,.3) -- (16, .3);
        \draw[ExtenderRay] (.75,-.5) -- (16,-.5);
    }
    \begin{scope}[yshift=0cm]
        \Qin{1}
    \end{scope}
    \draw[extendercolor] (1, -1) node {$ \mathcal{E}_n $};

    \draw[blue] (0, -2) node {$ \vdots $};
    \draw[blue] (0, -3) node {$ \vdots $};

    \begin{scope}[yshift=-5cm]
        \Qin{k}
    \end{scope}

    \begin{scope}[yshift=-7cm]
        \foreach \y in {0,...,3}
            \draw[JunkRay] (0, 0.2*\y) -- +(16,0);
    \end{scope}

    \draw[red] (2, 3) rectangle ++(1, -11) node[pos=.5] {$ X_n $};

    \colorlet{transitioncolor}{olive}
    \filldraw[draw=transitioncolor,fill=white,opacity=.9] (4, 3) rectangle (7, -8);
    \draw[transitioncolor] (5.5, 2.5) node {transition box $ Y $};
    \draw[transitioncolor] (5.5, -2) node {\huge$ \mathcal{P}'' $};

    \draw[transitioncolor, thick] (4,.6) to[out=0,in=180] (7,2);
    \draw[transitioncolor, thick] (4,.3) to[out=0,in=180] (7,2-.2);
    \draw[transitioncolor, thick] (4,-.5) to[out=0,in=180] (7,2-.4);

    \draw[transitioncolor, thick] (4,.6-5) to[out=0,in=180] (7,.6-5);
    \draw[transitioncolor, thick] (4,.3-5) to[out=0,in=180] (7,-.5-5);
    \draw[transitioncolor, thick] (4,-.5-5) to[out=0,in=180] (7,-7+.6);

    \draw[magenta] (7, -10) node {$ F \in \mathcal{F}_k $};

    \newcommand\newH{
        \begin{scope}[yshift=-10cm]
            \draw[magenta] (0, 0) circle [radius=.5];
            \draw[magenta] (80:.5) arc (15:80:-.5);
            \draw[magenta] (90:.5) arc (-10:-90:.5);
            \begin{scope}
                \clip (0,0) circle [radius=1];
                \draw[red] (280:.9) circle [radius=.6];
            \end{scope}
                \draw[magenta,->] (.25,.3) -- ++(0, 13.25);
                \draw[magenta,->] (-.15,.35) -- ++(0, 13.25);
                \draw[magenta,->] (-.35,.2) -- ++(0, 13.25);
        \end{scope}
    }

    \begin{scope}[xshift=9cm]
        \newH
    \end{scope}
    \draw[magenta] (10, -10) node {$ \cdots $};
    \draw[magenta] (10, 3.25) node {$ \cdots $};
    \begin{scope}[xshift=11cm]
        \newH
    \end{scope}
    \draw[magenta] (12, -10) node {$ \cdots $};
    \draw[magenta] (12, 3.25) node {$ \cdots $};

    \draw[magenta] (14,-10) node {$ \cdots $};
    \draw[magenta] (14,3.25) node {$ \cdots $};
    \begin{scope}[xshift=15cm]
        \newH
    \end{scope}

    \filldraw[draw=orange,fill=white,opacity=.9] (8, 3) rectangle (15.5, -8);

    \colorlet{pprimecolor}{red!50!white}
    \tikzstyle{Pprime}=[draw=pprimecolor, double=black]
    \tikzstyle{Pprimedashedend}=[Pprime,dashed,preaction={draw,solid,Pprime,shorten >=.5cm}]
    \tikzstyle{dashedend}=[dashed,postaction={draw,solid,shorten >=.5cm}]

    \draw[orange] (9, -1) node {linkage};
    \draw (9.2, -2) node {\huge$ {\color{pprimecolor}\mathcal{P}'} {\,\subseteq\,} \mathcal{P} $};

    \draw[Pprime] (8,2) -| (9-.35,3);
    \draw[Pprime] (8,2-.2) -| (9.25,3);
    \draw[Pprime] (8,2-.4) -| (9.5,3);
    \draw (8,.6) -| (9.75,3);
    \draw (8,.3) -| (10,3);
    \draw (8,-.5) -| (10.2,3);

    \draw[Pprimedashedend] (8,.6-5) -| (10.6,-3);
    \draw[dashedend] (8,.3-5) -| (10.7,-3.2);
    \draw[Pprimedashedend] (8,-.5-5) -| (10.8,-4);

    \draw[Pprimedashedend] (8,-7+.6) -| (11.2,-4.5);
    \draw[dashedend] (8,-7+.4) -| (11.8,-5);
    \draw[dashedend] (8,-7+.2) -| (14.7,-5.2);
    \draw[dashedend] (8,-7)    -| (14.9,-5.3);

    \draw (11, 2.5) node {$ \cdots $}
          (11, 1.5) node {$ \cdots $};

    \begin{scope}[xshift=13cm]
        \fill[fill=white, opacity=.95] (-.7, 4) rectangle (.7, -10.55);
        \draw[green, dashed] (-.7,4) |- (.7, -10.55) -- (.7, 4);
        \newH
        \draw[magenta] (0, -10) node {$ H $};
    \end{scope}

\end{tikzpicture}
    }
    \caption{Adding a new copy when~${n=2k}$ is even.} 
    \label{f:adding}
\end{sidewaysfigurepage}

\noindent \textbf{Construction part 2: ${n=2k-1}$ is odd (for~${k \geq 1}$).}

Let~$f$ denote the unique edge of~$T$ between~$T_{k-1}$ and~${T_{k} \setminus T_{k-1}}$. 

\vspace{.5cm}
\noindent \textbf{Case 1:} ${f \notin \partial_{\epsilon} (T_{k-1})}$.

Let~${\Fcal_{k} := \Fcal_{k-1}}$. 
Since~$\mathcal{F}_{k-1}$ is well-separated from~$\epsilon$ at~$T_{k-1}$, it follows that~${e \in \partial_{\neg \epsilon}(T_k)}$ for every~${e \in \partial(T_k) \setminus \partial(T_{k-1})}$. 
Hence~${T^{\neg \epsilon}_{k} = T^{\neg \epsilon}_{k-1}}$ and~${\partial_\epsilon(T_{k-1}) = \partial_\epsilon(T_k)}$, and so~$\mathcal{F}_k$ is well-separated from~$\epsilon$ at~$T_k$ and we can simply take ${\mathcal{Q}_{n+1} := \mathcal{Q}_n}$, ${\mathcal{E}_{n+1} := \mathcal{E}_n}$, ${\mathcal{J}_{k} := \mathcal{J}_{k-1}}$ and ${X_{n+1} := X_n}$ to satisfy~\ref{item:induction1}, \ref{item:induction2}, \ref{item:induction3} and~\ref{item:induction4}.

\vspace{.5cm}
\noindent \textbf{Case 2:} ${f \in \partial_{\epsilon} (T_{k-1})}$. (See Figure~\ref{f:new})

By~\ref{item:induction1} we can apply Lemma~\ref{lem:refinement-lemma} to~$\Fcal_{k-1}$ and~$T_{k-1}$ in order to find a thick $G$-tribe~$\Fcal_{k}$ 
and a thick flat subtribe~$\mathcal{F}^*$ of~$\mathcal{F}_{k-1}$, both concentrated at~$\epsilon$, satisfying properties \ref{itemconcentrated}--\ref{consistentpushingalong} from that lemma. 
It follows that~$\mathcal{F}_{k}$ satisfies~\ref{item:induction1} for the next step.

Let~${F \in \mathcal{F}^*}$ be a layer of~$\mathcal{F}^*$ such that 
\[
    {|F| \geq (\partial_\epsilon(T_k) + 2) \cdot |I(\partial_\epsilon (T_k), k)|}
\]
and consider the rays~${F^{\downarrow}(\mathcal{R}_f)}$. 
Consider the rays in the extender corresponding to the edge~$f$, that is~${\mathcal{E}_f := (E_{f,s,i}^n \colon i \in [k], s \in S(f))}$. 
By Lemma~{\ref{lem:core-preserving2}}, there is, for every subset~$\mathcal{S}$ of~${F^{\downarrow}(\mathcal{R}_f)}$ of size~${|\mathcal{E}_f^-|}$, a transitional linkage~$\mathcal{P}$ 
from~${\mathcal{E}_f^- \subseteq \overline{\mathcal{E}_n^-}}$ to~${\mathcal{S} \subseteq \overline{F^{\downarrow}(\mathcal{R}_f)}}$ after~${X_n \cup \, \init(\mathcal{E}_n)}$, which is preserving on~$\mathcal{E}^-_f$. 

Let us choose~${H_1,H_2,\ldots,H_k \in F}$ and let~${\mathcal{S} = \left(H^{\downarrow}_i(R_{f,s}) \colon i \in [k], s\in S(f)\right)}$. 
Let~$\mathcal{P}$ be the linkage given by the previous paragraph, which we recall is preserving on~$\mathcal{E}^-_f$. 
Since for every~${i \leq k}$, the family ${\left(E^{n-}_{f,s,i} \colon s \in S(f)\right)}$ forms an interval in~$\mathcal{E}^-_n$ and the set~${H_i^{\downarrow}(\mathcal{R}_{f})}$ forms an interval in~${F^{\downarrow}(\mathcal{R}_f)}$, and furthermore the order~$\leq_\epsilon$ agrees with~$\leq_{\mathcal{F}_k,f}$ on~${S(f)}$, it follows that, after perhaps relabelling the~$H_i$, for every~${i \in [k]}$ and~${s \in S(f)}$, $\Pcal$ links~$E^{n-}_{f,s,i}$ to~${H^{\downarrow}_i(R_{f,s})}$.

Let~${Z \subseteq V(\Gamma)}$ be a finite set such that~${\top(\omega, R)}$ and~${\bot(\omega, R)}$ are separated by~$Z$ in~${\Gamma - V(R)}$ 
for all~${R \in F^{\downarrow}(\mathcal{R}_f)}$ (cf.~Lemma~\ref{lem:core-exchange}).

Since~${|F|}$ is finite and ${(T,\mathcal{V})}$ is an extensive tree-decomposition, there exists an~${m \in \Nbb}$ such that if~${e \in T_{f^+}}$ with~${\dist(f^-,e^-) = m}$, then~${H(B(e)) \cap \left({X_n \cup Z \cup V(\bigcup \mathcal{P})}\right) = \emptyset}$ for every~${H \in F}$. 
Let~${F' \in \mathcal{F}_{k}}$ be as in Lemma~\ref{lem:refinement-lemma}\ref{consistentpushingalong} for~$F$ with such an~$m$. 

Hence, by definition, for each~${H_i \in F}$ there is some~${H'_i \in F'}$ which is a push-out of~$H_i$ to depth~$m$ along~$f$, and so there is some edge~${e \in T_{f^+}}$ with~${\dist(f^-,e^-) = m}$ and some subgraph ${W_i \subseteq H(B(e))}$ which is an~${I\overline{G[B(f)]}}$ such that for each~${s \in S(f)}$, we have that~${W_i(s)}$ contains the first vertex of~$W_i$ on~${H_i^{\downarrow}(R_{f,s})}$.

For each~${i \in [k]}$ we construct~$Q_{i}^{n+1}$ from~$Q_{i}^{n}$ as follows. 
Consider the part of~$G$ that we want to add~${G(T_{k-1}^{\neg \epsilon})}$ to obtain~${G(T_k^{\neg \epsilon})}$, namely 
\[
    {D := \overline{G[B(f)]} \left[ V_{f^+} \cup 
    \bigcup \big\{ B(e) \colon {e \in \partial_{\neg \epsilon}(T_k) \setminus \partial_{\neg \epsilon}(T_{k-1})} \big\} \right]}. 
\]
Let~${K_i := W_i(D)}$. 
Note that this is an inflated copy of~$D$, and for each~${s \in S(f)}$ and each~${i \in [k]}$ the branch set~${K_i(s)}$ contains the first vertex of~$K_i$ on~${H_i^{\downarrow}(R_{f,s})}$. 

Note further that, by the choice of~$m$, all the~$K_i$ are disjoint to~$\mathcal{Q}_n$. 
Let~$x_{f,s,i}$ denote the first vertex on the ray~${H^{\downarrow}_i(R_{f,s})}$ in~$K_i$, and let 
\[
    O_{s,i} := (E^n_{f,s,i} \circ_{\mathcal{P}} F^{\downarrow}(\mathcal{R}_f)) x_{f,s,i},
\]
where as before we note that~$\mathcal{P}$ is also a linkage from~${\mathcal{E}_n}$ to~${F^{\downarrow}(\mathcal{R}_f)}$.

Then, if we let ${\mathcal{O}_i := (O_{s,i} \colon s \in S(f))}$ and ${\mathcal{O} = (O_{s,i} \colon s \in S(f), i\in [k])}$, we see that 
\[
    Q_{i}^{n+1} := Q_{i}^{n} \oplus_{\mathcal{O}_i} K_i
\]
(see Definition~\ref{d:amalgamation}) is an inflated copy of~${G(T_k^{\neg \epsilon})}$ extending~$Q_{i}^{n}$. 
Hence, 
\[
	{\mathcal{Q}^{n+1} := ( Q^{n+1}_i \colon i \in [k])}
\]
is a family satisfying~\ref{item:induction2}.

Since~$\mathcal{F}_k$ is well-separated from~$\epsilon$ at~$T_k$, and each~$K_i$ is a subgraph of the restriction of ${W_i \subseteq H'_i}$ to~$D$, for each~$K_i$, there is a finite set~$\hat{X}_i$ separating~$K_i$ from~$\epsilon$, and hence the set
\[
   X_{n+1} := X_n \cup \bigcup_{i \in [k]} \hat{X}_i \cup V \left( \bigcup \mathcal{O} \right)
\]
is a bounder for~$\mathcal{Q}^{n+1}$. 

For ${e \in \partial_\epsilon(T_{k-1}) \setminus \{f\}}$, ${s \in S(e)}$, and~${i \in [k]}$, we set
\[
    {E}^{n+1}_{e,s,i} = E^n_{e,s,i} \circ_\mathcal{P} F^{\downarrow}(\mathcal{R}_f),
\]
and set 
\[
    \mathcal{E}' := \left(E^{n+1}_{e,s,i} \colon (e,s,i) \in I\left(\partial_\epsilon(T_{k-1}) \setminus \{f\},k\right)\right)
\]

Moreover, for~${e \in \partial_\epsilon(T_k) \setminus \partial_\epsilon(T_{k-1})}$, ${s \in S(e)}$, and~${i \in [k]}$, we set 
\[
    {E}^{n+1}_{e,s,i} = H'^{\downarrow}_i(R_{e,s}), 
\]
and set 
\[
    \mathcal{E}'' := \left(E^{n+1}_{e,s,i} \colon (e,s,i) \in I\left(\partial_\epsilon(T_k)\setminus\partial_\epsilon(T_{k-1}),k\right)\right).
\]
Note that, by construction, any such ray~$E^{n+1}_{e, s, i}$ has its initial vertex in the branch set~$Q^{n+1}_i(s)$ and is otherwise disjoint to~${\bigcup \mathcal{Q}_{n+1}}$. 
We set~${\mathcal{E}_{n+1} := \mathcal{E}' \cup \mathcal{E}''}$.
It is easy to check that this is an extender for~$\mathcal{Q}_{n+1}$.

We claim that ${(X_{n+1}, \mathcal{E}_{n+1})}$ is an extension scheme. 
Property~\ref{item:ES-defs} is apparent. 
Since~$\mathcal{F}_k$ strongly agrees about~$\partial(T_k)$, every $\epsilon$-ray in an any member of~$\mathcal{F}_k$ is core. 
Then, since~$\mathcal{F}^*$ is a flat subtribe of~$\mathcal{F}_k$ and every ray in~$\mathcal{E}_{n+1}$ shares a tail with a ray in a member of~$\mathcal{F}_k$ or~$\mathcal{F}^*$, 
it follows by Remark~{\ref{rem:core-remarks}}~\ref{rem:core-tail} that all rays in~$\mathcal{E}_{n+1}$ are core rays, and so~\ref{item:ES-core} holds. 

For any~${e \in \partial_\epsilon(T_{k-1}) \setminus \{f\}}$ and~${i \in [k]}$, the rays~$\mathcal{E}_{n+1,e,i}$ are a subfamily of~$\mathcal{E}'$, obtained by transitioning from the family~$\mathcal{E}_{n,e,i}$ to~${F^{\downarrow}(\mathcal{R}_f)}$ along the linkage~$\mathcal{P}$. 
By the induction hypothesis, $\leq_\epsilon$ agreed with the order induced by~$\leq_{\mathcal{F}_{k-1},e}$ on~$\mathcal{E}_{n,e,i}$, 
and, since ${\mathcal{F}_k \cup \mathcal{F}_{k-1}}$ strongly agrees about~${\partial_\epsilon(T_{k-1}) \setminus \{f\}}$, this is also the order induced by~$\leq_{\mathcal{F}_{k},e}$. 
Hence, since~$\mathcal{P}$ is preserving, by Lemma~\ref{l:core-preserving}, it follows that the order induced by~$\leq_{\mathcal{F}_{k},e}$ on~$\mathcal{E}_{n+1,e,i}$ agrees with~$\leq_\epsilon$.

For for ${e \in \partial_\epsilon(T_k) \setminus \partial_\epsilon(T_{k-1})}$ and~${i \in [k]}$, the rays~$\mathcal{E}_{n+1,e,i}$ are ${( H'^{\downarrow}_i(R_{e,s}) \colon s \in S(e))}$. 
Since ${H'_i \in F' \in \mathcal{F}_k}$ and~$\mathcal{F}_k$ strongly agrees about~$\partial(T_k)$, it follows that the order induced by~$\leq_{\mathcal{F}_{k},e}$ on~$\mathcal{E}_{n+1,e,i}$ agrees with~$\leq_\epsilon$. 
Hence Property~\ref{item:ES-correct} holds.

Finally, by Lemma~\ref{l:rayinducedsubgraph} it is clear that for any~${e \in \partial_\epsilon(T_{k-1}) \setminus \{f\}}$ and~${i \in [k]}$, the rays~$\mathcal{E}^{-}_{n+1,e,i}$ form an interval with respect to~$\leq_\epsilon$ on~$\mathcal{E}^-_{n+1}$, since they are each contained in a connected subgraph~$H'_i$ to which the tails of the rest of~$\mathcal{E}^-_{n+1}$ are disjoint. 
Furthermore, by choice of~$Z$ and Lemma~\ref{lem:core-exchange}, it it clear that, since~$\mathcal{P}$ is preserving on~$\mathcal{E}^{-}_n$, for each~${e \in \partial_\epsilon(T_k) \setminus \partial_\epsilon(T_{k-1})}$ and~${i \in [k]}$, the rays~$\mathcal{E}^-_{n+1,e,i}$ also form an interval with respect to~$\leq_\epsilon$ on~$\mathcal{E}^-_{n+1}$. 
Hence, property~\ref{item:ES-interval} holds and therefore~\ref{item:induction3} is satisfied for the next step.

For property~\ref{item:induction4}, we note that every ray in~$\mathcal{E}_{n+1}$ has a tail in some~${H \in F \in \mathcal{F}^*}$ or some pushout~$H'$ of~$H$ in~$\mathcal{F}_k$. 
Note that~${V(H') \subseteq V(H)}$. 
Since there is at least one core $\epsilon$-ray in each~${H \in F \in \mathcal{F}^*}$, and the~$H$ in~$F$ are pairwise disjoint, we can find a family of at least~${|F| - |\mathcal{E}_{n+1}|}$ such rays disjoint from~$\mathcal{E}_{n+1}$. 
However, since
\[
    |F| \geq (\partial_\epsilon(T_k) + 2) \cdot |\mathcal{E}_{n+1}|,
\]
it follows that we can find a suitable family~${|\mathcal{J}_k|}$.

This concludes the induction step.
\hfill 
\qed

\begin{sidewaysfigurepage}
    \centering
    \resizebox{.8\textwidth}{!}{
        
\begin{tikzpicture}[scale=1.1]
    \colorlet{extendercolor}{green!80!black}
    \tikzstyle{ExtenderRay}=[extendercolor,->]

    \newcommand\Qin[1]{
        \draw[blue] (0,0) circle [radius=1];
        \draw[blue] (0:1) arc (70:0:-1);
        \draw[blue] (-5:1) arc (-60:-5:-1);
        \draw[blue] (0,0) node {$ Q_{#1}^n $};
        \begin{scope}
            \clip (0,0) circle [radius=1.8];
            \draw[red] (160:1.75) circle [radius=1];
        \end{scope}
        \draw[red] (160:1.75) node {$ Q_{#1}^n(T_k^{\neg\epsilon}) $};
        \draw[ExtenderRay] (.6,.6) -- (16, .6);
        \draw[ExtenderRay] (.8,.3) -- (16, .3);
        \draw[ExtenderRay,blue] (.75,-.5) -- (16,-.5);
    }
    \begin{scope}
        \Qin{1}
        \draw[blue,->] (1,1) node[above] {\small$ S(f) $} -- ++(-.2,-.2);
    \end{scope}
    \draw[blue] (0, -2) node {$ \vdots $};
    \draw[blue] (0, -3) node {$ \vdots $};
    \begin{scope}[yshift=-5cm]
        \Qin{1}
    \end{scope}

    \draw[red] (1.5, 1.5) rectangle (2.5, -6.5) node[pos=.5] {$ X_n $};
    \draw[magenta] (7, -8.5) node {$ F \in \mathcal{F}^\ast $};

    \newcommand\newHBot{
        \draw[magenta] (0,0) circle [radius=.5];
        \draw[magenta] (0, .5) arc (-10:-90:.5);
        \begin{scope}
            \clip (0,0) circle [radius=1];
            \draw[red] (280:.9) circle [radius=.6];
        \end{scope}
    }

    \newcommand\newHBlob[1]{
        \draw[magenta] (0, 0) node {$ H_{#1} $};
        \newHBot
        \draw[magenta] (-.15, .35) -- ++(0, 11-.15);
        \draw[magenta] (-.35, .20) -- ++(0, 11);

        \begin{scope}[xshift=-.2cm, yshift=11.5cm]
            \draw[red] (0, 0) circle [radius=.5];
            \draw[red] (-1, 0.6) .. controls +(.8,-.8) and +(.8,-.8) .. (-.6,1);
            \draw[red] (0, 0) node {\tiny$ H_{#1}(f^+) $};
            \draw[red] (-.6, .5) node {\small$ \neg\epsilon $};
            \begin{scope}
                \clip (0,0) circle [radius=.5];
                \draw[blue] (110:.5) arc (30:145:-.2);
                \draw[blue] (60:.5) arc (-10:95:-.2);
            \end{scope}
            \draw[blue,->] (40:.45) -- +(up:1.1);
            \draw[blue,->] (100:.45) -- +(up:1);
            \draw[blue,->] (80:.45) -- +(up:1);
        \end{scope}
    }

    \newcommand\newHstraight[1]{
        \draw[magenta] (0, 0) node {$ H_{#1} $};
        \newHBot
        \draw[magenta,->] (-.15, .35) -- ++(0, 12.5);
        \draw[magenta,->] (-.35, .2) -- ++(0, 12.65);
    }

    \begin{scope}[xshift=9cm, yshift=-8.5cm]
        \draw[magenta,<-] (150:.55) -- ++(150:.3) node[left] {\tiny$ S(f) $};
        \newHBlob{1}
    \end{scope}
    \begin{scope}[xshift=11cm, yshift=-8.5cm]
        \newHstraight{2}
    \end{scope}
    \begin{scope}[xshift=13cm, yshift=-8.5cm]
        \newHBlob{3}
    \end{scope}

    \draw[magenta] (14, -8.5) node {$ \dots $};
    \draw[magenta] (14, 3.25) node {$ \dots $};

    \begin{scope}[xshift=15cm, yshift=-8.5cm]
        \newHstraight{m}
    \end{scope}

    \filldraw[draw=orange,fill=white,opacity=.9] (4, 1.5) rectangle (15, -6.5);
    \draw[orange] (5,1) node {linkage};

    \tikzstyle{pprime}=[very thick, black!40!white]
    \tikzstyle{dottedend}=[dotted,postaction={draw,solid,shorten >=.5cm}]
    \draw[pprime] (4,.6) -| (8.65, 1.5);
    \draw[pprime] (4,.3) -| (8.85, 1.5);
    \draw[pprime] (4,-.5) -| (10.85, 1.5);
    \draw[blue, ->] (10.85, 1.5) -- ++(0,2.85);

    \draw[blue, ->] (14.85, 1.5) -- ++(0,2.85);

    \draw[pprime,dottedend] (4,.6-5) -- ++(6, 0);
    \draw[pprime,dottedend] (4,.3-5) -- ++(6.5, 0);
    \draw[pprime,dottedend] (4,-5.5) -- ++(8, 0);
    \draw[pprime,dottedend] (12.65, 1.5) -- ++(0,-1);
    \draw[pprime,dottedend] (12.85, 1.5) -- ++(0,-1.5);
    \draw[pprime,dottedend] (14.85, 1.5) -- ++(0,-2.5);

    \draw[pprime] (8, -2.5) node {\huge$ \mathcal{P}' $};
    \draw[pprime] (5, -2.5) node {\huge$ \vdots $};
    \draw[pprime] (13.75, -4.5) node {\Huge$ \ddots $};
    \draw[pprime] (13.75, 1) node {\Huge$ \cdots $};

    \draw[blue] (14, 4) node {$ \mathcal{E}_{n+1} $};
\end{tikzpicture}
    }
    \caption{Extending the copies when~${n=2k-1}$ is odd.}
    \label{f:new}
\end{sidewaysfigurepage}

\subsection{The grid-like case}
\label{s:gridlike} 

In this section we will give a brief sketch of how the argument differs in the case where the end~$\epsilon$, towards which we may assume our $G$-tribe~$\mathcal{F}$ is concentrated, is grid-like.

In the case where~$\epsilon$ is half-grid-like we showed that the end~$\epsilon$ had a roughly linear structure, in the sense that there is a global partial order~$\leq_{\epsilon}$ which is defined on almost all of the $\epsilon$-rays, namely the core ones, such that every pair of disjoint core rays are comparable, and that this order determines the relative structure of any finite family of disjoint core rays, since it determines the ray graph.

Since, by Corollary~\ref{c:linearend}, $\RG_G(\mathcal{R}_e)$ is a path whenever~${e \in \partial_\epsilon(T_k)}$, there are only two ways that~$\leq_{\epsilon}$ can order~${H^{\downarrow}(\mathcal{R}_e)}$, and, since~${\partial_\epsilon(T_k)}$ is finite, by various pigeon-hole type arguments we can assume that it does so consistently for each~${H \in \bigcup \mathcal{F}_k}$ and each~$\Ecal_{e,i}$. 

We use this fact crucially in part~2 of the construction, where we wish to extend the graphs ${(Q^n_i \colon i \in [k])}$ from inflated copies of~${G(T^{\neg \epsilon}_{k-1})}$ to inflated copies of~${G(T^{\neg \epsilon}_{k})}$ along an edge~${e \in \partial(T_{k-1})}$. 
We wish to do so by constructing a linkage from the extender~$\Ecal_n$ to some layer~${F \in \mathcal{F}_k}$, using the self-similarity of~$G$ to find an inflated copy of~${G[B(e)]}$ which is `rooted' on the rays~${H^{\downarrow}(\mathcal{R}_e)}$ and extending each~$Q^n_i$ by such a subgraph.

However, for this step to work it is necessary that the linkage from~$\Ecal_n$ to~${F^{\downarrow}(\mathcal{R}_e)}$ is such that for each~${i \in [k]}$, there is some~${H \in F}$ such that ray~$E_{e,s,i}$ is linked to~${H^{\downarrow}(R_{e,s})}$ for each~${s \in S(e)}$. 
However, since any transitional linkage we construct between~$\Ecal_n$ and a layer~${F \in \mathcal{F}_n}$ will respect~$\leq_{\epsilon}$, we can use a transition box to `re-route' our linkage such that the above property holds.

In the case where~$\epsilon$ is grid-like we would like to say that the end has a roughly cyclic structure, in the sense that there is a global `partial cyclic order'~$C_\epsilon$, defined again on almost all of the $\epsilon$-rays, which will again determine the relative structure of any finite family of disjoint `core' rays.

As before, since~${\RG_{G}(\mathcal{R}_e)}$ is a path whenever~${e \in \partial_\epsilon(T_n)}$, there are only two ways that~$C_\epsilon$ can order~${H^{\downarrow}(\mathcal{R}_e)}$ (`clockwise' or `anti-clockwise') and so we can use similar arguments to assume that it does so consistently for each~${H \in \bigcup \mathcal{F}_k}$ and each~$\Ecal_{e,i}$, which allows us as before to control the linkages we build. 

To this end, suppose~$\epsilon$ is a grid-like end, and that~$N$ is as in Lemma~\ref{l:gridstructure}, so that the ray graph of any family of at least~${N+2}$ disjoint rays is a cycle. 
We say that an $\epsilon$-ray~$R$ is a \emph{core ray (of~$\epsilon$)} if there is some finite family~${(R_i \colon i \in[n])}$ of~${n \geq N+3}$ disjoint $\epsilon$-rays such that~${R = R_i}$ for some~${i \in [n]}$\footnote{We note that it is possible to show that, if~$\epsilon$ is grid-like, then in fact~${N=3}$.}. 

Every large enough ray graph is a cycle, which has a correct orientation by Lemma~\ref{l:gridstructure}, and we would like to say that this orientation is induced by a global `partial cyclic order' defined on the core rays of~$\epsilon$.

By a similar argument as in Section~\ref{s:core}, one can show the following:

\begin{lemma}
    For every core ray~$R$ of a grid-like end~$\epsilon$ there is a unique sub-end of~$\epsilon$ in~${G - V(R)}$, which is linear (cf.~Definition~\ref{d:linear}). 
\end{lemma}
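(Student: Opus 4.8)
The plan is to mirror, in the grid-like setting, the analysis carried out in Section~\ref{s:core} for half-grid-like ends, but accounting for the cyclic rather than linear structure of the ray graphs. First I would fix a core ray~$R$ of the grid-like end~$\epsilon$ and a witnessing family~${\mathcal{R} = (R_i \colon i \in [n])}$ of~${n \geq N+3}$ disjoint $\epsilon$-rays with~${R = R_c}$ for some~$c$. Since~${\RG(\mathcal{R})}$ is a cycle by Lemma~\ref{l:gridstructure}, deleting the vertex~$c$ leaves a single path, hence a connected graph; the rays indexed by this path are all equivalent in~${G - V(R)}$ (adjacent vertices in the ray graph of~${\mathcal{R} \setminus \{R_c\}}$, regarded inside~${G - V(R)}$, give equivalent rays, exactly as in the proof of Lemma~\ref{l:raygraph_separates}). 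This already shows that all rays of~$\mathcal{R}$ other than~$R$ lie in a \emph{single} end of~${G - V(R)}$; call it~$\epsilon_R$.

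Next I would show that \emph{every} $\epsilon$-ray~$S$ disjoint from~$R$ (or having a tail disjoint from~$R$) lies in~$\epsilon_R$, which is the heart of the uniqueness claim. Suppose not, and let~$S'$ be a tail of such an~$S$ disjoint from~${\bigcup \mathcal{R}}$ (such a tail exists: if~$S$ met some~$R_i$ infinitely often it would be equivalent to~$R_i$ in~${G - V(R)}$ and hence lie in~$\epsilon_R$). Form~${\mathcal{R}' := \mathcal{R} \cup \{S'\}}$, a family of~${n+1 \geq N+4}$ disjoint $\epsilon$-rays, so~${\RG(\mathcal{R}')}$ is again a cycle. Since the identity is a transition function from~$\mathcal{R}$ to~$\mathcal{R}'$ and it preserves the correct orientation by Lemma~\ref{l:gridstructure}, the cyclic order on~$\mathcal{R}$ embeds into that of~$\mathcal{R}'$; in particular~$c$ has exactly two neighbours in~${\RG(\mathcal{R}')}$, and the vertex~$s$ (the index of~$S'$) must be inserted somewhere into the cycle~${\RG(\mathcal{R})}$. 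Therefore~$s$ is adjacent in~${\RG(\mathcal{R}')}$ to at least one index~${i \neq c}$ (a short case analysis, exactly as in the proof of Lemma~\ref{l:centraltwoend}: if~$s$ were adjacent only to~$c$ then some old neighbour~$i$ of~$c$ would have lost its edge to~$c$, forcing an edge from~$s$ to~$i$). Being adjacent to~$R_i$ in the ray graph, the ray~$S'$ is equivalent to~$R_i$ in~${G - V(R)}$, hence lies in~$\epsilon_R$, a contradiction.

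Finally I would record linearity of~$\epsilon_R$. Given any finite family~$\mathcal{S}$ of disjoint rays in~$\epsilon_R$, augment it by a tail of~$R$ and enough buffer rays to reach size~${\geq N+3}$, so its ray graph in~$G$ is a cycle; since all of~$\mathcal{S}$ lies on one side of the vertex corresponding to (the tail of)~$R$, by Lemma~\ref{l:rayinducedsubgraph} the ray graph~${\RG_{G - V(R)}(\mathcal{S})}$ is an induced subgraph of this cycle minus that vertex, i.e.\ a subgraph of a path, and being connected (Lemma~\ref{rem:tail-raygraph}, as all rays of~$\mathcal{S}$ are equivalent in~${G - V(R)}$) it is itself a path. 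Hence~$\epsilon_R$ is linear in the sense of Definition~\ref{d:linear}, completing the proof. The main obstacle is the uniqueness step: making rigorous that inserting one new ray into a cyclic ray graph cannot create a genuinely new end, which requires the careful adjacency bookkeeping of Lemma~\ref{l:centraltwoend} transported to the cyclic setting; the rest is routine once Lemmas~\ref{l:gridstructure}, \ref{rem:tail-raygraph} and~\ref{l:rayinducedsubgraph} are invoked.
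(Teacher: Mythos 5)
Your treatment of the main assertion --- that all $\epsilon$-rays with a tail avoiding $R$ fall into a single end of ${G - V(R)}$ --- is correct, and it is exactly what the paper has in mind: the paper offers no proof beyond the remark that the lemma follows ``by a similar argument as in Section~\ref{s:core}'', and your adaptation of the proof of Lemma~\ref{l:centraltwoend} is the right one. Indeed the grid-like case is easier than the half-grid-like one, as you observe: in the cycle ${\RG(\mathcal{R}')}$ the new vertex~$s$ has exactly two neighbours, at most one of which is~$c$, so the adjacency bookkeeping of Lemma~\ref{l:centraltwoend} is barely needed.

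There is, however, a genuine gap in the linearity step, namely in the use of buffer rays for small families~$\mathcal{S}$. Lemma~\ref{l:rayinducedsubgraph} requires the second family to lie in ${\Gamma - V(\Gamma')}$; with ${\Gamma' = G - V(R)}$ the only ray available for that role is a tail of~$R$ itself, since your buffer rays~$\mathcal{B}$ lie in ${G - V(R)}$ and hence must be counted as part of the first family. The lemma then only yields that ${\RG_{G-V(R)}(\mathcal{S} \cup \mathcal{B})}$ is a subgraph of the cycle minus the $R$-vertex, i.e.\ of a path --- which is fine --- but passing from this to ${\RG_{G-V(R)}(\mathcal{S})}$ goes in the wrong direction: deleting the buffer rays from the family can only \emph{add} edges to the ray graph, since path systems previously blocked by a buffer ray become admissible. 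So ${\RG_{G-V(R)}(\mathcal{S})}$ is a supergraph, not a subgraph, of the restriction of that path to~$\mathcal{S}$. (Concretely, in ${\mathbb{Z} \times \mathbb{Z}}$ with~$R$ the upward ray in column~$0$ and~$\mathcal{S}$ consisting of the upward rays in columns~$1$ and~$5$, with buffers in columns~$2,3,4$, the two rays of~$\mathcal{S}$ are adjacent in ${\RG_{G-V(R)}(\mathcal{S})}$ but not in the restriction of the large path.) Your argument is therefore valid only when ${|\mathcal{S}| + 1 \geq N+2}$, where no buffers are needed and ${\RG_{G-V(R)}(\mathcal{S})}$ really is a connected spanning subgraph of a path; for smaller families a different argument is required, or one should verify that linearity in the sense of Definition~\ref{d:linear} can be certified by large families alone (which the citation of \cite[Lemma~3.5]{BEEGHPTII} there suggests, but which you do not invoke).
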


It follows that if~$R$ and~$R'$ are disjoint core rays then~$\epsilon$ splits into at most two ends in~${G - (V(R) \cup V(R'))}$.

\begin{definition}
    Let~$R$ and~$R'$ be disjoint core rays of~$\epsilon$. 
    We denote by~${\top(\epsilon, R,R')}$ the end of~${G - (V(R) \cup V(R'))}$ containing rays which appear between~$R$ and~$R'$ according to the correct orientation of some ray graph of a family of at least $N+3$ $\epsilon$-rays and by~${\bot(\epsilon, R,R')}$ the end of~${G - (V(R) \cup V(R'))}$ containing rays which appear between~$R'$ and~$R$ in the correct orientation of some ray graph of a family of at least $N+3$ $\epsilon$-rays.
\end{definition}

We will model our global `partial cyclic order' as a ternary relation on the set of core rays of~$\epsilon$. 
That is, a \emph{partial cyclic order} on a set~$X$ is a relation~${C \subset X^3}$ written~${[a,b,c]}$ satisfying the following axioms:
\begin{itemize}
    \item If~${[a,b,c]}$ then~${[b,c,a]}$.
    \item If~${[a,b,c]}$ then not~${[c,b,a]}$.
    \item If~${[a,b,c]}$ and~${[a,c,d]}$ then~${[a,b,d]}$.
\end{itemize}

\begin{lemmadef}
    Let~${\core(\epsilon)}$ denote the set of core rays of~$\epsilon$. 
    We define a partial cyclic order~$C_\epsilon$ on~${\core(\epsilon)}$ as follows:
    \[
        [R,S,T] \text{ if and only if } R,S,T \text{ have disjoint tails } xR,yS,zT \text{ and  } yS \in \top(\epsilon, xR,zT).
    \]
    Then, for any family~${(R_i \colon i \in [n])}$ of~${n \geq N+3}$ disjoint $\epsilon$-rays, the cyclic order induced on~${(R_i \colon i \in [n])}$ by~$C_\epsilon$ agrees with the correct orientation. 
\end{lemmadef}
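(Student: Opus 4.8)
The plan is to mirror the development of the half-grid-like case in Section~\ref{s:core}, replacing the binary separation order $\leq_\epsilon$ by the ternary cyclic order $C_\epsilon$, and replacing the notion of a core ray splitting $\epsilon$ into exactly two ends by the statement (quoted just before this lemma) that a core ray $R$ of a grid-like end has a unique linear sub-end in $G-V(R)$, while a pair $R,R'$ of disjoint core rays splits $\epsilon$ into at most two ends in $G-(V(R)\cup V(R'))$. So first I would check that $C_\epsilon$ as defined is indeed a partial cyclic order, i.e.\ satisfies the three axioms: cyclic symmetry is immediate from the symmetry of the role of the three rays in ``$R,S,T$ appear in the correct orientation of some ray graph'' (concretely, one takes a common family of at least $N+3$ disjoint $\epsilon$-rays containing disjoint tails of all three, which exists since $\epsilon$ is thick and all rays are equivalent, and appeals to Lemma~\ref{l:gridstructure}); asymmetry follows because the correct orientation of a cycle is a genuine orientation, so $[R,S,T]$ and $[T,S,R]$ cannot both hold in the same ray graph, and then one needs the consistency statement that different ray graphs induce compatible correct orientations on overlapping triples --- this is exactly the content one must extract from Lemma~\ref{l:gridstructure} together with a ``$\top(\epsilon,R,R')$ is well-defined'' lemma analogous to Lemma~\ref{l:unique_above}. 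The transitivity axiom $[R,S,T]\wedge[R,T,U]\Rightarrow[R,S,U]$ is the cyclic analogue of transitivity of $\leq_\epsilon$ and should follow by the same method as in Lemma~\ref{def:core-order}: put disjoint tails of all four rays into one large family $\mathcal R$ of $\geq N+3$ disjoint $\epsilon$-rays, note that $\RG(\mathcal R)$ is a cycle by Lemma~\ref{l:gridstructure}, and read off the cyclic order of the four vertices on that cycle, which automatically satisfies transitivity; the only subtlety is that the definition of $[R,S,T]$ via $\top(\epsilon,xR,zT)$ must be shown to coincide with ``appears between $R$ and $T$ on the correct orientation of $\RG(\mathcal R)$'' for any sufficiently large $\mathcal R$, which is precisely the well-definedness lemma mentioned above.

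Second, and this is the heart of the statement, I would prove the ``Then'' clause: for any family $\mathcal R=(R_i\colon i\in[n])$ of $n\geq N+3$ disjoint $\epsilon$-rays, the cyclic order induced on $[n]$ by $C_\epsilon$ agrees with the correct orientation of the cycle $\RG(\mathcal R)$. Here I would argue triple by triple. Fix $i,j,k\in[n]$ and let $x R_i, y R_j, z R_k$ be disjoint tails; I want to show $[R_i,R_j,R_k]$ (in the sense of $C_\epsilon$, i.e.\ $yR_j\in\top(\epsilon,xR_i,zR_k)$) holds if and only if $i,j,k$ occur in that cyclic order along the correctly oriented cycle $\RG(\mathcal R)$. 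For the ``if'' direction: if $i,j,k$ occur in this cyclic order on $\RG(\mathcal R)$, then the arc of the cycle from $i$ to $k$ not through the removal of $\{i,k\}$\,---more precisely, deleting the vertices $i$ and $k$ from the cycle leaves two paths, and $j$ lies on one of them---\,shows that $R_j$ is equivalent in $\Gamma-(V(R_i)\cup V(R_k))$ to the rays on that arc, which is one of the at most two ends into which $\epsilon$ splits; by the definition of $\top(\epsilon,R_i,R_k)$ (the end containing the rays appearing between $R_i$ and $R_k$ in the correct orientation of some large enough family --- and by the well-definedness, any such family, in particular $\mathcal R$ itself since $n\geq N+3$) this end is exactly $\top(\epsilon,R_i,R_k)$, whence $[R_i,R_j,R_k]$. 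The ``only if'' direction is the contrapositive of the same argument applied to the other arc. The key auxiliary fact needed here is the grid-like analogue of Lemma~\ref{l:raygraph_separates}: for a family $\mathcal R$ of $\geq N+3$ disjoint $\epsilon$-rays, vertices $i_1$ and $i_2$ lie in the same component of $\RG(\mathcal R)-\{j,k\}$ if and only if $R_{i_1}$ and $R_{i_2}$ belong to the same end of $\Gamma-(V(R_j)\cup V(R_k))$; this follows from Lemma~\ref{rem:tail-raygraph} (connectivity of ray graphs of equivalent rays) and Lemma~\ref{l:rayinducedsubgraph} (ray graphs of sub-families embed), exactly as in the proof of Lemma~\ref{l:raygraph_separates}, once one knows $\RG(\mathcal R)$ is a cycle so that $\RG(\mathcal R)-\{j,k\}$ has at most two components.

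The main obstacle I expect is the well-definedness of $\top(\epsilon,R,R')$, i.e.\ showing that the choice of ``some ray graph of a family of at least $N+3$ disjoint $\epsilon$-rays'' in its definition does not matter --- that two different such families induce the same labelling of the two ends of $\Gamma-(V(R)\cup V(R'))$ as $\top$ and $\bot$. This is the grid-like counterpart of Lemma~\ref{l:unique_above}, and as there the argument should go: given two witnessing families $\mathcal R_1,\mathcal R_2$, build a common ``large'' family $\mathcal S$ containing $R$, $R'$, and enough rays from each of the two ends of $\Gamma-(V(R)\cup V(R'))$, produce transition functions $\mathcal R_1\to\mathcal S$ and $\mathcal R_2\to\mathcal S$ that fix $R,R'$ and map each of the two ``halves'' of $\mathcal R_i$ to the corresponding half of $\mathcal S$ (using Lemma~\ref{l:trans-link} plus the Weak Linking Lemma within each end of $\Gamma-(V(R)\cup V(R'))$, combined with a trivial linkage on $R$ and $R'$), and then invoke the orientation-preservation clause of Lemma~\ref{l:gridstructure} to conclude both families induce the same correct cyclic orientation and hence the same $\top/\bot$ labelling. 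One small extra wrinkle compared with the half-grid case is that one should first record (as a lemma, proved just as sketched two paragraphs above for transitivity) that removing two disjoint core rays leaves at most two ends, so that ``$\top$'' and ``$\bot$'' exhaust the possibilities; this is what makes the cyclic bookkeeping finite. With these auxiliary lemmas in hand the proof of the displayed Lemma and Definition is then a routine translation of the half-grid-like arguments, and I would write it by citing Section~\ref{s:core} for the parts that transfer verbatim and spelling out only the cyclic-order-specific steps above.
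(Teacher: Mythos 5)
Your proposal is correct and follows essentially the same route as the paper, which itself only states that the claim follows ``by a similar argument as in Section~\ref{s:core}'' and leaves the adaptation to the reader. Your expansion --- verifying the cyclic-order axioms via a common large family whose ray graph is a cycle, proving well-definedness of $\top(\epsilon,R,R')$ as the analogue of Lemma~\ref{l:unique_above}, and establishing the two-ray analogue of Lemma~\ref{l:raygraph_separates} to get agreement with the correct orientation --- is precisely the intended translation of the half-grid-like arguments.
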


Again, by a similar argument as in Section~\ref{s:core}, one can show that this relation is in fact a partial cyclic order and that it always agrees with the correction orientation of large enough ray graphs. 
Furthermore, by Lemma~\ref{l:gridstructure}, given two families~$\Rcal$ and~$\Scal$ of at least~${N+3}$ disjoint $\epsilon$-rays, every transitional linkage between~$\Rcal$ and~$\Scal$ \emph{preserves}~$C_\epsilon$, for the obvious definition of preserving.

Given a family of disjoint $\epsilon$-rays ${\mathcal{R} = (R_i \colon i \in [n])}$ with a linear order~$\leq$ on~$\mathcal{R}$, we say that~$\leq$ \emph{agrees} with~$C_\epsilon$ if~${[R_i,R_j,R_k]}$ whenever~${R_i < R_j < R_k}$.

Given a family ${F = ( f_i \colon i \in I )}$ and a linear order~$\leq$ on~$I$, we denote by~${F(\leq)}$ the linear order on~$F$ induced by~$\leq$, i.e.~the order defined by~${f_i F(\leq) f_j}$ if and only if~${i \leq j}$.

As in Section~\ref{s:tribes} we say a thick $G$-tribe~$\mathcal{F}$ \emph{strongly agrees about~${\partial(T_n)}$} if 
\begin{itemize}
    \item it weakly agrees about~$\partial(T_n)$; 
    \item for each~${H \in \bigcup \mathcal{F}}$ every $\epsilon$-ray ${R \subseteq H}$ is in $\core(\epsilon)$; 
    \item for every~${e \in \partial_\epsilon(T_n)}$ there is a linear order~$\leq_{\mathcal{F},e}$ on~${S(e)}$ such that~${H^{\downarrow}(\mathcal{R}_e)(\leq_{\mathcal{F},e})}$ agrees with~$C_\epsilon$ on~${H^{\downarrow}(\mathcal{R}_e)}$ for all~${H \in \bigcup F}$. 
\end{itemize}

Using this definition, the $G$-tribe refinement lemma (Lemma~\ref{lem:refinement-lemma}) can also be shown to hold in the case where~$\epsilon$ is a grid-like-end.

Furthermore, we modify the definition of an extension scheme for a family of disjoint inflated copies of~${G(T^{\neg \epsilon}_n})$. 

\begin{definition}[Extension scheme]
    Let ${\mathcal{Q} = (Q_i \colon i \in [k])}$ be a family of disjoint inflated copies of~${G(S^{\neg \epsilon})}$ and~$\Fcal$ be a $G$-tribe which strongly agrees about~${\partial(S)}$. 
    We call a tuple~${(X,\Ecal)}$ an \emph{extension scheme} for~$\mathcal{Q}$ if the following holds:
    \begin{enumerate}[label=(ES\arabic*)]
        \item $X$ is a bounder for~$\mathcal{Q}$ and~$\Ecal$ is an extender for~$\mathcal{Q}$;
        \item $\Ecal$ is a family of core rays;
        \item the order~$C_\epsilon$ agrees with~${\Ecal_{e,i}^-(\leq_{\mathcal{F},e})}$ for every~${e \in \partial_\epsilon(S)}$; 
        \item the sets~$\mathcal{E}_{s,i}^-$ are intervals of~$C_\epsilon$ on~${\mathcal{E}^-}$ for all~${e \in \partial_\epsilon(S)}$ and~${i \in [k]}$.
    \end{enumerate}
\end{definition}

We can then proceed by induction as before, with the same induction hypotheses. 
For the most part the proof will follow verbatim, apart from one slight technical issue.

Recall that, in the case where~$n$ is even, we use the existence of the family of rays~${\overline{\mathcal{C}}}$ to find a linkage from~$\mathcal{C}$ to~${F^{\downarrow}(\mathcal{R}^-)}$ which is preserving on~$\mathcal{C}$ and similarly, in the case where~$n$ is odd, we do the same for~$\overline{\mathcal{E}_n^-}$. 
In the grid-like case we do not have to be so careful, since every transitional linkage from~$\mathcal{C}$ to~${F^{\downarrow}(\mathcal{R}^-)}$ will preserve~$C_\epsilon$, as long as~${|\mathcal{C}|}$ is large enough.

However, in order to ensure that~${|\mathcal{C}|}$ and~${|\mathcal{E}_n^-|}$ are large enough in each step, we should start by building~${N+3}$ inflated copies of~${G(T^{\neg \epsilon}_0)}$ in the first step, which can be done relatively straightforwardly. 
Indeed, in the case~${n=0}$ most of the argument in the construction is unnecessary, since a large part of the construction is constructing a new copy whilst re-routing the rays~$\mathcal{E}_n$ to avoid this new copy, but~$\mathcal{E}_0$ is empty. 
Therefore, it is enough to choose a layer~${F \in \mathcal{F}_0}$ with~${|F| \geq N+3}$, with say~${H_1,\ldots,H_{N+3} \in F}$ and to take
\[
    Q^1_i := H_i(T^{\neg \epsilon}_k)
\]
for each~${i \in [N+3]}$, and to take~${E^1_{e,s,i} = H^{\downarrow}_i(R_{e,s})}$ for each~${e \in \partial_\epsilon(T_0)}$, ${s \in S(e)}$, and ${i \in [N+3]}$. 
One can then proceed as before, extending the copies in odd steps and adding a new copy in even steps.

\section{Outlook: connections with well-quasi-ordering and better-quasi-ordering}
\label{s:WQO}

Our aim in this section is to sketch what we believe to be the limitations of the techniques of this paper. 
We will often omit or ignore technical details in order to give a simpler account of the relationship of the ideas involved.

Our strategy for proving ubiquity is heavily reliant on well-quasi-ordering results. 
The reason is that they are the only known tool for finding extensive tree-decompositions for broad classes of graphs. 

To more fully understand this, let us recall how well-quasi-ordering was used in the proofs of Lemmas~\ref{lem:finendsisext} and~\ref{lem:boundwidthisext}. 
Lemma~\ref{lem:finendsisext} states that any locally finite connected graph with only finitely many ends, all of them thin, has an extensive tree-decomposition. 
The key idea of the proof was as follows: 
for each end, there is a sequence of separators converging towards that end. 
The graphs between these separators are finite, and so are well-quasi-ordered by the Graph Minor Theorem. 
This well-quasi-ordering guarantees the necessary self-similarity.

Lemma~\ref{lem:boundwidthisext}, where infinitely many ends are allowed but the graph must have finite tree-width, is similar: 
once more, for each end there is a sequence of separators converging towards that end. 
The graphs between these separators are not necessarily finite, but they have bounded tree-width and so they are again well-quasi-ordered. 

Note that the Graph Minor Theorem is not needed for this latter result. 
Instead, the reason it works can be expressed in the following slogan, which will motivate the considerations in the rest of this section:

\begin{quote}
    Trees of wombats are well-quasi-ordered precisely when wombats themselves are better-quasi-ordered.
\end{quote}

Here better-quasi-ordering is a strengthening of well-quasi-ordering, introduced by Nash-Williams in~\cite{N65} essentially in order to make this slogan be true. 
Since graphs of bounded tree-width can be encoded as trees of graphs of bounded size, what is used here is that graphs of bounded size are better-quasi-ordered.

What if we wanted to go a little further, for example by allowing infinite tree-width but requiring that all ends should be thin? 
In that case, all we would know about the graphs between the separators would be that all their ends are thin. Such graphs are essentially trees of finite graphs. So, by the slogan above, to show that such trees are well-quasi-ordered we would need the statement that finite graphs are better-quasi-ordered. 

Indeed, this problem arises even if we restrict our attention to the following natural common strengthening of Theorems~\ref{t:And1} and~\ref{t:And2}:

\begin{conjecture}
    Any locally finite connected graph in which all blocks are finite is $\preceq$-ubiquitous.
\end{conjecture}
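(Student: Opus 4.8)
The plan is to show that any locally finite connected graph $G$ in which all blocks are finite admits an extensive tree-decomposition; then Theorem~\ref{t:nice} yields $\preceq$-ubiquity. The conjecture would therefore follow from a positive answer to Question~\ref{qst:loc_fin_admit ex_td} restricted to this class, and more precisely from a better-quasi-ordering statement about the finite ``pieces'' that appear between separators. The natural decomposition tree to use is the \emph{block-cut tree} of $G$: since every block is finite and $G$ is locally finite, each cutvertex lies in only finitely many blocks, so the block-cut tree is locally finite, and taking as parts the blocks (together with their incident cutvertices) gives a tree-decomposition with finite parts in which every vertex appears in only finitely many parts. What remains is to establish the self-similarity of each bough.

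First I would reduce to understanding boughs. Fix an edge $e$ of the block-cut tree; the bough $B(e)$ is itself a locally finite connected graph all of whose blocks are finite, hanging off a single cutvertex (or a single block) on its separator $S(e)$, which here has size~$1$ (block-cut separators are single cutvertices) --- or, if one works with a coarser tree whose nodes are the blocks, of bounded size equal to the block sizes along the relevant path. In either case $S(e)$ is bounded. To witness self-similarity towards some end $\omega_e$, I would pick a ray $R_e$ in the block-cut tree along which infinitely many ``copies'' of $B(e)$ appear, choose one $\omega_e$-ray $R_{e,s}$ for each $s\in S(e)$ running along this ray in $G$, and then find, arbitrarily far out, a subgraph $W\subseteq G[B(e')]$ which is an inflated copy of $\overline{G[B(e)]}$ with the marked rays entering $W$ precisely through the branch sets $W(s)$. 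This is exactly the kind of argument used in Lemmas~\ref{lem:one-ended-nice} and~\ref{lem:boundwidthisext}: one sets up a sequence of finite (or bounded) ``labelled'' pieces along $R_e$, applies a well-quasi-ordering result with point functions tracking where the marked rays cross separators, invokes Remark~\ref{r:increasingsubsequence} to get infinitely many increasing pairs, and glues the witnessing pointed minors together along the marked rays to build $W$.

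The key step --- and the main obstacle --- is precisely the well-quasi-ordering input. The finite pieces between consecutive separators along $R_e$ are themselves locally finite graphs all of whose blocks are finite, \emph{not} finite graphs and \emph{not} graphs of bounded tree-width, so neither the Graph Minor Theorem (Lemma~\ref{lem:labelled-wqo}) nor Thomas's theorem (Lemma~\ref{lem:labelled-wqo-btw}) applies directly. As explained in the slogan of Section~\ref{s:WQO}, such pieces are essentially \emph{trees of finite graphs}, so the well-quasi-ordering we need is ``trees of finite graphs, with point functions, are well-quasi-ordered under $\preceq_p$'', which by Nash-Williams' theory holds precisely when finite graphs are \emph{better}-quasi-ordered under the (pointed) minor relation. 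That finite graphs are bqo is not known --- it is strictly stronger than the Graph Minor Theorem --- so this route does not currently close the conjecture; it reduces it to a clean bqo statement. If one is willing to assume that finite graphs (with boundedly many points) are bqo under $\preceq_p$, then the rest of the argument is a routine adaptation of the proof of Lemma~\ref{lem:boundwidthisext}: encode each piece as a finitely-branching tree labelled by finite graphs, apply the tree bqo machinery to get an increasing subsequence, extract pointed self-minors $H_{m_j}$, concatenate the marked rays through them, and check as in Lemma~\ref{lem:one-ended-nice} that the resulting $W$ satisfies all three clauses of Definition~\ref{d:selfsimilarbough}. I would therefore present this as a conditional theorem, with the bqo of finite graphs isolated as the single missing ingredient, mirroring the discussion already begun in Section~\ref{s:WQO}.
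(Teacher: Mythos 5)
This statement is an open conjecture in the paper, not a theorem: Section~\ref{s:WQO} offers no proof, and instead explains that the natural route via extensive tree-decompositions stalls exactly where you say it does, namely at the need for a better-quasi-ordering of finite (pointed) graphs, since the pieces between successive separators are themselves trees of finite graphs. Your proposal is therefore not a proof of the conjecture, but you say so explicitly, and your conditional reduction --- block-cut-tree decomposition, self-similarity via a pointed wqo/bqo argument modelled on Lemmas~\ref{lem:one-ended-nice} and~\ref{lem:boundwidthisext}, with bqo of finite graphs isolated as the single missing ingredient --- reproduces precisely the diagnosis the authors themselves give. In short: there is no gap in your reasoning beyond the one you have correctly identified, and that gap is the reason the statement remains a conjecture.
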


In order to attack this conjecture with our current techniques we would need better-quasi-ordering of finite graphs.

Thomas has conjectured~\cite{T89} that countable graphs are well-quasi-ordered with respect to the minor relation. 
If this were true, it could allow us to resolve problems like those discussed above for countable graphs at least, since all the graphs appearing between the separators are countable. 
But this approach does not allow us to avoid the issue of better-quasi-ordering of finite graphs. 
Indeed, since countable trees of finite graphs can be coded as countable graphs, well-quasi-ordering of countable graphs would imply better-quasi-ordering of finite graphs.

Thus until better-quasi-ordering of finite graphs has been established, the best that we can hope for -- using our current techniques -- is to drop the condition of local finiteness from the main results of this paper. 
For countable graphs we hope to show this in the next paper in the series, 
however for graphs of larger cardinalities further issues arise.

\bibliographystyle{abbrv}
\bibliography{main}

\end{document}